\def\inte#1{
\displaystyle\mathop{#1\kern0pt}^\circ }
\let\pa=\partial
\def\cA{\mathcal{A}}
\let\grad\nabla
\def\virgp{\raise 2pt\hbox{,}}
\def\cdotpv{\raise 2pt\hbox{;}}
\def\eqdefa{\buildrel\hbox{\footnotesize def}\over =}
\def\C{\mathop{\bf C\kern 0pt}\nolimits}
\def\DD{\mathop{\bf D\kern 0pt}\nolimits}
\def\K{\mathop{\bf K\kern 0pt}\nolimits}
\def\N{\mathop{\bf N\kern 0pt}\nolimits}
\def\Q{\mathop{\bf Q\kern 0pt}\nolimits}
\def\R{\mathop{\bf R\kern 0pt}\nolimits}
\def\SS{\mathop{\bf S\kern 0pt}\nolimits}
\def\ZZ{\mathop{\bf Z\kern 0pt}\nolimits}
\def\TT{\mathop{\bf T\kern 0pt}\nolimits}
\def\dive{\mathop{\rm div}\nolimits}
\def\na{\nabla}
\newcommand{\beq}{\begin{equation}}
\newcommand{\eeq}{\end{equation}}
\newcommand{\ben}{\begin{eqnarray}}
\newcommand{\een}{\end{eqnarray}}
\newcommand{\beno}{\begin{eqnarray*}}
\newcommand{\eeno}{\end{eqnarray*}}
\newtheorem{thm}{Theorem}[section]
\newtheorem{lem}{Lemma}[section]
\newtheorem{rmk}{Remark}[section]
\newtheorem{col}{Corollary}[section]
\newtheorem{prop}{Proposition}[section]
\renewcommand{\theequation}{\thesection.\arabic{equation}}
\begin{document}

\title{Global stability of the compressible viscous surface waves in an infinite layer}

\author[G. Gui]{Guilong Gui}
\address[G. Gui]{School of Mathematics and Computational Science, Xiangtan University, Xiangtan 411105, China.} \email{glgui@amss.ac.cn}
\author[Z. Zhang]{Zhifei Zhang} \address[Z. Zhang]{School of Mathematical Science, Peking University, Beijing 100871, China.} \email{zfzhang@math.pku.edu.cn}

\begin{abstract}

We investigate in this paper the global stability of the compressible viscous surface waves in the absence of surface tension effect with a steady-state violating Rayleigh-Taylor instability and the reference domain being the horizontal infinite layer. The fluid dynamics are governed by the 3-D gravity-driven isentropic compressible Navier-Stokes equations. We develop a mathematical approach to establish global well-posedness of free boundary problems of the multi-dimensional compressible Navier-Stokes system based on the Lagrangian framework, which requires no nonlinear compatibility conditions on the initial data.

\end{abstract}

\date{}

\maketitle

\vskip 0.2cm

\noindent {\bf Keywords:} Compressible viscous surface waves; Lagrangian coordinates; Global well-posedness; Steady-state

\vskip 0.2cm

\noindent {\sl Mathematics Subject Classification 2020:} 35Q30, 76E19, 76N10

\renewcommand{\theequation}{\thesection.\arabic{equation}}
\setcounter{equation}{0}

\section{Introduction}

\subsection{Formulation in Eulerian Coordinates}
We consider in this paper the global existence of time-dependent flows of an
viscous isentropic compressible fluid in a moving domain $\Omega(t)$
with an upper free surface $\Sigma_{F}(t)$ and a fixed bottom $\Sigma_B$
\begin{equation}\label{VFS-eqns-1}
\begin{cases}
&\partial_t\rho+\nabla\cdot(\rho\,u)=0 \quad\mbox{in} \quad \Omega(t),\\
  &\rho(\partial_t u + (u\cdot \grad) u )+ \nabla\, p(\rho)- \nabla\cdot\mathbb{S}(u)=-g\,\rho\,e_1 \quad\mbox{in} \quad \Omega(t),\\
 & (p(\rho)\,\mathbb{I}-\mathbb{S}(u))n(t)=p_{\mbox{\tiny atm}}n(t)\quad \mbox{on} \quad \Sigma_{F}(t),\\
 &  \mathcal{V}(\Sigma_{F}(t))=u\cdot n(t) \quad \mbox{on} \quad \Sigma_{F}(t),\\
 &u|_{\Sigma_B}=0,
\end{cases}
\end{equation}
where the positive scalar function $\rho$ is the density of the fluid, the velocity $u=(u^1, u^2, u^3)$, $u^i=u^i(x_1, x_2, x_3)$, $i, j=1, 2, 3$, and the constant $g > 0$ stands for the strength of gravity. we denote $n(t)$ the outward-pointing unit normal on $\Sigma_F(t)$, $\mathbb{I}$ the $3\times3$ identity matrix. The stress tensor $\mathbb{S}(u)$ is defined by $\mathbb{S}(u)=\varepsilon\mathbb{D}(u)+(\delta-\frac{2}{3}\varepsilon)(\nabla \cdot u)\mathbb{I}$, where the strain tensor $\mathbb{D}(u)=\nabla u+\nabla u^{T}$ and dynamic viscosity $\varepsilon$ and bulk viscosity $\delta$ are constants which satisfy the following relationship
 \begin{equation*}
 \varepsilon>0,\quad \delta\geq 0.
 \end{equation*}
  The deviatoric (trace-free) part of the strain tensor $\mathbb{D}(u)$ is then $\mathbb{D}^0(u)=\mathbb{D}(u)-\frac{2}{3}\dive\, u\, \mathbb{I}$. The viscous stress tensor in fluid is then given by $\mathbb{S}(u)=\varepsilon\,\mathbb{D}^0(u)+\delta \dive\, u\, \, \mathbb{I}$. The scalar function $p$ is the pressure which is a function of density $p=p(\rho)>0$, and the pressure function is assumed to be smooth, positive, and strictly increasing. We denote $ \mathcal{V}(\Sigma_{F}(t))$ the outer-normal velocity of the free surface $\Sigma_{F}(t)$. The tensor $(p\,\mathbb{I} - \mathbb{S}(u))$ is known as the viscous stress tensor. The first equation in \eqref{VFS-eqns-1} is the conservation of mass. The second equation in \eqref{VFS-eqns-1} is the momentum conservation law, where the gravity is the only external force, which points in the negative $x_1$ direction (as the vertical direction). The third equation in \eqref{VFS-eqns-1} means the fluid satisfies the kinetic boundary condition on the free boundary $\Sigma_F(t)$, where $p_{atm}$ stands for the atmospheric pressure, assumed to be constant. The kinematic boundary condition (the forth equation in \eqref{VFS-eqns-1}) states that the free boundary $\Sigma_F(t)$ is moving with speed equal to the normal component of the fluid velocity. The fifth equation in \eqref{VFS-eqns-1} implies that the fluid is no-slip, no-penetrated on the fixed bottom boundary. Here the effect of surface tension is neglected on the free surface.

\subsection{Steady-state}

Let the equilibrium domain $\Omega \subset \mathbb{R}^3$ be the horizontal infinite slab
\begin{equation}\label{def-domain-1}
\begin{split}
&\Omega=\{x=(x_1, x_h)|-\underline{b}<x_1<0,\quad x_h=(x_2, x_3)\in\mathbb{R}^2\}
\end{split}
\end{equation}
with the bottom $\Sigma_b=\{x_1=-\underline{b}\}$ and the top surface  $\Sigma_0=\{x_1=0\}$, where the positive constant $\underline{b}$ will be the depth of the fluid at infinity.

We seek a steady-state equilibrium solution to \eqref{VFS-eqns-1} with $\bar{u}=0$, $\bar{\rho}=\bar{\rho}(x_1)$ and the equilibrium domain $\Omega$ in \eqref{def-domain-1}. Then the system \eqref{VFS-eqns-1} reduces to the following ODE with the equilibrium density $\bar{\rho}=\bar{\rho}(x_1)$
\begin{equation}\label{steady-ODE-1}
\begin{cases}
&\frac{d p(\bar{\rho}(x_1))}{dx_1}=-g\,\bar{\rho}(x_1),\quad \forall\, x_1 \in (-\underline{b}, 0),\\
&p(\bar{\rho}(0))=p_{\text{atm}}.
\end{cases}
\end{equation}
Motivated by \cite{Guo-Tice-2010}, we may claim that the necessary and sufficient for the existence of an equilibrium to \eqref{steady-ODE-1} are as follows:
\begin{equation}\label{steady-ODE-2}
\begin{split}
&(1). \quad p_{\text{atm}} \in p(\mathbb{R}^+), \quad \text{which defines} \quad\bar{\rho}_1\eqdefa p^{-1}(p_{\text{atm}}),\\
&(2). \quad 0<\underline{b}<\frac{1}{g}\int_{\bar{\rho}_1}^{+\infty}\frac{p'(s)}{s}\,ds.
\end{split}
\end{equation}
In fact, since the pressure function $p=p(\cdot)$ is smooth, positive, and strictly increasing in $\mathbb{R}^{+}$, then second equation in \eqref{steady-ODE-1} holds if and only if $p_{\text{atm}} \in p(\mathbb{R}^+)$, which defines $\bar{\rho}_1= p^{-1}(p_{\text{atm}})$, that is, the first condition in \eqref{steady-ODE-2} holds.
On the other hand, we introduce the enthalpy function $h:\,(0, +\infty)\rightarrow \mathbb{R}$ given by
\begin{equation}\label{def-enth-1}
\begin{split}
 &h(z)\eqdefa\int_{\bar{\rho}_1}^z\frac{p'(s)}{s}\,ds,
 \end{split}
\end{equation}
which is smooth, strictly increasing, and positive on $[\bar{\rho}_1,\,+\infty)$ according to the assumptions on the pressure function $p$.
From \eqref{steady-ODE-1}, we get
\begin{equation}\label{steady-ODE-3}
\begin{cases}
 &\frac{d h(\bar{\rho}(x_1))}{dx_1}=-g,\quad \forall\, x_1 \in (-\underline{b}, 0),\\
 &\bar{\rho}(0)=\bar{\rho}_1.
 \end{cases}
\end{equation}
Solve this ODE to find $\bar{\rho}(x_1)=h^{-1}(-g\,x_1)$, which gives a well-defined, smooth, and decreasing function $\bar{\rho}: [-\underline{b}, \,0]\rightarrow [\bar{\rho}_1, \,+\infty)$ if and only if
\begin{equation}\label{steady-ODE-4}
 g\,\underline{b} \in h((\bar{\rho}_1, +\infty)),
\end{equation}
that is, the second condition in \eqref{steady-ODE-2} holds.

Throughout the rest of the paper we will assume that the conditions in \eqref{steady-ODE-2}
are satisfied for a fixed $\underline{b}>0$, which then uniquely determine the equilibrium
density $\bar{\rho}(x_1)$. We consider only the case that the density is away from the vacuum:
\begin{equation}\label{steady-ODE-5}
\inf_{x_1 \in [-\underline{b},\, 0]}\bar{\rho}(x_1)>0.
\end{equation}
Furthermore, we assume that
\begin{equation}\label{steady-ODE-6}
\inf_{x_1 \in [-\underline{b}, \, 0]}(-\bar{\rho}'(x_1))>0,
\end{equation}
which excludes the Rayleigh-Taylor gravitational instability.

Under the assumptions \eqref{steady-ODE-2}, \eqref{steady-ODE-5}, and \eqref{steady-ODE-6}, we focus in the paper on the global stability issue of the steady-state $u=0$, $\rho=\bar{\rho}(x_1)$, and $\Omega(t)=\Omega$ defined on \eqref{def-domain-1}.

The problem can be equivalently stated as follows. Given an initial domain $\Omega_0 \subset \mathbb{R}^3$ bounded by a bottom surface $\Sigma_{B}$, and a top surface $\Sigma_F(0)$, as well as an initial velocity field $u_0$, where the upper boundary does not touch the bottom, we wish to find for each $t\in [0, T]$, a domain $\Omega(t)$ initially near the equilibrium domain $\Omega$, $(u(t, \cdot), \,\rho(t, \cdot))$ defined on $\Omega(t)$ near the steady-state $(u,\, \rho)=(0, \,\bar{\rho}(x_1))$ defined on $\Omega$, and a transformation
$\bar{\eta}(t, \cdot):\,\Omega_0\rightarrow \mathbb{R}^3$ so that
\begin{equation}\label{VFS-eqns-12}
\begin{cases}
&\Omega(t)=\bar{\eta}(t, \Omega_0), \quad\bar{\eta}(t, \Sigma_B)=\Sigma_B,\\
&\partial_t\bar{\eta}=u\circ\bar{\eta},\\
&\partial_t\rho+\nabla\cdot(\rho\,u)=0 \quad\mbox{in} \quad \Omega(t),\\
  &\rho(\partial_t u + (u\cdot \grad) u )+ \nabla\, p(\rho)- \nabla\cdot\mathbb{S}(u)=-g\,\rho\,\nabla\,x_1 \quad\mbox{in} \quad \Omega(t),\\
 & (p(\rho)\,\mathbb{I}-\mathbb{S}(u))n(t)=p_{\mbox{\tiny atm}}n(t)\quad \mbox{on} \quad \Sigma_{F}(t),\\
 &u|_{\Sigma_B}=0,\\
 &u|_{t=0}=u_0,\quad \rho|_{t=0}={\rho}_0,\quad  \bar{\eta}|_{t=0}=x.
\end{cases}
\end{equation}

 We assume that the initial domain $\Omega_0$ is the image of $\Omega$ under a diffeomorphism $\overline{\sigma}: \Omega \rightarrow  \Omega_0$, where $\overline{\sigma}(\Sigma_b)=\Sigma_B$, $\overline{\sigma}(\Sigma_0)=\Sigma_F(0)$, $\overline{\sigma}$ is of the form $\overline{\sigma}(x)=x+\xi_0(x)$, and $\xi_0$ satisfies $\xi_0^1 \in H^s(\Sigma_0)$, and $\Lambda_h^{s-1} \nabla\xi_0\in H^1(\Omega)$ with $s>2$,  $\xi_0^1$ stands for the vertical component of the vector $\xi_0$, $\nabla=(\partial_1, \partial_2, \partial_3)^T$, and $\nabla_h=(\partial_2, \partial_3)^T$.

\subsection{Known results}

The free boundary problems of the Navier-Stokes equations have attracted much attention. The quite direct approach used to solve these problems  is Lagrangian. In Lagrangian coordinates, the geometry of the domain is encoded in the flow map $\bar{\eta}: (0, T)\times\Omega_0 \rightarrow\Omega(t)$ satisfying $\partial_t\bar{\eta}(t, x) = v(t, x)$ which gives the trajectory of a particle located at $x\in \Omega_0=\Omega(0)$ at $t = 0$, where $v = u \circ \bar{\eta}$ is the Lagrangian velocity field in the fixed domain $\Omega_0$ with $u$ being the Eulerian velocity field in the moving domain $\Omega(t)$.

In Lagrangian coordinates, Solonnikov \cite{Solonnikov-1977} proved the local well-posedness of the viscous surface problem in H\"{o}lder spaces for the incompressible fluid motion in a bounded domain whose entire boundary is a free surface. While in a horizontal infinite domain, Beale \cite{Beale-1981} obtained the local well-posedness in $L^2$-based space-time Sobolev spaces under the assumption with necessary compatibility condition, which was extended to $L^p$-based  space-time Sobolev spaces by Abels \cite{Abels-2005}. However, Beale gave the negative answer to the global well-posedness in \cite{Beale-1981}. As a matter of fact, he proved a non-decay theorem which showed that a {\it reasonable} extension to small-data global well-posedness with decay of the free surface fails. In order to get the global well-posedness of the incompressible viscous surface problem, Beale \cite{Beale-1984} turned to introduce another method---the flattening transformation to successful solve the viscous surface problem with surface tension in the space-time Sobolev spaces. The incompressible viscous surface problem (without surface tension) was studied by Sylvester \cite{Sylvester-1990} and Tani-Tanaka \cite{Tani-Tanaka-1995} in the Beale-Solonnikov functional framework, where higher regularity and more compatibility conditions were required. Beale and Nishida studied the decay properties of solutions to incompressible viscous surface waves with surface tension in \cite{Beale-Nishida-1985}, and then Hataya \cite{Hataya-2011} gave a complete proof to their decay estimates provided $\xi_0^1 \in L^1(\Sigma_0)$ and initial data is small. This result also suggests that, even if $\xi_0^1 \in L^1(\Sigma_0)$, for the solution $u$ to the viscous surface waves without surface tension, both $\|u\|_{L^{1}(\mathbb{R}^+;H^{2}(\Omega))}$ and  $\|\xi^1\|_{L^{2}(\mathbb{R}^+; L^{2}(\Sigma_0))}$ may still  blow-up.

By using the flattening transformation, Guo and Tice \cite{Guo-Tice-1, Guo-Tice-2} employed the geometric structure in the Eulerian coordinates to study the local well-posedness of the incompressible viscous surface waves in Sobolev spaces for small initial data, and then introduced the two-tier energy method to get the algebraic decay rate of the solutions, which leads to the construction of global-in-time solutions.  Wu \cite{Wu-2014} extended the local well-posedness result in \cite{Guo-Tice-2} from small data to general data, and
 Wang \cite{Wang-2020} studied the anisotropic decay of the global solution. We mention that, the two-tier energy method in \cite{Guo-Tice-1, Guo-Tice-2} couples the decay of low-order energy and the boundedness of high-order energy, where higher regularity and more compatibility conditions, as well as the negative horizontal regularity of the solutions were needed, which also avoids to contradict Beale's non-decay theorem. Ren, Xiang and Zhang \cite{Ren-X-Zhang-2019} proved the local well-posedness of the viscous surface wave equation in low regularity Sobolev spaces in Eulerian coordinates, where only the first-order time-derivative of the velocity is involved in the compatibility condition, but the global existence of the solution is left open.

Recently, Masmoudi and Rousset \cite{Masm-Rou-2017} studied the inviscid limit of the free boundary incompressible Navier-Stokes equations, where they proved the existence of solutions on a uniform time interval based on Sobolev conormal spaces, in which the minimal normal regularity of the velocity was required, and then justified the inviscid limit.

More recently, the first author of this paper developed a mathematical approach to establish global well-posedness of the incompressible viscous surface waves based on the Lagrangian framework \cite{Gui-2020}, where no nonlinear compatibility conditions on the initial data were required, which gives a positive answer to the Beale's question \cite{Beale-1981} about the existence of the global strong solutions to the free boundary incompressible system in Lagrangian coordinates.

The free boundary problems corresponding to a single horizontally periodic layer of compressible viscous fluid with surface tension and without the gravitation have been also studied by several authors. Jin \cite{Jin-2003} and Jin-Padula \cite{Jin-Padula-2002} produced global-in-time solutions using Lagrangian coordinates, and Tanaka and Tani produced global solutions with temperature dependence. Zadrzynska \cite{Zadrzynska2001, Zajaczkowski1999} proved the global well-posedness of the viscous compressible Fluid bounded by a free boundary. Guo and Tice \cite{Guo-Tice-2010} studied the linear theory of the problem for two layers of compressible viscous fluids.

The free boundary problem allows for the development of the classical Rayleigh-Taylor instability \cite{Rayleigh-1883, Taylor-1950}, at least when the equilibrium has a heavier fluid on top and a lighter one below and there is a downward gravitational force.

There are also some global results for compressible viscous surface-internal wave problems with gravitation and without the surface tension by using the flattening transformation. For instance, in a horizontal periodic domain with finite depth, Wang, Tice, and Kim \cite{WTK-2016, WTK-2014} obtained the local and global well-posedness and decay for two layers compressible fluid, and Huang and Luo \cite{HLuo-2021} establish the global well-posedness of the single layer of compressible viscous heat-conducting surface wave without surface tension provided that the initial data are close to a nontrivial equilibrium state. Tani and Tanaka \cite{TaniT-2003} studied the solvability of the compressible heat-conducting surface wave problem with the surface tension in Lagrangian coordinates.

\subsection{Our goal}

There are still some problems left to be solved in the study of global well-posedness of the compressible viscous surface waves without surface tension.

First, it is of interest to know whether we can get the global strong solution of this system in a horizontally infinite layer, in particular, via the Lagrangian approach.

On the other hand, in previous works, the global well-posedness was established for the initial data which has
high normal regularity and some compatibility conditions in terms of the time-derivatives of the velocity on the initial data are needed. Usually, it is not easy to verify the valid of the compatibility condition in terms of the time-derivatives of the velocity, which is essentially some type of the nonlinear compatibility conditions from the momentum equation in \eqref{VFS-eqns-1}. In the present case, a natural and important question is whether a corresponding well-posedness result can be obtained with low normal regularity and without any compatibility conditions in terms of the time-derivatives of the velocity (nonlinear compatibility conditions) on the initial data.

The aim of this paper is to give positive answers to these two problems. We will employ the Lagrangian approach to study the global well-posedness of the compressible viscous surface waves without surface tension in the infinite layer, where the assumptions of low normal regularity and no any compatibility conditions in terms of the time-derivatives of the velocity on the initial data are required.

\subsection{Formulation of the system in Lagrangian Coordinates}

Let us now, in more detail, introduce the Lagrangian coordinates in which the free boundary becomes fixed.

\subsubsection{The flow map}

Let $\eta\eqdefa \bar{\eta} \circ \bar{\sigma}$ be a position of the fluid particle $x$ in the equilibrium domain $\Omega$ at time $t$ so that
\begin{equation*}
\begin{cases}
&\frac{d}{dt}\eta(t, x)=u(t, \eta(t, x)), \quad t>0, \, x\in \Omega,\\
&\eta|_{t=0}=x+\xi_0(x), \quad x\in \Omega,
\end{cases}
\end{equation*}
then the displacement $\xi(t, x)\eqdefa \eta(t, x)-x$ satisfies
\begin{equation*}
\begin{cases}
&\frac{d}{dt}\xi(t, x)=u(t, x+\xi(t, x)),\\
&\xi|_{t=0}=\xi_0.
\end{cases}
\end{equation*}
We define Lagrangian quantities the velocity $v$ and the pressure $q$ in fluid as (where $x=(x_1, x_2, x_3)^T\in \Omega$):
\begin{equation*}
\begin{split}
&v(t, x)\eqdefa u(t, \eta(t, x)),\quad q(t, x)\eqdefa p(t, \eta(t, x)).
\end{split}
\end{equation*}
Denote the Jacobian of the flow map $\eta$ by
$J \eqdefa \mbox{det}(D\eta)$.
Define $\mathcal{A} \eqdefa  (D\eta)^{-T}$, then according to definitions of the flow map $\eta$ and the displacement $\xi$, we may get the identities:
\begin{equation*}
\mathcal{A}_{i}^k \partial_{k} \eta^j=\mathcal{A}_{k}^j \partial_{i} \eta^k=\delta_i^j, \quad \partial_k(J\mathcal{A}_{i}^k)=0,\quad\partial_{i} \eta^j=\delta_i^j+\partial_{i} \xi^j, \quad \mathcal{A}_{i}^j=\delta_i^j-\mathcal{A}_{i}^k \partial_k\xi^j.
\end{equation*}

Simple computation implies that
$ J=1+\grad \cdot\xi+\mathcal{B}_{00}+\mathcal{B}_{000}$
with
\begin{equation*}\label{J-expression-2a}
  \begin{split}
&\mathcal{B}_{00}:=\partial_1\xi^1 \nabla_h\cdot \xi^h-\partial_1\xi^h\cdot\nabla_h\xi^1+\nabla_h^{\perp}\xi^2\cdot\nabla_h\xi^3,\\
&\mathcal{B}_{000}:=\partial_1\xi^1 \nabla_h^{\perp}\,\xi^2\cdot \nabla_h\,\xi^3+\partial_1\xi^2\nabla_h^{\perp}\,\xi^3\cdot \nabla_h\,\xi^1+\partial_1\xi^3 \nabla_h^{\perp}\,\xi^1\cdot \nabla_h\,\xi^2
  \end{split}
\end{equation*}
with $\xi^h\eqdefa (\xi^2, \xi^3)^T$,
$\nabla_h\eqdefa (\partial_2,\, \partial_3)^T,\quad \nabla_h^{\perp}\eqdefa (-\partial_3,\, \partial_2)^T$.

Usually, we set $a_{ij}\eqdefa\,J\,\mathcal{A}_i^j$ and $\widetilde{\mathcal{A}}:=\mathcal{A}-\mathbb{I}$.

\subsubsection{Derivatives of $J$ and $\mathcal{A}$ in Lagrangian coordinates}

Next, we give some useful equations which we often use in what follows.

Since $\mathcal{A}(D\eta)^T=I$, differentiating it with respect to $t$ and $x$ once yields
\begin{equation}\label{identity-Lagrangian-1}
\begin{split}
&\partial_t \mathcal{A}_{i}^j=-\mathcal{A}_{k}^j\mathcal{A}_{i}^{m} \partial_{m}v^k,\,\partial_{s} \mathcal{A}_{i}^j=-\mathcal{A}_{k}^j\mathcal{A}_{i}^{m} \partial_{m}\partial_{s}\xi^k,
\end{split}
\end{equation}
where we used the fact $\partial_t\eta=v$ in the first equation in \eqref{identity-Lagrangian-1}.
Whence differentiate the Jacobian determinant $J$, we get
\begin{equation*}
\begin{split}
&\partial_t J =J \mathcal{A}_{i}^j \partial_j v^i, \quad \partial_{k} J =J \mathcal{A}_{i}^j \partial_j\partial_{k} \xi^i.
\end{split}
\end{equation*}
Moreover, it is easy to verify the following Piola identity:
\begin{equation*}
\begin{split}
&\partial_j (J \mathcal{A}_{i}^j) =0 \quad \forall \,i = 1, 2, 3.
\end{split}
\end{equation*}
Here and in what follows, the subscript notation for vectors and tensors as well as the Einstein summation convention has been adopted unless otherwise specified.

\subsubsection{Navier-Stokes equations in Lagrangian coordinates}

Under Lagrangian coordinates, we may introduce the differential operators with
their actions given by $(\nabla_{\mathcal{A}}f)_i=\mathcal{A}_i^j  \partial_jf$, $ \mathbb{D}_{\mathcal{A}} (v)=\nabla_{\mathcal{A}} v+(\nabla_{\mathcal{A}} v)^T$, $\Delta_{\mathcal{A}} f=\nabla_{\mathcal{A}}\cdot \nabla_{\mathcal{A}} f$, so the Lagrangian version of the system \eqref{VFS-eqns-1} can be written on the fixed reference domain $\Omega$ as
\begin{equation}\label{eqns-pert-1aa}
\begin{cases}
&\partial_t\xi=v\quad \mbox{in}\quad  \Omega,\\
&\partial_tf+f\nabla_{\mathcal{A}} \cdot v=0,\quad \mbox{in}\quad  \Omega,\\
  &f\partial_t v + \nabla_{\mathcal{A}}\,p(f)-\grad_{\mathcal{A}} \cdot \mathbb{S}_{\mathcal{A}}(v)=-g\,f\,\nabla_{\mathcal{A}}\,\eta^1\quad \mbox{in}\quad  \Omega,\\
     & \bigg(p(f)\,\mathbb{I}-\mathbb{S}_{\mathcal{A}}(v)\bigg)\mathcal{N}=p_{\text{atm}}\mathcal{N}\quad \mbox{on} \quad \Sigma_0,\\
       & v|_{\Sigma_{b}}=0,\\
 &\xi|_{t=0}=\xi_0, \,   v|_{t=0}=v_0,
\end{cases}
\end{equation}
where $n_0=(1, 0, 0)^T$ is the outward-pointing unit normal vector on the interface $\Sigma_{0}$, $\mathcal{N}:= J\mathcal{A}\,n_0$ stands for the outward-pointing normal vector on the moving interface $\Sigma_F(t)$.

One may readily check from the definition of $J$ that
$\pa_t J=\na_{J\cA}\cdot v$,
which together with the equation of $f$ in \eqref{eqns-pert-1aa} yields $\pa_t(fJ)=0$.
Hence, we find
\begin{align}\label{mass-cons-1}
Jf(t,x)=(Jf)(0,x)=\det (D\eta_0)\rho_0(\eta_0),
\end{align}
where $\rho_0$ is a given initial density function. We are interested in the initial density $\rho_0$ satisfying
\begin{equation}\label{assum-init-density-1}
\begin{split}
&\rho_0(\eta_0)\det (D\eta_0)=\bar{\rho}(x_1)\quad \text{in} \quad \Omega.
\end{split}
\end{equation}
Thus, it follows from \eqref{mass-cons-1} that
\begin{equation*}
\begin{split}
J\,f=\bar{\rho}(x_1),
\end{split}
\end{equation*}
which implies that
\begin{equation*}
\begin{split}
   f= J^{-1}\,\bar{\rho}(x_1),\quad p(f)=p( J^{-1}\,\bar{\rho}).
\end{split}
\end{equation*}
\begin{rmk}\label{rmk-mass-cons}
For any smooth, bounded subdomain $\mathcal{O}$ of $\Omega$, we know that $\eta_0(\mathcal{O})$ is a subdomain of $\Omega(0)$ if $\eta_0$ is a diffeomorphism from $\Omega$ to $\Omega(0)$. Hence, by using change of variables, we get
\begin{equation}\label{mass-cons-2}
\begin{split}
 \int_{\eta_0(\mathcal{O})} \rho_0(y) \,dy= \int_{\mathcal{O}} \rho_0(\eta_0)\,\det (D\eta_0) \,dx.
\end{split}
\end{equation}
Therefore, the assumption \eqref{assum-init-density-1} is equivalent to the mass conservation law \cite{DM-1990}
\begin{equation}\label{mass-cons-3}
\begin{split}
 \int_{\eta_0(\mathcal{O})} \rho_0(y) \,dy= \int_{\mathcal{O}} \bar{\rho}\,dx \quad \forall \,\, \mathcal{O} \subset \Omega.
\end{split}
\end{equation}
\end{rmk}

Thanks to the definition of the enthalpy function \eqref{def-enth-1}, we get
\begin{equation*}
\begin{split}
  \nabla_{\mathcal{A}}\,p(f)+g\,f\nabla_{\mathcal{A}}\,\eta^1=f\nabla_{\mathcal{A}}\bigg(h(f)-h(\bar{\rho}(x_1))+g\,\xi^1\bigg).
 \end{split}
\end{equation*}
Define
\begin{equation}\label{def-q-Q-1}
  q \eqdefa h(f)-h(\bar{\rho}(x_1))+g\,\xi^1, \quad  {Q} \eqdefa h(f)-h(\bar{\rho}(x_1)),
\end{equation}
then we have
\begin{equation*}\label{pressure-exp-enth-1}
\nabla_{\mathcal{A}}\,p(f)+g\,f\nabla_{\mathcal{A}}\,\eta^1 =f\nabla_{\mathcal{A}}q,\quad {Q} =q-g\,\xi^1,
\end{equation*}
which implies that the momentum equations in \eqref{eqns-pert-1aa} can be read as
\begin{equation*}
\bar{\rho}(x_1)\partial_t v + \bar{\rho}(x_1)\nabla_{\mathcal{A}}\,q-\grad_{J\mathcal{A}} \cdot \mathbb{S}_{\mathcal{A}}(v)=0\quad \mbox{in}\quad  \Omega.
\end{equation*}
By the definition of $q$ in \eqref{def-q-Q-1}, we have
\begin{equation*}
\partial_tq = \partial_th(f)+g\,v^1,
\end{equation*}
which along with the equation of $f$ in \eqref{eqns-pert-1aa} and the definition of the enthalpy function \eqref{def-enth-1} implies
\begin{equation}\label{eqns-q-form-1}
\begin{split}
  & \partial_tq = -p'(f) \nabla_{\mathcal{A}}\cdot v  +g\,v^1,\\
   \end{split}
\end{equation}
and then
\begin{equation}\label{eqns-q-form-2}
\begin{split}
  &  \nabla_{\mathcal{A}}\cdot v =- (p'(f))^{-1}\partial_t {Q}.
   \end{split}
\end{equation}
From \eqref{eqns-q-form-1}, we get
\begin{equation*}
\begin{split}
 & \partial_tq =-p'(\bar{\rho}(x_1)) \nabla_{\mathcal{A}}\cdot v +(p'(\bar{\rho}(x_1))-p'(f)) \nabla_{\mathcal{A}}\cdot v +g\,v^1,
   \end{split}
\end{equation*}
which leads to
\begin{equation}\label{eqns-q-form-3}
\begin{split}
\nabla \cdot (\bar{\rho}(x_1)\,v)&=g^{-1}\, \bar{\rho}'(x_1) \partial_tq+B_{1, i}^{h, j}\partial_jv^i
   \end{split}
\end{equation}
with
\begin{equation*}
\begin{split}
B_{1, i}^{h, j}\partial_jv^i\eqdefa\, g^{-1}\bar{\rho}'(x_1)   (p'(f)-p'(\bar{\rho}(x_1)) \nabla_{\mathcal{A}}\cdot v -\bar{\rho}(x_1)\nabla_{\widetilde{\mathcal{A}}}\cdot \,v.
   \end{split}
\end{equation*}

On the other hand, due to the classical inverse function theorem and $h'(z)=\frac{p'(z)}{z}>0$ ($\forall \,\,z>0$), there is a unique inverse function $\varphi$ of the equation ${Q}= h(f)-h(\bar{\rho}(x_1))$ so that
\begin{equation*}
\begin{split}
  &f=\varphi({Q})=h^{-1}({Q}+h(\bar{\rho}(x_1))),\quad \varphi(0)=\bar{\rho}(x_1),
   \end{split}
\end{equation*}
and
\begin{equation*}
\begin{split}
    &\varphi'(y)=\frac{1}{h'(\varphi(y))}=\frac{\varphi(y)}{p'(\varphi(y))},\quad \varphi'(0)=\frac{\varphi(0)}{p'(\varphi(0))}=\frac{\bar{\rho}(x_1)}{p'(\bar{\rho}(x_1))}.
   \end{split}
\end{equation*}
Hence, we obtain
\begin{equation*}
\begin{split}
  &p(f)-p(\bar{\rho}(x_1))=(p\circ \varphi)({Q})-(p\circ \varphi)(0)=(p\circ \varphi)'(0){Q}+{Q}^2 \int_0^1(p\circ \varphi)''(s\,{Q})(1-s)\,ds,
 \end{split}
\end{equation*}
which follows that
\begin{equation*}
p(f)-p(\bar{\rho}(x_1))=\bar{\rho}(x_1){Q}+{Q}^2 \mathcal{R}(Q)
\end{equation*}
with
\begin{equation*}
\mathcal{R}(Q)\eqdefa\int_0^1(p\circ \varphi)''(s\,{Q})(1-s)\,ds.
\end{equation*}
Hence, on the boundary $\Sigma_0$, one has
\begin{equation*}
p(f)-p(\bar{\rho}(x_1))=\bar{\rho}(0){Q}+{Q}^2 \mathcal{R}(Q) \quad \text{on} \quad \Sigma_0.
\end{equation*}
Therefore, under the assumptions \eqref{assum-init-density-1} and $\Omega_0=\bar{\sigma}(\Omega)$, the system \eqref{VFS-eqns-12} can be equivalently read as
\begin{equation}\label{eqns-pert-1}
\begin{cases}
&\partial_t\xi=v\quad \mbox{in}\quad  \Omega,\\
  &\bar{\rho}(x_1)\partial_t v + \bar{\rho}(x_1)\nabla_{\mathcal{A}}\,q-\grad_{J\mathcal{A}} \cdot \mathbb{S}_{\mathcal{A}}(v)=0\quad \mbox{in}\quad  \Omega,\\
  & \nabla \cdot (\bar{\rho}(x_1)\,v)=g^{-1}\, \bar{\rho}'(x_1) \partial_tq+B_{1, i}^{h, j}\partial_jv^i\quad \mbox{in}\quad  \Omega,\\
     & \bigg((\bar{\rho}(0){Q}+{Q}^2 \mathcal{R}(Q))\,\mathbb{I}-\mathbb{S}_{\mathcal{A}}(v)\bigg)\mathcal{N}=0\quad \mbox{on} \quad \Sigma_0,\\
       & v|_{\Sigma_{b}}=0,\\
 &\xi|_{t=0}=\xi_0, \,   v|_{t=0}=v_0.
\end{cases}
\end{equation}

\subsection{Our results}\label{subsec-our-result}

In order to state our results, we must explain our notation for differential operators, Sobolev spaces and norms and describe the regularity assumptions which will be imposed.

We denote the operator $\mathcal{P}(\partial_h)$ the horizontal derivatives of some functions, here $\mathcal{P}(\partial_h)$ is a pseudo-differential operator with the symbol  $\mathcal{P}(\varsigma_h)$ depending only on the horizontal frequency $ \varsigma_h=(\varsigma_2,\, \varsigma_3)^T$, that is,
 \begin{equation*}
\begin{split}
 \mathcal{P}(\partial_h)f(x_1,x_h):=\mathcal{F}^{-1}_{\varsigma_h\rightarrow x_h}\bigg(\mathcal{P}(\varsigma_h)\mathcal{F}_{x_h\rightarrow \varsigma_h}(f(x_1,x_h))\bigg),
\end{split}
\end{equation*}
where $\mathcal{F}(f)$ (or $\mathcal{F}^{-1}(f)$) is the Fourier (or inverse Fourier) transform of a function $f$.
In particular, we denote $\dot{\Lambda}_h^{\sigma}$ (or $\Lambda_h^{\sigma}$) the homogeneous (or nonhomogeneous) horizontal differential operator with the symbol $|\varsigma_h|^{\sigma}$ (or $\langle\varsigma_h\rangle^{\sigma}$) respectively, where ${\sigma} \in \mathbb{R}$, $\langle\varsigma_h\rangle\eqdefa (1+|\varsigma_h|^2)^{1/2}$.

We take $H^k(\Omega)$ with $k \in \mathbb{N}$ and $H^{\sigma}(\Sigma_0)$ with ${\sigma}\in \mathbb{R}$ to be the usual Sobolev spaces, and the homogeneous space $\mathring{H}^1(\Omega)=\{f\in L^1_{\text{loc}}(\Omega):\,\nabla\,f \in L^2(\Omega)\}$ with the norm $\|f\|_{\mathring{H}^1(\Omega)}=\|\nabla\,f \|_{L^2(\Omega)}$. For $ f\in {}_{0}{H}^{k}(\Omega)$, it means $f \in {H}^{k}(\Omega)$ with zero bottom boundary value $f|_{\Sigma_b}=0$. For a functional space $Y$, when we write $f \in \Lambda_h^{-\sigma}Y$ (or $f \in \dot{\Lambda}_h^{-\sigma}Y$) we mean $\Lambda_h^{\sigma}\,f\in Y$ (or $\dot{\Lambda}_h^{\sigma}f \in Y$).

\subsubsection{Local well-posedness}

 Inspired by the works in \cite{Gui-WW-2019} and \cite{GuiL-2022}, we may get the following local well-posedness of the system \eqref{eqns-pert-1}, which proof is very similar to the one in \cite{GuiL-2022}, and  we omit it.
\begin{thm}[Local well-posedness]\label{thm-local}
Let $s>2$. Assume $(\xi_0,\,v_0)$ satisfies ${\Lambda_h^{s-1}}\nabla\,\xi_0 \in  H^1(\Omega)$, $\xi^1_0 \in H^{s}(\Sigma_0)$,  ${\Lambda_h^{s-1}}v_0\in {}_{0}{H}^{1}(\Omega)$. There exists a positive constant $\epsilon_0$ such that the viscous surface wave problem \eqref{eqns-pert-1} with initial data $(\xi_0, v_0)$ has a unique solution $(\xi,\,v,\,q)$ (depending continuously on the initial data) satisfying
\begin{equation*}
\begin{split}
& {\Lambda_h^{s-1}}\nabla\,\xi \in \mathcal{C}([0, T_0];  H^1(\Omega)), \,\, \xi^1 \in \mathcal{C}([0, T_0]; H^{s}(\Sigma_0)),\\
&{\Lambda_h^{s-1}}v\in(\mathcal{C}([0, T_0];  {}_{0}{H}^{1}(\Omega))\cap L^2([0, T_0]; {H}^2(\Omega)),\quad {\Lambda_h^{s-1}} q\in L^2([0, T_0];  {H}^1(\Omega))
\end{split}
\end{equation*}
with $T_0=\min\{1, (\|\Lambda_h^{s-1}\nabla\,\xi_0\|_{H^1(\Omega)}^2+\|\xi^1_0 \|_{H^{s}(\Sigma_0)}^2)^{-1}\}$ provided \begin{equation*}
\|\Lambda_h^{s-1}\nabla\,\xi_0\|_{H^1(\Omega)}^2+\|\xi^1_0 \|_{H^{s}(\Sigma_0)}^2+\|\Lambda_h^{s-1}v_0\|_{H^{1}(\Omega)}^2 <\epsilon_0,
\end{equation*}
 and, for any $T\in (0, T_0]$, the solution satisfies the estimate
\begin{equation}\label{def-energy-local}
\begin{split}
&\sup_{t \in[0, T]}(\|\Lambda_h^{s-1}\nabla\,\xi(t)\|_{H^1(\Omega)}^2+\|\xi^1(t) \|_{H^{N}(\Sigma_0)}^2+\|\Lambda_h^{s-1}v(t)\|_{H^{1}(\Omega)}^2 )\\
&\qquad\qquad \qquad \qquad \qquad \qquad \qquad +\|\Lambda_h^{s-1}(\nabla\,v,\,q)\|_{L^2([0, T_0]; {H}^1(\Omega))}^2 \\
&\leq C\bigg(\|\Lambda_h^{s-2}\nabla\,\xi_0\|_{H^1(\Omega)}^2+\|\xi^1_0 \|_{H^{s-1}(\Sigma_0)}^2+\|\Lambda_h^{s-1}v_0\|_{H^{1}(\Omega)}^2\\
&\qquad \qquad \qquad \qquad \qquad \qquad +T\,(\|\Lambda_h^{s-1}\nabla\,\xi_0\|_{H^1(\Omega)}^2+\|\xi^1_0 \|_{H^{s}(\Sigma_0)}^2)\bigg),
\end{split}
\end{equation}
And if the maximal time $T^{\ast} $of the existence of the solution $(\xi,\,v,\,q)$ is finite: $T^{\ast} <+\infty $, then
\begin{equation}\label{loc-blow-crit}
\begin{split}
\lim_{t  \nearrow T^{\ast}}(\|\Lambda_h^{s-1}\nabla\,\xi(t)\|_{H^1(\Omega)}+\|\xi^1(t) \|_{H^{s}(\Sigma_0)}+\|\Lambda_h^{s-1}v(t)\|_{H^{1}(\Omega)}+\|J^{-1}\|_{L^\infty(\Omega)})=+\infty.
\end{split}
\end{equation}
Moreover, if in addition $(\xi_0,\,v_0) \in \mathfrak{F}_{s+\ell_0}$, then $(\xi,\,v) \in \mathcal{C}([0, T_0]; \mathfrak{F}_{s+\ell_0})$.
\end{thm}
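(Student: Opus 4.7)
The plan is to adapt the Lagrangian/Galerkin construction of \cite{Gui-WW-2019, GuiL-2022}, which handled the incompressible case in the same anisotropic function space built on $\Lambda_h^{s-1}$. The overall strategy is a fixed-point argument: define an iteration in which the coefficients $\mathcal{A}, J, \mathcal{N}$ are frozen from the previous iterate $\xi^n$, the linearized compressible problem for $(v^{n+1}, q^{n+1})$ is solved by a Galerkin/Hille-Yosida argument, and $\xi^{n+1}$ is updated by $\partial_t\xi^{n+1} = v^{n+1}$. Compared with the incompressible situation, the novel feature is the mixed hyperbolic-parabolic structure: $v$ satisfies a parabolic momentum equation while $q$ obeys the transport-type relation \eqref{eqns-q-form-1}, so the two unknowns carry different natural regularities and must be estimated separately. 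The smallness assumption on the initial data, combined with the embedding $H^1 \cap \Lambda_h^{-(s-1)}L^\infty \hookrightarrow L^\infty$ valid for $s>2$, guarantees that $\|J^n - 1\|_{L^\infty} + \|\mathcal{A}^n - \mathbb{I}\|_{L^\infty}$ remains small uniformly on $[0, T_0]$, so that the change of coordinates stays a diffeomorphism through the whole iteration.

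The linear a priori estimate proceeds in two layers. At the base $L^2$ level, pair the momentum equation with $v$ and the continuity-type identity \eqref{eqns-q-form-3} with $g^{-1}q$; coercivity of $\mathbb{S}_{\mathcal{A}}(v)$ via a perturbed Korn inequality controls $\nabla v$ in $L^2_t L^2_x$, and the boundary integral on $\Sigma_0$ produces, through the trace identity and $Q = q - g\xi^1$, a term proportional to $-\bar{\rho}'(0)\int_{\Sigma_0}|\xi^1|^2$, which is coercive by \eqref{steady-ODE-6} and furnishes the $L^\infty_t L^2(\Sigma_0)$ control of $\xi^1$. Higher regularity is gained by applying $\Lambda_h^{s-1}$ horizontally: since all equilibrium coefficients depend only on $x_1$, the commutators are lower order. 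One transverse derivative for $v$ is then recovered from interior elliptic regularity for the momentum operator, and the boundary relation $\bar{\rho}(0)Q = \mathbb{S}_{\mathcal{A}}(v)\mathcal{N}\cdot\mathcal{N} - \mathcal{R}(Q)Q^2$ bootstraps $\xi^1$ up to $H^s(\Sigma_0)$. The pressure $q$ inherits its estimate from the transport equation \eqref{eqns-q-form-1}, which loses no derivative once $v$ is in hand.

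A Banach contraction argument in the norm prescribed by \eqref{def-energy-local} closes the scheme on $[0, T_0]$, with the choice $T_0 = \min\{1, (\|\Lambda_h^{s-1}\nabla\xi_0\|_{H^1}^2 + \|\xi_0^1\|_{H^s}^2)^{-1}\}$ arising from balancing linear growth of the highest-order norm against the quadratic/cubic nonlinear terms. The continuation criterion \eqref{loc-blow-crit} follows from a standard argument; $\|J^{-1}\|_{L^\infty}$ enters because the Lagrangian map degenerates precisely when $J \to 0$. The main obstacle, and the feature that genuinely distinguishes this statement from the incompressible result of \cite{GuiL-2022} and from the existing compressible free-boundary literature, is the absence of any nonlinear compatibility condition on the initial data: the customary device of time-differentiating the momentum equation to buy regularity is unavailable, since $\partial_t v|_{t=0}$ lies only in a very weak space. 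I would resolve this by staying strictly within the anisotropic framework, where the operator $\Lambda_h^{s-1}$ supplies enough tangential smoothness to close every nonlinear product via horizontal paraproducts while only ever requiring the single transverse derivative that the data actually carries.
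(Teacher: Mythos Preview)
Your proposal is consistent with the paper's treatment: the paper does not actually prove Theorem~\ref{thm-local} but states that the proof ``is very similar to the one in~\cite{GuiL-2022}'' and omits it, citing precisely the references~\cite{Gui-WW-2019, GuiL-2022} on which you also base your sketch. Your outline of the Lagrangian iteration with frozen coefficients, the anisotropic $\Lambda_h^{s-1}$ energy estimates avoiding time-differentiation, and the contraction on $[0,T_0]$ is a reasonable summary of what that adapted argument would entail.
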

Notice that the definition of $\mathfrak{F}_{s+\ell_0}$ in Theorem \ref{thm-local} is be defined in \eqref{def-spaceF-1} below.
\begin{rmk}\label{rmk-euler-1}
According to \eqref{def-energy-local}, we see that, if $\epsilon_0$ in Theorem \ref{thm-local} is small enough, there is a positive $\delta_0 \in (0, \frac{1}{2})$ such that  $1-\delta_0\leq \sup_{(t, x)\in[0,T]\times \Omega}J(t, x)\leq 2$, which implies that the flow-map $\eta(t, x)$ defines a diffeomorphism from the equilibrium domain $\Omega$ to the moving domain $\Omega(t)$ with the boundary $\Sigma_F(t)$. From this, together with the fact that $\eta_0$ is a diffeomorphism from the equilibrium domain $\Omega$ to the initial domain $\Omega(0)$, we deduce a diffeomorphism from the initial domain $\Omega(0)$ to the evolving domain $\Omega(t)$ for any $t \in [0, T]$. Denote the inverse of the flow map $\eta(t, x)$ by $\eta^{-1}(t, y)$ for $t \in [0, T]$ so that if $y = \eta(t, x)$ for $y \in \Omega(t)$ and $t \in [0, T]$, then $x = \eta^{-1}(t, y) \in \Omega$.
\end{rmk}

\subsubsection{Global well-posedness theorem}

Let us explain that, at the beginning of this subsection, how to define the energy and dissipation functions.

Let $s>2$ and $\lambda\in (0, 1)$ satisfy $s+\lambda>3$, $\ell_0 \in (\frac{1}{2},\,\min\{\frac{1}{2}+\frac{1}{4}\lambda,\,\frac{s+\lambda-2}{2}\}]$. Taking $\sigma_0:=2\ell_0-\lambda$, we have $\sigma_0+\lambda=2\ell_0$, $\sigma_0 \in (1-\lambda, \,\min\{1-\frac{1}{2}\lambda,\,s-2\}]$.

We define the decay instantaneous energy $\dot{\mathcal{E}}_{s}$ (with $s>2$) as
\begin{equation*}
\begin{split}
 \dot{\mathcal{E}}_{s}\eqdefa  &
 \|\dot{\Lambda}_h^{\sigma_0}\Lambda_h^{s-\sigma_0}(v,\, q)\|_{L^2(\Omega)}^2+ \|\dot{\Lambda}_h^{\sigma_0}\Lambda_h^{s-\sigma_0}\xi^1\|_{L^2(\Sigma_0)}^2 \\
&\qquad +\|\dot{\Lambda}_h^{\sigma_0-1}\Lambda_h^{s+\frac{1}{2}-\sigma_0} {Q}\|_{L^2(\Sigma_0)}^2 +\|\dot{\Lambda}_h^{\sigma_0}{\Lambda}_h^{s-1-\sigma_0}(\nabla\, v,\,\partial_1q)\|_{L^2(\Omega)}^2,
\end{split}
\end{equation*}
and the decay instantaneous dissipation $\dot{\mathcal{D}}_{s}$ is defined by
\begin{equation*}
\begin{split}
\dot{\mathcal{D}}_{s}\eqdefa & \|\dot{\Lambda}_h^{\sigma_0}\Lambda_h^{s-\sigma_0}\nabla\,v\|_{L^2(\Omega)}^2 +\|\dot{\Lambda}_h^{\sigma_0-1}\Lambda_h^{s+\frac{1}{2}-\sigma_0}({Q},\,\partial_t{Q})\|_{L^2(\Sigma_0)}^2
\\
&\qquad +\|\dot{\Lambda}_h^{\sigma_0+1}\Lambda_h^{s-2-\sigma_0}\xi^1\|_{H^\frac{1}{2}(\Sigma_0)}^2+  \|\dot{\Lambda}_h^{\sigma_0}{\Lambda}_h^{s-1-\sigma_0}
(\partial_t\partial_1\,q,
\, \nabla^2\,v,\,\nabla\,q,\,\partial_tv)\|_{L^2(\Omega)}^2.
\end{split}
\end{equation*}
On the other hand, we define the bounded instantaneous energy $\mathcal{E}_{s}$  and dissipation $\mathcal{D}_{s}$ (with $s>2$) including the negative horizontal derivative of $v$ and $\xi^1|_{\Sigma_0}$
\begin{equation}\label{def-bdd-energy-dissi-v-1}
\begin{split}
&\mathcal{E}_{s}\eqdefa \|\dot{\Lambda}_h^{-\lambda}\,(\nabla\,v,\,q)\|_{L^2(\Omega)}^2
+\|\dot{\Lambda}_h^{-\lambda}\,(Q,\,\xi^1)\|_{L^2(\Sigma_0)}^2+ \dot{\mathcal{E}}_{s},\\
&\mathcal{D}_{s}\eqdefa\|(\dot{\Lambda}_h^{-\lambda}(\nabla\,v,\,\partial_tv)\|_{L^2(\Omega)}^2
+\|\dot{\Lambda}_h^{-\lambda}({Q},\,\partial_t{Q})\|_{L^2(\Sigma_0)}^2+\dot{\mathcal{D}}_{s}.
\end{split}
\end{equation}

We also define instantaneous quantities $E_s=E_{s}(\xi,\,v)$ (with $s>2$) in terms of the velocity $v$ and the displacement $\xi$:
\begin{equation*}
\begin{split}
 &E_s\eqdefa \|\dot{\Lambda}_h^{\sigma_0}{\Lambda}_h^{s-1-\sigma_0}\nabla\xi\|_{H^1(\Omega)}^2+\mathcal{E}_{s}.
\end{split}
\end{equation*}
From this, we define the Hilbert functional space $\mathfrak{F}_N$ by
 \begin{equation}\label{def-spaceF-1}
\begin{split}
\mathfrak{F}_s\eqdefa \{(\xi,\,v)\in \mathring{H}^1(\Omega)\times {}_{0}H^1(\Omega):\,{E}_{s}(\xi,\,v)<+\infty\}
\end{split}
\end{equation}
equipped with the norm
$ \|(\xi,\,v)\|_{\mathfrak{F}_s}\eqdefa {E}_{s}(\xi, \,v)^{\frac{1}{2}}$.

\begin{rmk}\label{rmk-bdd-energy-1}
Thanks to the definitions above, for $s>2$, one can see
\begin{equation*}
\begin{split}
&{\mathcal{E}}_{s+\ell_0}\thicksim \|\dot{\Lambda}_h^{s+\ell_0-1 }(\nabla\, v,\,\nabla\,q)\|_{L^2(\Omega)}^2+ \|(\dot{\Lambda}_h^{s+\ell_0}\xi^1,\,\dot{\Lambda}_h^{s+\ell_0- \frac{1}{2}}{Q})\|_{L^2(\Sigma_0)}^2 +  {\mathcal{E}}_{s},\\
&\dot{\mathcal{E}}_{s+\ell_0}\thicksim\|\dot{\Lambda}_h^{s+\ell_0-1 }(\nabla\, v,\,\nabla\,q)\|_{L^2(\Omega)}^2+ \|(\dot{\Lambda}_h^{s+\ell_0}\xi^1,\,\dot{\Lambda}_h^{s+\ell_0- \frac{1}{2}}{Q})\|_{L^2(\Sigma_0)}^2+ \dot{\mathcal{E}}_{s},\\
&{E}_{s+\ell_0}\thicksim \|\dot{\Lambda}_h^{s-1+\ell_0 }\nabla\xi\|_{H^1(\Omega)}^2+\mathcal{E}_{s+\ell_0}+E_{s},
\end{split}
\end{equation*}
and
\begin{equation*}
\begin{split}
&{\mathcal{D}}_{s+\ell_0}\thicksim \|\dot{\Lambda}_h^{s+\ell_0}\nabla\,v\|_{L^2(\Omega)}^2 +\|\dot{\Lambda}_h^{s+\ell_0- \frac{1}{2}}({Q},\,\partial_t{Q})\|_{L^2(\Sigma_0)}^2
+\|\dot{\Lambda}_h^{s +\ell_0-1}\xi^1\|_{H^\frac{1}{2}(\Sigma_0)}^2\\
&\qquad\qquad+  \|\dot{\Lambda}_h^{s+\ell_0-1 }
(\partial_t\partial_1\,q,
\, \nabla^2\,v,\,\nabla\,q,\,\partial_tv)\|_{L^2(\Omega)}^2+{\mathcal{D}}_{s}, \\
&\dot{\mathcal{D}}_{s+\ell_0}\thicksim \|\dot{\Lambda}_h^{s+\ell_0}\nabla\,v\|_{L^2(\Omega)}^2 +\|\dot{\Lambda}_h^{s+\ell_0- \frac{1}{2}}({Q},\,\partial_t{Q})\|_{L^2(\Sigma_0)}^2
+\|\dot{\Lambda}_h^{s +\ell_0-1}\xi^1\|_{H^\frac{1}{2}(\Sigma_0)}^2\\
&\qquad\qquad+  \|\dot{\Lambda}_h^{s+\ell_0-1 }
(\partial_t\partial_1\,q,
\, \nabla^2\,v,\,\nabla\,q,\,\partial_tv)\|_{L^2(\Omega)}^2+\dot{\mathcal{D}}_{s}.
\end{split}
\end{equation*}
\end{rmk}

The main result of this paper states as follows, the proof of which will be presented in Section \ref{sect-proof-mainthm}.
\begin{thm}[Global well-posedness]\label{thm-main}
Let $s>2$ and $\lambda\in (0, 1)$ satisfy $s+\lambda>3$, $\ell_0 \in (\frac{1}{2},\,\min\{\frac{1}{2}+\frac{1}{4}\lambda,\,\frac{s+\lambda-2}{2}\}]$. If $(\xi_0,\,v_0) \in \mathfrak{F}_{s+\ell_0}$, then there exists a small positive constant $\nu_0$ such that the compressible viscous surface wave problem \eqref{eqns-pert-1} is globally well-posed in $\mathfrak{F}_{s+\ell_0}$ provided $E_{s+\ell_0}(0)\leq \nu_0$. Moreover, for any $t>0$, there hold
\begin{equation}\label{totalenerg-high-thm-1}
\begin{split}
&\mathcal{E}_{s+\ell_0}(t) \lesssim\mathcal{ E}_{s+\ell_0}(0),\,E_{s}(t) \lesssim \,E_{s}(0)+\mathcal{ E}_{s+\ell_0}(0),\\
&E_{s+\ell_0}(t) \lesssim \,E_{s+\ell_0}(0)+(1+t)\mathcal{ E}_{s+\ell_0}(0),\\
&\sup_{\tau\in[0, t]}\bigg((1+\tau)^{2\ell_0}\dot{\mathcal{E}}_{s}(\tau)\bigg)
+\int_0^t\bigg((1+\tau)^{\frac{1}{2}+\ell_0}\dot{\mathcal{D}}_{s}(\tau)
+\mathcal{D}_{s+\ell_0}(\tau)\bigg)\,d\tau \lesssim \mathcal{E}_{s+\ell_0}(0).
 \end{split}
\end{equation}
\end{thm}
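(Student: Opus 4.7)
The plan is a standard continuity argument built above Theorem~\ref{thm-local}. I would introduce
$$
\mathcal{M}(T) \eqdefa \sup_{t\in[0,T]}\mathcal{E}_{s+\ell_0}(t) + \sup_{t\in[0,T]}\frac{E_{s+\ell_0}(t)}{1+t} + \sup_{t\in[0,T]}(1+t)^{2\ell_0}\dot{\mathcal{E}}_s(t) + \int_0^T\!\Bigl[(1+\tau)^{\frac{1}{2}+\ell_0}\dot{\mathcal{D}}_s(\tau) + \mathcal{D}_{s+\ell_0}(\tau)\Bigr]d\tau
$$
and let $T^\star$ be the maximal time on which $\mathcal{M}(T)\leq 2C_0\,\mathcal{E}_{s+\ell_0}(0)$. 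By local existence and Remark~\ref{rmk-euler-1}, $J$ stays close to $1$ on $[0,T^\star)$, so the goal is to improve the bound to $C_0\,\mathcal{E}_{s+\ell_0}(0)$, forcing $T^\star=+\infty$ via the blow-up criterion \eqref{loc-blow-crit}.

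\emph{High-order energy identity.} The core step is to prove
$$
\mathcal{E}_{s+\ell_0}(t) + \int_0^t \mathcal{D}_{s+\ell_0}(\tau)\,d\tau \lesssim \mathcal{E}_{s+\ell_0}(0) + \sqrt{\mathcal{M}(t)}\int_0^t \mathcal{D}_{s+\ell_0}(\tau)\,d\tau.
$$
I would obtain this by applying $\dot{\Lambda}_h^{\sigma_0}\Lambda_h^{s+\ell_0-\sigma_0}$, and separately $\dot{\Lambda}_h^{-\lambda}$, to the momentum equation in \eqref{eqns-pert-1}, testing against the correspondingly weighted $v$ in $L^2(\Omega)$, and integrating by parts. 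The viscous term produces $\|\mathbb{D}^0(v)\|^2$ after a Korn inequality; the pressure term generates the boundary trace $\int_{\Sigma_0}\bar{\rho}(0)\,Q\,v\cdot\mathcal{N}$, which via \eqref{eqns-q-form-2} becomes $\tfrac12\frac{d}{dt}\|(p'(\bar{\rho}(0)))^{-1/2}Q\|_{L^2(\Sigma_0)}^2$ up to cubic corrections; using $\partial_t\xi=v$ together with \eqref{eqns-q-form-3}, the stratification contribution is turned into $\tfrac{g^2}{2}\frac{d}{dt}\!\int(-\bar{\rho}'(x_1))|\xi^1|^2$, where assumption \eqref{steady-ODE-6} supplies the positive sign that defeats the Rayleigh--Taylor mechanism. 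Commutator and nonlinear remainders produced by $\widetilde{\mathcal{A}}$, $J-1$ and $\mathcal{R}(Q)$ are then controlled in the product form $\sqrt{\mathcal{M}(t)}\,\mathcal{D}_{s+\ell_0}$ through horizontal paraproduct estimates and trace inequalities.

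\emph{Decay and closing.} Running the same tangential argument at level $s$ rather than $s+\ell_0$ yields $\frac{d}{dt}\dot{\mathcal{E}}_s + \dot{\mathcal{D}}_s \lesssim \sqrt{\mathcal{M}(t)}\,\dot{\mathcal{D}}_s$. Interpolating each piece of $\dot{\mathcal{E}}_s$ between $\dot{\mathcal{D}}_s$ and the negative-horizontal-regularity reservoir sitting inside $\mathcal{E}_{s+\ell_0}$, and exploiting $\sigma_0+\lambda=2\ell_0$, gives
$$
\dot{\mathcal{E}}_s^{\,1+\frac{1}{2\ell_0}} \lesssim \mathcal{E}_{s+\ell_0}(0)^{\frac{1}{2\ell_0}}\,\dot{\mathcal{D}}_s.
$$
An ODE comparison then produces $\dot{\mathcal{E}}_s(t)\lesssim \mathcal{E}_{s+\ell_0}(0)(1+t)^{-2\ell_0}$; because $2\ell_0>\tfrac12+\ell_0$, the weighted integral $\int_0^t(1+\tau)^{\frac12+\ell_0}\dot{\mathcal{D}}_s\,d\tau$ stays bounded. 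The linear-in-$t$ bound on $E_{s+\ell_0}$ comes from $\partial_t\xi=v$ and the already-established $L^2_t\mathcal{D}_{s+\ell_0}$ control: $\|\nabla\xi(t)\|^2\leq 2\|\nabla\xi_0\|^2+2t\int_0^t\|\nabla v\|^2\,d\tau$. Combining the three improvements closes $\mathcal{M}$ once $\nu_0$ is chosen small enough and hence yields \eqref{totalenerg-high-thm-1}.

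\emph{Main obstacle.} The delicate step is the high-order energy identity. Since $\mathcal{E}_{s+\ell_0}$ only carries one normal derivative of $(v,\xi)$, the initial trace $\partial_t v(0)$ cannot be used, so no nonlinear compatibility condition is available; the coupled hyperbolic/parabolic structure between $q$ and $v$ must be treated with purely tangential operators, and the stratification damping of $\xi^1$ and $Q$ on $\Sigma_0$ must be extracted without losing a derivative at the free surface. Fitting every nonlinear remainder into the product form $\sqrt{\mathcal{M}(t)}\,\mathcal{D}_{s+\ell_0}$ under these constraints is where most of the technical work will lie.
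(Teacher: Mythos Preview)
Your high-level scheme (continuity argument, interpolation $\dot{\mathcal{E}}_s^{1+1/(2\ell_0)}\lesssim \mathcal{E}_{s+\ell_0}^{1/(2\ell_0)}\dot{\mathcal{D}}_s$, ODE decay, integrating $\partial_t\xi=v$) matches the paper. The gap is in the energy identity itself: testing the momentum equation against $v$ produces only the first row of the energy/dissipation pair, namely $\|v\|$, $\|q\|_{L^2(\Omega)}$, $\|\xi^1\|_{L^2(\Sigma_0)}$ in the energy and $\|\nabla v\|$ in the dissipation (this is exactly Proposition~\ref{lem-tan-pseudo-energy-1}). But $\mathcal{E}_{s+\ell_0}$ and $\mathcal{D}_{s+\ell_0}$ as defined in Section~\ref{subsec-our-result} also contain $\|\nabla v\|$, $\|\partial_1 q\|$, $\|Q\|_{L^2(\Sigma_0)}$ in the energy and $\|\partial_t v\|$, $\|\nabla^2 v\|$, $\|\nabla q\|$, $\|\partial_t\partial_1 q\|$, $\|Q\|,\|\partial_t Q\|_{L^2(\Sigma_0)}$, $\|\xi^1\|_{H^{1/2}(\Sigma_0)}$ in the dissipation. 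Without these pieces the interpolation and the nonlinear bounds do not close. The paper obtains them through four additional mechanisms that your plan omits: (i) testing against $\partial_t v$ using the good unknowns $\mathcal{G}^j$ of \eqref{d-1-v-unknown-1}, which vanish on $\Sigma_0$ and allow the integration-by-parts in Lemma~\ref{lem-pseudo-energy-tv-1} to go through without invoking $\partial_t v(0)$; (ii) the damped boundary ODE \eqref{eqns-Q-bdry-1} for $Q|_{\Sigma_0}$ (Lemma~\ref{lem-bdry-energy-1}); (iii) the damped interior ODE \eqref{comp-eqns-v1-2} for $\partial_1 q$ (Lemma~\ref{lem-est-pseudo-1q-energy-1}); (iv) localized Stokes estimates (Section~\ref{sect-stokes}). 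These are then assembled with weights $\epsilon,\epsilon^2,\epsilon^3,\epsilon^4$ into the composite functionals $\widehat{\mathcal{E}}_{\sigma,\epsilon}$, $\widehat{\mathcal{D}}_{\sigma,\epsilon}$ at the start of Section~\ref{sect-total}, and the comparison Lemma~\ref{lem-equiv-decay-total-N+1-1} shows these are equivalent to $\mathcal{E}_\sigma$, $\mathcal{D}_\sigma$. Your description of the boundary term is also off: the $v$-test yields $\tfrac{d}{dt}\|\xi^1\|_{L^2(\Sigma_0)}^2$ (via the linearized dynamic condition and $\partial_t\xi^1=v^1$) and $\tfrac{d}{dt}\int(-\bar\rho'/g)|q|^2$ in the bulk (via \eqref{eqns-q-form-3}), not $\tfrac{d}{dt}\|Q\|_{L^2(\Sigma_0)}^2$; the latter comes only from step (ii).

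A second gap is the structure of the high-order remainders. When the top tangential derivative lands on a coefficient built from $\nabla\xi$, the nonlinear term is of size $E_{s+\ell_0}^{1/2}\dot{\mathcal{D}}_{s_0}^{1/2}\dot{\mathcal{D}}_{s+\ell_0}^{1/2}$ (see \eqref{restate-bdddecay-total-N+1-1}), which after Young leaves $E_{s+\ell_0}\,\dot{\mathcal{D}}_{s_0}$. Since $E_{s+\ell_0}$ grows linearly in $t$, this does \emph{not} fit the form $\sqrt{\mathcal{M}(t)}\,\mathcal{D}_{s+\ell_0}$ with $\mathcal{M}$ small. The paper handles this by first establishing the decay $\int_0^t(1+\tau)\dot{\mathcal{D}}_{s_0}\,d\tau\lesssim\mathcal{E}_{s+\ell_0}(0)$ from \eqref{decay-DN-1}, then feeding it into a Gronwall argument on \eqref{diff-ineq-gron-EN+1-1}; the order of the two steps matters and cannot be collapsed into a single bootstrap inequality of the shape you wrote.
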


\begin{rmk}\label{rmk-mainthm-2} There are three main differences between the work of Jang-Tice-Wang \cite{WTK-2014} and our result Theorem \ref{thm-main}. The first one is that, Jang-Tice-Wang \cite{WTK-2014} studied the global well-posedness of the system \eqref{VFS-eqns-1} via its Eulerian and geometric formulations, while we directly use the Lagrangian approach to get its global well-posedness result, which also tells us what happens to the entire flow map $\eta$ in $\Omega$. On the other hand, consideration in \cite{WTK-2014} was about the system \eqref{VFS-eqns-1} in horizontally periodic layer, which method used doesn't work to the horizontally infinite layer case we study in the paper. Finally, compared to the work of \cite{WTK-2014} in which high total regularity of the initial data and compatibility conditions about the time derivatives of the velocity are imposed, there is no requirement about the compatibility conditions in terms of $\partial_tv$ in Theorem \ref{thm-main}, and we need only to ask the minimal normal regularity of the velocity (more precisely, no more than the first order normal derivatives on the initial velocity are required in the theorem).
 \end{rmk}

\begin{rmk}\label{rmk-mainthm-3}
(i)There is no requirement about the compatibility conditions in terms of $\partial_tv$ in Theorem \ref{thm-main}, which {\bf plays a significant role} in the study of the vacuum free boundary problem of the compressible Navier-Stokes system (see Gui-Wang-Wang \cite{Gui-WW-2019} for example).

(ii)The Lagrangian method used in the paper is a new approach to study the global well-posedness of the incompressible or compressible Navier-Stokes equations with free boundary problem, which can be applied to investigate the well-posedness of many types of free boundary problem of fluid dynamical systems.

(iii)Some anisotropic algebraic decay estimates in terms of the energy and dissipation are uncovered in Theorem \ref{thm-main}, and this kind of result may be helpful to understand the possible blow-up or decay mechanism of the solution.
 \end{rmk}

\subsection{Plan of the paper}

The rest of the paper is organized as follows. Section \ref{sect-form} introduces some new variables according to the kinetic boundary condition $\eqref{VFS-eqns-1}_3$ in \eqref{VFS-eqns-1}, and gives the linearized form of the viscous surface waves \eqref{VFS-eqns-1} in Lagrangian coordinates. Next, we first recall, in Section \ref{sect-tool}, some necessary estimates, and then derive the basic estimates in terms of the flow map. In Section \ref{sect-energy}, we apply the four new good unknowns introduced first in the paper to get energy estimates of the horizontal derivatives of the velocity and its gradient. Estimates of second normal derivatives of the velocity as well as their high-order horizontal regularities are showed by using the Stokes estimates in Section \ref{sect-stokes}. As a consequence, the total energy estimates are stated in Section \ref{sect-total}. Finally, in Section \ref{sect-proof-mainthm}, we complete the proof of Theorem \ref{thm-main}.

\subsection{Notations}

\medbreak  Let us end this introduction by some notations that will be used in all that follows.

For operators $A,B,$ we denote $[A;B]=AB-BA$ to be the  commutator of $A$ and $B.$ We denote $1+t$ by $\langle t\rangle$. For~$a\lesssim b$, we mean that there is a uniform constant $C,$ which may be different on different lines, such that $a\leq Cb$.  The notation $a\thicksim b$ means both $a\lesssim b$ and $b\lesssim a$. Throughout the paper, the subscript notation for vectors and tensors as well as the Einstein summation convention has been adopted unless otherwise specified, Einstein¡¯s summation convention means that repeated Latin indices $i,\, j,\, k$, etc., are summed from $1$ to $3$, and repeated Greek indices $\alpha, \,\beta, \,\gamma$, etc., are summed from $2$ to $3$. In the vector $v=(v^1, v^2, v^3)^T$, we denote the vertical component by $v^1$, and its horizontal component by $v^h=(v^2, v^3)^T$.

\renewcommand{\theequation}{\thesection.\arabic{equation}}
\setcounter{equation}{0}

\section{Linearized form of the system in Lagrangian coordinates}\label{sect-form}

\subsection{Some new unknowns from the boundary}
In this section, we want to introduce four new good unknowns related to the normal derivatives $\partial_1v$ and the pressure $q$, which plays a significant role for the proof of the main result of the paper.

Notice that $\mathcal{N}=J\mathcal{A}\,n_0$ is the outer normal vector field on the interface $\Sigma_0$,
\begin{equation*}
  \begin{split}
&\mathcal{N}= J\mathcal{A}\,n_0=a_{j1}e_j=a_{11}e_1+a_{21}e_2+a_{31}e_3.
\end{split}
\end{equation*}
Let $\tau_2$ and $\tau_3$ be two independent tangential vector fields on $\Sigma_0$
\begin{equation*}
  \begin{split}
&\tau_2=-a_{21}e_1+a_{11}e_2,\quad \tau_3=-a_{31}e_1+a_{11}e_3,
\end{split}
\end{equation*}
which implies that $\mathcal{N}\cdot \tau_{\alpha}=0$ with $\alpha=2, 3$.

By a direct computation, we get
\begin{equation}\label{deform-bdry-1}
  \begin{split}
&\bigl(\mathbb{D}_{J\mathcal{A}}(v)\mathcal{N}\bigr)^1=(a_{11}^2+|\overrightarrow{\rm{a}_1}|^2)\partial_1 v^1+a_{11}a_{\alpha\,1}\partial_1 v^\alpha+\mathcal{B}_{1, i}^{\alpha}\partial_{\alpha}v^{i}, \\
&\bigl(\mathbb{D}_{J\mathcal{A}}(v)\mathcal{N}\bigr)^2=a_{21}a_{11}\,\partial_1 v^1+(|\overrightarrow{\rm{a}_1}|^2+a_{21}^2)\partial_1 v^2+a_{21}a_{31}\,\partial_1 v^3+\partial_2 v^1+\mathcal{B}_{2, i}^{\alpha}\partial_{\alpha}v^{i}, \\
&\bigl(\mathbb{D}_{J\mathcal{A}}(v)\mathcal{N}\bigr)^3=a_{31}a_{11}\partial_1 v^1+a_{31}a_{21}\partial_1 v^2+(|\overrightarrow{\rm{a}_1}|^2+a_{31}^2)\partial_1 v^3+\partial_3 v^1+\mathcal{B}_{3, i}^{\alpha}\partial_{\alpha}v^{i},\\
  \end{split}
\end{equation}
where
\begin{equation}\label{def-a1-row-1}
\overrightarrow{\rm{a}_1}:=(a_{11}, a_{21}, a_{31})^T,\quad |\overrightarrow{\rm{a}_1}| :=(\sum_{i=1}^3a_{i1}^2)^{1/2},
\end{equation}
and
\begin{equation*}
  \begin{split}
&\mathcal{B}_{1, i}^{\alpha}\partial_{\alpha}v^{i}\eqdefa a_{i\beta}a_{i1}\partial_{\beta} v^1+a_{1\beta}a_{i\,1}\,\partial_\beta v^i,\\
&\mathcal{B}_{2, i}^{\alpha}\partial_{\alpha}v^{i}\eqdefa (a_{22}a_{11}-1)\,\partial_2 v^1+a_{23}a_{11}\,\partial_3 v^1+ a_{i\beta}a_{i1}\partial_{\beta} v^2+a_{2\beta}a_{\gamma\,1}\,\partial_{\beta} v^\gamma,\\
&\mathcal{B}_{3, i}^{\alpha}\partial_{\alpha}v^{i}\eqdefa a_{32}a_{11}\partial_2 v^1+a_{3\beta}a_{\alpha\,1}\partial_{\beta} v^{\alpha}+ a_{i\beta}a_{i1} \partial_{\beta} v^3+ (a_{33}a_{11}-1) \,\partial_3 v^1.
\end{split}
\end{equation*}

Hence, multiplying the boundary equation on $\Sigma_0$ in \eqref{eqns-pert-1} by  the tangential vectors $\tau_{\alpha}$ (with $\alpha=2, 3$) implies
\begin{equation}\label{re-bdry-eqns-1}
  \begin{split}
&0=\tau_2\cdot(\mathbb{D}_{J\mathcal{A}}(v)\mathcal{N})
=-a_{21} |\overrightarrow{\rm{a}_1}|^2 \partial_1 v^1+a_{11} |\overrightarrow{\rm{a}_1}|^2\partial_1 v^2\\
&\qquad\qquad\qquad\qquad\qquad+a_{11}\partial_2 v^1+(-a_{21}\mathcal{B}_{1, i}^{\alpha}+a_{11}\mathcal{B}_{2, i}^{\alpha})\partial_{\alpha}v^{i},
\end{split}
\end{equation}
and
\begin{equation}\label{re-bdry-eqns-2}
  \begin{split}
&0=\tau_3\cdot(\mathbb{D}_{J\mathcal{A}}(v)\mathcal{N})
=-a_{31} |\overrightarrow{\rm{a}_1}|^2 \partial_1 v^1+a_{11} |\overrightarrow{\rm{a}_1}|^2\partial_1 v^3\\
&\qquad\qquad\qquad\qquad\qquad+a_{11}\partial_3 v^1+(-a_{31}\mathcal{B}_{1, i}^{\alpha}+a_{11}\mathcal{B}_{3, i}^{\alpha})\partial_{\alpha}v^{i}.
\end{split}
\end{equation}
Thanks to \eqref{deform-bdry-1}, one can see
\begin{equation*}
  \begin{split}
 \mathcal{N}\cdot\bigl(\mathbb{D}_{J\mathcal{A}}(v)\mathcal{N}\bigr) =&
2a_{11} |\overrightarrow{\rm{a}_1}|^2 \partial_1 v^1+2a_{21} |\overrightarrow{\rm{a}_1}|^2 \partial_1 v^2+2a_{31} |\overrightarrow{\rm{a}_1}|^2 \partial_1 v^3\\
  &+a_{21}\partial_2 v^1+a_{31}\partial_3 v^1+a_{11}\mathcal{B}_{1, i}^{\alpha}\partial_{\alpha}v^{i} +a_{21}\mathcal{B}_{2, i}^{\alpha}\partial_{\alpha}v^{i} +a_{31}\mathcal{B}_{3, i}^{\alpha}\partial_{\alpha}v^{i}.
  \end{split}
\end{equation*}
In the fluid, we write
\begin{equation}\label{div-cond-fluid-1}
  \begin{split}
&\grad_{J\mathcal{A}}\cdot v=a_{11}\partial_1v^1
+a_{\alpha\,1}\partial_1v^{\alpha}+ a_{22} \partial_2v^2+ a_{33} \partial_3v^3+\mathcal{B}_{4, i}^{\alpha}\partial_{\alpha}v^{i}
  \end{split}
\end{equation}
with the nonlinear terms including horizontal derivatives of the velocity $\partial_{\alpha} v$
\begin{equation*}
  \begin{split}
&\mathcal{B}_{4, i}^{\alpha}\partial_{\alpha}v^{i}\eqdefa a_{1\beta}\partial_{\beta}v^1+a_{23}\partial_3v^2+a_{32}\partial_2v^3.
  \end{split}
\end{equation*}
Combining \eqref{div-cond-fluid-1} with \eqref{eqns-q-form-2} gives rise to
\begin{equation}\label{comp-cond-fluid-2}
  \begin{split}
&a_{11}\partial_1v^1
+a_{\alpha\,1}\partial_1v^{\alpha}=-(a_{22} \partial_2v^2+ a_{33} \partial_3v^3)-J(p'(f))^{-1} \partial_t{Q}-\mathcal{B}_{4, i}^{\alpha}\partial_{\alpha}v^{i}.
  \end{split}
\end{equation}
Thanks to \eqref{re-bdry-eqns-1}, \eqref{re-bdry-eqns-2}, and \eqref{comp-cond-fluid-2}, we solve them in terms of the vertical derivative of $v$ to conclude that on the interface $\Sigma_0$
\begin{equation}\label{v-com-interface-1}
  \begin{cases}
&\partial_1v^1=-\nabla_h\cdot v^h+\mathcal{B}_{6, i}^{\alpha}\partial_{\alpha}v^{i}-a_{11}|\overrightarrow{\rm{a}_1}|^{-2}J(p'(f))^{-1}\partial_t{Q},\\
&  \partial_1v^{2}=-\partial_2v^1+\mathcal{B}_{7, i}^{\alpha}\partial_{\alpha}v^{i}-a_{21}J(p'(f))^{-1}\partial_t{Q},\\
&  \partial_1v^{3}=-\partial_3v^1+\mathcal{B}_{8, i}^{\alpha}\partial_{\alpha}v^{i}-a_{31}J(p'(f))^{-1}\partial_t{Q},\\
\end{cases}
\end{equation}
where
\begin{equation*}
  \begin{split}
  &\mathcal{B}_{6, i}^{\alpha}\partial_{\alpha}v^{i}\eqdefa -|\overrightarrow{\rm{a}_1}|^{-2}((a_{11}a_{22}-|\overrightarrow{\rm{a}_1}|^{2}) \partial_2v^2+ (a_{11}a_{33}-|\overrightarrow{\rm{a}_1}|^{2}) \partial_3v^3)-a_{11}|\overrightarrow{\rm{a}_1}|^{-2}\mathcal{B}_{4, i}^{\alpha}\partial_{\alpha}v^{i}\\
&+ a_{11}|\overrightarrow{\rm{a}_1}|^{-4}(a_{21}\partial_2 v^1+a_{31}\partial_3 v^1)-  ( (a_{21}^2+a_{31}^2)\mathcal{B}_{1, i}^{\alpha}- a_{11}(a_{21}\mathcal{B}_{2, i}^{\alpha}+a_{31}\mathcal{B}_{3, i}^{\alpha}))\partial_{\alpha}v^{i},\\
&\mathcal{B}_{7, i}^{\alpha}\partial_{\alpha}v^{i}\eqdefa \frac{ a_{21}^2}{|\overrightarrow{\rm{a}_1}|^{2}}\partial_2 v^1+\frac{ a_{21}a_{31}}{|\overrightarrow{\rm{a}_1}|^{2}} \partial_3 v^1 -a_{21}  \bigg((a_{22} \partial_2v^2+ a_{33} \partial_3v^3) +\mathcal{B}_{4, i}^{\alpha}\partial_{\alpha}v^{i}\bigg) \\
&\qquad\qquad\quad+|\overrightarrow{\rm{a}_1}|^{-2}(a_{11} a_{21}\mathcal{B}_{1, i}^{\alpha}-(a_{11}^2+a_{31}^2)\mathcal{B}_{2, i}^{\alpha}+a_{21} a_{31}\mathcal{B}_{3, i}^{\alpha})\partial_{\alpha}v^{i},\\
&\mathcal{B}_{8, i}^{\alpha}\partial_{\alpha}v^{i}\eqdefa \frac{a_{21}a_{31} }{|\overrightarrow{\rm{a}_1}|^{2}} \partial_2 v^1+ \frac{ a_{31}^2}{|\overrightarrow{\rm{a}_1}|^{2}}\partial_3 v^1-a_{31} \bigg((a_{22} \partial_2v^2+ a_{33} \partial_3v^3) +\mathcal{B}_{4, i}^{\alpha}\partial_{\alpha}v^{i}\bigg)\\
&\qquad\qquad\quad+|\overrightarrow{\rm{a}_1}|^{-2}\bigg(a_{31}(a_{11}\mathcal{B}_{1, i}^{\alpha}+a_{21}\mathcal{B}_{2, i}^{\alpha})-(a_{11}^2+a_{21}^2)\mathcal{B}_{3, i}^{\alpha}\bigg)\partial_{\alpha}v^{i}.
\end{split}
\end{equation*}

Define
\begin{equation*}
  \begin{split}
&\widetilde{\mathcal{B}}_{1}\eqdefa -(a_{11}|\overrightarrow{\rm{a}_1}|^{-2}J(p'(f))^{-1}-(p'(\bar{\rho}(x_1)))^{-1}),\quad \widetilde{\mathcal{B}}_{2}\eqdefa -a_{21}J(p'(f))^{-1},\\
& \widetilde{\mathcal{B}}_{3}\eqdefa -a_{31}J(p'(f))^{-1},\quad \widetilde{\mathcal{B}}_{4}\eqdefa -(\delta+\frac{4}{3}\varepsilon)\, ((p'(f))^{-1}-(p'(\bar{\rho}(x_1)))^{-1}).
\end{split}
\end{equation*}
then we have
\begin{equation*}
  \begin{cases}
&\partial_1v^1=-\nabla_h\cdot v^h+\mathcal{B}_{6, i}^{\alpha}\partial_{\alpha}v^{i}-(p'(\bar{\rho}(0)))^{-1}\partial_t{Q}+\widetilde{\mathcal{B}}_{1}\partial_t{Q},\\
&  \partial_1v^{2}=-\partial_2v^1+\mathcal{B}_{7, i}^{\alpha}\partial_{\alpha}v^{i}+\widetilde{\mathcal{B}}_{2}\partial_t{Q},\\
&  \partial_1v^{3}=-\partial_3v^1+\mathcal{B}_{8, i}^{\alpha}\partial_{\alpha}v^{i}+\widetilde{\mathcal{B}}_{3}\partial_t{Q}.
\end{cases}
\end{equation*}

On the other hand, from the boundary condition on $\Sigma_0$ in \eqref{eqns-pert-1}, one finds
\begin{equation*}
  \begin{split}
&\bigg(\bar{\rho}(0)\,{Q}+{Q}^2\mathcal{R}({Q})-(\delta-\frac{2}{3}\varepsilon)\,\nabla_{\mathcal{A}}\cdot v\bigg)\, \mathcal{N}\cdot\mathcal{N}- \varepsilon\mathcal{N}\cdot(\mathbb{D}_{\mathcal{A}}(v) \mathcal{N})=0,
  \end{split}
\end{equation*}
which implies
\begin{equation*}\label{bdry-q-1}
\begin{split}
  & J\, \bigg(\bar{\rho}(0)\,{Q}+{Q}^2\mathcal{R}({Q})\bigg)=
  -(\delta+\frac{4}{3}\varepsilon)\,J(p'(f))^{-1}\partial_t{Q}\\
  &\qquad-2\varepsilon(a_{22} \partial_2v^2+ a_{33} \partial_3v^3) -2\varepsilon\mathcal{B}_{4, i}^{\alpha}\partial_{\alpha}v^{i}+\varepsilon  |\overrightarrow{\rm{a}_1}|^{-2}(a_{\beta\,1}\partial_{\beta} v^1 + a_{j1}\mathcal{B}_{j, i}^{\alpha}\partial_{\alpha}v^{i}).
  \end{split}
\end{equation*}
Hence, we get
\begin{equation}\label{bdry-tq-1}
\begin{split}
  &- (\delta+\frac{4}{3}\varepsilon) (p'(f))^{-1}\partial_t{Q}= \bar{\rho}(0)\,{Q}+{Q}^2\mathcal{R}({Q})+2\varepsilon \nabla_h\cdot\,v^h-\mathcal{B}_{9, i}^{\alpha}\partial_{\alpha}v^{i}
  \end{split}
\end{equation}
with
\begin{equation*}
\begin{split}
  \mathcal{B}_{9, i}^{\alpha}\partial_{\alpha}v^{i} \eqdefa &-2\varepsilon  ((J^{-1}a_{22} -1)\partial_2v^2+(J^{-1} a_{33} -1)\partial_3v^3) -2\varepsilon  J^{-1}\mathcal{B}_{4, i}^{\alpha}\partial_{\alpha}v^{i}\\
  &+\varepsilon  |\overrightarrow{\rm{a}_1}|^{-2}  J^{-1}(a_{\beta\,1}\partial_{\beta} v^1 + a_{j1}\mathcal{B}_{j, i}^{\alpha}\partial_{\alpha}v^{i}).
\end{split}
\end{equation*}

 Plugging \eqref{bdry-tq-1} into \eqref{v-com-interface-1} shows that, on the boundary $\Sigma_0$,
\begin{equation}\label{d-1-v-interface-form-1}
  \begin{cases}
&\partial_1v^1=-\frac{\delta-\frac{2}{3}\varepsilon}{\delta+\frac{4}{3}\varepsilon} \nabla_h\cdot v^h+\mathcal{B}_{11, i}^{\alpha}\partial_{\alpha}v^{i}+ \frac{a_{11}J \bar{\rho}(x_1)}{|\overrightarrow{\rm{a}_1}|^{2}(\delta+\frac{4}{3}\varepsilon)}\,{Q}
+\frac{a_{11}|\overrightarrow{\rm{a}_1}|^{-2}J}{(\delta+\frac{4}{3}\varepsilon)}
{Q}^2\mathcal{R}({Q}),\\
&  \partial_1v^{2}=-\partial_2v^1+\mathcal{B}_{12, i}^{\alpha}\partial_{\alpha}v^{i}+\frac{a_{21}J\bar{\rho}(x_1)}{(\delta+\frac{4}{3}\varepsilon)} \,{Q} +\frac{a_{21}J}{(\delta+\frac{4}{3}\varepsilon)}{Q}^2\mathcal{R}({Q}),\\
&  \partial_1v^{3}=-\partial_3v^1+\mathcal{B}_{13, i}^{\alpha}\partial_{\alpha}v^{i}+\frac{a_{31}J\bar{\rho}(x_1)}{(\delta+\frac{4}{3}\varepsilon)} \,{Q}+\frac{a_{31}J}{(\delta+\frac{4}{3}\varepsilon)}{Q}^2\mathcal{R}({Q}),\\
\end{cases}
\end{equation}
where
\begin{equation*}
  \begin{split}
&\mathcal{B}_{11, i}^{\alpha}\partial_{\alpha}v^{i}\eqdefa\mathcal{B}_{6, i}^{\alpha}\partial_{\alpha}v^{i}+\frac{2\varepsilon (a_{11}|\overrightarrow{\rm{a}_1}|^{-2}J-1)}{(\delta+\frac{4}{3}\varepsilon)} \nabla_h\cdot\,v^h-\frac{a_{11}|\overrightarrow{\rm{a}_1}|^{-2}J}{(\delta+\frac{4}{3}\varepsilon)}\mathcal{B}_{9, i}^{\alpha}\partial_{\alpha}v^{i},\\
& \mathcal{B}_{12, i}^{\alpha}\partial_{\alpha}v^{i}\eqdefa  \mathcal{B}_{7, i}^{\alpha}\partial_{\alpha}v^{i}+\frac{2\varepsilon a_{21}J}{(\delta+\frac{4}{3}\varepsilon)} \nabla_h\cdot\,v^h-\frac{a_{21}J}{(\delta+\frac{4}{3}\varepsilon)}\mathcal{B}_{9, i}^{\alpha}\partial_{\alpha}v^{i},\\
& \mathcal{B}_{13, i}^{\alpha}\partial_{\alpha}v^{i}\eqdefa \mathcal{B}_{8, i}^{\alpha}\partial_{\alpha}v^{i}+\frac{2\varepsilon a_{31}J}{(\delta+\frac{4}{3}\varepsilon)} \nabla_h\cdot\,v^h-\frac{a_{31}J}{(\delta+\frac{4}{3}\varepsilon)}\mathcal{B}_{9, i}^{\alpha}\partial_{\alpha}v^{i}.
\end{split}
\end{equation*}
Due to \eqref{bdry-tq-1}, we find, on the boundary $\Sigma_0$,
\begin{equation}\label{eqns-Q-bdry-1}
  \begin{split}
&\bar{\rho}(0){Q}+(\delta+\frac{4}{3}\varepsilon)\, \frac{-\bar{\rho}'(0)}{g\,\bar{\rho}(0)}\partial_t{Q}=-2\varepsilon\,\nabla_h\cdot v^h+\mathcal{B}_{9, i}^{\alpha}\partial_{\alpha}v^{i}
-{Q}^2\mathcal{R}({Q})+\widetilde{\mathcal{B}}_{4}\partial_t{Q}
\end{split}
\end{equation}
with $\widetilde{\mathcal{B}}_{4}\eqdefa -(\delta+\frac{4}{3}\varepsilon)\, ((p'(f))^{-1}-(p'(\bar{\rho}(x_1)))^{-1})$.

On the other hand, substituting the first equation of \eqref{v-com-interface-1} into  \eqref{bdry-tq-1}, and then combining the result with \eqref{eqns-q-form-2}, we obtain
\begin{equation}\label{bdry-q-eqns-1}
\begin{split}
    &\bar{\rho}(0)\,{Q}-2\varepsilon \partial_1v^1-(\delta-\frac{2}{3}\varepsilon) \nabla \cdot v=\mathcal{B}(\partial_{\alpha}v, {Q}, \partial_t{Q})
  \end{split}
\end{equation}
with
\begin{equation*}
\begin{split}
&\mathcal{B}(\partial_{\alpha}v, {Q}, \partial_t{Q}):=-\widetilde{\mathcal{B}}_5\partial_t{Q} -(\delta+\frac{4}{3}\varepsilon)\mathcal{B}_{6, i}^{\alpha}\partial_{\alpha}v^{i} + \mathcal{B}_{9, i}^{\alpha}\partial_{\alpha}v^{i}-{Q}^2\mathcal{R}({Q}) \quad \text{and} \quad\\
  &\widetilde{\mathcal{B}}_5:=(\delta+\frac{4}{3}\varepsilon)\,( 1 -J a_{11}|\overrightarrow{\rm{a}_1}|^{-2})(p'(f))^{-1}.
\end{split}
\end{equation*}

According to \eqref{d-1-v-interface-form-1}, we introduce three new unknowns $\mathcal{G}^i$ with $i=1, 2, 3$ defined in the fluid by
\begin{equation}\label{d-1-v-unknown-1}
  \begin{cases}
&\mathcal{G}^1\eqdefa\partial_1v^1+\frac{\delta-\frac{2}{3}\varepsilon}{\delta+\frac{4}{3}\varepsilon} \nabla_h\cdot v^h-\frac{\bar{\rho}(0)}{(\delta+\frac{4}{3}\varepsilon)}\, \mathcal{H}({Q})-\widetilde{\mathcal{G}}^1,\\
&  \mathcal{G}^{2}\eqdefa\partial_1v^{2}+\partial_{2}v^1-\widetilde{\mathcal{G}}^2,\\
&  \mathcal{G}^{3}\eqdefa\partial_1v^{3}+\partial_{3}v^1-\widetilde{\mathcal{G}}^3\quad \text{in} \quad \Omega,
\end{cases}
\end{equation}
where for $\beta=2, 3$,
\begin{equation*}
  \begin{split}
  &\widetilde{\mathcal{G}}^j\eqdefa \mathcal{G}^j_{{Q}, 1}+\mathcal{G}^j_{\partial_hv}+\mathcal{G}^j_{{Q}, 2} \quad (\text{for}\,\, j=1, 2, 3),\\
& \mathcal{G}^{1}_{{Q}, 1}:=\frac{\bar{\rho}(0)\,(a_{11}|\overrightarrow{\rm{a}_1}|^{-2}J-1) }{(\delta+\frac{4}{3}\varepsilon)}\, \mathcal{H}({Q}),\quad \mathcal{G}^{\beta}_{{Q}, 1}:=\frac{\bar{\rho}(0)\,a_{\beta\,1}J}{(\delta+\frac{4}{3}\varepsilon)}  \mathcal{H}({Q}),\quad\mathcal{G}^{1}_{\partial_hv}:=\mathcal{B}_{11, i}^{\alpha}\partial_{\alpha}v^{i},\\
& \mathcal{G}^{\beta}_{\partial_hv}:=\mathcal{B}_{10+\beta, i}^{\alpha}\partial_{\alpha}v^{i}, \, \mathcal{G}^{1}_{{Q}, 2}:=\frac{a_{11}|\overrightarrow{\rm{a}_1}|^{-2}J}{(\delta+\frac{4}{3}\varepsilon)}
\mathcal{H}({Q}^2\mathcal{R}({Q})),\, \mathcal{G}^{\beta}_{{Q}, 2}:=\frac{a_{\beta\,1}J}{(\delta+\frac{4}{3}\varepsilon)}\mathcal{H}(
{Q}^2\mathcal{R}({Q})).
\end{split}
\end{equation*}
Due to \eqref{d-1-v-interface-form-1} and \eqref{d-1-v-unknown-1}, we find
\begin{equation}\label{unknown-bdry-1}
  \begin{split}
  &\mathcal{G}^1=0,\quad \mathcal{G}^2=0,\quad \mathcal{G}^3=0\quad\text{on}\quad \Sigma_0,
\end{split}
\end{equation}
and $\partial_1v^i$ with $i=1, 2, 3$ satisfies
\begin{equation}\label{partialv-q-1}
  \begin{cases}
  &\partial_1v^1=\mathcal{G}^1-\frac{\delta-\frac{2}{3}\varepsilon}{\delta+\frac{4}{3}\varepsilon} \nabla_h\cdot v^h+\frac{\bar{\rho}(0)}{(\delta+\frac{4}{3}\varepsilon)}\, \mathcal{H}({Q})+\widetilde{\mathcal{G}}^1,\\
&\partial_1v^{2}= \mathcal{G}^{2}-\partial_{2}v^1+\widetilde{\mathcal{G}}^2,\\
&\partial_1v^{3}=\mathcal{G}^{3}-\partial_{3}v^1+\widetilde{\mathcal{G}}^3\quad \text{in} \quad \Omega.
\end{cases}
\end{equation}

Combining \eqref{v-com-interface-1} with \eqref{bdry-q-eqns-1}, the boundary condition on $\Sigma_0$ in \eqref{eqns-pert-1} can be written as the linearized form
\begin{equation}\label{prtv-interface-1}
\begin{split}
\bar{\rho}(0)q\, e_1- \mathbb{S}(v)\,e_1=
\left(
  \begin{array}{c}
   g\bar{\rho}(0)\xi^1+\mathcal{B}(\partial_{\alpha}v, {Q}, \partial_t{Q})\\
    -\varepsilon\,(\mathcal{B}_{7, i}^{\alpha}\partial_{\alpha}v^{i}+\widetilde{\mathcal{B}}_{2}\partial_t{Q})\\
    - \varepsilon\,(\mathcal{B}_{8, i}^{\alpha}\partial_{\alpha}v^{i}+\widetilde{\mathcal{B}}_{3}\partial_t{Q}).
  \end{array}
\right)
\end{split}
\end{equation}

In order to extend the interface boundary forms of $\partial_1v$ and $q$ in \eqref{prtv-interface-1} to the interior domain of the fluid, let us first introduce $\mathcal{H}(f)$ as the harmonic extension of $f|_{\Sigma_0}$ into $\Omega$:
\begin{equation}\label{harmonic-ext-xi-1-1}
\begin{cases}
   &\Delta \mathcal{H}(f)=0 \quad\mbox{in} \quad \Omega,\\
   &\mathcal{H}(f)|_{\Sigma_0}=f|_{\Sigma_0},\quad \mathcal{H}(f)|_{\Sigma_b}=0.
\end{cases}
\end{equation}

These four new unknowns $\mathcal{G}^i$ with $i=1, 2, 3$ and ${Q}$ play an important role in the estimate of the $L^\infty_tL^2$ norms of $\nabla\,v$ as well as its tangential derivatives, as it is possible to close all the estimates by using integration in parts and the relations \eqref{partialv-q-1} with \eqref{unknown-bdry-1} when we  encounter the integrals including the normal derivative of $\partial_1v$ and $q$. More details will be showed in the proof of Lemma \ref{lem-pseudo-energy-tv-1}. This makes it possible to avoid using the compatibility conditions in terms of the acceleration $\partial_tv$ and its derivatives.

\subsection{Linearized form of the system}

Let's now derive the linearized form of the system \eqref{eqns-pert-1} in the Lagrangian coordinates, which is crucially used to recover the bounds for high order derivatives.

Under the Lagrangian coordinates, we divide the dissipative terms $\grad_{J\mathcal{A}}\grad_{\mathcal{A}}\cdot v$ and $\grad_{J\mathcal{A}} \cdot \mathbb{D}_{\mathcal{A}}(v)$ into linear and nonlinear parts
\begin{equation}\label{dissipative-split-1}
\begin{split}
 &(\grad_{J\mathcal{A}}\grad_{\mathcal{A}}\cdot v)^k=(\nabla\nabla\cdot v)^k+\widetilde{\mathfrak{g}}_{kk},\quad (\grad_{J\mathcal{A}} \cdot \mathbb{D}_{\mathcal{A}}(v))^{k}=(\nabla \cdot \mathbb{D}(v))^{k}+\mathfrak{g}_{kk}
\end{split}
\end{equation}
with $k=1, 2, 3$, where nonlinear parts $\widetilde{\mathfrak{g}}_{kk}$ and $\mathfrak{g}_{kk}$ have the forms
\begin{equation*}\label{re-cond-fluid-1}
  \begin{split}
\widetilde{\mathfrak{g}}_{11}\eqdefa &[J(\mathcal{A}_{1}^1)^2-1]\partial_1^2 v^1+J\mathcal{A}_{1}^1\mathcal{A}_{\alpha}^1\partial_1^2v^{\alpha}
+(J\mathcal{A}_{1}^1\mathcal{A}_{j}^{\alpha}+J\mathcal{A}_{1}^{\alpha}\mathcal{A}_{j}^1
-\delta_j^{\alpha}) \partial_1\partial_{\alpha}v^j\\
&+J\mathcal{A}_{1}^{\beta} \mathcal{A}_{j}^{\alpha} \partial_{\beta} \partial_{\alpha}v^j+J(\mathcal{A}_{1}^{1}\partial_{1}\mathcal{A}_{j}^{1} +\mathcal{A}_{1}^{\alpha}\partial_{\alpha}\mathcal{A}_{j}^{1}) \partial_{1}v^j+J\mathcal{A}_{1}^{k}\partial_{k}( \mathcal{A}_{j}^{\alpha}) \partial_{\alpha}v^j,\\
\widetilde{\mathfrak{g}}_{\beta\beta}\eqdefa &J\mathcal{A}_{\beta}^1 \mathcal{A}_{1}^1\partial_1^2 v^1J+\mathcal{A}_{\beta}^1\mathcal{A}_{\alpha}^1\partial_1^2v^{\alpha}
+(J\mathcal{A}_{\beta}^1\mathcal{A}_{j}^{\alpha}+J\mathcal{A}_{\beta}^{\alpha}\mathcal{A}_{j}^1
-\delta_{\beta}^{\alpha}\delta_{j}^1) \partial_1\partial_{\alpha}v^j\\
&+(J\mathcal{A}_{\beta}^{\gamma} \mathcal{A}_{j}^{\alpha} -\delta_{\beta}^{\gamma} \delta_{j}^{\alpha})\partial_{\gamma} \partial_{\alpha}v^j+J\mathcal{A}_{\beta}^{k}\partial_{k}( \mathcal{A}_{j}^1) \partial_1v^j+J\mathcal{A}_{\beta}^{k}\partial_{k} \mathcal{A}_{j}^{\alpha} \partial_{\alpha}v^j,
  \end{split}
\end{equation*}
\begin{equation*}
\begin{split}
 &\mathfrak{g}_{11}:=[J^{-1}|\overrightarrow{\rm{a}_1}|^2+J(\mathcal{A}_1^{1})^2-2]\partial_1^2 v^1+J\mathcal{A}_{\alpha}^{1}\mathcal{A}_1^1\partial_1^2 v^{\alpha}+2J\mathcal{A}_i^{\alpha}\mathcal{A}_i^1\partial_{\alpha} \partial_1 v^1\\
&+(J\mathcal{A}_i^{\alpha}\mathcal{A}_i^{\beta}-\delta_i^{\alpha}\delta_i^{\beta})\partial_{\alpha} \partial_{\beta} v^1+ J\mathcal{A}_i^{1}\mathcal{A}_1^{\alpha} \partial_1 \partial_{\alpha} v^i+(J\mathcal{A}_i^{\alpha}\mathcal{A}_1^1 -\delta_i^{\alpha}\delta_1^1)\partial_1 \partial_{\alpha} v^i+J\mathcal{A}_i^{\alpha}\mathcal{A}_1^{\beta}\partial_{\alpha} \partial_{\beta} v^i\\
&+J(\mathcal{A}_i^{m} \partial_m\mathcal{A}_i^1\partial_1 v^1+\mathcal{A}_i^{m} \partial_m\mathcal{A}_1^1\partial_1 v^i)+J(\mathcal{A}_i^{m} \partial_m\mathcal{A}_i^{\alpha}\partial_{\alpha} v^1+\mathcal{A}_i^{m} \partial_m\mathcal{A}_1^{\alpha}\partial_{\alpha} v^i),\\
 &\mathfrak{g}_{22}:= (J^{-1}|\overrightarrow{\rm{a}_1}|^2-1)\partial_1^2 v^2+J\mathcal{A}_i^{1}\mathcal{A}_2^1\partial_1^2v^i+2J\mathcal{A}_i^{\alpha}\mathcal{A}_i^1\partial_{\alpha} \partial_1 v^2\\
&+J(\mathcal{A}_i^{\alpha}\mathcal{A}_i^{\beta}-\delta_i^{\alpha}\delta_i^{\beta})\partial_{\alpha}\partial_{\beta} v^2+(J\mathcal{A}_i^{m}\mathcal{A}_2^2-\delta_i^{m})\partial_m \partial_2 v^i+J\mathcal{A}_i^{\alpha}\mathcal{A}_2^1\partial_{\alpha}\partial_1 v^i+J\mathcal{A}_i^{m}\mathcal{A}_2^3\partial_m \partial_3 v^i\\
&+ J(\mathcal{A}_i^{m} \partial_m\mathcal{A}_i^1\partial_1 v^2+J\mathcal{A}_i^{m} \partial_m\mathcal{A}_2^1\partial_1 v^i)+ (\mathcal{A}_i^{m} \partial_m\mathcal{A}_i^{\alpha}\partial_{\alpha} v^2+\mathcal{A}_i^{m} \partial_m\mathcal{A}_2^{\alpha}\partial_{\alpha} v^i),\\
 &\mathfrak{g}_{33}:=(J^{-1}|\overrightarrow{\rm{a}_1}|^2-1)\partial_1 ^2 v^3+J\mathcal{A}_i^{1}\mathcal{A}_3^1\partial_1^2 v^i +2J\mathcal{A}_i^{1}\mathcal{A}_i^{\alpha}\partial_1 \partial_{\alpha} v^3 \\
&+(J\mathcal{A}_i^{\beta}\mathcal{A}_i^{\alpha}-\delta_i^{\beta}\delta_i^{\alpha})\partial_{\beta} \partial_{\alpha} v^3+(J\mathcal{A}_i^{m}\mathcal{A}_3^3-\delta_i^{m})\partial_m \partial_3 v^i+J\mathcal{A}_i^{\alpha}\mathcal{A}_3^1\partial_{\alpha} \partial_1 v^i+J\mathcal{A}_i^{m}\mathcal{A}_3^2\partial_m \partial_2 v^i\\
&+ J(\mathcal{A}_i^{m} \partial_m\mathcal{A}_i^1\partial_1 v^3+J\mathcal{A}_i^{m} \partial_m\mathcal{A}_3^1\partial_1 v^i)+J(\mathcal{A}_i^{m} \partial_m\mathcal{A}_i^{\alpha}\partial_{\alpha} v^3+\mathcal{A}_i^{m} \partial_m\mathcal{A}_3^{\alpha}\partial_{\alpha} v^i).
\end{split}
\end{equation*}

The system \eqref{eqns-pert-1} can be written as the linearized form
\begin{equation}\label{eqns-linear-1}
\begin{cases}
&\partial_t\xi=v,\\
 & \bar{\rho}(x_1)\partial_t v +\bar{\rho}(x_1)\nabla\,q-  \nabla\cdot\mathbb{S}(v)=\mathfrak{g},\\
 &\nabla\,\cdot  (\bar{\rho}(x_1)\,v)=g^{-1}\, \bar{\rho}'(x_1) \partial_tq+B_{1, i}^{h, j}\partial_jv^i\quad \text{in} \quad \Omega,\\
  &\bar{\rho}(0)q\, e_1- \mathbb{S}(v)\,e_1=
\left(
  \begin{array}{c}
   \bar{\rho}(0)\xi^1+\mathcal{B}(\partial_{\alpha}v, {Q}, \partial_t{Q})\\
    -\varepsilon\,(\mathcal{B}_{7, i}^{\alpha}\partial_{\alpha}v^{i}+\widetilde{\mathcal{B}}_{2}\partial_t{Q})\\
    - \varepsilon\,(\mathcal{B}_{8, i}^{\alpha}\partial_{\alpha}v^{i}+\widetilde{\mathcal{B}}_{3}\partial_t{Q})
  \end{array}
\right) \quad \text{on} \quad \Sigma_0,\\
&v|_{\Sigma_b}=0,
    \end{cases}
\end{equation}
where $\mathfrak{g}=(\mathfrak{g}_1, \mathfrak{g}_2, \mathfrak{g}_3)^T$ with
\begin{equation}\label{def-g-linear-1}
\begin{split}
 &\mathfrak{g}_k:=-\bar{\rho}(x_1)\widetilde{\mathcal{A}}_k^i\partial_iq +(\delta-\frac{2}{3}\varepsilon)\widetilde{\mathfrak{g}}_{kk}+\varepsilon\,\mathfrak{g}_{kk} \quad \text{with} \quad k=1, 2, 3.
\end{split}
\end{equation}

We split the components of $\nabla\cdot\mathbb{S}(v)$ into two parts
\begin{equation*}
  \begin{split}
    &(\nabla\cdot\mathbb{S}(v))^1=(\delta+\frac{4}{3}\varepsilon)\partial_1^2v^1+\mathcal{L}_{1}(\partial_h\nabla\,v),\quad (\nabla\cdot\mathbb{S}(v))^2=\varepsilon\partial_1^2v^2+\mathcal{L}_{2}(\partial_h\nabla\,v),\\
     &(\nabla\cdot\mathbb{S}(v))^3=\varepsilon\partial_1^2v^3+\mathcal{L}_{3}(\partial_h\nabla\,v),
     \end{split}
\end{equation*}
where
\begin{equation*}
  \begin{split}
 &\mathcal{L}_{1}(\partial_h\nabla\,v):=(\delta-\frac{2}{3}\varepsilon)\partial_1\nabla_h\cdot v^h+\varepsilon \partial_2(\partial_1v^2+\partial_2v^1)+\varepsilon\partial_3(\partial_1v^3+\partial_3v^1),\\
  &\mathcal{L}_{2}(\partial_h\nabla\,v):=
(\delta-\frac{2}{3}\varepsilon)\partial_2\nabla_h\cdot v^h+(\delta+\frac{1}{3}\varepsilon) \partial_2\partial_1v^1+2\varepsilon\partial_2^2v^2+\varepsilon\partial_3(\partial_2v^3+\partial_3v^2),\\
  &\mathcal{L}_{3}(\partial_h\nabla\,v):=
  (\delta-\frac{2}{3}\varepsilon)\partial_3\nabla_h\cdot v^h+(\delta+\frac{1}{3}\varepsilon) \partial_3\partial_1v^1+2\varepsilon\partial_3^2v^3+\varepsilon\partial_2(\partial_2v^3+\partial_3v^2).
      \end{split}
\end{equation*}
From this, the momentum equations in \eqref{eqns-linear-1} are equivalent to the equations
\begin{equation}\label{prt1-q-vh-1}
\begin{cases}
&  \bar{\rho}(x_1)\partial_1q-  (\delta+\frac{4}{3}\varepsilon)\partial_1^2v^1=-\bar{\rho}(x_1)\partial_t v^1+\mathcal{L}_{1}(\partial_h\nabla\,v)+\mathfrak{g}_1,\\\\
 &\bar{\rho}(x_1)\partial_{\alpha}q-  \varepsilon\partial_1^2v^{\alpha}=-\bar{\rho}(x_1)\partial_t v^{\alpha}-\mathcal{L}_{\alpha}(\partial_h\nabla\,v) +\mathfrak{g}_{\alpha}\quad (\text{with}\quad \alpha=2, 3).
     \end{cases}
\end{equation}

\renewcommand{\theequation}{\thesection.\arabic{equation}}
\setcounter{equation}{0}

\section{Preliminary estimates}\label{sect-tool}

Let us first recall some basic estimates, which will be heavily used in the rest of the paper.

\begin{lem}[\cite{Alinhac-1986}, Theorem 2.61 in \cite{BCD}]\label{lem-composition-1}
Let $f$ be a smooth function on $\mathbb{R}$ vanishing at $0$, $s_1$, $s_2$ be two positive
real number, $s_1\in (0, 1)$, $s_2>0$. If $u$ belongs to $\dot{H}^{s_1}(\mathbb{R}^2) \cap \dot{H}^{s_2}(\mathbb{R}^2)\cap L^{\infty}(\mathbb{R}^2)$, then so does $f\circ u$,
and we have
\begin{equation*}
\|f\circ u\|_{\dot{H}^{s_i}} \leq C(f', \|u\|_{L^{\infty}})\|u\|_{\dot{H}^{s_i}}\quad \text{for} \quad i=1, 2.
\end{equation*}
\end{lem}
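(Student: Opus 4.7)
The plan is to separate the argument by whether the Sobolev exponent lies in the range $(0,1)$ or is an arbitrary positive number, since the two ranges admit substantially different tools.

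For the case $s_1 \in (0,1)$, I would invoke the Gagliardo–Slobodeckij characterization of the homogeneous fractional Sobolev space on $\mathbb{R}^{2}$, namely
\begin{equation*}
\|g\|_{\dot{H}^{s_1}}^{2} \sim \int\!\!\int_{\mathbb{R}^{2}\times\mathbb{R}^{2}}\frac{|g(x)-g(y)|^{2}}{|x-y|^{2+2s_{1}}}\,dx\,dy.
\end{equation*}
Setting $M := \|u\|_{L^{\infty}(\mathbb{R}^{2})}$ and applying the mean value theorem to the $C^{1}$ function $f$ on $[-M,M]$ yields $|f(u(x))-f(u(y))| \leq \|f'\|_{L^{\infty}([-M,M])}|u(x)-u(y)|$, so that the estimate in the statement follows at once by substitution with constant $C(f',\|u\|_{L^{\infty}}) = \|f'\|_{L^{\infty}([-M,M])}$. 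The hypothesis $f(0)=0$ is not even needed at this point, only $f \in C^{1}$ on the range of $u$.

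For the general case $s_2>0$ (which in particular includes $s_{2}\geq 1$ where the Slobodeckij characterization is no longer valid), I would pass to the Littlewood–Paley machinery. With the convention $S_{-1}u \equiv 0$ and the vanishing assumption $f(0)=0$, one has the telescoping identity
\begin{equation*}
f\circ u \;=\; \sum_{k\geq -1}\bigl[f(S_{k+1}u)-f(S_{k}u)\bigr] \;=\; \sum_{k\geq -1} m_{k}\,\Delta_{k}u,\qquad m_{k} := \int_{0}^{1} f'\bigl(S_{k}u+\tau\Delta_{k}u\bigr)\,d\tau,
\end{equation*}
where each $m_{k}$ is bounded pointwise by $\|f'\|_{L^{\infty}([-M,M])}$ uniformly in $k$. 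For each dyadic block I split the sum defining $\Delta_{q}(f\circ u)$ into a high-frequency portion $k\geq q-N_{0}$ and a low-frequency portion $k< q-N_{0}$. The high-frequency piece is controlled by the direct bound $\|\Delta_{q}(m_{k}\Delta_{k}u)\|_{L^{2}}\lesssim \|f'\|_{L^{\infty}}\|\Delta_{k}u\|_{L^{2}}$; the low-frequency piece requires extracting an additional decay factor from the spectral gap, which one obtains by exploiting that the map $y\mapsto f'(y)$ is smooth on the compact range $[-M,M]$, so that $m_{k}$ is Lipschitz and produces a commutator-type gain. Multiplying by $2^{2qs_{2}}$, summing in $q$, and applying discrete Young's inequality for $\ell^{2}$-convolutions delivers the estimate with a constant of the form $C(f,\|u\|_{L^{\infty}})$.

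The main obstacle I anticipate is precisely the case $s_{2}\geq 1$: there the pointwise finite-difference characterization fails, one is forced into the paradifferential framework, and one must handle the multipliers $m_{k}$ which carry no spectral localization of their own. The saving grace is the $L^{\infty}$-bound on $u$, which confines $f$ and all its derivatives to a compact interval and thereby supplies every norm needed to close the Littlewood–Paley estimates uniformly in $k$.
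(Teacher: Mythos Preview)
The paper does not prove this lemma at all: it is stated with a citation to Alinhac and to Theorem~2.61 of Bahouri--Chemin--Danchin, and no argument is given in the paper itself. Your proposal is therefore not competing with a proof in the paper but rather reconstructing the proof from those references.

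Your sketch is essentially the standard argument. The $s_{1}\in(0,1)$ case via the Gagliardo--Slobodeckij seminorm and the pointwise Lipschitz bound $|f(u(x))-f(u(y))|\le \|f'\|_{L^{\infty}([-M,M])}|u(x)-u(y)|$ is correct and is in fact more elementary than what BCD do (they run Littlewood--Paley uniformly for all $s>0$). For the general $s_{2}>0$ case your Meyer telescoping formula $f(u)=\sum_{k}m_{k}\Delta_{k}u$ is exactly the BCD approach. One small sharpening: the ``commutator-type gain'' you invoke for the low-frequency piece $k<q-N_{0}$ is more concretely obtained by Bernstein's inequality, namely
\[
\|\Delta_{q}(m_{k}\Delta_{k}u)\|_{L^{2}}\lesssim 2^{-q}\|\nabla(m_{k}\Delta_{k}u)\|_{L^{2}}\lesssim 2^{-q}\bigl(\|\nabla m_{k}\|_{L^{\infty}}\|\Delta_{k}u\|_{L^{2}}+\|m_{k}\|_{L^{\infty}}\|\nabla\Delta_{k}u\|_{L^{2}}\bigr),
\]
together with the chain-rule bound $\|\nabla m_{k}\|_{L^{\infty}}\lesssim \|f''\|_{L^{\infty}([-M,M])}\,2^{k}\|u\|_{L^{\infty}}$, which delivers the factor $2^{k-q}$ needed to sum. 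With that clarification your outline is complete and matches the cited literature.
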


\begin{lem}(\cite{Alinhac-1986, BCD})\label{prop-composition-2}
Let $f$ be a smooth function such that $f'(0)=0$ . Let $s > 0$. For any couple $(u, v)$ of functions in $B^{s}_{p, r}\cap L^{\infty}$,
the function $f\circ v-f\circ u$ then belongs to $\dot{H}^{s}\cap L^{\infty}$ and
\begin{equation*}\begin{split}
&\|f\circ v-f\circ u\|_{\dot{H}^{s}} \leq C_{f''}(\|u\|_{L^{\infty}},  \|v\|_{L^{\infty}})\\
&\qquad \times (\|u-v\|_{\dot{H}^{s}}\sup_{\tau\in [0, 1]}\|v+\tau(u-v)\|_{L^{\infty}}+\|u-v\|_{L^{\infty}}\sup_{\tau\in [0, 1]}\|v+\tau(u-v)\|_{\dot{H}^{s}}),
\end{split}
\end{equation*}
where $C_{f''}(\cdot, \cdot)$ is a uniformly continuous function on $[0, \infty) \times [0, \infty)$ depending only on $f''$.
\end{lem}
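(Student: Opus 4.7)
The plan is to reduce the statement to Lemma~\ref{lem-composition-1} via the fundamental theorem of calculus combined with a Bony-type product estimate in $\dot{H}^{s}\cap L^{\infty}$. Setting $w_\tau := u+\tau(v-u)$ for $\tau\in[0,1]$, one writes
\begin{equation*}
f\circ v - f\circ u \;=\; (v-u)\,G, \qquad G \;:=\; \int_0^1 f'(w_\tau)\,d\tau,
\end{equation*}
so the desired inequality decouples into a product estimate for $(v-u)\cdot G$ in $\dot{H}^{s}\cap L^{\infty}$ and a composition-type bound for $G$.

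Since $f\in C^{\infty}$ with $f'(0)=0$, the derivative $f'$ is itself a smooth function vanishing at the origin, so Lemma~\ref{lem-composition-1} may be applied to $f'$ in place of $f$. Taking the auxiliary index $s_1\in(0,1)$ and $s_2=s$ in that lemma (admissible since a function in $\dot{H}^{s}\cap L^{\infty}$ has the required intermediate regularity via Littlewood--Paley interpolation), one obtains
\begin{equation*}
\|f'(w_\tau)\|_{\dot{H}^{s}} \;\leq\; C\bigl(f'',\|w_\tau\|_{L^{\infty}}\bigr)\,\|w_\tau\|_{\dot{H}^{s}},
\end{equation*}
while $\|f'(w_\tau)\|_{L^{\infty}}\leq C(f'')\,\|w_\tau\|_{L^{\infty}}$ follows from $f'(0)=0$ and the mean value theorem. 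Integrating in $\tau$ and using $\|w_\tau\|_{X}\leq \|u\|_{X}+\|v\|_{X}$ for $X\in\{L^{\infty},\dot{H}^{s}\}$ gives the uniform bounds
\begin{equation*}
\|G\|_{L^{\infty}}+\|G\|_{\dot{H}^{s}} \;\leq\; C_{f''}\bigl(\|u\|_{L^{\infty}},\|v\|_{L^{\infty}}\bigr)\,\sup_{\tau\in[0,1]}\bigl(\|w_\tau\|_{L^{\infty}}+\|w_\tau\|_{\dot{H}^{s}}\bigr).
\end{equation*}

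Next, the standard product law in $\dot{H}^{s}\cap L^{\infty}$, proved via Bony's paraproduct decomposition $ab=T_{a}b+T_{b}a+R(a,b)$, reads
\begin{equation*}
\|(v-u)\,G\|_{\dot{H}^{s}} \;\lesssim\; \|v-u\|_{\dot{H}^{s}}\|G\|_{L^{\infty}}+\|v-u\|_{L^{\infty}}\|G\|_{\dot{H}^{s}}.
\end{equation*}
Substituting the bounds for $G$ obtained above yields exactly the inequality asserted in the lemma; the companion $L^{\infty}$ bound on $f\circ v-f\circ u$ is a direct consequence of Taylor's formula with remainder applied pointwise.

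The main obstacle lies in making the composition estimate on $f'(w_\tau)$ fully rigorous for arbitrary $s>0$, since Lemma~\ref{lem-composition-1} requires an auxiliary index lying in $(0,1)$. This is circumvented either by iterating that lemma dyadically, or, more directly for higher values of $s$, by invoking the Fa\`a di Bruno formula to expand $\nabla^{k}(f'\circ w_\tau)$ as a sum of products $f^{(k+1)}(w_\tau)\cdot\nabla^{j_{1}}w_\tau\cdots\nabla^{j_{\ell}}w_\tau$ with $j_{1}+\cdots+j_{\ell}=k$, and then closing the estimate via Moser-type tame product inequalities in $\dot{H}^{s}\cap L^{\infty}$ uniformly in $\tau\in[0,1]$, using that each $f^{(k+1)}$ is smooth and hence bounded on the range of $w_\tau$.
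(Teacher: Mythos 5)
The paper does not prove this lemma; it cites it to Alinhac's paracomposition paper and to Bahouri--Chemin--Danchin. There is therefore no in-paper argument to compare against. Your reduction via the first-order Taylor identity $f\circ v - f\circ u = (v-u)\int_0^1 f'(w_\tau)\,d\tau$, followed by the composition estimate for $f'$ (which vanishes at $0$ since $f'(0)=0$) and the tame product law in $\dot H^s\cap L^\infty$, is indeed the standard route taken in those references, and the bookkeeping of how the constant ends up depending on $f''$ and on $\|u\|_{L^\infty},\|v\|_{L^\infty}$ is correct.

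One place where your argument is too casual: the parenthetical ``admissible since a function in $\dot H^s\cap L^\infty$ has the required intermediate regularity via Littlewood--Paley interpolation'' does not hold in the homogeneous scale. For $0<s_1<s$, the inclusion $\dot H^s\cap L^\infty\subset \dot H^{s_1}$ can fail: the low-frequency blocks $\dot\Delta_j g$ with $j\to-\infty$ are controlled in $L^\infty$ but not in $L^2$, so $\sum_{j<0}2^{2js_1}\|\dot\Delta_j g\|_{L^2}^2$ need not converge. The embedding you want \emph{is} automatic in the nonhomogeneous Besov scale (which is the space $B^s_{p,r}$ actually appearing in the hypothesis), so the correct way to invoke Lemma~\ref{lem-composition-1} is to note that the hypotheses of the present lemma already place $u,v$ (hence every $w_\tau$) in $B^s_{p,r}\cap L^\infty\subset H^{s_1}\cap L^\infty$ for $s_1\in(0,1)$, rather than appealing to interpolation inside $\dot H^s\cap L^\infty$. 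Your fallback route through Fa\`a di Bruno and Moser-type tame estimates would also close this gap, at the cost of more work. With this repair the proof is sound.
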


\begin{lem}[Classical product laws in Sobolev spaces \cite{BCD, Taylor-2000}]\label{lem-product-law-1}
Let $s_0>2$, $p_1,\,p_2 \in [2, +\infty)$, $q_1,\, q_2 \in (2, +\infty]$, $\frac{1}{p_1}+\frac{1}{q_1}=\frac{1}{p_2}+\frac{1}{q_2}=\frac{1}{2}$, there hold
\begin{equation}\label{product-law-1}
\begin{split}
&\|f\,g\|_{\dot{H}^{s_1+s_2-1}(\mathbb{R}^2)} \lesssim  \|f\|_{\dot{H}^{s_1}(\mathbb{R}^2)} \|g\|_{\dot{H}^{s_2}(\mathbb{R}^2)} \quad \forall \,\, s_1,\,s_2 \in (-1, 1),\, s_1+s_2>0,\\
&\|\dot{\Lambda}_h^{\sigma}(f\,g)\|_{L^2(\mathbb{R}^2_h)} \lesssim
\|\dot{\Lambda}_h^{\sigma}f\|_{L^{p_1}(\mathbb{R}^2_h)}\|g\|_{L^{q_1}(\mathbb{R}^2_h)}
+\|\dot{\Lambda}_h^{\sigma}g\|_{L^{p_2}(\mathbb{R}^2_h)}\|f\|_{L^{q_2}(\mathbb{R}^2_h)} \quad \forall \,\, \sigma>0,\\
&\|\dot{\Lambda}_h^{s_1}(f\,g)\|_{L^2(\mathbb{R}^2_h)} \lesssim
\|\dot{\Lambda}_h^{s_1}f\|_{L^2(\mathbb{R}^2_h)}
\|\dot{\Lambda}_h^{s_2}\Lambda_h^{s_0-1-s_2}g\|_{L^2(\mathbb{R}^2_h)} \, \forall\,\, s_1 \in (-1, 1],\,s_2\in (0, 1).
\end{split}
\end{equation}
\end{lem}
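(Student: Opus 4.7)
The three product estimates are all consequences of Bony's paraproduct decomposition, so the plan is to derive them from the Littlewood-Paley characterization of the relevant Sobolev/Besov norms. Recalling that for any tempered distribution $u$ on $\mathbb{R}^2$,
\begin{equation*}
\|u\|_{\dot{H}^s(\mathbb{R}^2)}^2 \sim \sum_{j\in\mathbb{Z}} 2^{2js}\|\dot{\Delta}_j u\|_{L^2}^2,
\end{equation*}
I will write $fg = T_f g + T_g f + R(f,g)$ with $T_f g = \sum_j S_{j-1} f\,\dot{\Delta}_j g$ and $R(f,g) = \sum_j \dot{\Delta}_j f\,\widetilde{\dot{\Delta}}_j g$, and estimate each piece separately.

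For the first inequality, the key observation is that the assumption $s_1,s_2\in(-1,1)$ in dimension $n=2$ is exactly the range in which both paraproducts $T_f g$ and $T_g f$ make sense in $\dot{H}^{s_1+s_2-1}$. The paraproduct term $T_f g$ is estimated via Bernstein's inequality $\|S_{j-1}f\|_{L^\infty}\lesssim 2^{j(1-s_1)}\|f\|_{\dot{H}^{s_1}}$ (valid because $s_1<1=n/2$), leading to
\begin{equation*}
2^{j(s_1+s_2-1)}\|\dot{\Delta}_j T_f g\|_{L^2} \lesssim c_j \|f\|_{\dot{H}^{s_1}}\|g\|_{\dot{H}^{s_2}},
\end{equation*}
with $\{c_j\}\in\ell^2$. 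The remainder $R(f,g)$ is summed by means of the condition $s_1+s_2>0$, which ensures absolute convergence of the series $\sum_{j\geq k-N_0} 2^{k(s_1+s_2-1)}\|\dot{\Delta}_k(\dot{\Delta}_j f\,\widetilde{\dot{\Delta}}_j g)\|_{L^2}$ after applying Bernstein.

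For the second inequality, I would again split $fg$ by paraproducts. Since the conjugate exponents $(p_i,q_i)$ satisfy $1/p_i+1/q_i=1/2$, Hölder's inequality in the horizontal frequency variables gives
\begin{equation*}
\|\dot{\Delta}_j^h T_f g\|_{L^2_h} \lesssim \|S_{j-1}^h f\|_{L^{q_2}_h}\|\dot{\Delta}_j^h g\|_{L^{p_2}_h},
\end{equation*}
and analogous bounds for $T_g f$ and $R(f,g)$. Distributing the $\dot{\Lambda}_h^\sigma$ derivative on whichever factor carries it in each paraproduct and applying Bernstein for the low-frequency cutoff produces the stated inequality. The condition $\sigma>0$ ensures the remainder sum converges.

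For the third inequality, the factor $\dot{\Lambda}_h^{s_2}\Lambda_h^{s_0-1-s_2}g$ lies in $L^2_h$ but, thanks to $s_0>2$, the companion norm $\|g\|_{\dot{H}^{s_2}(\mathbb{R}^2)\cap H^{s_0-1}(\mathbb{R}^2)}$ embeds continuously into $L^\infty_h$ through the two-dimensional subcritical embedding $H^{s_0-1}\hookrightarrow L^\infty$; so the strategy is to bound the paraproducts $T_f g$ and $R(f,g)$ by pushing all derivatives onto $f$ and estimating $g$ via this $L^\infty$ embedding, while $T_g f$ is handled directly by $\|S_{j-1}^h g\|_{L^\infty_h}\lesssim \|g\|_{L^\infty_h}$. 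The constraint $s_1\in(-1,1]$ is the natural range that allows the paraproduct $T_f g$ to be estimated in $\dot{H}^{s_1}$ without exceeding the critical index, and $s_2\in(0,1)$ guarantees convergence of the remainder series.

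The main technical obstacle throughout is the remainder piece $R(f,g)$: the bounds must be summable, which at the endpoints ($s_1+s_2=0$ for (i), $\sigma=0$ for (ii)) would fail, so carefully tracking the strict inequalities is essential. Once the paraproduct and remainder are estimated separately on dyadic blocks, the lemma follows by summing the $\ell^2$ sequences.
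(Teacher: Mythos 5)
The paper does not prove this lemma; it is stated as ``classical'' with citations to \cite{BCD, Taylor-2000}, so there is no internal proof to compare against. Your Bony-paraproduct strategy is indeed the standard approach one finds in those references for the first two estimates, and your sketch of those two is essentially sound: for the first, Bernstein on $S_{j-1}f$ with $s_1<n/2=1$, plus $s_1+s_2>0$ for the remainder; for the second, a H\"older-plus-Bernstein treatment of the three paraproduct pieces giving the fractional Leibniz estimate.

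The third inequality is where your write-up has a genuine gap. You say you will handle $T_fg$ ``by pushing all derivatives onto $f$ and estimating $g$ via the $L^\infty$ embedding,'' but in $T_fg=\sum_j S_{j-1}f\,\dot\Delta_j g$ the high frequency sits on $g$, so after localizing $\dot\Lambda_h^{s_1}\dot\Delta_k T_fg$ the weight $2^{ks_1}$ lands on the $g$ block, not on the low-frequency $S_{k-1}f$ factor; you cannot simply transfer it to $f$. What actually works is Bernstein on the \emph{low-frequency} factor, $\|S_{k-1}f\|_{L^\infty}\lesssim 2^{k(1-s_1)}\|f\|_{\dot H^{s_1}}$, which after recombination reduces matters to $\sup_k 2^k\|\dot\Delta_k g\|_{L^2}$ being controlled by $\|\dot\Lambda_h^{s_2}\Lambda_h^{s_0-1-s_2}g\|_{L^2}$ (true since $s_2<1$ controls the low frequencies and $s_0-1>1$ the high). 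But that Bernstein bound requires $s_1<1$ strictly: at $s_1=1$ the partial sum $\sum_{l\le k-2}2^l\|\dot\Delta_l f\|_{L^2}$ is \emph{not} controlled by $\|f\|_{\dot H^1(\mathbb{R}^2)}$, because $\dot H^1(\mathbb{R}^2)\not\hookrightarrow L^\infty$, so your argument degenerates exactly at the endpoint $s_1=1$ included in the statement. One has to treat $s_1=1$ separately, for instance by switching to a H\"older--Sobolev splitting $\|\nabla(fg)\|_{L^2}\le\|\nabla f\|_{L^2}\|g\|_{L^\infty}+\|f\|_{L^p}\|\nabla g\|_{L^q}$ and exploiting that the high-frequency norm on $g$ corresponds to $\dot H^{s_0-1}$ with $s_0-1>1$, or by using a paraproduct with a non-$L^\infty$ placement of $S_{k-1}f$. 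Your observation that $\|g\|_{L^\infty}\lesssim\|\dot\Lambda_h^{s_2}\Lambda_h^{s_0-1-s_2}g\|_{L^2}$ (from $s_2\in(0,1)$, $s_0>2$) correctly disposes of $T_gf$, but it does not carry over to $T_fg$ as your phrasing suggests. In short: right tool, right conditions on $s_2$ and $s_0$, but the description of the $T_fg$ estimate is misleading and leaves the $s_1=1$ endpoint unproven.
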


\begin{lem}[Embedding inequality \cite{BCD}]\label{lem-embedding-ineq-1}
For any $\sigma \in (0, 1)$, $s_0>2$, there holds
\begin{equation}\label{embedding-ineq-001}
\begin{split}
&\|f\|_{L^\infty(\mathbb{R}_h^2)} \lesssim  \|\dot{\Lambda}_h^{\sigma} \Lambda_h^{s_0-1-\sigma} f\|_{L^2(\mathbb{R}_h^2)}.
\end{split}
\end{equation}
\end{lem}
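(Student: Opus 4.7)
\smallskip

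The plan is to prove this embedding by a standard Fourier-analytic argument in the horizontal variables. Since the norm on the right involves only horizontal frequencies and we want to control $\|f\|_{L^\infty(\mathbb R_h^2)}$, I will pass to Fourier space in $x_h \in \mathbb R^2$ and use the elementary bound $\|f\|_{L^\infty(\mathbb R_h^2)} \le \|\widehat f\,\|_{L^1(\mathbb R_h^2)}$.

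\smallskip

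With this inequality in hand, I would apply the Cauchy--Schwarz inequality after inserting the weight $m(\varsigma_h) \eqdefa |\varsigma_h|^{\sigma}\langle \varsigma_h\rangle^{s_0-1-\sigma}$, namely
\begin{equation*}
\int_{\mathbb R^2}|\widehat f(\varsigma_h)|\,d\varsigma_h
=\int_{\mathbb R^2}\bigl(m(\varsigma_h)|\widehat f(\varsigma_h)|\bigr)\,\frac{d\varsigma_h}{m(\varsigma_h)}
\le \Bigl(\int_{\mathbb R^2}m(\varsigma_h)^2|\widehat f|^2\,d\varsigma_h\Bigr)^{\!1/2}
\Bigl(\int_{\mathbb R^2}\frac{d\varsigma_h}{m(\varsigma_h)^2}\Bigr)^{\!1/2}.
\end{equation*}
By Plancherel the first factor equals $\|\dot\Lambda_h^\sigma\Lambda_h^{s_0-1-\sigma}f\|_{L^2(\mathbb R_h^2)}$, so the desired estimate reduces to showing that the weight integral $\int_{\mathbb R^2} m(\varsigma_h)^{-2}\,d\varsigma_h$ is finite under the hypotheses $\sigma\in(0,1)$ and $s_0>2$.

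\smallskip

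To verify that integrability, I would split into low and high frequencies and pass to polar coordinates. Near the origin $\langle\varsigma_h\rangle\sim 1$, so $m(\varsigma_h)^{-2}\sim|\varsigma_h|^{-2\sigma}$, which gives
\begin{equation*}
\int_{|\varsigma_h|\le 1}|\varsigma_h|^{-2\sigma}\,d\varsigma_h
\;\simeq\;\int_0^1 r^{1-2\sigma}\,dr\;<\;+\infty
\quad\Longleftrightarrow\quad \sigma<1,
\end{equation*}
which is guaranteed by $\sigma\in(0,1)$. At high frequencies $\langle\varsigma_h\rangle\sim|\varsigma_h|$, so $m(\varsigma_h)^{-2}\sim|\varsigma_h|^{-2(s_0-1)}$, and
\begin{equation*}
\int_{|\varsigma_h|\ge 1}|\varsigma_h|^{-2(s_0-1)}\,d\varsigma_h
\;\simeq\;\int_1^{+\infty} r^{1-2(s_0-1)}\,dr\;<\;+\infty
\quad\Longleftrightarrow\quad s_0>\tfrac{3}{2},
\end{equation*}
which holds since $s_0>2$. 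Combining the two pieces yields the bound.

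\smallskip

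There is no real obstacle here; the only mildly delicate point is making sure the two conditions on $\sigma$ and $s_0$ are each used exactly once (the lower one at low frequencies through $\sigma<1$, the upper one at high frequencies through $s_0>3/2$) and that the 2D polar measure supplies the crucial extra factor of $r$ that makes both tails convergent. The argument is essentially the standard proof that $\dot H^\sigma\cap \dot H^{s_0-1}\hookrightarrow L^\infty$ in two dimensions for these exponent ranges.
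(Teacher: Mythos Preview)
Your argument is correct and is the standard way to prove this embedding; the paper itself gives no proof, merely citing \cite{BCD}. One small arithmetic slip: the high-frequency integral $\int_1^\infty r^{1-2(s_0-1)}\,dr$ converges iff $3-2s_0<-1$, i.e.\ $s_0>2$, not $s_0>\tfrac32$ as you wrote. This does not affect the proof since the hypothesis is precisely $s_0>2$, but it is worth correcting, and it also explains why the lemma requires $s_0>2$ rather than the weaker $s_0>\tfrac32$.
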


In what follows, we assume $s_0 \in [2+\sigma_0, s]$.

As a corollary, we find immediately the anisotropic Sobolev inequality as follows
\begin{col}[Anisotropic Sobolev inequality]\label{cor-embedding-ineq-1}
For any $\sigma \in (0, 1)$, $s_0>2$, there holds
\begin{equation}\label{embedding-ineq-1}
\begin{split}
&\|f\|_{L^\infty(\Omega)}^2 \lesssim  \|\dot{\Lambda}_h^{\sigma} \Lambda_h^{s_0-1-\sigma} f\|_{L^2(\Omega)}\|\dot{\Lambda}_h^{\sigma} \Lambda_h^{s_0-1-\sigma} f\|_{H^1(\Omega)}.
\end{split}
\end{equation}
\end{col}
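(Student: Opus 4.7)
The plan is to combine the horizontal embedding (Lemma 3.4) with a one–dimensional Sobolev trace–type estimate in the vertical variable $x_1$, exploiting that $\Omega$ is a slab of finite thickness $\underline b$ in the $x_1$ direction and infinite only in $x_h$.

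First, I would introduce $u := \dot{\Lambda}_h^{\sigma}\Lambda_h^{s_0-1-\sigma} f$ and, for each fixed $x_1\in(-\underline b,0)$, apply Lemma 3.4 on the horizontal plane $\mathbb{R}^2_h$ to the slice $f(x_1,\cdot)$. This yields
\begin{equation*}
\|f(x_1,\cdot)\|_{L^\infty(\mathbb{R}^2_h)}^2 \;\lesssim\; \|u(x_1,\cdot)\|_{L^2(\mathbb{R}^2_h)}^2,
\end{equation*}
so that $\|f\|_{L^\infty(\Omega)}^2\lesssim \sup_{x_1\in(-\underline b,0)}\|u(x_1,\cdot)\|_{L^2(\mathbb{R}^2_h)}^2$. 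The problem is thus reduced to bounding this vertical supremum.

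Next, set $h(x_1):=\|u(x_1,\cdot)\|_{L^2(\mathbb{R}^2_h)}^2$. For any $x_1,y_1\in(-\underline b,0)$, the fundamental theorem of calculus combined with Cauchy--Schwarz in $x_h$ gives
\begin{equation*}
|h(x_1)-h(y_1)| \;\leq\; 2\int_{-\underline b}^{0} \|u(s,\cdot)\|_{L^2(\mathbb{R}^2_h)}\,\|\partial_1 u(s,\cdot)\|_{L^2(\mathbb{R}^2_h)}\,ds \;\leq\; 2\,\|u\|_{L^2(\Omega)}\,\|\partial_1 u\|_{L^2(\Omega)}.
\end{equation*}
Since the mean of $h$ on $(-\underline b,0)$ equals $\underline b^{-1}\|u\|_{L^2(\Omega)}^2$, there exists some $y_1$ with $h(y_1)\leq \underline b^{-1}\|u\|_{L^2(\Omega)}^2$. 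Combining with the previous inequality yields
\begin{equation*}
\sup_{x_1\in(-\underline b,0)} h(x_1) \;\lesssim\; \|u\|_{L^2(\Omega)}^2 + \|u\|_{L^2(\Omega)}\,\|\partial_1 u\|_{L^2(\Omega)} \;\lesssim\; \|u\|_{L^2(\Omega)}\,\|u\|_{H^1(\Omega)}.
\end{equation*}

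Putting the two steps together gives exactly \eqref{embedding-ineq-1}. The argument is essentially routine; the only point worth flagging is that one must know the quantity $\|u(x_1,\cdot)\|_{L^2(\mathbb{R}^2_h)}$ is an absolutely continuous function of $x_1$ so that the fundamental theorem of calculus applies, which follows by density from $u\in H^1(\Omega)$ (if the right-hand side is finite, otherwise the estimate is trivial), using Fubini to commute the horizontal $L^2$-norm with vertical differentiation. No nontrivial obstacle arises; the main conceptual point is simply that the horizontal pseudodifferential operator $\dot{\Lambda}_h^{\sigma}\Lambda_h^{s_0-1-\sigma}$ commutes with $\partial_{x_1}$, which allows the vertical derivative in the final bound to fall cleanly on $u$ rather than producing extra commutator terms.
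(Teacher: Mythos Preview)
Your proposal is correct and is exactly the natural elaboration of what the paper treats as an immediate corollary: apply Lemma~\ref{lem-embedding-ineq-1} slice-wise in $x_h$, then use the standard one-dimensional Agmon-type estimate $\sup_{x_1}\|u(x_1,\cdot)\|_{L^2_h}^2\lesssim \|u\|_{L^2(\Omega)}\|u\|_{H^1(\Omega)}$ on the finite vertical interval. The paper gives no further proof beyond calling it an immediate consequence, so your write-up is precisely the intended argument.
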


\begin{lem}\label{lem-productlaw-1aa}
Let $s>2$, if $(\lambda,\,\sigma_0) \in (0, 1)$ satisfies $1-\lambda< \sigma_0\leq \min\{1-\frac{1}{2}\lambda,\,s-2\}$, $\ell_0:=\frac{\sigma_0+\lambda}{2}$, $\sigma>\sigma_0$, then there hold that
\begin{equation}\label{fg-product-1}
  \begin{split}
    &\|\dot{\Lambda}_h^{-\lambda}(f_1\,f_2)\|_{L^2} \lesssim \min\{ \|\dot{\Lambda}_h^{\sigma_0-1}f_1 \|_{L^2}\| \dot{\Lambda}_h^{2(1-\ell_0)}f_2\|_{H^1},\,\|\dot{\Lambda}_h^{\sigma_0-1}f_1 \|_{H^1}\| \dot{\Lambda}_h^{2(1-\ell_0)}f_2\|_{L^2}\},\\
  &\|\dot{\Lambda}_h^{-\lambda}(f_1\,f_2)\|_{H^1}\lesssim\|\dot{\Lambda}_h^{\sigma_0-1}f_1 \|_{H^1}\| \dot{\Lambda}_h^{2(1-\ell_0)}f_2\|_{H^1},\\
  & \|\dot{\Lambda}_h^{\sigma_0}(f_1\,f_2)\|_{L^2}\lesssim \min\{\|\dot{\Lambda}_h^{\frac{1+\sigma_0}{2}}f_1 \|_{H^1}\|\dot{\Lambda}_h^{\frac{1+\sigma_0}{2}} f_2\|_{L^2},\,\|\dot{\Lambda}_h^{\frac{1+\sigma_0}{2}}f_1 \|_{L^2}\|\dot{\Lambda}_h^{\frac{1+\sigma_0}{2}} f_2\|_{H^1}\},\\
&\|\dot{\Lambda}_h^{\sigma_0}(f_1\,f_2)\|_{H^1} \lesssim \|\dot{\Lambda}_h^{\frac{1+\sigma_0}{2}}f_1 \|_{H^1}\|\dot{\Lambda}_h^{\frac{1+\sigma_0}{2}} f_2\|_{H^1},\\
 & \|\dot{\Lambda}_h^{\sigma}(f_1\,f_2)\|_{L^2}\lesssim \min\{\|\dot{\Lambda}_h^{\sigma}f_1 \|_{H^1}\|\dot{\Lambda}_h^{\sigma_0} \Lambda_h^{s_0-1-\sigma_0}  f_2\|_{L^2}+\|\dot{\Lambda}_h^{\sigma_0} \Lambda_h^{s_0-1-\sigma_0}f_1\|_{H^1}\|\dot{\Lambda}_h^{\sigma} f_2\|_{L^2},\,\\
 &\qquad \qquad\qquad\|\dot{\Lambda}_h^{\sigma}f_1 \|_{L^2}\|\dot{\Lambda}_h^{\sigma_0} \Lambda_h^{s_0-1-\sigma_0}  f_2\|_{H^1}+\|\dot{\Lambda}_h^{\sigma_0} \Lambda_h^{s_0-1-\sigma_0}f_1\|_{H^1}\|\dot{\Lambda}_h^{\sigma} f_2\|_{L^2}\},\\
&\|\dot{\Lambda}_h^{\sigma}(f_1\,f_2)\|_{H^1} \lesssim \|\dot{\Lambda}_h^{\sigma}f_1 \|_{H^1}\|\dot{\Lambda}_h^{\sigma_0} \Lambda_h^{s_0-1-\sigma_0} f_2\|_{H^1}+\|\dot{\Lambda}_h^{\sigma_0} \Lambda_h^{s_0-1-\sigma_0}f_1\|_{H^1}\|\dot{\Lambda}_h^{\sigma} f_2\|_{H^1}.
  \end{split}
\end{equation}
\end{lem}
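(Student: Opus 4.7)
The plan is to derive all six estimates from the two-dimensional horizontal product laws of Lemma~\ref{lem-product-law-1} applied pointwise in the vertical variable $x_1$, and then to integrate in $x_1\in(-\underline{b},0)$ using the anisotropic trace
\[
\sup_{x_1\in(-\underline{b},0)}\|g(x_1,\cdot)\|_{L^2(\mathbb{R}^2_h)}\lesssim \|g\|_{H^1(\Omega)},
\]
which follows from $H^1((-\underline{b},0))\hookrightarrow L^\infty((-\underline{b},0))$ combined with Minkowski. The $\min$ structure appearing in every estimate reflects only the freedom to place the $L^\infty_{x_1}$ bound on one factor or the other.

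For the first inequality I take $s_1=\sigma_0-1$ and $s_2=2(1-\ell_0)=2-\sigma_0-\lambda$ in the first estimate of Lemma~\ref{lem-product-law-1}. The hypothesis $1-\lambda<\sigma_0\leq 1-\frac{1}{2}\lambda$ is precisely what is needed to guarantee $s_1\in(-\lambda,0)$, $s_2\in[1-\frac{1}{2}\lambda,1)$ in $(-1,1)$ with $s_1+s_2>0$ and $s_1+s_2-1=-\lambda$. Applying the product law pointwise in $x_1$, squaring, integrating, and putting either $\dot{\Lambda}_h^{\sigma_0-1}f_1$ or $\dot{\Lambda}_h^{2(1-\ell_0)}f_2$ into $L^\infty_{x_1}L^2_h$ (controlled by $H^1(\Omega)$ by the trace above) yields the first estimate in both orderings of the $\min$. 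The second follows by writing $\partial_j(f_1 f_2)=\partial_j f_1\cdot f_2+f_1\cdot\partial_j f_2$ for $j=1,2,3$ and applying the first estimate to each piece, together with the $L^2$-boundedness of the horizontal Riesz transforms needed to absorb the $\partial_j$ with $j\in\{2,3\}$ against $\dot{\Lambda}_h^{-\lambda}$. The third and fourth estimates use the same template with the symmetric choice $s_1=s_2=(1+\sigma_0)/2\in(1/2,1)$, for which $s_1+s_2-1=\sigma_0$.

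For the fifth and sixth estimates, $\sigma>\sigma_0$ may exceed $1$, and the third estimate of Lemma~\ref{lem-product-law-1} (which requires $s_1\in(-1,1]$) no longer applies directly. I would instead use Bony's paraproduct decomposition $f_1 f_2=T_{f_1}f_2+T_{f_2}f_1+R(f_1,f_2)$ in the horizontal variable, which yields pointwise in $x_1$, for every $\sigma>0$,
\[
\|\dot{\Lambda}_h^{\sigma}(f_1 f_2)(x_1,\cdot)\|_{L^2(\mathbb{R}^2_h)}\lesssim \|f_1(x_1,\cdot)\|_{L^\infty(\mathbb{R}^2_h)}\|\dot{\Lambda}_h^{\sigma}f_2(x_1,\cdot)\|_{L^2(\mathbb{R}^2_h)}+\|f_2(x_1,\cdot)\|_{L^\infty(\mathbb{R}^2_h)}\|\dot{\Lambda}_h^{\sigma}f_1(x_1,\cdot)\|_{L^2(\mathbb{R}^2_h)}.
\]
Substituting the embedding $\|f_i(x_1,\cdot)\|_{L^\infty(\mathbb{R}^2_h)}\lesssim \|\dot{\Lambda}_h^{\sigma_0}\Lambda_h^{s_0-1-\sigma_0}f_i(x_1,\cdot)\|_{L^2(\mathbb{R}^2_h)}$ from Lemma~\ref{lem-embedding-ineq-1} and integrating in $x_1$, with the $L^\infty_{x_1}$ placed on whichever factor produces the desired ordering inside the $\min$, yields the fifth estimate; the sixth then follows from the fifth by one more Leibniz expansion in the normal and horizontal directions.

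The technically delicate point is precisely the paraproduct step for $\sigma>1$, where Lemma~\ref{lem-product-law-1} is out of range and the Bony decomposition is genuinely needed; one should also verify that the paraproduct remainder $R(f_1,f_2)$ is under control at level $\dot{\Lambda}_h^{\sigma}$ (which holds since $\sigma>0$) and that $s_0-1-\sigma_0\geq 0$, i.e., $\Lambda_h^{s_0-1-\sigma_0}$ is a nonnegative-order multiplier, which is guaranteed by the standing convention $s_0\in[2+\sigma_0,s]$. Everything else in the proof is routine bookkeeping with horizontal fractional derivatives and the one-dimensional Sobolev trace in $x_1$.
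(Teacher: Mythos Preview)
Your proposal is correct and follows essentially the same route as the paper. The only cosmetic difference concerns the fifth and sixth estimates: you flag the third inequality of Lemma~\ref{lem-product-law-1} as out of range for $\sigma>1$ and propose to work directly with the Bony paraproduct, but in fact the \emph{second} inequality of Lemma~\ref{lem-product-law-1} (the Kato--Ponce estimate, valid for all $\sigma>0$ with $p_1=p_2=2$, $q_1=q_2=\infty$) already gives exactly the pointwise-in-$x_1$ bound you write down, and this is what the paper invokes. Your paraproduct argument simply re-derives that line of Lemma~\ref{lem-product-law-1}; everything else---the parameter checks $s_1=\sigma_0-1$, $s_2=2(1-\ell_0)$ and $s_1=s_2=(1+\sigma_0)/2$, the placement of $L^\infty_{x_1}L^2_h$ via $H^1\hookrightarrow L^\infty_{x_1}L^2_h$, the Leibniz expansion for the $H^1$ norms, and the use of Lemma~\ref{lem-embedding-ineq-1} to replace $L^\infty_h$ by $\dot{\Lambda}_h^{\sigma_0}\Lambda_h^{s_0-1-\sigma_0}$ in $L^2_h$---matches the paper exactly.
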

\begin{proof}
For the estimates of $\dot{\Lambda}_h^{-\lambda}(f_1\,f_2)$, thanks to Lemma \ref{lem-product-law-1} again, we find
\begin{equation*}
  \begin{split}
  &\|\dot{\Lambda}_h^{-\lambda}(f_1\,f_2)\|_{L^2} \lesssim \min\{ \|\dot{\Lambda}_h^{\sigma_0-1}f_1 \|_{L^\infty_{x_1}L^2_h}\| \dot{\Lambda}_h^{2(1-\ell_0)}  f_2\|_{L^2},\,\|\dot{\Lambda}_h^{\sigma_0-1}f_1\|_{L^2} \| \dot{\Lambda}_h^{2(1-\ell_0)}  f_2\|_{L^\infty_{x_1}L^2_h}\}
  \end{split}
\end{equation*}
and
\begin{equation*}
  \begin{split}
&\|\dot{\Lambda}_h^{-\lambda}(f_1\,f_2)\|_{H^1}\lesssim \|\dot{\Lambda}_h^{-\lambda}(f_1\,f_2)\|_{L^2}
+\|\dot{\Lambda}_h^{-\lambda}(\nabla\,f_1\,f_2+f_1\nabla\,f_2)\|_{L^2}\\
&\lesssim \|\dot{\Lambda}_h^{\sigma_0-1}f_1 \|_{L^\infty_{x_1}L^2_h}\| \dot{\Lambda}_h^{2(1-\ell_0)}  f_2\|_{L^2}+\|\dot{\Lambda}_h^{\sigma_0-1}\nabla\,f_1 \|_{L^2}\| \dot{\Lambda}_h^{2(1-\ell_0)} f_2\|_{L^\infty_{x_1}L^2_h}\\
&\qquad\qquad+\|\dot{\Lambda}_h^{\sigma_0-1}f_1\|_{L^\infty_{x_1}L^2_h}\|\dot{\Lambda}_h^{2(1-\ell_0)}\nabla\, f_2\|_{L^2}.
  \end{split}
\end{equation*}
Due to the Sobolev embedding $H^1\hookrightarrow L^\infty_{x_1}L^2_h$, we get
\begin{equation*}
  \begin{split}
  &\|\dot{\Lambda}_h^{-\lambda}(f_1\,f_2)\|_{L^2} \lesssim \min\{ \|\dot{\Lambda}_h^{\sigma_0-1}f_1 \|_{H^1}\| \dot{\Lambda}_h^{2(1-\ell_0)} f_2\|_{L^2},\,\|\dot{\Lambda}_h^{\sigma_0-1}f_1\|_{L^2} \|  \dot{\Lambda}_h^{2(1-\ell_0)}  f_2\|_{H^1}\},\\
&\|\dot{\Lambda}_h^{-\lambda}(f_1\,f_2)\|_{H^1}\lesssim   \|\dot{\Lambda}_h^{\sigma_0-1}f_1 \|_{H^1}\| \dot{\Lambda}_h^{2(1-\ell_0)}f_2\|_{H^1},
  \end{split}
\end{equation*}
that is, the first two inequalities in \eqref{fg-product-1} hold true.

Applying Lemma \ref{lem-product-law-1} directly ensures that
\begin{equation*}
  \begin{split}
  & \|\dot{\Lambda}_h^{\sigma_0}(f_1\,f_2)\|_{L^2}\lesssim \min\{\|\dot{\Lambda}_h^{\frac{1+\sigma_0}{2}}f_1 \|_{H^1}\|\dot{\Lambda}_h^{\frac{1+\sigma_0}{2}} f_2\|_{L^2},\,\|\dot{\Lambda}_h^{\frac{1+\sigma_0}{2}}f_1 \|_{L^2}\|\dot{\Lambda}_h^{\frac{1+\sigma_0}{2}} f_2\|_{H^1}\},\\
&\|\dot{\Lambda}_h^{\sigma_0}(f_1\,f_2)\|_{H^1} \lesssim \|\dot{\Lambda}_h^{\frac{1+\sigma_0}{2}}f_1 \|_{H^1}\|\dot{\Lambda}_h^{\frac{1+\sigma_0}{2}} f_2\|_{H^1}.
  \end{split}
\end{equation*}
We use Lemma \ref{lem-product-law-1} to get
\begin{equation*}
  \begin{split}
\|\dot{\Lambda}_h^{\sigma}(f_1\,f_2)\|_{L^2}\lesssim \min\{&\|\dot{\Lambda}_h^{\sigma}f_1 \|_{L^\infty_{x_1}L^2_h}\| f_2\|_{L^2_{x_1}L^\infty_h}+\|f_1\|_{L^\infty}\|\dot{\Lambda}_h^{\sigma} f_2\|_{L^2},\\
&\|\dot{\Lambda}_h^{\sigma}f_1 \|_{L^2}\| f_2\|_{L^\infty}+\|f_1\|_{L^\infty}\|\dot{\Lambda}_h^{\sigma} f_2\|_{L^2}\}
  \end{split}
\end{equation*}
and
\begin{equation*}
  \begin{split}
\|\dot{\Lambda}_h^{\sigma}\nabla(f_1\,f_2)\|_{L^2}&\lesssim  \|\dot{\Lambda}_h^{\sigma}\nabla\,f_1  \|_{L^2}\| f_2\|_{L^\infty}+\|\nabla\,f_1\|_{L^2_{x_1}L^\infty_h}\|\dot{\Lambda}_h^{\sigma} f_2\|_{L^\infty_{x_1}L^2_h}
\\
&\qquad +\|\dot{\Lambda}_h^{\sigma}f_1 \|_{L^\infty_{x_1}L^2_h}\| \nabla\,f_2\|_{L^2_{x_1}L^\infty_h}+
\|f_1\|_{L^\infty}\|\dot{\Lambda}_h^{\sigma} \nabla\,f_2\|_{L^2}.
  \end{split}
\end{equation*}
which along with Corollary \ref{cor-embedding-ineq-1} and the Sobolev embedding $H^1(\Omega)\hookrightarrow L^\infty_{x_1}L^2_h(\Omega)$ gives rise to the last two inequalities in \eqref{fg-product-1}.
This completes the proof of Lemma \ref{lem-productlaw-1aa}.
\end{proof}

Motivated by Lemma 2.7 in \cite{Beale-1981}, we may get the following version of Korn's type inequality for the equilibrium domain $\Omega$.
\begin{lem}[Korn's lemma, Lemma 2.7 in \cite{Beale-1981}]\label{lem-korn-2}
Let $\Omega$ be the equilibrium domain given in \eqref{def-domain-1}, then there exists a positive constant $C$, independent of $u$, such that
\begin{equation}\label{korn-comp-2}
\|u\|_{H^1(\Omega)} \leq C\,\|\mathbb{D}^0(u)\|_{L^2(\Omega)}
\end{equation}
for all $u \in H^1(\Omega)$ with $u|_{\Sigma_b}=0$.
\end{lem}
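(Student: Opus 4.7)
The proof should adapt Beale's strategy from \cite{Beale-1981} for the classical Korn inequality to the deviatoric strain $\mathbb{D}^0(u) = \mathbb{D}(u) - \frac{2}{3}(\dive\,u)\mathbb{I}$. The underlying algebraic identity
\begin{equation*}
|\mathbb{D}^0(u)|^2 = |\mathbb{D}(u)|^2 - \tfrac{4}{3}|\dive\,u|^2
\end{equation*}
shows that the trace--free part loses control of the divergence at the pointwise level, so the key is to exploit the global structure of the horizontal slab together with the no--slip condition on $\Sigma_b$.

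My plan is to proceed in three stages. First, by a standard density argument I reduce to $u \in C^\infty(\overline\Omega;\mathbb{R}^3)$ with horizontally compact support satisfying $u|_{\Sigma_b}=0$. Next, applying the Fourier transform in the horizontal variable $x_h \mapsto \xi_h$ reduces the estimate to a one--parameter family (indexed by $\xi_h \in \mathbb{R}^2$) of 1D estimates on the interval $(-\underline{b},0)$ for $\hat u(\cdot,\xi_h) \in H^1((-\underline{b},0);\mathbb{C}^3)$ with $\hat u(-\underline{b},\xi_h)=0$, of the form
\begin{equation*}
\int_{-\underline{b}}^{0}\!\bigl[(1+|\xi_h|^2)|\hat u|^2+|\partial_{x_1}\hat u|^2\bigr]\,dx_1 \leq C\int_{-\underline{b}}^{0}\bigl|\widehat{\mathbb{D}^0(u)}(x_1,\xi_h)\bigr|^2\,dx_1,
\end{equation*}
with $C$ independent of $\xi_h$. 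Plancherel's theorem then yields the full inequality after integrating in $\xi_h$ over $\mathbb{R}^2$.

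The third stage is to establish the 1D estimate uniformly in $\xi_h$, which I would attack by splitting frequencies. For $|\xi_h|\geq R$ (with $R$ suitably large), the tangential components $\widehat{\mathbb{D}^0(u)}_{\alpha\beta}$ contain leading contributions of order $|\xi_h||\hat u|$, and combining this with the Poincar\'e inequality $\|\hat u\|_{L^2(-\underline{b},0)}^2 \leq \underline{b}^2\|\partial_{x_1}\hat u\|_{L^2(-\underline{b},0)}^2$ (granted by $\hat u(-\underline{b},\xi_h)=0$) closes the bound by direct algebraic manipulation of the tensor components. For $|\xi_h|<R$, I would argue by contradiction--compactness: a failure of the estimate would produce a normalized sequence $(\hat u_n,\xi_n)$ with $\|\widehat{\mathbb{D}^0(u_n)}\|_{L^2}\to 0$ and $|\xi_n|\le R$; Rellich's theorem on the bounded interval $(-\underline{b},0)$, together with compactness of $\{|\xi_h|\leq R\}$, yields a nontrivial strong $L^2$--limit $\hat u_*(\cdot,\xi_*)$ satisfying $\widehat{\mathbb{D}^0(u_*)}\equiv 0$ and $\hat u_*(-\underline{b},\xi_*)=0$.

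The main obstacle is precisely this low--frequency regime, where coercivity cannot be obtained by direct quadratic manipulation. One must then invoke the classical rigidity for the deviatoric strain in $\mathbb{R}^3$: if $\mathbb{D}^0(u)=0$ on an open set, then $u$ is a conformal Killing field of $\mathbb{R}^3$, hence a polynomial of degree at most two. Evaluating the corresponding plane--wave lift at every point of the full plane $\Sigma_b$ where the field must vanish forces all coefficients of that polynomial to be zero, so $\hat u_*\equiv 0$, contradicting the normalization. This rigidity step is the crucial input that makes the Dirichlet condition on $\Sigma_b$ strong enough to compensate for the loss of the divergence control inherent in the trace--free strain.
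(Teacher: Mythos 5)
Your overall strategy --- horizontal Fourier transform, a frequency split, Poincar\'e on the bounded interval, and a compactness/rigidity argument at low frequencies --- is the right skeleton, and since the paper itself does not prove the lemma (it only cites Beale for motivation) I compare your sketch against what the estimate actually demands.

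The low--frequency step is sound, though phrased in a roundabout way. Passing to the conformal Killing algebra on $\mathbb{R}^3$ is more than is needed, and the ``plane--wave lift'' $\hat u_*(x_1)e^{i\xi_*\cdot x_h}$ is complex--valued and not integrable on $\Omega$, so invoking Liouville-type rigidity there requires a little care. The cleaner observation is that the Fourier ODE system $\widehat{\mathbb{D}^0(u)}(\cdot,\xi_h)=0$ is already rigid: the purely tangential components force $\xi_2\hat u^2-\xi_3\hat u^3=\xi_2\hat u^3+\xi_3\hat u^2=0$, hence $\hat u^h\equiv 0$ for $\xi_h\neq 0$, and then the $1\alpha$ components give $\xi_\alpha\hat u^1=0$; for $\xi_h=0$ one gets constants killed by the bottom condition. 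This gives the contradiction more directly than the conformal Killing field rigidity and avoids the $L^2$ issue.

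The high--frequency step, by contrast, has a genuine gap. The claim that the estimate ``closes by direct algebraic manipulation of the tensor components'' together with Poincar\'e is not true at that level of generality. The tangential components $\widehat{\mathbb{D}^0}_{\alpha\beta}$ do control $|\xi_h|\,|\hat u^h|$ pointwise, and the $11$--component then controls $\partial_1\hat u^1$, so $\hat u^h$, $\partial_1\hat u^1$, and $\hat u^1$ (via Poincar\'e) are fine. But the remaining quantities $|\xi_h|^2\|\hat u^1\|_{L^2}^2$ and $\|\partial_1\hat u^h\|_{L^2}^2$ only appear through the $1\alpha$ components, and expanding $\int|\widehat{\mathbb{D}^0}_{1\alpha}|^2$ requires an integration by parts on the cross term $\int\partial_1\hat u^\alpha\,\overline{\hat u^1}$. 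This produces an uncancelled boundary term $\xi_\alpha\hat u^\alpha(0)\,\overline{\hat u^1}(0)$ at the \emph{free surface} $\Sigma_0$, where no boundary condition is available. Estimating that term by $|\hat u^1(0)|\le \underline b^{1/2}\|\partial_1\hat u^1\|$ and similarly for $\hat u^h(0)$ gives only a bound of order $|\xi_h|^{2/3}\|\widehat{\mathbb{D}^0}\|^2$, which does not close. To close it one must instead use the sharper trace interpolation
\begin{equation*}
|f(0)|^2 \leq 2\,\|f\|_{L^2(-\underline b,0)}\,\|\partial_1 f\|_{L^2(-\underline b,0)}\qquad\text{for }f(-\underline b)=0,
\end{equation*}
apply it to both $\phi:=\xi_\alpha\hat u^\alpha$ and $\hat u^1$, combine it with $\|\phi\|_{L^2}\lesssim\|\widehat{\mathbb{D}^0}\|_{L^2}$ and $\|\partial_1\phi\|_{L^2}\le|\xi_h|^2\|\hat u^1\|_{L^2}+|\xi_h|\|\widehat{\mathbb{D}^0}\|_{L^2}$, and then absorb by Young with the correct exponents. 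This is precisely the delicate point one expects at a free boundary, and it should not be waved away as ``direct algebraic manipulation.'' As written, the reader cannot tell whether you know how to close it, and the naive version of your step fails.
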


We state now some estimates on the harmonic extension operator $\mathcal{H}$ defined in \eqref{harmonic-ext-xi-1-1}.
\begin{lem}[\cite{Gui-2020}]\label{lem-est-harmonic-ext-1}
Let $s\in \mathbb{R}$ and $r\geq 2$. For the harmonic extension $\mathcal{H}(f)$ of $f=f(t, x_1, x_h)$ (with $t \in\mathbb{R}^+$, $( x_1, x_h) \in \Omega$), there hold
\begin{equation*}
\begin{split}
 &\|\mathcal{H}(f)\|_{H^1(\Omega)} \lesssim \|f\|_{H^{\frac{1}{2}}(\Sigma_0)}, \quad \|\dot{\Lambda}_h^{s}\mathcal{H}(f)\|_{H^1(\Omega)} \lesssim \|\dot{\Lambda}_h^{s}f\|_{H^{\frac{1}{2}}(\Sigma_0)}, \\
 &\|\dot{\Lambda}_h^{s}\mathcal{H}(f)\|_{H^{2}(\Omega)} \lesssim \|\dot{\Lambda}_h^{s}f\|_{H^{\frac{3}{2}}(\Sigma_0)},\quad\|\mathcal{H}(f)\|_{H^{r}(\Omega)} \lesssim \|f\|_{H^{r-\frac{1}{2}}(\Sigma_0)},\\
 &\|\partial_t\mathcal{H}(f)\|_{H^1(\Omega)} \lesssim \|\partial_tf\|_{H^{\frac{1}{2}}(\Sigma_0)}.
\end{split}
\end{equation*}
\end{lem}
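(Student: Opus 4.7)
The plan is to exploit the horizontal translation invariance of the slab $\Omega=(-\underline{b},0)\times\mathbb{R}^2_h$ and reduce everything to a one-dimensional problem via horizontal Fourier transform. Writing $\widehat{g}(x_1,\varsigma_h)$ for the partial Fourier transform of $g$ in $x_h$, the defining system \eqref{harmonic-ext-xi-1-1} becomes a two-point ODE boundary value problem
\begin{equation*}
(\partial_1^2-|\varsigma_h|^2)\widehat{\mathcal{H}(f)}(x_1,\varsigma_h)=0,\qquad \widehat{\mathcal{H}(f)}(0,\varsigma_h)=\widehat{f}(\varsigma_h),\qquad \widehat{\mathcal{H}(f)}(-\underline{b},\varsigma_h)=0,
\end{equation*}
whose solution is explicit:
\begin{equation*}
\widehat{\mathcal{H}(f)}(x_1,\varsigma_h)=\widehat{f}(\varsigma_h)\,K(x_1,\varsigma_h),\qquad K(x_1,\varsigma_h):=\frac{\sinh(|\varsigma_h|(x_1+\underline{b}))}{\sinh(|\varsigma_h|\underline{b})}.
\end{equation*}
All required estimates will follow from Plancherel's theorem combined with pointwise bounds on the kernel $K$ and its $x_1$-derivatives.

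First I will establish the basic $H^1$ bound. By Plancherel and the explicit form of $K$, one has
\begin{equation*}
\|\mathcal{H}(f)\|_{H^1(\Omega)}^2\lesssim\int_{\mathbb{R}^2}\Bigl(\langle\varsigma_h\rangle^2\!\int_{-\underline{b}}^{0}|K|^2\,dx_1+\int_{-\underline{b}}^{0}|\partial_1K|^2\,dx_1\Bigr)|\widehat{f}(\varsigma_h)|^2\,d\varsigma_h,
\end{equation*}
and a direct computation shows that the bracketed quantity is bounded by $C\langle\varsigma_h\rangle$ uniformly in $\varsigma_h$ (the low-frequency contribution is controlled by the bounded thickness $\underline{b}$, while the high-frequency contribution behaves like $|\varsigma_h|$ thanks to the exponential decay of $K$ away from $x_1=0$). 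This yields $\|\mathcal{H}(f)\|_{H^1(\Omega)}\lesssim\|f\|_{H^{1/2}(\Sigma_0)}$.

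Next, since $\dot{\Lambda}_h^{s}$ is simply multiplication by $|\varsigma_h|^s$ on the Fourier side and the harmonic extension commutes with horizontal Fourier multipliers, one has $\dot{\Lambda}_h^{s}\mathcal{H}(f)=\mathcal{H}(\dot{\Lambda}_h^{s}f)$, so the second and fifth inequalities reduce immediately to the first one applied to $\dot{\Lambda}_h^{s}f$ and $\partial_tf$ respectively. For the $H^2$-type bound I will use the equation $\partial_1^2\mathcal{H}(f)=-\Delta_h\mathcal{H}(f)$, which on the Fourier side reads $\partial_1^2\widehat{\mathcal{H}(f)}=|\varsigma_h|^2\widehat{\mathcal{H}(f)}$; this is responsible for the gain of one horizontal derivative in the trace norm, giving the $H^{3/2}(\Sigma_0)$ bound. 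The remaining estimate $\|\mathcal{H}(f)\|_{H^r(\Omega)}\lesssim\|f\|_{H^{r-1/2}(\Sigma_0)}$ for $r\ge 2$ follows by iterating this trade-off: every normal derivative above the first can be replaced by two horizontal derivatives using harmonicity, after which the argument above handles horizontal derivatives and the lowest-order normal derivative.

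I do not anticipate any serious obstacle, as the lemma is a standard trace-type result for the Dirichlet problem on a slab; the only point that needs care is the uniform control of $K$ and its $x_1$-derivatives across the entire frequency range, in particular handling $|\varsigma_h|\to 0$ where one cannot use the exponential decay and must instead exploit the finite thickness $\underline{b}$. Alternatively, one can invoke classical elliptic regularity for the Poisson problem in the smooth-bottom slab together with the explicit trace inequality, but the Fourier approach is cleaner and keeps all constants independent of the horizontal frequency.
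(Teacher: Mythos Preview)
The paper does not prove this lemma at all: it is stated with a citation to \cite{Gui-2020} and used as a black box. Your Fourier-analytic argument via the explicit kernel $K(x_1,\varsigma_h)=\sinh(|\varsigma_h|(x_1+\underline{b}))/\sinh(|\varsigma_h|\underline{b})$ is the standard and correct way to establish such trace-type bounds on a slab, and the points you flag (commutation of $\mathcal{H}$ with horizontal multipliers, conversion of $\partial_1^2$ into $|\varsigma_h|^2$ via harmonicity, and the separate treatment of low and high horizontal frequencies in the kernel bounds) are exactly what is needed.
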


Let us now turn to estimate the coefficients of nonlinear terms in the system \eqref{eqns-pert-1}. First, thanks to Lemma \ref{embedding-ineq-1}, we get
\begin{equation}\label{size-xi-2}
\begin{split}
&\|\nabla\,\xi(t)\|_{L^\infty(\Omega)} \leq  C\| \dot{\Lambda}_h^{\sigma_0} \Lambda_h^{s_0-1-\sigma_0} \nabla\,\xi(t)\|_{H^1(\Omega)},
\end{split}
\end{equation}
which leads to
\begin{lem}\label{lem-size-xi}
There exists a positive constant $C$, independent of $\xi$ and $t$, such that
\begin{equation}\label{size-xi-1}
\|\nabla\xi(t)\|_{L^\infty(\Omega)} \leq C\,E_{s_0}^{\frac{1}{2}}(t).
\end{equation}
\end{lem}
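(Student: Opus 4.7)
The statement is a direct consequence of the anisotropic Sobolev inequality from Corollary~\ref{cor-embedding-ineq-1} together with the definition of the instantaneous energy $E_{s_0}$, so the plan is short and essentially computational.

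First I would apply inequality \eqref{embedding-ineq-1} of Corollary~\ref{cor-embedding-ineq-1} to the function $f=\nabla\xi(t,\cdot)$ with the parameter $\sigma$ chosen to equal $\sigma_0$. Since $s_0\in[2+\sigma_0,s]$ satisfies $s_0>2$ and $\sigma_0\in(0,1)$ by the hypotheses of Subsection~\ref{subsec-our-result}, the inequality is applicable and yields
\begin{equation*}
\|\nabla\xi(t)\|_{L^\infty(\Omega)}^2
\lesssim \|\dot{\Lambda}_h^{\sigma_0}\Lambda_h^{s_0-1-\sigma_0}\nabla\xi(t)\|_{L^2(\Omega)}\,
\|\dot{\Lambda}_h^{\sigma_0}\Lambda_h^{s_0-1-\sigma_0}\nabla\xi(t)\|_{H^1(\Omega)}.
\end{equation*}
Dominating the $L^2$ factor by the $H^1$ factor (which is the content of the bound \eqref{size-xi-2} already displayed just before the statement), I obtain
\begin{equation*}
\|\nabla\xi(t)\|_{L^\infty(\Omega)}
\lesssim \|\dot{\Lambda}_h^{\sigma_0}\Lambda_h^{s_0-1-\sigma_0}\nabla\xi(t)\|_{H^1(\Omega)}.
\end{equation*}

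Next I would invoke the definition
\[
E_{s_0}=\|\dot{\Lambda}_h^{\sigma_0}\Lambda_h^{s_0-1-\sigma_0}\nabla\xi\|_{H^1(\Omega)}^2+\mathcal{E}_{s_0},
\]
which immediately gives
\[
\|\dot{\Lambda}_h^{\sigma_0}\Lambda_h^{s_0-1-\sigma_0}\nabla\xi(t)\|_{H^1(\Omega)}^2 \le E_{s_0}(t),
\]
since $\mathcal{E}_{s_0}$ is non-negative. Combining with the previous display and taking square roots yields the claim $\|\nabla\xi(t)\|_{L^\infty(\Omega)}\le C\,E_{s_0}^{1/2}(t)$.

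There is no essential obstacle in this proof; the only point requiring care is the compatibility of indices, namely that $s_0-1-\sigma_0\ge 0$ and $\sigma_0\in(0,1)$, both of which are guaranteed by the standing assumption $s_0\in[2+\sigma_0,s]$ stated immediately before Corollary~\ref{cor-embedding-ineq-1}. The anisotropic embedding~\eqref{embedding-ineq-1} is exactly the tool designed to trade the horizontal regularity $\dot{\Lambda}_h^{\sigma_0}\Lambda_h^{s_0-1-\sigma_0}$ with one vertical derivative against an $L^\infty$ bound, so the estimate follows without additional interpolation or commutator arguments. \hfill$\blacksquare$
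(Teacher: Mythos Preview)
Your proposal is correct and follows essentially the same route as the paper: apply the anisotropic Sobolev inequality \eqref{embedding-ineq-1} with $\sigma=\sigma_0$ to $\nabla\xi$ to obtain \eqref{size-xi-2}, and then read off the bound from the definition of $E_{s_0}$. The paper's proof is in fact just the one-line observation \eqref{size-xi-2} placed immediately before the lemma; your write-up merely makes the index checks and the use of the definition of $E_{s_0}$ explicit.
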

Lemma \ref{lem-size-xi} tells us how the size of $\xi$ can control the variation of the deformation $D\xi(t)$ of the displacement $\xi$.

With Lemma \ref{lem-size-xi} in hand, we may prove that, the entries of the matrices $\widetilde{\mathcal{A}}:=\mathcal{A}-I$,  $J\mathcal{A}-I$, as well as the Jacobian $J$ of the flow map and its inverse $J^{-1}$, are anisotropic when we consider their low horizontal regularities, which is stated as follows.

\begin{lem}\label{lem-est-aij-1}
Let $s_0>2$, $i, \, j=1, 2, 3$, $\sigma_0 \in (0, 1)$, if $E_{s_0}(t) \leq 1$ for all time $t\in [0, T]$, then there hold
\begin{equation}\label{est-aij-J-1}
  \begin{split}
&(1). \|(J^{\pm 1}-1,\, \widetilde{\mathcal{A}}_{i}^{j},\,J\widetilde{\mathcal{A}}_i^j)\|_{L^\infty}\lesssim E_{s_0}^{\frac{1}{2}};\\
&(2). \,\|\dot{\Lambda}_h^{\sigma}\,(J^{\pm 1}-1,\, \widetilde{\mathcal{A}}_{i}^{j},\,J\widetilde{\mathcal{A}}_i^j)\|_{L^2}\lesssim E_{\sigma}^{\frac{1}{2}}, \\
&\qquad\qquad \qquad \qquad \|\dot{\Lambda}_h^{\sigma}\,(J^{\pm 1}-1,\, \widetilde{\mathcal{A}}_{i}^{j},\,J\widetilde{\mathcal{A}}_i^j)\|_{H^1}\lesssim E_{\sigma+1}^{\frac{1}{2}}\quad(\forall \, \sigma \geq \sigma_0);\\
&(3). \|\dot{\Lambda}_h^{\sigma}(J^{\pm 1}-1,\,\widetilde{\mathcal{A}}_{i}^{j},\,J\widetilde{\mathcal{A}}_i^j)\|_{H^1}\lesssim E_{\sigma+1}^{\frac{1}{2}} \\
&\qquad \qquad \qquad \quad \forall \, \sigma \in [\sigma_0-1, \sigma_0] \qquad \text{with}  \quad(i, j)\neq  (1, 2), (1, 3).
            \end{split}
\end{equation}
\end{lem}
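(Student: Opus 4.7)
The plan is to reduce each part of the lemma to a statement about the composition of a smooth scalar function with $\nabla \xi$, then apply the composition results (Lemmas \ref{lem-composition-1}--\ref{prop-composition-2}) and the anisotropic product laws (Lemmas \ref{lem-product-law-1} and \ref{lem-productlaw-1aa}). The starting point is that under $E_{s_0}(t) \leq 1$ and Lemma \ref{lem-size-xi} we have $\|\nabla\xi\|_{L^\infty} \ll 1$, so $J$ stays uniformly bounded away from zero. Thanks to the explicit polynomial formula $J - 1 = \nabla\cdot\xi + \mathcal{B}_{00}+\mathcal{B}_{000}$ given in the excerpt, the cofactor identity $a_{ij} = J\mathcal{A}_i^j$, and the convergent Neumann series $\widetilde{\mathcal{A}} = \sum_{k\geq 1}(-(D\xi)^T)^k$ (in the paper's matrix convention), each of the quantities $J^{\pm 1}-1$, $\widetilde{\mathcal{A}}_i^j$, and $J\widetilde{\mathcal{A}}_i^j$ can be written as $F(\nabla\xi)$ for a smooth function $F$ with $F(0)=0$.

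Part (1) is then an immediate consequence of the mean value theorem plus the $L^\infty$ bound of Lemma \ref{lem-size-xi}. For Part (2), in the regime $\sigma\geq\sigma_0$, I would Taylor-expand $F(\nabla\xi) = F'(0)\cdot\nabla\xi + R(\nabla\xi)$ at the origin. The linear part is bounded by $\|\dot{\Lambda}_h^\sigma \nabla\xi\|_{L^2}\lesssim E_\sigma^{1/2}$ via the $\nabla\xi$-contribution in the definition of $E_\sigma$, while the smooth remainder $R(\nabla\xi)$ is controlled by the product inequalities of Lemma \ref{lem-productlaw-1aa} together with Lemma \ref{prop-composition-2}, using the $L^\infty$ bound from Part (1) to absorb the extra copies of $\nabla\xi$. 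The $H^1$ version costs one extra derivative on one factor and replaces $E_\sigma$ with $E_{\sigma+1}$.

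Part (3) is the main obstacle and also explains the exclusion of $(i,j) = (1,2),(1,3)$. For $\sigma\in[\sigma_0-1,\sigma_0]$ we are at or below the base horizontal regularity $\sigma_0$, so the naive product estimate fails and one must exploit the specific differential structure of each entry. For $\widetilde{\mathcal{A}}_\alpha^j$ with $\alpha\in\{2,3\}$, the leading term $-\partial_\alpha\xi^j$ is a horizontal derivative that can be absorbed into the operator via $\dot{\Lambda}_h^\sigma\partial_\alpha = R_\alpha \dot{\Lambda}_h^{\sigma+1}$ (with $R_\alpha$ a zero-order horizontal Riesz multiplier), thereby reducing to a standard estimate on $\nabla\xi$ with $\sigma$ horizontal derivatives, controlled by $E_{\sigma+1}$. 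For the entry $(1,1)$, the explicit cofactor formula $J\mathcal{A}_1^1 = a_{11} = \partial_2\eta^2\partial_3\eta^3 - \partial_3\eta^2\partial_2\eta^3$ contains only horizontal derivatives of horizontal components, so $J\widetilde{\mathcal{A}}_1^1 = a_{11} - J$ reduces to combinations in which the only normal-derivative linear contribution is $\partial_1\xi^1$, a normal derivative of the \emph{vertical} component, which admits a negative horizontal regularity estimate via the boundary data $\|\dot{\Lambda}_h^{-\lambda}\xi^1\|_{L^2(\Sigma_0)}$ contained in $\mathcal{E}_s$ together with the bottom condition $\xi^1|_{\Sigma_b}=0$ inherited from $v|_{\Sigma_b}=0$. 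The scalars $J^{\pm 1}-1$ are handled analogously, since their only normal-derivative linear contribution is again the single $\partial_1\xi^1$. The excluded entries $(1,2)$ and $(1,3)$ have leading part $-\partial_1\xi^2,-\partial_1\xi^3$, i.e.\ normal derivatives of \emph{horizontal} components, for which no analogous recovery is available at horizontal regularity $\sigma<\sigma_0$. The technical heart of the proof is therefore the careful book-keeping, entry by entry, of which combinations of $\partial_k\xi^\ell$ appear in the cofactor expansion and the verification that the $\partial_1\xi^h$ factors never appear in isolation for the allowed index pairs, the remaining quadratic and cubic terms being handled by Lemma \ref{lem-productlaw-1aa} under the $L^\infty$ smallness of $\nabla\xi$.
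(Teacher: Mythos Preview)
For Parts (1)--(2) and for the entries of Part (3) with $i\in\{2,3\}$, your outline coincides with the paper's: work with the explicit polynomial expansion of each cofactor in $\nabla\xi$, bound the linear piece by the $\nabla\xi$-contribution to $E_\sigma$, and control the quadratic and cubic remainders via the product laws of Lemma~\ref{lem-productlaw-1aa}. You have also correctly isolated the mechanism behind the exclusion of $(1,2),(1,3)$ in Part (3): the linear part must carry a horizontal derivative so that $\dot\Lambda_h^{\sigma_0-1}\partial_\alpha$ can be traded for $\dot\Lambda_h^{\sigma_0}$ via a Riesz multiplier. The paper illustrates this on $a_{23}$, whose cofactor expansion contains only $\nabla_h\xi\otimes\nabla\xi$--type products.

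The step that does not go through is your treatment of $\widetilde{\mathcal A}_1^1$, $J\widetilde{\mathcal A}_1^1$ and $J^{\pm1}-1$, whose linear part is an irreducible $\partial_1\xi^1$. Knowing $\|\dot\Lambda_h^{-\lambda}\xi^1\|_{L^2(\Sigma_0)}$ together with $\xi^1|_{\Sigma_b}=0$ gives \emph{no} control on $\|\dot\Lambda_h^{\sigma_0-1}\partial_1\xi^1\|_{H^1(\Omega)}$: boundary values of $\xi^1$ place no constraint on $\partial_1\xi^1$ in the interior, and the displacement part of $E_\sigma$ stores $\nabla\xi$ only at horizontal regularity $\ge\sigma_0$. (The Neumann expansion of $\widetilde{\mathcal A}_1^1$ even produces a pure $(\partial_1\xi^1)^2$ quadratic term with no horizontal factor to exploit.) The paper's own proof does not address these cases either: it proves (3) on $a_{23}$ and declares the remaining entries analogous. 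This is harmless for the sequel, since the only application of Part (3) at $\sigma<\sigma_0$ is through \eqref{est-sigma-B-2} in Lemma~\ref{lem-est-B-Bv-1}, which explicitly assumes that every term of $B(\widetilde{\mathcal A})$ already carries a $\partial_\alpha\xi^i$ factor --- precisely the structure that your argument and the paper's do cover.
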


\begin{proof}
For the first third inequalities in \eqref{est-aij-J-1}, we need only prove it for the term $a_{12}$, and the same proof remains valid for the others.

Indeed, for the first second in  \eqref{est-aij-J-1},
due to the expression of $a_{ij}$, we obtain from Lemma \ref{lem-product-law-1} that
\begin{equation}\label{est-aij-infty-1}
  \begin{split}
    & \|a_{12}\|_{L^\infty}\lesssim \|\partial_1\xi^2\|_{L^\infty}
  +\|\nabla\xi\|_{L^\infty}^2\lesssim E_{s_0}^{\frac{1}{2}}+E_{s_0} \lesssim  E_{s_0}^{\frac{1}{2}}.
            \end{split}
\end{equation}
Notice that
\begin{equation}\label{a12-Hk-1}
  \begin{split}
  \|\dot{\Lambda}_h^{\sigma}a_{12}\|_{L^2}&\lesssim \|\dot{\Lambda}_h^{\sigma} \partial_1\xi^2\|_{L^2}
+\|\dot{\Lambda}_h^{\sigma}(\nabla\xi\otimes\nabla\xi) \|_{L^2}+\|\dot{\Lambda}_h^{\sigma}(\nabla\xi\otimes\nabla^2\xi) \|_{L^2},\\
\|\dot{\Lambda}_h^{\sigma}a_{12}\|_{H^1} &\lesssim\|\dot{\Lambda}_h^{\sigma}a_{12}\|_{L^2}
+\|\dot{\Lambda}_h^{\sigma}\nabla\,a_{12}\|_{L^2}\\
&\lesssim \|\dot{\Lambda}_h^{\sigma} \partial_1\xi^2\|_{H^1}
+\|\dot{\Lambda}_h^{\sigma}(\nabla\xi\otimes\nabla\xi) \|_{L^2}+\|\dot{\Lambda}_h^{\sigma}(\nabla\xi\otimes\nabla^2\xi) \|_{L^2},
  \end{split}
\end{equation}
for $\sigma \geq \sigma_0$, thanks to \eqref{lem-product-law-1}, we may get
\begin{equation*}
  \begin{split}
  &\|\dot{\Lambda}_h^{\sigma}(\nabla\xi\otimes\nabla\xi) \|_{L^2}\lesssim \|\dot{\Lambda}_h^{\sigma}\nabla\xi\|_{L^2}\|\nabla\xi\|_{L^\infty}\lesssim E_{\sigma}^{\frac{1}{2}} E_{s_0}^{\frac{1}{2}},\\
&\|\dot{\Lambda}_h^{\sigma}(\nabla\xi\otimes\nabla\xi) \|_{L^2}+\|\dot{\Lambda}_h^{\sigma}(\nabla\xi\otimes\nabla^2\xi) \|_{L^2}\lesssim \|\dot{\Lambda}_h^{\sigma}\nabla\xi\|_{L^2}\|\nabla\xi\|_{L^\infty}\\
&\qquad\qquad\qquad+\|\dot{\Lambda}_h^{\sigma}\nabla\xi\|_{L^\infty_{x_1}L^2_h}
\|\nabla^2\xi\|_{L^2_{x_1}L^\infty_h}+\|\nabla\xi\|_{L^\infty}\|\dot{\Lambda}_h^{\sigma}\nabla^2\xi\|_{L^2}\lesssim E_{\sigma+1}^{\frac{1}{2}} E_{s_0}^{\frac{1}{2}},
  \end{split}
\end{equation*}
which along with \eqref{a12-Hk-1} follows $\|\dot{\Lambda}_h^{\sigma}a_{12}\|_{L^2}\lesssim E_{\sigma}^{\frac{1}{2}}$ and $\|\dot{\Lambda}_h^{\sigma}a_{12}\|_{H^1}\lesssim E_{\sigma+1}^{\frac{1}{2}}$.

For the third inequality in \eqref{est-aij-J-1}, we need only prove it for the term $a_{23}$ with $\sigma=\sigma_0-1$, and the same proof remains valid for the others. In fact, one has
\begin{equation}\label{a23-Hk-1}
  \begin{split}
\|\dot{\Lambda}_h^{\sigma_0-1}a_{23}\|_{H^1} &\lesssim \|\dot{\Lambda}_h^{\sigma_0-1} \partial_2\xi^3\|_{H^1}
+\|\dot{\Lambda}_h^{\sigma_0-1}(\nabla_h\xi\otimes\nabla\xi) \|_{L^2}\\
&\qquad\qquad +\|\dot{\Lambda}_h^{\sigma_0-1}(\nabla_h\xi\otimes\nabla^2\xi) \|_{L^2}+\|\dot{\Lambda}_h^{\sigma_0-1}(\nabla\nabla_h\xi\otimes\nabla\xi) \|_{L^2}.
  \end{split}
\end{equation}
Due to \eqref{fg-product-1}, we may get
\begin{equation*}
  \begin{split}
  &\|\dot{\Lambda}_h^{\sigma_0-1}(\nabla_h\xi\otimes\nabla\xi) \|_{L^2}+\|\dot{\Lambda}_h^{\sigma_0-1}(\nabla_h\xi\otimes\nabla^2\xi) \|_{L^2}+\|\dot{\Lambda}_h^{\sigma_0-1}(\nabla\nabla_h\xi\otimes\nabla\xi) \|_{L^2}\\
  &\lesssim \|\dot{\Lambda}_h^{\sigma_0-1}\nabla_h\xi\|_{H^1}
  \|\dot{\Lambda}_h^{\sigma_0}{\Lambda}_h^{s_0-1-\sigma_0}\nabla\xi\|_{H^1}\lesssim E_{\sigma_0}^{\frac{1}{2}} E_{s_0}^{\frac{1}{2}},
  \end{split}
\end{equation*}
which along with \eqref{a23-Hk-1} follows $\|\dot{\Lambda}_h^{\sigma_0-1}a_{23}\|_{H^1}\lesssim E_{\sigma_0}^{\frac{1}{2}}$.

The proof of the lemma is thus completed.
\end{proof}

With Lemma \ref{lem-est-aij-1} in hand, we estimate $\mathcal{B}_{j, i}^{\alpha}$ and $\mathcal{B}_{j, i}^{\alpha}\partial_\alpha\,v^i$ with $j=1, \cdots, 9.$ For simplicity, we denote $\mathcal{B}_{j, i}^{\alpha}$ by $B=B(\widetilde{\mathcal{A}})$.
\begin{lem}\label{lem-est-B-Bv-1}
Let $2 \leq  k\in \mathbb{N}$, $B=B(\widetilde{\mathcal{A}})$, if $(\lambda,\,\sigma_0) \in (0, 1)$ satisfies $1-\lambda< \sigma_0\leq 1-\frac{1}{2}\lambda$, and $E_{s_0}(t) \leq 1$ for all $t\in [0, T]$, then there hold that $\forall\,t\in [0, T]$
\begin{equation}\label{est-B-k-1}
  \begin{split}
  &\|\dot{\Lambda}_h^{\sigma}B(\widetilde{\mathcal{A}})\|_{H^1}\lesssim E_{\sigma+1}^{\frac{1}{2}}\quad\text{if} \quad  \sigma \geq  \sigma_0,\\
  & \|\dot{\Lambda}_h^{\sigma}(B(\widetilde{\mathcal{A}})\nabla_h\,v)\|_{H^1}\lesssim E_{s_0}^{\frac{1}{2}}\dot{\mathcal{D}}_{s_0}^{\frac{1}{2}} \quad\text{if} \quad  \sigma \in [-\lambda, \sigma_0],\\
  &\|\dot{\Lambda}_h^{\sigma}(B(\widetilde{\mathcal{A}})\nabla\,v)\|_{H^1}\lesssim E_{\sigma+1}^{\frac{1}{2}}\dot{\mathcal{D}}_{s_0}^{\frac{1}{2}}+E_{s_0}^{\frac{1}{2}} \dot{\mathcal{D}}_{\sigma+1}^{\frac{1}{2}}\quad\text{if} \quad  \sigma \geq \sigma_0,\\
 &\|\dot{\Lambda}_h^{\sigma}\partial_tB(\widetilde{\mathcal{A}})\|_{H^1}\lesssim \dot{\mathcal{D}}_{\sigma+1}^{\frac{1}{2}}+ E_{\sigma+1}^{\frac{1}{2}}\dot{\mathcal{D}}_{s_0}^{\frac{1}{2}}\quad\text{if} \quad  \sigma \geq \sigma_0.
  \end{split}
\end{equation}
If, moreover, all the terms in $B(\widetilde{\mathcal{A}})$ have at least one $\partial_{\alpha}\xi^i$-type factor with $i=1, 2, 3$ and $\alpha=2, 3$, then we have
\begin{equation}\label{est-sigma-B-2}
  \begin{split}
  &\|\dot{\Lambda}_h^{\sigma}B(\widetilde{\mathcal{A}})\|_{H^1}\lesssim E_{\sigma+1}^{\frac{1}{2}}\,\,\text{if}\, \, \sigma \geq  \sigma_0-1,\quad\|\dot{\Lambda}_h^{\sigma}(B(\widetilde{\mathcal{A}})\nabla\,v)\|_{H^1}\lesssim E_{s_0}^{\frac{1}{2}}\dot{\mathcal{D}}_{s_0}^{\frac{1}{2}} \,\,\text{if} \,\,  \sigma \in [-\lambda, \sigma_0].
  \end{split}
\end{equation}
\end{lem}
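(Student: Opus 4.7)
\smallskip

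\textbf{Proof plan for Lemma \ref{lem-est-B-Bv-1}.} The starting observation is that every nonlinear coefficient $\mathcal{B}_{j,i}^{\alpha}$ listed between \eqref{deform-bdry-1} and \eqref{bdry-tq-1} is a polynomial in the entries of $\widetilde{\mathcal{A}}$ and of $J^{\pm1}-1$ (because $\mathcal{A}=(I+D\xi)^{-T}$ and $J=\det(I+D\xi)$, expanded as a convergent series under the smallness $E_{s_0}\le 1$ coming from Lemma \ref{lem-size-xi}). Consequently $B(\widetilde{\mathcal{A}})=\sum_\beta c_\beta \prod_{k=1}^{N_\beta} X^{(\beta)}_k$, where each $X^{(\beta)}_k$ is an entry of $\widetilde{\mathcal{A}}$ or of $J^{\pm1}-1$ and $N_\beta\ge 1$. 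The strategy for every bound is then to apply the generalized Leibniz rule, keep one distinguished factor at the ``hard'' regularity $\dot{\Lambda}_h^{\sigma}$, and absorb the remaining factors at the baseline regularity $\dot{\Lambda}_h^{\sigma_0}\Lambda_h^{s_0-1-\sigma_0}$ where they are bounded by $E_{s_0}^{1/2}\lesssim 1$ thanks to Lemma \ref{lem-est-aij-1}(2) and the product estimates of Lemma \ref{lem-productlaw-1aa}.

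For the first inequality in \eqref{est-B-k-1}, the single-factor case ($N_\beta=1$) is exactly Lemma \ref{lem-est-aij-1}(2) for $\sigma\ge\sigma_0$. For the multi-factor case, I would apply the last two inequalities of Lemma \ref{lem-productlaw-1aa} with $f_1=X^{(\beta)}_1$ carrying the high horizontal regularity and $f_2=\prod_{k\ge 2} X^{(\beta)}_k$, iterating, to reduce everything to factors of type $\dot{\Lambda}_h^{\sigma}X^{(\beta)}_1 \in H^1$ and $\dot{\Lambda}_h^{\sigma_0}\Lambda_h^{s_0-1-\sigma_0}X^{(\beta)}_k\in H^1$. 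The third inequality (for $B\cdot\nabla v$) follows by treating $\nabla v$ as one more factor: either $\nabla v$ inherits the high $\dot{\Lambda}_h^{\sigma}$ and is bounded by $\dot{\mathcal{D}}_{\sigma+1}^{1/2}$ while $B$ is absorbed at baseline, or $B$ takes the high derivative (controlled by $E_{\sigma+1}^{1/2}$ via the first inequality already proved) and $\nabla v$ is bounded by $\dot{\mathcal{D}}_{s_0}^{1/2}$. The time-derivative estimate reduces to the third: by \eqref{identity-Lagrangian-1}, $\partial_t\widetilde{\mathcal{A}}$ and $\partial_t(J^{\pm1}-1)$ are polynomials in $\mathcal{A}$ times $\nabla v$, so $\partial_t B(\widetilde{\mathcal{A}})=\widetilde B(\widetilde{\mathcal{A}})\nabla v$ for a new polynomial $\widetilde B$, and the desired bound follows from the $B\cdot\nabla v$ estimate already established.

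The negative-regularity estimate (second line of \eqref{est-B-k-1}) is the most delicate point. For $\sigma=-\lambda$ I would apply the first two inequalities of Lemma \ref{lem-productlaw-1aa} with $f_1=\nabla_h v$, $f_2=B$:
\begin{equation*}
\|\dot{\Lambda}_h^{-\lambda}(B\,\nabla_h v)\|_{H^1}\lesssim \|\dot{\Lambda}_h^{\sigma_0-1}\nabla_h v\|_{H^1}\,\|\dot{\Lambda}_h^{2(1-\ell_0)}B\|_{H^1}.
\end{equation*}
The first factor is controlled by $\dot{\mathcal{D}}_{s_0}^{1/2}$ because $\dot{\mathcal{D}}_{s_0}$ already contains $\dot{\Lambda}_h^{\sigma_0}\nabla v$ in $H^1$. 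For the second factor, the hypothesis $\sigma_0\le 1-\tfrac12\lambda$ gives $2(1-\ell_0)=2-\sigma_0-\lambda\ge\sigma_0$, so (a) applies and yields $\lesssim E_{2(1-\ell_0)+1}^{1/2}\lesssim E_{s_0}^{1/2}$, using $s_0\ge 2+\sigma_0\ge 3-\sigma_0-\lambda$ (the latter is equivalent to $2\sigma_0\ge 1-\lambda$, which follows from $\sigma_0>1-\lambda$). Intermediate $\sigma\in(-\lambda,\sigma_0]$ is handled by interpolation/monotonicity, together with the $\sigma=\sigma_0$ endpoint, which falls under the third inequality already proved (with $\nabla_h v$ in place of $\nabla v$, a strictly weaker norm).

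For the improved statement \eqref{est-sigma-B-2}: the extra assumption that every monomial of $B$ contains a factor $\partial_\alpha\xi^i$ (with $\alpha\in\{2,3\}$) means each term can be factored as $\partial_\alpha\xi^i\cdot(\text{rest})$, and the distinguished factor $\partial_\alpha\xi^i$ enjoys the low-regularity bound of Lemma \ref{lem-est-aij-1}(3) down to $\sigma=\sigma_0-1$ (which is precisely why entries $(i,j)\ne(1,2),(1,3)$ are singled out there). Putting the high horizontal derivative on this ``good'' factor and absorbing the rest exactly as in paragraph one extends (a) to $\sigma\ge\sigma_0-1$, giving the first bound in \eqref{est-sigma-B-2}. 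For the second bound in \eqref{est-sigma-B-2}, the horizontal derivative required to make the argument of paragraph three go through is now supplied by $B$ itself rather than by $v$, so the same product-law scheme applies with $\nabla v$ in place of $\nabla_h v$. The main obstacle in writing the proof carefully is bookkeeping: ensuring the indices $(\sigma,\sigma_0,\lambda,\ell_0,s_0)$ always fall into the ranges permitted by Lemmas \ref{lem-product-law-1}, \ref{lem-productlaw-1aa}, and \ref{lem-est-aij-1}, which is the reason the relations $\sigma_0>1-\lambda$, $\sigma_0\le 1-\lambda/2$, $\ell_0=(\sigma_0+\lambda)/2$, and $s_0\ge 2+\sigma_0$ have been arranged at the outset.
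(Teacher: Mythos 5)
Your proposal follows the paper's proof essentially step for step: the first bounds in \eqref{est-B-k-1} and \eqref{est-sigma-B-2} from Lemma \ref{lem-est-aij-1}, the $B\cdot\nabla_h v$ bound at $\sigma=-\lambda$ from the product law of Lemma \ref{lem-productlaw-1aa} with $f_1=\nabla_h v$, $f_2=B$ (using $\sigma_0 \le 2(1-\ell_0) \le s_0-1$), the $B\cdot\nabla v$ bound for $\sigma\ge\sigma_0$ by distributing the high horizontal derivative between the two factors, and the improved bound in \eqref{est-sigma-B-2} by letting the $\partial_\alpha\xi$ factor absorb the low regularity via Lemma \ref{lem-est-aij-1}(3) while $\nabla v$ takes the $\dot{\Lambda}_h^{2(1-\ell_0)}$ weight. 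The one place to be slightly more careful is the $\partial_t B$ step: since $\partial_t\widetilde{\mathcal{A}}$ contains a piece that is linear in $\nabla v$ with no accompanying $\widetilde{\mathcal{A}}$ factor, you should write $\partial_t B = c\,\nabla v + B_1(\widetilde{\mathcal{A}})\nabla v$ (as the paper does) rather than absorb everything into a single ``new polynomial $\widetilde B(\widetilde{\mathcal{A}})$'' vanishing at the origin — the linear piece is exactly what produces the stand-alone $\dot{\mathcal{D}}_{\sigma+1}^{1/2}$ term in the stated bound.
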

\begin{proof}
In view of Lemma  \ref{lem-est-aij-1}, it immediately follows the first inequalities in \eqref{est-B-k-1} and \eqref{est-sigma-B-2}. Let's now focus on the product estimates of $B(\widetilde{\mathcal{A}})\nabla\,v$ and $B(\widetilde{\mathcal{A}})\nabla_h\,v$.

We first use the product laws \eqref{fg-product-1} to get
\begin{equation*}
  \begin{split}
&\|\dot{\Lambda}_h^{-\lambda}(B(\widetilde{\mathcal{A}})\nabla_h\,v)\|_{H^1}\lesssim  \|\dot{\Lambda}_h^{\sigma_0-1}\nabla_h\,v\|_{H^1}
\|\dot{\Lambda}_h^{\sigma_0} \Lambda_h^{s_0-1-\sigma_0} B(\widetilde{\mathcal{A}})\|_{H^1}
\lesssim E_{s_0}^{\frac{1}{2}} \dot{\mathcal{D}}_{\sigma_0}^{\frac{1}{2}}.
  \end{split}
\end{equation*}
Similar estimates hold for $\|\dot{\Lambda}_h^{\sigma}(B(\widetilde{\mathcal{A}})\nabla_h\,v)\|_{H^1}$ with $\sigma \in [-\lambda, s_0]$.

On the other hand, thanks to \eqref{fg-product-1} again, for $ \sigma \geq \sigma_0$, it can be found that
\begin{equation}\label{est-Bv-large-4}
  \begin{split}
\|\dot{\Lambda}_h^{\sigma}(B(\widetilde{\mathcal{A}})\nabla_h\,v)\|_{H^1}&\lesssim \|\dot{\Lambda}_h^{\sigma}B(\widetilde{\mathcal{A}}) \|_{H^1}\|\dot{\Lambda}_h^{\sigma_0} \Lambda_h^{s_0-1-\sigma_0} \nabla_h\,v\|_{H^1}\\
&\quad+\|\dot{\Lambda}_h^{\sigma_0} \Lambda_h^{s_0-1-\sigma_0} B(\widetilde{\mathcal{A}})\|_{H^1}\|\dot{\Lambda}_h^{\sigma} \nabla_h\,v\|_{H^1}\lesssim E_{\sigma+1}^{\frac{1}{2}}\dot{\mathcal{D}}_{s_0}^{\frac{1}{2}} +E_{s_0}^{\frac{1}{2}} \dot{\mathcal{D}}_{\sigma+1}^{\frac{1}{2}} ,
  \end{split}
\end{equation}
which yields the third inequality in \eqref{est-B-k-1}.

Since $\partial_tB(\widetilde{\mathcal{A}})$ has the form like $\nabla\,v+\,B_1(\widetilde{\mathcal{A}})\nabla\,v$ with smooth function $B_1(\cdot)$, we immediately get the forth inequality in \eqref{est-B-k-1} from \eqref{est-Bv-large-4}.

Finally, for the second inequality in \eqref{est-sigma-B-2}, due to the same argument above, we need only prove it in the case of $\sigma=-\lambda$. Since all the terms in $B(\widetilde{\mathcal{A}})$ have at least one $\partial_{\alpha}\xi^i$-type factor, we get from the product law \eqref{fg-product-1} that
\begin{equation*}
  \begin{split}
\|\dot{\Lambda}_h^{-\lambda}(B(\widetilde{\mathcal{A}})\nabla \,v)\|_{H^1} \lesssim \|\dot{\Lambda}_h^{\sigma_0-1}B(\widetilde{\mathcal{A}}) \|_{H^1}\|\dot{\Lambda}_h^{2-\sigma_0-\lambda} \nabla \,v\|_{H^1}\lesssim E_{s_0}^{\frac{1}{2}} \dot{\mathcal{D}}_{s_0}^{\frac{1}{2}},
  \end{split}
\end{equation*}
so the third inequality in \eqref{est-sigma-B-2} holds true. This finishes the proof of Lemma \ref{lem-est-B-Bv-1}.
\end{proof}

Let us deal with the terms with respect to the source term $\mathfrak{g}$.

\begin{lem}\label{lem-est-g-1}
If $s_0>2$, $\lambda,\,\sigma_0 \in (0, 1)$ satisfies $1-\lambda< \sigma_0\leq 1-\frac{1}{2}\lambda$, $j=1, 2, 3$, and $E_{s_0}(t) \leq 1$ for all $t\in [0, T]$, then there hold that $\forall\,t\in [0, T]$
\begin{equation}\label{est-g-2}
\begin{split}
&\|\dot{\Lambda}_h^{-\lambda}\,(\widetilde{\mathfrak{g}}_{jj},\,\mathfrak{g}_{jj})\|_{L^2}
 +\|\dot{\Lambda}_h^{\sigma_0}\,(\widetilde{\mathfrak{g}}_{jj},\,\mathfrak{g}_{jj})\|_{L^2}\lesssim \,E_{s_0}^{\frac{1}{2}} \dot{\mathcal{D}}_{\sigma_0+1}^{\frac{1}{2}},\\
 &\|\dot{\Lambda}_h^{\sigma}\,(\widetilde{\mathfrak{g}}_{jj},\,\mathfrak{g}_{jj})\|_{L^2}\lesssim\,E_{s_0}^{\frac{1}{2}} \dot{\mathcal{D}}_{\sigma+1}^{\frac{1}{2}} \quad \forall \,\,\sigma \in [\sigma_0,\,s_0-1),\\
  &\|\dot{\Lambda}_h^{\sigma}\,(\widetilde{\mathfrak{g}}_{jj},\,\mathfrak{g}_{jj})\|_{L^2}\lesssim E_{\sigma+1}^{\frac{1}{2}} \dot{\mathcal{D}}_{s_0}^{\frac{1}{2}}+ E_{s_0}^{\frac{1}{2}} \dot{\mathcal{D}}_{\sigma+1}^{\frac{1}{2}} \quad \forall \,\, \sigma  \geq s_0-1,\\
   &\|\dot{\Lambda}_h^{-\lambda} \mathfrak{g}\|_{L^2}\lesssim E_{s_0}^{\frac{1}{2}}\,
(\dot{\mathcal{E}}_{\sigma_0+\frac{1}{2}}^{\frac{1}{2}}  + \dot{\mathcal{D}}_{\sigma_0+1}^{\frac{1}{2}}),\quad\|\dot{\Lambda}_h^{\sigma}\, \mathfrak{g} \|_{L^2}
 \lesssim\,E_{s_0}^{\frac{1}{2}} \dot{\mathcal{D}}_{\sigma+1}^{\frac{1}{2}} \quad \forall \,\,\sigma \in [\sigma_0,\,s_0-1),\\
  &\|\dot{\Lambda}_h^{\sigma}\,\mathfrak{g}\|_{L^2}\lesssim E_{\sigma+1}^{\frac{1}{2}} \dot{\mathcal{D}}_{s_0}^{\frac{1}{2}}+ E_{s_0}^{\frac{1}{2}} \dot{\mathcal{D}}_{\sigma+1}^{\frac{1}{2}} \quad \forall \,\, \sigma  \geq s_0-1.
\end{split}
\end{equation}
\end{lem}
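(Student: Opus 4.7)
The quantities $\widetilde{\mathfrak{g}}_{jj}$ and $\mathfrak{g}_{jj}$ are finite sums of products of two schematic types: (i) $B(\widetilde{\mathcal{A}})\,\nabla^2 v$, where $B$ is a smooth matrix-valued function vanishing at $\widetilde{\mathcal{A}}=0$; and (ii) $B_1(\widetilde{\mathcal{A}})\,\nabla\widetilde{\mathcal{A}}\otimes\nabla v$, which can be rewritten as $B_1(\widetilde{\mathcal{A}})\,\nabla^2\xi\otimes \nabla v$. The source term $\mathfrak{g}$ differs only by the extra block $-\bar{\rho}(x_1)\widetilde{\mathcal{A}}^i_k\partial_iq$. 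The plan is therefore to run each of the six claimed bounds as a routine application of the product laws of Lemma~\ref{lem-productlaw-1aa} to these schematic products, using Lemma~\ref{lem-est-B-Bv-1} to control the $B(\widetilde{\mathcal{A}})$-factor at the appropriate horizontal regularity.

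For the $\dot{\Lambda}_h^{-\lambda}$ bound on $(\widetilde{\mathfrak{g}}_{jj},\mathfrak{g}_{jj})$, I would apply the first inequality of \eqref{fg-product-1} with $f_1=B(\widetilde{\mathcal{A}})$ and $f_2=\nabla^2 v$. Lemma~\ref{lem-est-B-Bv-1} gives $\|\dot{\Lambda}_h^{\sigma_0-1}B(\widetilde{\mathcal{A}})\|_{H^1}\lesssim E_{s_0}^{1/2}$ whenever the factor contains at least one $\nabla\xi$, while $\|\dot{\Lambda}_h^{2(1-\ell_0)}\nabla^2 v\|_{L^2}$ (note $2(1-\ell_0)=2-\sigma_0-\lambda\in(\sigma_0-1,\sigma_0]$) is controlled by $\dot{\mathcal{D}}_{\sigma_0+1}^{1/2}$ via the $\|\dot{\Lambda}_h^{\sigma_0}\Lambda_h^{s-1-\sigma_0}\nabla^2 v\|_{L^2}$-piece of the dissipation. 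The type-(ii) terms are handled by the same split, treating $B_1(\widetilde{\mathcal{A}})\nabla^2\xi$ as the low-regularity factor and $\nabla v$ as the high-regularity one. This yields the first inequality, and the $\dot{\Lambda}_h^{\sigma_0}$-bound follows from the third line of \eqref{fg-product-1} by the same bookkeeping.

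The medium-regularity estimates, $\sigma\in[\sigma_0,s_0-1)$, would use the fifth line of \eqref{fg-product-1}: I always place the rough $\widetilde{\mathcal{A}}$-factor in the $\|\dot{\Lambda}_h^{\sigma_0}\Lambda_h^{s_0-1-\sigma_0}\cdot\|_{H^1}\lesssim E_{s_0}^{1/2}$ slot and the derivative-of-$v$ factor in the $\|\dot{\Lambda}_h^\sigma\cdot\|_{L^2}$ slot, which sits inside $\dot{\mathcal{D}}_{\sigma+1}^{1/2}$. Once $\sigma\geq s_0-1$, neither factor is obviously the "rough" one, so both splits in the product law must be retained; this produces exactly the sum $E_{\sigma+1}^{1/2}\dot{\mathcal{D}}_{s_0}^{1/2}+E_{s_0}^{1/2}\dot{\mathcal{D}}_{\sigma+1}^{1/2}$ appearing in the statement.

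For the bounds on $\mathfrak{g}$ itself, the new contribution is $\bar{\rho}(x_1)\widetilde{\mathcal{A}}^i_k\partial_iq$. When $\sigma\geq\sigma_0$, the full gradient $\nabla q$ is already part of $\dot{\mathcal{D}}_{\sigma+1}$ (via $\|\dot{\Lambda}_h^{\sigma_0}\Lambda_h^{s-1-\sigma_0}\nabla q\|_{L^2}$), so the same three product-law splits used above close the last two claimed estimates without new input. The subtle step is the $\dot{\Lambda}_h^{-\lambda}$ bound on $\mathfrak{g}$: at such low horizontal regularity one cannot afford to place $\nabla q$ entirely in the dissipation, so I would separate $\widetilde{\mathcal{A}}^1_k\partial_1q$ from $\widetilde{\mathcal{A}}^\alpha_k\partial_\alpha q$. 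For the normal-derivative piece I apply the product law with the $\partial_1q$ factor controlled by the energy through $\|\dot{\Lambda}_h^{\sigma_0}\Lambda_h^{s-1-\sigma_0}\partial_1q\|_{L^2}$, which produces precisely the $E_{s_0}^{1/2}\dot{\mathcal{E}}_{\sigma_0+1/2}^{1/2}$ term; the tangential piece is absorbed into $E_{s_0}^{1/2}\dot{\mathcal{D}}_{\sigma_0+1}^{1/2}$ as before. The main technical obstacle throughout is therefore the correct assignment of derivative orders between the two factors in each product so that each horizontal exponent lies inside the narrow corridor allowed by Lemma~\ref{lem-productlaw-1aa}, and so that the surviving factor of $v$, $q$, or $\xi$ falls in a slot controlled by the specified energy or dissipation; the algebraic constraints $1-\lambda<\sigma_0\leq 1-\tfrac{1}{2}\lambda$ and $\ell_0=\tfrac{\sigma_0+\lambda}{2}$ are exactly what make all these exponents admissible.
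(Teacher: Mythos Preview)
Your strategy and decomposition match the paper's, but the treatment of the pressure block $\bar\rho\,\widetilde{\mathcal A}_k^i\partial_iq$ at the $-\lambda$ level has the roles reversed and misses a key step. You propose to place the normal piece $\widetilde{\mathcal A}_k^1\partial_1q$ in the energy and the tangential piece $\widetilde{\mathcal A}_k^\alpha\partial_\alpha q$ in the dissipation. For $k=2,3$ this works, but for $k=1$ the coefficients $\widetilde{\mathcal A}_1^\alpha$ are precisely the two entries excluded in Lemma~\ref{lem-est-aij-1}(3): their leading part is $\partial_1\xi^\alpha$, with no horizontal derivative to spare, so you cannot push them down to regularity $\sigma_0-1$. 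Any admissible product split then forces the $q$-factor to land as $\|\dot\Lambda_h^{\sigma_0}q\|_{L^2(\Omega)}$ rather than $\|\dot\Lambda_h^{\sigma_0}\nabla_hq\|_{L^2}$, and the former is not contained in $\dot{\mathcal D}$. The paper closes this by Poincar\'e in $x_1$ together with the boundary identity $q=Q+g\,\xi^1$ on $\Sigma_0$, producing $\|\dot\Lambda_h^{\sigma_0}\xi^1\|_{L^2(\Sigma_0)}$ and hence the $\dot{\mathcal E}_{\sigma_0+1/2}^{1/2}$ contribution. So the energy term in the $-\lambda$ bound on $\mathfrak g$ actually comes from the \emph{tangential} pressure piece, not the normal one.

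A second, smaller imprecision: for the type-(i) terms $B(\widetilde{\mathcal A})\,\partial_1^2v$ at $-\lambda$ regularity you invoke $\|\dot\Lambda_h^{\sigma_0-1}B\|_{H^1}\lesssim E_{s_0}^{1/2}$, but this needs $B$ to carry a $\partial_h\xi$-factor (the hypothesis of \eqref{est-sigma-B-2}), not merely some $\nabla\xi$. The paper explicitly records that every coefficient multiplying $\partial_1^2v$ in $\widetilde{\mathfrak g}_{jj},\mathfrak g_{jj}$ has this structure; without that structural observation the $-\lambda$ estimate on the $\partial_1^2v$-terms does not close either.
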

\begin{proof}
We first bound the terms $\widetilde{\mathfrak{g}}_{jj}$ and $\mathfrak{g}_{jj}$ with $j=1,2,3$ of $\mathfrak{g}$ in \eqref{est-g-2}. For this, we split the terms of $\widetilde{\mathfrak{g}}_{jj}$ and $\mathfrak{g}_{jj}$ in \eqref{def-g-linear-1} into two types: the one is of the second-order derivative terms of $v$ and the other is of the first-order derivative terms of $v$.

 For the first one, notice that all the coefficients $B(\widetilde{\mathcal{A}})$ in the $B(\widetilde{\mathcal{A}})\partial_1^2v$-type terms have at least one $\partial_h\xi$-type factor, the product law \eqref{fg-product-1} ensures
\begin{equation*}
\begin{split}
 &\|\dot{\Lambda}_h^{-\lambda}\,(B(\widetilde{\mathcal{A}})\partial_1^2v)\|_{L^2}\lesssim \|\dot{\Lambda}_h^{1-\sigma_0-\lambda} B(\widetilde{\mathcal{A}}) \|_{H^1}
 \|\dot{\Lambda}_h^{\sigma_0}\partial_1^2v\|_{L^2},\\
  &\|\dot{\Lambda}_h^{\sigma}\,(B(\widetilde{\mathcal{A}})\partial_1^2v)\|_{L^2}\lesssim \|\dot{\Lambda}_h^{\sigma_0}  \Lambda_h^{s_0-1-\sigma_0}B(\widetilde{\mathcal{A}}) \|_{H^1}
 \|\dot{\Lambda}_h^{\sigma}\partial_1^2v\|_{L^2}\quad \forall\,\sigma \in [\sigma_0,\,1),
\end{split}
\end{equation*}
and for $\sigma \in [1,\,s_0-1)$
\begin{equation*}
\begin{split}
   &\|\dot{\Lambda}_h^{\sigma}\,(B(\widetilde{\mathcal{A}})\partial_1^2v)\|_{L^2}\lesssim \|\dot{\Lambda}_h^{\sigma-1}\,(B(\widetilde{\mathcal{A}})\partial_h\partial_1^2v)\|_{L^2}
   +\|\dot{\Lambda}_h^{\sigma-1}\,(\partial_hB(\widetilde{\mathcal{A}})\partial_1^2v)\|_{L^2}\\
   &\lesssim \|\dot{\Lambda}_h^{\sigma_0}  \Lambda_h^{s_0-1-\sigma_0}B(\widetilde{\mathcal{A}}) \|_{H^1}
 \|\dot{\Lambda}_h^{\sigma}\partial_1^2v\|_{L^2}+\|\dot{\Lambda}_h^{s_0-2}  \partial_hB(\widetilde{\mathcal{A}}) \|_{H^1}
 \|\dot{\Lambda}_h^{2+\sigma-s_0}\partial_1^2v\|_{L^2}.
\end{split}
\end{equation*}
While for any $\sigma \geq s_0-1$
\begin{equation*}
\begin{split}
  &\|\dot{\Lambda}_h^{\sigma}\,(B(\widetilde{\mathcal{A}})\partial_1^2v)\|_{L^2}\lesssim \|\dot{\Lambda}_h^{\sigma}B(\widetilde{\mathcal{A}}) \|_{H^1}\|\dot{\Lambda}_h^{\sigma_0} \Lambda_h^{s_0-1-\sigma_0} \partial_1^2v\|_{L^2}\\
  &\qquad\qquad\qquad\qquad\qquad+\|\dot{\Lambda}_h^{\sigma_0} \Lambda_h^{s_0-1-\sigma_0}B(\widetilde{\mathcal{A}})\|_{H^1}\|\dot{\Lambda}_h^{\sigma} \partial_1^2v\|_{L^2}.
\end{split}
\end{equation*}
Thanks to Lemma \ref{lem-est-aij-1} again, we prove
\begin{equation}\label{est-g-2-6a}
\begin{split}
 &\|\dot{\Lambda}_h^{-\lambda}\,(B(\widetilde{\mathcal{A}})\partial_1^2v)\|_{L^2}\lesssim   E_{s_0}^{\frac{1}{2}} \dot{\mathcal{D}}_{\sigma_0+1}^{\frac{1}{2}},\\
 & \|\dot{\Lambda}_h^{\sigma}\,(B(\widetilde{\mathcal{A}})\partial_1^2v)\|_{L^2} \lesssim  E_{s_0}^{\frac{1}{2}} \dot{\mathcal{D}}_{\sigma+1}^{\frac{1}{2}}\quad \forall\,\sigma \in [\sigma_0,\,s_0-1),\\
 & \|\dot{\Lambda}_h^{\sigma}\,(B(\widetilde{\mathcal{A}})\partial_1^2v)\|_{L^2}\lesssim \, E_{\sigma+1}^{\frac{1}{2}} \dot{\mathcal{D}}_{s_0}^{\frac{1}{2}}+ E_{s_0}^{\frac{1}{2}} \dot{\mathcal{D}}_{\sigma+1}^{\frac{1}{2}}\quad \forall\,\sigma \geq s_0-1.
\end{split}
\end{equation}

While for the terms like $B(\widetilde{\mathcal{A}})\partial_h\nabla\,v$, thanks to Lemmas \ref{lem-productlaw-1aa} and \ref{lem-est-aij-1}, one has
\begin{equation*}
\begin{split}
 &\|\dot{\Lambda}_h^{-\lambda}\,(B(\widetilde{\mathcal{A}})\partial_h\nabla\,v)\|_{L^2}\lesssim \|\dot{\Lambda}_h^{2-\sigma_0-\lambda}B(\widetilde{\mathcal{A}}) \|_{L^\infty_{x_1}L^2_h}
 \|\dot{\Lambda}_h^{\sigma_0-1}\,\partial_h\nabla\,v\|_{L^2},\\
   &\|\dot{\Lambda}_h^{\sigma}\,(B(\widetilde{\mathcal{A}})\partial_h\nabla\,v)\|_{L^2}\lesssim \|\dot{\Lambda}_h^{\sigma_0}  \Lambda_h^{s_0-1-\sigma_0}B(\widetilde{\mathcal{A}})\|_{H^1}
 \|\dot{\Lambda}_h^{\sigma}\partial_h\nabla\,v\|_{L^2}\quad \forall\,\sigma \in [\sigma_0,\,1),\\
 &\|\dot{\Lambda}_h^{\sigma}\,(B(\widetilde{\mathcal{A}})\partial_h\nabla\,v)\|_{L^2}\lesssim \|\dot{\Lambda}_h^{\sigma_0}  \Lambda_h^{s_0-1-\sigma_0}B(\widetilde{\mathcal{A}}) \|_{H^1}
 \|\dot{\Lambda}_h^{\sigma}\partial_h\nabla\,v\|_{L^2}\\
   &\qquad\qquad+\|\dot{\Lambda}_h^{s_0-2}  \partial_hB(\widetilde{\mathcal{A}}) \|_{H^1}
 \|\dot{\Lambda}_h^{2+\sigma-s_0}\partial_h\nabla\,v\|_{L^2}\quad \forall\,\sigma \in [1,\,s_0-1),\\
  &\|\dot{\Lambda}_h^{\sigma}\,(B(\widetilde{\mathcal{A}})\partial_h\nabla\,v)\|_{L^2}\lesssim \|\dot{\Lambda}_h^{\sigma}B(\widetilde{\mathcal{A}}) \|_{H^1}\|\dot{\Lambda}_h^{\sigma_0} \Lambda_h^{s_0-1-\sigma_0}\partial_h\nabla\,v\|_{L^2}\\
  &\qquad\qquad\qquad\qquad\qquad+\|\dot{\Lambda}_h^{\sigma_0} \Lambda_h^{s_0-1-\sigma_0}B(\widetilde{\mathcal{A}})\|_{H^1}\|\dot{\Lambda}_h^{\sigma} \partial_h\nabla\,v\|_{L^2}\quad \forall\,\sigma \geq s_0-1,
\end{split}
\end{equation*}
from this and the fact $1>2-\sigma_0-\lambda \geq \sigma_0$ and the Sobolev embedding $H^1(\Omega) \hookrightarrow L^\infty_{x_1}L^2_h(\Omega)$, we arrive at
\begin{equation}\label{est-g-2-6b}
\begin{split}
 &\|\dot{\Lambda}_h^{-\lambda}\,(B(\widetilde{\mathcal{A}})\partial_h\nabla\,v)\|_{L^2}\lesssim   E_{s_0}^{\frac{1}{2}} \dot{\mathcal{D}}_{\sigma_0+1}^{\frac{1}{2}},\\
 & \|\dot{\Lambda}_h^{\sigma}\,(B(\widetilde{\mathcal{A}})\partial_h\nabla\,v)\|_{L^2} \lesssim  E_{s_0}^{\frac{1}{2}} \dot{\mathcal{D}}_{\sigma+1}^{\frac{1}{2}}\quad \forall\,\sigma \in [\sigma_0,\,s_0-1),\\
 & \|\dot{\Lambda}_h^{\sigma}\,(B(\widetilde{\mathcal{A}})\partial_h\nabla\,v)\|_{L^2}\lesssim \, E_{\sigma+1}^{\frac{1}{2}} \dot{\mathcal{D}}_{s_0}^{\frac{1}{2}}+ E_{s_0}^{\frac{1}{2}} \dot{\mathcal{D}}_{\sigma+1}^{\frac{1}{2}}\quad \forall\,\sigma \geq s_0-1.
\end{split}
\end{equation}
For the first-order derivative terms of $v$ in $\mathfrak{g}_{jj}$ and $\widetilde{\mathfrak{g}}_{jj}$ with $j=1,2,3$, notice that all of them have the forms with $\mathcal{A}_{i}^{m}\partial_{m}\mathcal{A}_{j}^{1} \partial_{1}v$ or $\mathcal{A}_{i}^{m}\partial_{m} \mathcal{A}_{j}^{\alpha}\partial_{\alpha}v$, we then apply the product law \eqref{product-law-1} and Lemma \ref{lem-est-aij-1} to deduce
\begin{equation}\label{est-g-2-7a}
\begin{split}
 &\|\dot{\Lambda}_h^{-\lambda}\,(\mathcal{A}_{i}^{m}\partial_{m}\mathcal{A}_{j}^{1} \partial_{1}v)\|_{L^2}\lesssim \|\dot{\Lambda}_h^{1-\sigma_0-\lambda}(\mathcal{A}_{i}^{m}\partial_{m}\mathcal{A}_{j}^{1})\|_{L^2}\,
 \|\dot{\Lambda}_h^{\sigma_0}\partial_1 v\|_{L^\infty_{x_1}L^2_h}\\
 &\lesssim \|\dot{\Lambda}_h^{\sigma_0-1}\dot{\Lambda}_h^{2-2\sigma_0-\lambda}(\mathcal{A}_{i}^{m}\partial_{m}\mathcal{A}_{j}^{1})\|_{L^2}\,
 \|\dot{\Lambda}_h^{\sigma_0}\partial_1 v\|_{H^1}\lesssim\,E_{s_0}^{\frac{1}{2}} \dot{\mathcal{D}}_{\sigma_0+1}^{\frac{1}{2}},
\end{split}
\end{equation}
and
\begin{equation}\label{est-g-2-7b}
\begin{split}
 &\|\dot{\Lambda}_h^{-\lambda}\,(\mathcal{A}_{i}^{m}\partial_{m} \mathcal{A}_{j}^{\alpha}\partial_{\alpha}v)\|_{L^2}\lesssim \|\dot{\Lambda}_h^{\sigma_0-1} \partial_{\alpha}v\|_{L^\infty_{x_1}L^2_h}\,
 \|\dot{\Lambda}_h^{2-\sigma_0-\lambda}(\mathcal{A}_{i}^{m}\partial_{m} \mathcal{A}_{j}^{\alpha})\|_{L^2}\\
 &\lesssim \|\dot{\Lambda}_h^{2-\sigma_0-\lambda}(\mathcal{A}_{i}^{m}\partial_{m} \mathcal{A}_{j}^{\alpha})\|_{L^2}\,\|\dot{\Lambda}_h^{ \sigma_0} v\|_{H^1}\lesssim\,E_{s_0}^{\frac{1}{2}} \dot{\mathcal{D}}_{\sigma_0+1}^{\frac{1}{2}},
\end{split}
\end{equation}
where we have used the facts $1-\sigma_0>2-2\sigma_0-\lambda \geq 0$ and $\sigma_0 \leq 2-\sigma_0-\lambda \leq s_0-1$.

Thanks to the product law \eqref{product-law-1} and Lemma \ref{lem-est-aij-1}, we obtain, for $\sigma \in [\sigma_0,\,1)$
\begin{equation*}
\begin{split}
 \|\dot{\Lambda}_h^{\sigma}\,(\mathcal{A}_{i}^{m}\partial_{m}\mathcal{A}_{j}^{1} \partial_{1}v)\|_{L^2}&\lesssim \|\dot{\Lambda}_h^{1-\sigma}(\mathcal{A}_{i}^{m}\partial_{m}\mathcal{A}_{j}^{1})\|_{L^2}\,
 \|\dot{\Lambda}_h^{\sigma}\partial_1 v\|_{L^\infty_{x_1}L^2_h}\\
 &\lesssim \|\dot{\Lambda}_h^{\sigma_0-1}\dot{\Lambda}_h^{2-\sigma_0-\sigma}
 (\mathcal{A}_{i}^{m}\partial_{m}\mathcal{A}_{j}^{1})\|_{L^2}\,
 \|\dot{\Lambda}_h^{\sigma}\partial_1 v\|_{H^1},
\end{split}
\end{equation*}
for $\sigma \in [1,\,s_0-1)$
\begin{equation*}
\begin{split}
\|\dot{\Lambda}_h^{\sigma}\,(\mathcal{A}_{i}^{m}\partial_{m} \mathcal{A}_{j}^{\alpha}\partial_{\alpha}v)\|_{L^2}
&\lesssim \|\dot{\Lambda}_h^{\sigma-1}\,(\mathcal{A}_{i}^{m}\partial_{m} \mathcal{A}_{j}^{\alpha}\partial_{\alpha}\partial_hv)\|_{L^2}+\|\dot{\Lambda}_h^{\sigma-1}\,(\partial_h(\mathcal{A}_{i}^{m}\partial_{m} \mathcal{A}_{j}^{\alpha})\partial_{\alpha}v)\|_{L^2}\\
 &\lesssim\|\dot{\Lambda}_h^{\sigma-1} \partial_{\alpha}\partial_hv\|_{L^\infty_{x_1}L^2_h}\,
 \|\dot{\Lambda}_h^{\sigma_0}{\Lambda}_h^{s_0-1-\sigma_0}(\mathcal{A}_{i}^{m}\partial_{m} \mathcal{A}_{j}^{\alpha})\|_{L^2}\\
 &\qquad+\|\dot{\Lambda}_h^{2+\sigma-s_0} \partial_{\alpha}v\|_{L^\infty_{x_1}L^2_h}\,
 \|\dot{\Lambda}_h^{s_0-2}\partial_h(\mathcal{A}_{i}^{m}\partial_{m} \mathcal{A}_{j}^{\alpha})\|_{L^2},
\end{split}
\end{equation*}
and for $\sigma  \geq s_0-1$
\begin{equation*}
\begin{split}
 \|\dot{\Lambda}_h^{\sigma}\,(\mathcal{A}_{i}^{m}\partial_{m}\mathcal{A}_{j}^{1} \partial_{1}v)\|_{L^2} \lesssim& \|\dot{\Lambda}_h^{\sigma}(\mathcal{A}_{i}^{m}\partial_{m}\mathcal{A}_{j}^{1})\|_{L^2}\,
 \|\dot{\Lambda}_h^{\sigma_0}{\Lambda}_h^{s_0-1-\sigma_0} \partial_1 v\|_{L^\infty_{x_1}L^2_{x_h}}\\
 &+\|\dot{\Lambda}_h^{\sigma_0}{\Lambda}_h^{s_0-1-\sigma_0}
 (\mathcal{A}_{i}^{m}\partial_{m}\mathcal{A}_{j}^{1})\|_{L^2}\,
 \|\dot{\Lambda}_h^{\sigma} \partial_1 v\|_{L^\infty_{x_1}L^2_{x_h}},
\end{split}
\end{equation*}
which follows that
\begin{equation}\label{est-g-2-7c}
\begin{split}
 &\|\dot{\Lambda}_h^{\sigma}\,(\mathcal{A}_{i}^{m}\partial_{m}\mathcal{A}_{j}^{1} \partial_{1}v)\|_{L^2}\lesssim\,E_{s_0}^{\frac{1}{2}} \dot{\mathcal{D}}_{\sigma+1}^{\frac{1}{2}}\quad \forall \,\,\sigma \in [\sigma_0,\,1),\\
&\|\dot{\Lambda}_h^{\sigma}\,(\mathcal{A}_{i}^{m}\partial_{m} \mathcal{A}_{j}^{\alpha}\partial_{\alpha}v)\|_{L^2}\lesssim\,E_{s_0}^{\frac{1}{2}} \dot{\mathcal{D}}_{\sigma+1}^{\frac{1}{2}} \quad \forall \,\,\sigma \in [1,\,s_0-1),\\
& \|\dot{\Lambda}_h^{\sigma}\,(\mathcal{A}_{i}^{m}\partial_{m}\mathcal{A}_{j}^{1} \partial_{1}v)\|_{L^2}\lesssim E_{\sigma+1}^{\frac{1}{2}} \dot{\mathcal{D}}_{s_0}^{\frac{1}{2}}+ E_{s_0}^{\frac{1}{2}} \dot{\mathcal{D}}_{\sigma+1}^{\frac{1}{2}} \quad \forall \,\, \sigma  \geq s_0-1.
\end{split}
\end{equation}
Similarly, we may get
\begin{equation}\label{est-g-2-7d}
\begin{split}
 &\|\dot{\Lambda}_h^{\sigma}\,(\mathcal{A}_{i}^{m}\partial_{m}\mathcal{A}_{j}^{i} \partial_{i}v)\|_{L^2}\lesssim\,E_{s_0}^{\frac{1}{2}} \dot{\mathcal{D}}_{\sigma+1}^{\frac{1}{2}} \quad \forall \,\,\sigma \in [\sigma_0,\,s_0-1),
\end{split}
\end{equation}
and
\begin{equation}\label{est-g-2-7e}
\begin{split}
 &\|\dot{\Lambda}_h^{\sigma}\,(\mathcal{A}_{i}^{m}\partial_{m}\mathcal{A}_{j}^{k} \partial_{k}v)\|_{L^2}\lesssim E_{\sigma+1}^{\frac{1}{2}} \dot{\mathcal{D}}_{s_0}^{\frac{1}{2}}+ E_{s_0}^{\frac{1}{2}} \dot{\mathcal{D}}_{\sigma+1}^{\frac{1}{2}} \quad \forall \,\, \sigma  \geq s_0-1.
\end{split}
\end{equation}
Therefore, thanks to \eqref{est-g-2-6a}-\eqref{est-g-2-7e},  we obtain that for any $j=1,2,3$
\begin{equation*}
\begin{split}
 &\|\dot{\Lambda}_h^{-\lambda}\,(\widetilde{\mathfrak{g}}_{jj},\,\mathfrak{g}_{jj})\|_{L^2}
 +\|\dot{\Lambda}_h^{\sigma_0}\,(\widetilde{\mathfrak{g}}_{jj},\,\mathfrak{g}_{jj})\|_{L^2}\lesssim \,E_{s_0}^{\frac{1}{2}} \dot{\mathcal{D}}_{\sigma_0+1}^{\frac{1}{2}},\\
 &\|\dot{\Lambda}_h^{\sigma}\,(\widetilde{\mathfrak{g}}_{jj},\,\mathfrak{g}_{jj})\|_{L^2}\lesssim\,E_{s_0}^{\frac{1}{2}} \dot{\mathcal{D}}_{\sigma+1}^{\frac{1}{2}} \quad \forall \,\,\sigma \in [\sigma_0,\,s_0-1),\\
  &\|\dot{\Lambda}_h^{\sigma}\,(\widetilde{\mathfrak{g}}_{jj},\,\mathfrak{g}_{jj})\|_{L^2}\lesssim E_{\sigma+1}^{\frac{1}{2}} \dot{\mathcal{D}}_{s_0}^{\frac{1}{2}}+ E_{s_0}^{\frac{1}{2}} \dot{\mathcal{D}}_{\sigma+1}^{\frac{1}{2}} \quad \forall \,\, \sigma  \geq s_0-1.
\end{split}
\end{equation*}
For the pressure terms in $\mathfrak{g}$, from \eqref{product-law-1}, it can be obtained that
\begin{equation*}
\begin{split}
 &\|\dot{\Lambda}_h^{-\lambda}(\mathcal{A}_2^1\partial_1q)\|_{L^2}\lesssim \|\dot{\Lambda}_h^{1-\sigma_0-\lambda}\mathcal{A}_2^1\|_{L^\infty_{x_1}L^2_h}
 \|\dot{\Lambda}_h^{\sigma_0} \partial_1q \|_{L^2}\lesssim E_{s_0}^{\frac{1}{2}} \dot{\mathcal{D}}_{\sigma_0+1}^{\frac{1}{2}}.
\end{split}
\end{equation*}
While for the term about $\partial_hq$, we have
\begin{equation*}\label{est-g-2-10}
\begin{split}
 &\|\dot{\Lambda}_h^{-\lambda}( \mathcal{A}_1^h \partial_hq)\|_{L^2}\lesssim \|\dot{\Lambda}_h^{2-\sigma_0-\lambda}\mathcal{A}_1^h\|_{L^\infty_{x_1}L^2_h}
 \|\dot{\Lambda}_h^{\sigma_0-1} \partial_hq \|_{L^2}\lesssim E_{s_0}^{\frac{1}{2}}\,
 \|\dot{\Lambda}_h^{\sigma_0}q \|_{L^2}.\\
\end{split}
\end{equation*}
To control $ \|\dot{\Lambda}_h^{\sigma_0}q \|_{L^2}$, we employ Poincare's inequality and the relation $q={Q}+g\,\xi^1$ to get
\begin{equation*}
\begin{split}
 &\|\dot{\Lambda}_h^{\sigma_0}q\|_{L^2(\Omega)}\lesssim \|\dot{\Lambda}_h^{\sigma_0}q\|_{L^2(\Sigma_0)}+\|\dot{\Lambda}_h^{\sigma_0}\partial_1q \|_{L^2(\Omega)} \lesssim \|\dot{\Lambda}_h^{\sigma_0}\xi^1\|_{L^2(\Sigma_0)}+ \|\dot{\Lambda}_h^{\sigma_0} {Q}\|_{L^2(\Sigma_0)}+ \dot{\mathcal{D}}_{\sigma_0+1}^{\frac{1}{2}}.
\end{split}
\end{equation*}
It thus follows
\begin{equation*}\label{est-g-2-11}
\begin{split}
 &\|\dot{\Lambda}_h^{-\lambda}( \mathcal{A}_1^h \partial_hq)\|_{L^2}\lesssim E_{s_0}^{\frac{1}{2}}\,
(\|\dot{\Lambda}_h^{\sigma_0}\xi^1\|_{L^2(\Sigma_0)}+ \|\dot{\Lambda}_h^{\sigma_0} {Q}\|_{L^2(\Sigma_0)}+ \dot{\mathcal{D}}_{\sigma_0+1}^{\frac{1}{2}}).\\
\end{split}
\end{equation*}
Repeating the above argument to the other pressure terms in $\mathfrak{g}$ ensures that
\begin{equation*}
\begin{split}
 &\|\dot{\Lambda}_h^{-\lambda}(\mathcal{A}_2^1\partial_1q)\|_{L^2}
 +\|\dot{\Lambda}_h^{-\lambda}(\mathcal{A}_1^h\partial_hq)\|_{L^2}
 +\|\dot{\Lambda}_h^{-\lambda}((\mathcal{A}_1^1-1)\partial_1q)\|_{L^2} +\|\dot{\Lambda}_h^{-\lambda}((\mathcal{A}_2^2-1)\partial_2q)\|_{L^2}\\
 &+\|\dot{\Lambda}_h^{-\lambda}( \mathcal{A}_2^3 \partial_3q)\|_{L^2}+\|\dot{\Lambda}_h^{-\lambda}(\mathcal{A}_3^1\partial_1q)\|_{L^2}
 +\|\dot{\Lambda}_h^{-\lambda}((\mathcal{A}_3^3-1)\partial_3q)\|_{L^2}\\
 &\lesssim E_{s_0}^{\frac{1}{2}}\,
(\|\dot{\Lambda}_h^{\sigma_0}\xi^1\|_{L^2(\Sigma_0)}+ \|\dot{\Lambda}_h^{\sigma_0} {Q}\|_{L^2(\Sigma_0)}+ \dot{\mathcal{D}}_{\sigma_0+1}^{\frac{1}{2}}).
\end{split}
\end{equation*}
Consequently, we conclude that
\begin{equation*}
\begin{split}
&\|\dot{\Lambda}_h^{-\lambda} \mathfrak{g}\|_{L^2}\lesssim E_{s_0}^{\frac{1}{2}}\,
(\|\dot{\Lambda}_h^{\sigma_0}\xi^1\|_{L^2(\Sigma_0)}+ \|\dot{\Lambda}_h^{\sigma_0} {Q}\|_{L^2(\Sigma_0)}+ \dot{\mathcal{D}}_{\sigma_0+1}^{\frac{1}{2}})\lesssim E_{s_0}^{\frac{1}{2}}\,
(\dot{\mathcal{E}}_{\sigma_0+\frac{1}{2}}^{\frac{1}{2}}  + \dot{\mathcal{D}}_{\sigma_0+1}^{\frac{1}{2}}).
\end{split}
\end{equation*}
In the same manner it is easy to get
\begin{equation*}
\begin{split}
  &\|\dot{\Lambda}_h^{\sigma}\, \mathfrak{g} \|_{L^2}
 \lesssim\,E_{s_0}^{\frac{1}{2}} \dot{\mathcal{D}}_{\sigma+1}^{\frac{1}{2}} \quad \forall \,\,\sigma \in [\sigma_0,\,s_0-1),\\
  &\|\dot{\Lambda}_h^{\sigma}\,\mathfrak{g}\|_{L^2}\lesssim E_{\sigma+1}^{\frac{1}{2}} \dot{\mathcal{D}}_{s_0}^{\frac{1}{2}}+ E_{s_0}^{\frac{1}{2}} \dot{\mathcal{D}}_{\sigma+1}^{\frac{1}{2}} \quad \forall \,\, \sigma  \geq s_0-1.
\end{split}
\end{equation*}
The proof of the lemma is therefore accomplished.
\end{proof}

\begin{lem}\label{lem-est-G-norm-1}
Let $s_0>2$. If $(\lambda,\,\sigma_0) \in (0, 1)$ satisfies $1-\lambda< \sigma_0\leq 1-\frac{1}{2}\lambda$, $j=1, 2, 3$, $\sigma>1$, and $E_{s_0}(t) \leq 1$ for all $t\in [0, T]$, then there hold that $\forall\,t\in [0, T]$
\begin{equation}\label{est-GQ1-norm-1}
\begin{split}
 &\|\dot{\Lambda}_h^{-\lambda}(\mathcal{G}^{j}_{{Q}, 1},\,\mathcal{G}^{j}_{{Q}, 2})\|_{H^1}+\|\dot{\Lambda}_h^{\sigma_0}(\mathcal{G}^{j}_{{Q}, 1},\,\mathcal{G}^{j}_{{Q}, 2})\|_{H^1}\lesssim E_{s_0}^{\frac{1}{2}}\|\dot{\Lambda}_h^{\sigma_0} \Lambda_h^{\frac{1}{2}} {Q}\|_{L^2(\Sigma_0)},\\
     &\|\dot{\Lambda}_h^{-\lambda}\mathcal{G}^{j}_{\partial_hv} \|_{L^2} + \|\dot{\Lambda}_h^{\sigma_0}\mathcal{G}^{j}_{\partial_hv} \|_{L^2} \lesssim E_{s_0}^{\frac{1}{2}}\|\dot{\Lambda}_h^{\sigma_0} \Lambda_h v\|_{L^2},\\
    &\|\dot{\Lambda}_h^{-\lambda}\mathcal{G}^{j}_{\partial_hv} \|_{H^1} + \|\dot{\Lambda}_h^{\sigma_0}\mathcal{G}^{j}_{\partial_hv} \|_{H^1} \lesssim E_{s_0}^{\frac{1}{2}}\|\dot{\Lambda}_h^{\sigma_0} \Lambda_h\nabla\,v\|_{L^2},
  \end{split}
\end{equation}
\begin{equation}\label{est-GQ2-norm-1}
\begin{split}
  &\|\dot{\Lambda}_h^{\sigma}(\mathcal{G}^{j}_{{Q}, 1},\,\mathcal{G}^{j}_{{Q}, 2})\|_{L^2}\lesssim E_{\sigma+1}^{\frac{1}{2}} \|\dot{\Lambda}_h^{\sigma_0} \Lambda_h^{s_0-\frac{1}{2}-\sigma_0} {Q}\|_{L^2(\Sigma_0)}+ E_{s_0}^{\frac{1}{2}}   \|\dot{\Lambda}_h^{\sigma_0} \Lambda_h^{\sigma-\frac{1}{2}-\sigma_0}  {Q}\|_{L^2(\Sigma_0)},\\
   &\|\dot{\Lambda}_h^{\sigma}\mathcal{G}^{j}_{\partial_hv} \|_{L^2}\lesssim  E_{\sigma+1}^{\frac{1}{2}}\|\dot{\Lambda}_h^{\sigma_0} \Lambda_h^{s_0-\sigma_0} v\|_{L^2}+E_{s_0}^{\frac{1}{2}}\|\dot{\Lambda}_h^{\sigma+1}v\|_{L^2},
       \end{split}
\end{equation}
\begin{equation}\label{est-GQ2-s-norm-1}
\begin{split}
  &\|\dot{\Lambda}_h^{\sigma+\ell_0}(\mathcal{G}^{j}_{{Q}, 1},\,\mathcal{G}^{j}_{{Q}, 2})\|_{L^2}\lesssim E_{\sigma+1}^{\frac{1}{2}}\|\dot{\Lambda}_h^{\sigma_0} \Lambda_h^{s_0-\frac{1}{2}+\ell_0-\sigma_0} {Q}\|_{L^2(\Sigma_0)}+E_{s_0}^{\frac{1}{2}}\|\dot{\Lambda}_h^{\sigma+\ell_0-1} \Lambda_h^{\frac{1}{2}} {Q}\|_{L^2(\Sigma_0)},\\
   &\|\dot{\Lambda}_h^{\sigma+\ell_0}\mathcal{G}^{j}_{\partial_hv} \|_{L^2}\lesssim  E_{\sigma+1}^{\frac{1}{2}}\|\dot{\Lambda}_h^{\sigma_0} \Lambda_h^{s_0-1+\ell_0-\sigma_0} \nabla\,v\|_{L^2}+E_{s_0}^{\frac{1}{2}}\|\dot{\Lambda}_h^{\sigma+\ell_0}v\|_{L^2},
       \end{split}
\end{equation}
\begin{equation*}
\begin{split}
  &\|\dot{\Lambda}_h^{\sigma}(\mathcal{G}^{j}_{{Q}, 1},\,\mathcal{G}^{j}_{{Q}, 2},\,\mathcal{G}^{j}_{\partial_hv})\|_{H^1}\lesssim E_{\sigma+1}^{\frac{1}{2}}\dot{\mathcal{D}}_{s_0}^{\frac{1}{2}}
  +E_{s_0}^{\frac{1}{2}}\dot{\mathcal{D}}_{\sigma+1}^{\frac{1}{2}},\\
\end{split}
\end{equation*}
and
\begin{equation*}
\begin{split}
   &\|\dot{\Lambda}_h^{\sigma}\partial_t(\mathcal{G}^{j}_{{Q}, 1},\,\mathcal{G}^{j}_{{Q}, 2},\,\mathcal{G}^{j}_{\partial_hv})\|_{L^2}\lesssim E_{\sigma+1}^{\frac{1}{2}}\dot{\mathcal{D}}_{s_0}^{\frac{1}{2}}
  +E_{s_0}^{\frac{1}{2}}\dot{\mathcal{D}}_{\sigma+1}^{\frac{1}{2}},\\
     &\|\dot{\Lambda}_h^{-\lambda}\partial_t(\mathcal{G}^{j}_{{Q}, 1},\,\mathcal{G}^{j}_{{Q}, 2},\,\mathcal{G}^{j}_{\partial_hv})\|_{L^2}+\|\dot{\Lambda}_h^{\sigma_0}\partial_t(\mathcal{G}^{j}_{{Q}, 1},\,\mathcal{G}^{j}_{{Q}, 2},\,\mathcal{G}^{j}_{\partial_hv})\|_{L^2}\lesssim E_{s_0}^{\frac{1}{2}}\dot{\mathcal{D}}_{s_0}^{\frac{1}{2}}.
\end{split}
\end{equation*}
\end{lem}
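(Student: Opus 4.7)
The plan is to treat the three families $\mathcal{G}^{j}_{Q,1}$, $\mathcal{G}^{j}_{Q,2}$, $\mathcal{G}^{j}_{\partial_h v}$ separately, observing that each is a product of a coefficient of the form $B(\widetilde{\mathcal{A}})$ (vanishing when $\xi=0$) times either a harmonic extension in the first two cases or a purely horizontal derivative $\partial_{\alpha}v^{i}$ in the third. The workhorses will be the anisotropic product laws in Lemma~\ref{lem-productlaw-1aa}, the coefficient bounds in Lemma~\ref{lem-est-aij-1}, the harmonic-extension inequalities in Lemma~\ref{lem-est-harmonic-ext-1}, and the already-established $B(\widetilde{\mathcal{A}})\nabla_h v$ estimates in Lemma~\ref{lem-est-B-Bv-1}. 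Throughout, the smallness hypothesis $E_{s_0}(t)\le 1$ lets us control composition factors like $\mathcal{R}(Q)$ via Lemmas~\ref{lem-composition-1}--\ref{prop-composition-2}.

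\emph{Estimates for $\mathcal{G}^{j}_{Q,1}$ and $\mathcal{G}^{j}_{Q,2}$.} Each such term is $F(\widetilde{\mathcal{A}},J)\,\mathcal{H}(\Psi)$ with $\Psi=Q$ or $\Psi=Q^{2}\mathcal{R}(Q)$, and $F$ vanishing at $\xi=0$ (e.g.\ $a_{\beta 1}J$, $a_{11}|\vec{\rm a}_{1}|^{-2}J-1$). First I will apply the product laws \eqref{fg-product-1} at regularity $-\lambda$ and $\sigma_{0}$ to get $\|\dot{\Lambda}_{h}^{-\lambda}(F\,\mathcal{H}(\Psi))\|_{H^{1}}+\|\dot{\Lambda}_{h}^{\sigma_{0}}(F\,\mathcal{H}(\Psi))\|_{H^{1}}\lesssim \|\dot{\Lambda}_{h}^{\sigma_{0}-1}F\|_{H^{1}}\|\dot{\Lambda}_{h}^{2(1-\ell_{0})}\mathcal{H}(\Psi)\|_{H^{1}}$, then estimate $F$ by Lemma~\ref{lem-est-aij-1}(3) (which applies since the relevant indices of $\widetilde{\mathcal{A}}_{i}^{j}$ avoid $(1,2),(1,3)$), and estimate $\mathcal{H}(\Psi)$ by Lemma~\ref{lem-est-harmonic-ext-1} reducing to a boundary norm $\|\Psi\|_{H^{r-1/2}(\Sigma_{0})}$. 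For $\Psi=Q^{2}\mathcal{R}(Q)$ I will combine Lemma~\ref{lem-composition-1} (to handle $\mathcal{R}(Q)$) with product laws in $H^{s}(\Sigma_{0})$, the extra $Q$-factor giving the smallness required so that the overall bound absorbs into $E_{s_0}^{1/2}$. For the higher-order bounds \eqref{est-GQ2-norm-1}, \eqref{est-GQ2-s-norm-1} and the $\dot{\Lambda}_{h}^{\sigma}H^{1}$ estimates, I will split the derivatives between $F$ and $\mathcal{H}(\Psi)$ via the usual low-high/high-low decomposition in \eqref{fg-product-1}, invoking Lemma~\ref{lem-est-aij-1}(2) at high regularity and \ref{lem-est-aij-1}(3) at the critical level $\sigma_0$.

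\emph{Estimates for $\mathcal{G}^{j}_{\partial_h v}$.} By construction $\mathcal{G}^{j}_{\partial_h v}=\mathcal{B}_{k,i}^{\alpha}\partial_{\alpha}v^{i}$ where every term of $\mathcal{B}_{k,i}^{\alpha}$ carries at least one factor in $\widetilde{\mathcal{A}}$ or $J-1$, so it has the $B(\widetilde{\mathcal{A}})\nabla_{h}v$ structure with a vanishing coefficient. Consequently the $\|\dot{\Lambda}_{h}^{-\lambda}\cdot\|_{L^{2}/H^{1}}$ and $\|\dot{\Lambda}_{h}^{\sigma_{0}}\cdot\|_{L^{2}/H^{1}}$ bounds follow immediately from the second estimate in \eqref{est-B-k-1} together with \eqref{est-sigma-B-2}, after rewriting the right-hand side in the form $E_{s_0}^{1/2}\|\dot{\Lambda}_{h}^{\sigma_0}\Lambda_h v\|_{L^2}$ (respectively $\|\dot{\Lambda}_{h}^{\sigma_0}\Lambda_h \nabla v\|_{L^2}$) using $\dot{\mathcal{D}}_{s_0}^{1/2}\lesssim \|\dot{\Lambda}_{h}^{\sigma_0}\Lambda_h^{s_0-1-\sigma_0}\nabla v\|_{L^2}+\cdots$ and the equivalence provided by Remark on the functional scales. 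The $\dot{\Lambda}_{h}^{\sigma}$ and $\dot{\Lambda}_{h}^{\sigma+\ell_{0}}$ estimates in $L^{2}$ follow the same pattern: split derivatives between $\mathcal{B}_{k,i}^{\alpha}$ and $\partial_{\alpha}v^{i}$ using \eqref{fg-product-1}, then control the coefficient by Lemma~\ref{lem-est-aij-1} at the appropriate anisotropic regularity.

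\emph{Time-derivative estimates and main obstacle.} For $\partial_{t}(\mathcal{G}^{j}_{Q,1},\mathcal{G}^{j}_{Q,2},\mathcal{G}^{j}_{\partial_h v})$ I will use the identities $\partial_{t}\mathcal{A}_{i}^{j}=-\mathcal{A}_{k}^{j}\mathcal{A}_{i}^{m}\partial_{m}v^{k}$ and $\partial_{t}J=J\mathcal{A}_{i}^{j}\partial_{j}v^{i}$ from \eqref{identity-Lagrangian-1}, distributing $\partial_{t}$ by Leibniz and re-expressing each piece as a sum of products of $\nabla v$ and $\widetilde{\mathcal{A}}$-type factors (or, in the $Q$-cases, $\partial_{t}Q$ terms coming through the harmonic extension and composition formulas). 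Each resulting summand will again be estimated by \eqref{fg-product-1} combined with Lemmas~\ref{lem-est-aij-1}--\ref{lem-est-B-Bv-1}, swapping the role of $E$ and $\dot{\mathcal{D}}$ depending on which factor carries the derivatives. The main technical obstacle will be the high-order case $\sigma\ge s_0-1$: there, placing all derivatives on the coefficient forces $\|\dot{\Lambda}_{h}^{\sigma}F\|_{H^{1}}\lesssim E_{\sigma+1}^{1/2}$ but leaves the extension in only an $L^\infty$-type norm controlled via Corollary~\ref{cor-embedding-ineq-1}, while the reverse split produces $\dot{\mathcal{D}}_{\sigma+1}^{1/2}$. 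Balancing these two contributions cleanly so that the final right-hand side is exactly $E_{\sigma+1}^{1/2}\dot{\mathcal{D}}_{s_0}^{1/2}+E_{s_0}^{1/2}\dot{\mathcal{D}}_{\sigma+1}^{1/2}$ is the delicate point, and will require using Lemma~\ref{lem-est-harmonic-ext-1} to transfer the $H^{1}(\Omega)$-norm of $\mathcal{H}(Q)$ to a $H^{1/2}(\Sigma_{0})$-norm of $Q$ at each split.
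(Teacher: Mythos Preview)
Your overall strategy matches the paper's: treat the three families separately, reduce each to a product of a vanishing coefficient $B(\widetilde{\mathcal{A}})$ times either $\mathcal{H}(\Psi)$ or $\partial_\alpha v^i$, and then feed everything through Lemmas~\ref{lem-productlaw-1aa}, \ref{lem-est-harmonic-ext-1}, \ref{lem-est-aij-1}. The handling of $\mathcal{G}^{j}_{Q,1}$, $\mathcal{G}^{j}_{Q,2}$ and of the time-derivative terms is essentially what the paper does; the high-low/low-high split for the $\dot{\Lambda}_h^{\sigma}$ bounds is also the right idea. One small point you do not mention: for the $\dot{\Lambda}_h^{\sigma+\ell_0}$ estimates in \eqref{est-GQ2-s-norm-1} the paper does not use \eqref{fg-product-1} directly but instead passes through the fractional Sobolev embeddings $\dot{H}^{1-\ell_0}(\R^2_h)\hookrightarrow L^{2/\ell_0}$ and $\dot{H}^{\ell_0}(\R^2_h)\hookrightarrow L^{2/(1-\ell_0)}$ to place the extra $\ell_0$ derivatives on the extension factor; you will need something of this sort.

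There is, however, one genuine gap in your plan for $\mathcal{G}^{j}_{\partial_h v}$ at low regularity. You propose to invoke the second line of \eqref{est-B-k-1} in Lemma~\ref{lem-est-B-Bv-1}, which yields a right-hand side $E_{s_0}^{1/2}\dot{\mathcal{D}}_{s_0}^{1/2}$, and then ``rewrite'' this as $E_{s_0}^{1/2}\|\dot{\Lambda}_h^{\sigma_0}\Lambda_h v\|_{L^2}$. That rewriting goes the wrong way: since $\|\dot{\Lambda}_h^{\sigma_0}\Lambda_h v\|_{L^2}\lesssim\dot{\mathcal{D}}_{s_0}^{1/2}$ (Poincar\'e plus the obvious inclusion $\Lambda_h\le \Lambda_h^{s_0-\sigma_0}$), the bound stated in \eqref{est-GQ1-norm-1} is \emph{sharper} than what Lemma~\ref{lem-est-B-Bv-1} gives, and it is this sharper form that is actually needed later (in the energy-comparison Lemma~\ref{lem-comparision-s1-1}). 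The paper therefore does \emph{not} quote Lemma~\ref{lem-est-B-Bv-1} here; it redoes the product estimate from scratch, placing the horizontal derivative on the coefficient so that only $\|\dot{\Lambda}_h^{\sigma_0-1}\partial_\alpha v\|_{L^2}\sim\|\dot{\Lambda}_h^{\sigma_0}v\|_{L^2}$ appears on the velocity factor. Concretely, for $\sigma=-\lambda$ one writes
\[
\|\dot{\Lambda}_h^{-\lambda}(\mathcal{B}_{10+j,i}^{\alpha}\partial_\alpha v^i)\|_{L^2}
\lesssim \|\dot{\Lambda}_h^{2-\sigma_0-\lambda}\mathcal{B}_{10+j,i}^{\alpha}\|_{H^1}\,
\|\dot{\Lambda}_h^{\sigma_0-1}\partial_\alpha v\|_{L^2}
\lesssim E_{s_0}^{1/2}\|\dot{\Lambda}_h^{\sigma_0}v\|_{L^2},
\]
and similarly for $\sigma=\sigma_0$ and for the $H^1$ norms. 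The fix is small but essential: do the product estimate directly rather than appealing to the packaged $B(\widetilde{\mathcal{A}})\nabla_h v$ bound.
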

\begin{proof}
Thanks to Lemmas \ref{lem-productlaw-1aa}, \ref{lem-est-harmonic-ext-1}, \ref{lem-est-aij-1}, and the fact $1-\lambda< \sigma_0\leq 1-\frac{1}{2}\lambda$, we have
\begin{equation}\label{exp-decom-G12-3}
  \begin{split}
    \|\dot{\Lambda}_h^{-\lambda}\mathcal{G}^{1}_{{Q}, 1}\|_{H^1}&\lesssim \|\dot{\Lambda}_h^{1-\sigma_0-\lambda}( a_{11}|\overrightarrow{\rm{a}_1}|^{-2}J -1)\|_{H^1}\, \|\dot{\Lambda}_h^{\sigma_0}\mathcal{H}({Q})\|_{H^1}\lesssim E_{s_0}^{\frac{1}{2}}\|\dot{\Lambda}_h^{\sigma_0} \Lambda_h^{\frac{1}{2}} {Q}\|_{L^2(\Sigma_0)},\\
        \|\dot{\Lambda}_h^{-\lambda}\mathcal{G}^{1}_{{Q}, 2}\|_{H^1}&\lesssim (1+\|\dot{\Lambda}_h^{\sigma_0}{\Lambda}_h^{s_0-1-\sigma_0}( a_{11}|\overrightarrow{\rm{a}_1}|^{-2}J -1)\|_{H^1})\, \|\dot{\Lambda}_h^{-\lambda}\mathcal{H}({Q}^2\mathcal{R}({Q}))\|_{H^1}.
\end{split}
\end{equation}
Owing to Lemmas \ref{lem-productlaw-1aa} and \ref{lem-est-harmonic-ext-1} again, one obtains
\begin{equation*}
  \begin{split}
 \|\dot{\Lambda}_h^{-\lambda}\mathcal{H}({Q}^2\mathcal{R}({Q}))\|_{H^1}&\lesssim  \|\dot{\Lambda}_h^{-\lambda}{\Lambda}_h^{\frac{1}{2}}({Q}^2\mathcal{R}({Q}))\|_{L^2(\Sigma_0)}\\
 &\lesssim \|\dot{\Lambda}_h^{1-\sigma_0-\lambda}{\Lambda}_h^{\frac{1}{2}}({Q}\mathcal{R}({Q}))\|_{L^2(\Sigma_0)}
 \|\dot{\Lambda}_h^{\sigma_0}Q\|_{L^2(\Sigma_0)}\lesssim E_{s_0}^{\frac{1}{2}}\|\dot{\Lambda}_h^{\sigma_0} \Lambda_h^{\frac{1}{2}} {Q}\|_{L^2(\Sigma_0)},
\end{split}
\end{equation*}
which along with \eqref{exp-decom-G12-3} yields
\begin{equation*}
  \begin{split}
    \|\dot{\Lambda}_h^{-\lambda}(\mathcal{G}^{1}_{{Q}, 1},\mathcal{G}^{1}_{{Q}, 2})\|_{H^1}\lesssim E_{s_0}^{\frac{1}{2}}\|\dot{\Lambda}_h^{\sigma_0} \Lambda_h^{\frac{1}{2}} {Q}\|_{L^2(\Sigma_0)}.
\end{split}
\end{equation*}
Similarly, we prove
\begin{equation*}
  \begin{split}
    &\|\dot{\Lambda}_h^{-\lambda}(\mathcal{G}^{\beta}_{{Q}, 1},\mathcal{G}^{\beta}_{{Q}, 2})\|_{H^1}+\|\dot{\Lambda}_h^{\sigma_0}(\mathcal{G}^{j}_{{Q}, 1},\mathcal{G}^{j}_{{Q}, 2})\|_{H^1}\lesssim E_{s_0}^{\frac{1}{2}}\|\dot{\Lambda}_h^{\sigma_0} \Lambda_h^{\frac{1}{2}} {Q}\|_{L^2(\Sigma_0)},
\end{split}
\end{equation*}
and
\begin{equation*}
  \begin{split}
    &\|\dot{\Lambda}_h^{-\lambda}\mathcal{G}^{j}_{\partial_hv} \|_{L^2} \lesssim \|\dot{\Lambda}_h^{2-\sigma_0-\lambda} \mathcal{B}_{10+j, i}^{\alpha}\|_{H^1} \|\dot{\Lambda}_h^{\sigma_0-1} \partial_{\alpha}v^{i} \|_{L^2} \lesssim E_{s_0}^{\frac{1}{2}}\|\dot{\Lambda}_h^{\sigma_0}  v \|_{L^2},\\
   &\|\dot{\Lambda}_h^{\sigma_0}\mathcal{G}^{j}_{\partial_hv} \|_{L^2}\lesssim   \|\dot{\Lambda}_h^{\sigma_0} \Lambda_h^{s_0-1-\sigma_0} \mathcal{B}_{10+j, i}^{\alpha}\|_{H^1} \|\dot{\Lambda}_h^{\sigma_0} \partial_{\alpha}v^{i} \|_{L^2} \lesssim E_{s_0}^{\frac{1}{2}}\|\dot{\Lambda}_h^{\sigma_0+1}  v \|_{L^2},\\
  &\|\dot{\Lambda}_h^{-\lambda}\mathcal{G}^{j}_{\partial_hv} \|_{H^1} \lesssim \|\dot{\Lambda}_h^{2-\sigma_0-\lambda} \mathcal{B}_{10+j, i}^{\alpha}\|_{H^1} \|\dot{\Lambda}_h^{\sigma_0-1} \partial_{\alpha}v^{i} \|_{H^1} \lesssim E_{s_0}^{\frac{1}{2}}\|\dot{\Lambda}_h^{\sigma_0} \nabla\,v \|_{L^2},\\
   &\|\dot{\Lambda}_h^{\sigma_0}\mathcal{G}^{j}_{\partial_hv} \|_{H^1}\lesssim   \|\dot{\Lambda}_h^{\sigma_0} \Lambda_h^{s_0-1-\sigma_0} \mathcal{B}_{10+j, i}^{\alpha}\|_{H^1} \|\dot{\Lambda}_h^{\sigma_0} \partial_{\alpha}v^{i} \|_{H^1} \lesssim E_{s_0}^{\frac{1}{2}}\|\dot{\Lambda}_h^{\sigma_0+1} \nabla\,v \|_{L^2},
  \end{split}
\end{equation*}
which results in \eqref{est-GQ1-norm-1}.

Thanks to Lemmas \ref{lem-productlaw-1aa}, \ref{lem-est-harmonic-ext-1}, and \ref{lem-est-aij-1} again, we find for $\sigma \geq \sigma_0$
\begin{equation*}
  \begin{split}
   \|\dot{\Lambda}_h^{\sigma}\mathcal{G}^{1}_{{Q}, 1}\|_{L^2}&\lesssim \|\dot{\Lambda}_h^{\sigma}( a_{11}|\overrightarrow{\rm{a}_1}|^{-2}J -1) \|_{H^1}\|\dot{\Lambda}_h^{\sigma_0} \Lambda_h^{s_0-1-\sigma_0} \mathcal{H}({Q})\|_{L^2}\\
   &\qquad\qquad\qquad+\|\dot{\Lambda}_h^{\sigma_0} \Lambda_h^{s_0-1-\sigma_0}( a_{11}|\overrightarrow{\rm{a}_1}|^{-2}J -1)\|_{H^1}\|\dot{\Lambda}_h^{\sigma} \mathcal{H}({Q})\|_{L^2}\\
   &\lesssim E_{\sigma+1}^{\frac{1}{2}}\|\dot{\Lambda}_h^{\sigma_0} \Lambda_h^{s_0-\frac{1}{2}-\sigma_0} {Q}\|_{L^2(\Sigma_0)}+E_{s_0}^{\frac{1}{2}}\|\dot{\Lambda}_h^{\sigma-1} \Lambda_h^{\frac{1}{2}} {Q}\|_{L^2(\Sigma_0)},
   \end{split}
\end{equation*}
\begin{equation*}
  \begin{split}
&\|\dot{\Lambda}_h^{\sigma}\mathcal{G}^{1}_{{Q}, 1}\|_{H^1} \lesssim \|\dot{\Lambda}_h^{\sigma}( a_{11}|\overrightarrow{\rm{a}_1}|^{-2}J -1) \|_{H^1}\|\dot{\Lambda}_h^{\sigma_0} \Lambda_h^{s_0-1-\sigma_0} \mathcal{H}({Q})\|_{H^1}\\
&\qquad\qquad\qquad +\|\dot{\Lambda}_h^{\sigma_0} \Lambda_h^{s_0-1-\sigma_0}( a_{11}|\overrightarrow{\rm{a}_1}|^{-2}J -1)\|_{H^1}\|\dot{\Lambda}_h^{\sigma} \mathcal{H}({Q})\|_{H^1}\\
&\lesssim E_{\sigma+1}^{\frac{1}{2}}\|\dot{\Lambda}_h^{\sigma_0} \Lambda_h^{s_0-\frac{1}{2}-\sigma_0} {Q}\|_{L^2(\Sigma_0)}+E_{s_0}^{\frac{1}{2}}\|\dot{\Lambda}_h^{\sigma} \Lambda_h^{\frac{1}{2}} {Q}\|_{L^2(\Sigma_0)}\lesssim  E_{\sigma+1}^{\frac{1}{2}}\dot{\mathcal{D}}_{s_0}^{\frac{1}{2}}
  +E_{s_0}^{\frac{1}{2}}\dot{\mathcal{D}}_{\sigma+1}^{\frac{1}{2}},
   \end{split}
\end{equation*}
and
\begin{equation*}
  \begin{split}
   \|\dot{\Lambda}_h^{\sigma+\ell_0}\mathcal{G}^{1}_{{Q}, 1}\|_{L^2}&\lesssim \|\dot{\Lambda}_h^{\sigma+\ell_0}( a_{11}|\overrightarrow{\rm{a}_1}|^{-2}J -1) \|_{L^2_{x_1}L^{\frac{2}{\ell_0}}_{h}}\|\dot{\Lambda}_h^{\sigma_0} \Lambda_h^{s_0-1-\sigma_0} \mathcal{H}({Q})\|_{L^{\infty}_{x_1}L^{\frac{2}{1-\ell_0}}_{h}}\\
   &\qquad\qquad\qquad+\|\dot{\Lambda}_h^{\sigma_0} \Lambda_h^{s_0-1-\sigma_0}( a_{11}|\overrightarrow{\rm{a}_1}|^{-2}J -1)\|_{H^1}\|\dot{\Lambda}_h^{\sigma+\ell_0} \mathcal{H}({Q})\|_{L^2}\\
   &\lesssim E_{\sigma+1}^{\frac{1}{2}}\|\dot{\Lambda}_h^{\sigma_0} \Lambda_h^{s_0-\frac{1}{2}+\ell_0-\sigma_0} {Q}\|_{L^2(\Sigma_0)}+E_{s_0}^{\frac{1}{2}}\|\dot{\Lambda}_h^{\sigma+\ell_0-1} \Lambda_h^{\frac{1}{2}} {Q}\|_{L^2(\Sigma_0)},
   \end{split}
\end{equation*}
where we used the Sobolev embedding $\dot{H}^{1-\ell_0}(\mathbb{R}^2_h) \hookrightarrow L^{\frac{2}{\ell_0}}(\mathbb{R}^2_h)$ and $\dot{H}^{\ell_0}(\mathbb{R}^2_h) \hookrightarrow L^{\frac{2}{1-\ell_0}}(\mathbb{R}^2_h)$ in the last inequality.
In the same manner, one can see
\begin{equation*}
  \begin{split}
   &\|\dot{\Lambda}_h^{\sigma}\mathcal{G}^{\beta}_{{Q}, 1}\|_{L^2}\lesssim E_{\sigma+1}^{\frac{1}{2}}\|\dot{\Lambda}_h^{\sigma_0} \Lambda_h^{s_0-\frac{1}{2}-\sigma_0} {Q}\|_{L^2(\Sigma_0)}+E_{s_0}^{\frac{1}{2}}\|\dot{\Lambda}_h^{s-2} \Lambda_h^{\frac{1}{2}} {Q}\|_{L^2(\Sigma_0)},\\
     &\|\dot{\Lambda}_h^{\sigma+\ell_0}\mathcal{G}^{\beta}_{{Q}, 1}\|_{L^2}\lesssim E_{\sigma+1}^{\frac{1}{2}}\|\dot{\Lambda}_h^{\sigma_0} \Lambda_h^{s_0-\frac{1}{2}+\ell_0-\sigma_0} {Q}\|_{L^2(\Sigma_0)}+E_{s_0}^{\frac{1}{2}}\|\dot{\Lambda}_h^{\sigma+\ell_0-1} \Lambda_h^{\frac{1}{2}} {Q}\|_{L^2(\Sigma_0)},\\
   &\|\dot{\Lambda}_h^{\sigma}\mathcal{G}^{\beta}_{{Q}, 1}\|_{H^1}\lesssim E_{\sigma+1}^{\frac{1}{2}}\dot{\mathcal{D}}_{s_0}^{\frac{1}{2}}
  +E_{s_0}^{\frac{1}{2}}\dot{\mathcal{D}}_{\sigma+1}^{\frac{1}{2}},
\end{split}
\end{equation*}
\begin{equation*}
  \begin{split}
  &\|\dot{\Lambda}_h^{\sigma}\mathcal{G}^{j}_{\partial_hv} \|_{L^2}\lesssim  E_{\sigma+1}^{\frac{1}{2}}\|\dot{\Lambda}_h^{\sigma_0} \Lambda_h^{s_0-\sigma_0} v\|_{L^2}+E_{s_0}^{\frac{1}{2}}\|\dot{\Lambda}_h^{\sigma+1}v\|_{L^2},\\
    &\|\dot{\Lambda}_h^{\sigma+\ell_0}\mathcal{G}^{j}_{\partial_hv} \|_{L^2}\lesssim  E_{\sigma+1}^{\frac{1}{2}}\|\dot{\Lambda}_h^{\sigma_0} \Lambda_h^{s_0-1+\ell_0-\sigma_0} \nabla\,v\|_{L^2}+E_{s_0}^{\frac{1}{2}}\|\dot{\Lambda}_h^{\sigma+\ell_0}v\|_{L^2},\\
    &\|\dot{\Lambda}_h^{\sigma}\mathcal{G}^{j}_{\partial_hv} \|_{H^1}\lesssim   E_{\sigma+1}^{\frac{1}{2}}\dot{\mathcal{D}}_{s_0}^{\frac{1}{2}}
  +E_{s_0}^{\frac{1}{2}}\dot{\mathcal{D}}_{\sigma+1}^{\frac{1}{2}},
  \end{split}
\end{equation*}
\begin{equation*}
  \begin{split}
            & \|\dot{\Lambda}_h^{\sigma} \mathcal{G}^{j}_{{Q}, 2}\|_{L^2}\lesssim  E_{\sigma+1}^{\frac{1}{2}} \|\dot{\Lambda}_h^{\sigma_0} \Lambda_h^{s_0-\frac{1}{2}-\sigma_0} {Q}\|_{L^2(\Sigma_0)}+ E_{s_0}^{\frac{1}{2}}   \|\dot{\Lambda}_h^{\sigma_0} \Lambda_h^{\sigma-\frac{1}{2}-\sigma_0}  {Q}\|_{L^2(\Sigma_0)},\\
                        & \|\dot{\Lambda}_h^{\sigma+\ell_0} \mathcal{G}^{j}_{{Q}, 2}\|_{L^2}\lesssim  E_{\sigma+1}^{\frac{1}{2}}\|\dot{\Lambda}_h^{\sigma_0} \Lambda_h^{s_0-\frac{1}{2}+\ell_0-\sigma_0} {Q}\|_{L^2(\Sigma_0)}+E_{s_0}^{\frac{1}{2}}\|\dot{\Lambda}_h^{\sigma+\ell_0-1} \Lambda_h^{\frac{1}{2}} {Q}\|_{L^2(\Sigma_0)},\\
                        & \|\dot{\Lambda}_h^{\sigma} \mathcal{G}^{j}_{{Q}, 2}\|_{H^1} \lesssim E_{\sigma+1}^{\frac{1}{2}}\dot{\mathcal{D}}_{s_0}^{\frac{1}{2}}
  +E_{s_0}^{\frac{1}{2}}\dot{\mathcal{D}}_{\sigma+1}^{\frac{1}{2}}\quad (\forall\,\,j=1, 2, 3).
\end{split}
\end{equation*}
For $\partial_t\mathcal{G}^{\beta}_{{Q}, 1}$, we first compute
\begin{equation*}
  \begin{split}
& \partial_t\mathcal{G}^{\beta}_{{Q}, 1}=\frac{\bar{\rho}(0)}{(\delta+\frac{4}{3}\varepsilon)} \bigg(a_{\beta\,1}J \mathcal{H}( \partial_t{Q})+ \partial_t(a_{\beta\,1}J)\mathcal{H}( {Q})\bigg),
  \end{split}
\end{equation*}
then applying Lemma \ref{lem-productlaw-1aa}, we obtain that
\begin{equation*}
  \begin{split}
 & \|\dot{\Lambda}_h^{\sigma} \partial_t\mathcal{G}^{\beta}_{{Q}, 1}\|_{L^2}\lesssim  E_{\sigma+1}^{\frac{1}{2}}\dot{\mathcal{D}}_{s_0}^{\frac{1}{2}}
 +E_{s_0}^{\frac{1}{2}}\dot{\mathcal{D}}_{\sigma+1}^{\frac{1}{2}}.
  \end{split}
\end{equation*}
Repeating the above argument yields
\begin{equation*}
  \begin{split}
   &\|\dot{\Lambda}_h^{\sigma}\partial_t(\mathcal{G}^{j}_{{Q}, 1},\,\mathcal{G}^{j}_{{Q}, 2},\,\mathcal{G}^{j}_{\partial_hv})\|_{L^2}\lesssim E_{\sigma+1}^{\frac{1}{2}}\dot{\mathcal{D}}_{s_0}^{\frac{1}{2}}
  +E_{s_0}^{\frac{1}{2}}\dot{\mathcal{D}}_{\sigma+1}^{\frac{1}{2}} \quad(\forall\,\,\sigma\geq \sigma_0),\\
     &\|\dot{\Lambda}_h^{-\lambda}\partial_t(\mathcal{G}^{j}_{{Q}, 1},\,\mathcal{G}^{j}_{{Q}, 2},\,\mathcal{G}^{j}_{\partial_hv})\|_{L^2}+\|\dot{\Lambda}_h^{\sigma_0}\partial_t(\mathcal{G}^{j}_{{Q}, 1},\,\mathcal{G}^{j}_{{Q}, 2},\,\mathcal{G}^{j}_{\partial_hv})\|_{L^2}\lesssim E_{s_0}^{\frac{1}{2}}\dot{\mathcal{D}}_{\sigma_0+1}^{\frac{1}{2}}.
  \end{split}
\end{equation*}
This completes the proof of Lemma \ref{lem-est-G-norm-1}.
\end{proof}

\renewcommand{\theequation}{\thesection.\arabic{equation}}
\setcounter{equation}{0}
\section{Energy estimates of the tangential derivatives of the velocity}\label{sect-energy}

In this section, we will derive some global energy estimates. In view of the local well-posedness theorem (Theorem \ref{thm-local}), it suffices to get necessary {\it a priori} estimates. Here and in what follows, all the $C$-forms, such as $C_j$, $c_j$, $\mathfrak{C}_j$, $\widetilde{\mathfrak{C}}_j$, $\mathfrak{c}_j$, and $\widetilde{\mathfrak{c}}_j$, are generic positive constants, which may be different on different lines.

\subsection{Energy estimates of the horizontal derivatives of the velocity}\label{subsect-est-hori-1}

In general, since the equations of the derivatives of the solution $(v, \xi)$ to the system \eqref{eqns-pert-1} lack the nonlinear symmetric structure, we have no idea to get the high-order energy identity. For this reason, we turn to employ the linearized form \eqref{eqns-linear-1} of the system \eqref{eqns-pert-1} to study the high-order energy estimates.

We first apply the operator $\dot{\Lambda}_h^{\sigma}$ with $\sigma\in \mathbb{R}$ to the linearized form \eqref{eqns-linear-1} of the system \eqref{eqns-pert-1} to get
\begin{equation}\label{eqns-sigma-lin-1}
\begin{cases}
 & \bar{\rho}(x_1)\partial_t \dot{\Lambda}_h^{\sigma}v +\bar{\rho}(x_1)\nabla\,\dot{\Lambda}_h^{\sigma}q-  \nabla\cdot\mathbb{S}(\dot{\Lambda}_h^{\sigma}v)=\dot{\Lambda}_h^{\sigma}\mathfrak{g},\\
 &\nabla\,\cdot  (\bar{\rho}(x_1)\,\dot{\Lambda}_h^{\sigma}v)=g^{-1}\, \bar{\rho}'(x_1) \partial_t\dot{\Lambda}_h^{\sigma}q+\dot{\Lambda}_h^{\sigma}(B_{1, i}^{h, j}\partial_jv^i)\quad \text{in} \quad \Omega,\\
  &\bar{\rho}(0)\dot{\Lambda}_h^{\sigma}q\, e_1- \mathbb{S}(\dot{\Lambda}_h^{\sigma}v)\,e_1=
\left(
  \begin{array}{c}
   \bar{\rho}(0)\dot{\Lambda}_h^{\sigma}\xi^1+\dot{\Lambda}_h^{\sigma}\mathcal{B}(\partial_{\alpha}v, {Q}, \partial_t{Q})\\
    -\varepsilon\,\dot{\Lambda}_h^{\sigma}(\mathcal{B}_{7, i}^{\alpha}\partial_{\alpha}v^{i}+\widetilde{\mathcal{B}}_{2}\partial_t{Q})\\
    - \varepsilon\,\dot{\Lambda}_h^{\sigma}(\mathcal{B}_{8, i}^{\alpha}\partial_{\alpha}v^{i}+\widetilde{\mathcal{B}}_{3}\partial_t{Q})
  \end{array}
\right) \quad \text{on} \quad \Sigma_0,\\
&\dot{\Lambda}_h^{\sigma}v|_{\Sigma_b}=0.
    \end{cases}
\end{equation}

For the general horizontal derivatives of the velocity, we first derive
\begin{prop}\label{lem-tan-pseudo-energy-1}
Let $\sigma \in \mathbb{R}$, $(v, \xi)$ be smooth solution to the system \eqref{eqns-pert-1}, then there holds
\begin{equation}\label{tan-linear-pseudo-0}
\begin{split}
& \frac{1}{2}\frac{d}{dt}\bigg(\|\sqrt{\bar{\rho}(x_1)}\,\dot{\Lambda}_h^{\sigma}v\|_{L^2(\Omega)}^2+g^{-1} \|\sqrt{-\bar{\rho}'(x_1)}\, \dot{\Lambda}_h^{\sigma}q\|_{L^2(\Omega)}^2 + g\,\bar{\rho}(0)\|\dot{\Lambda}_h^{\sigma}\xi^1\|_{L^2(\Sigma_0)}^2\bigg)\\
 &\quad+\bigg(\frac{\varepsilon}{2}\|\mathbb{D}^0(\dot{\Lambda}_h^{\sigma}v)\|_{L^2(\Omega)}^2+\delta\, \|\grad \cdot \dot{\Lambda}_h^{\sigma}v\|_{L^2(\Omega)}^2\bigg)=\sum_{j=1}^3\mathfrak{K}_j
\end{split}
\end{equation}
with
\begin{equation}\label{tan-linear-pseudo-0a}
\begin{split}
 &\mathfrak{K}_1:=\int_{\Sigma_{0}} \bigg(\dot{\Lambda}_h^{\sigma}v^1\,\dot{\Lambda}_h^{\sigma}\mathcal{B}(\partial_{\alpha}v, {Q}, \partial_t{Q})-\varepsilon\,\dot{\Lambda}_h^{\sigma}v^2\,\dot{\Lambda}_h^{\sigma}(\mathcal{B}_{7, i}^{\alpha}\partial_{\alpha}v^{i}+\widetilde{\mathcal{B}}_{2}\partial_t{Q})\\
 &\qquad\qquad\qquad\qquad\qquad\qquad\qquad\qquad
    - \varepsilon\,\dot{\Lambda}_h^{\sigma}v^3\,\dot{\Lambda}_h^{\sigma}(\mathcal{B}_{8, i}^{\alpha}\partial_{\alpha}v^{i}+\widetilde{\mathcal{B}}_{3}\partial_t{Q})
\bigg) \,dS_0,\\
&\mathfrak{K}_2:=\int_{ \Omega} \dot{\Lambda}_h^{\sigma}q\,\dot{\Lambda}_h^{\sigma}(B_{1, i}^{h, j}\partial_jv^i)\, dx,\ \mathfrak{K}_3:=\int_{ \Omega} \dot{\Lambda}_h^{\sigma}\mathfrak{g}\cdot \dot{\Lambda}_h^{\sigma}v\, dx.
\end{split}
\end{equation}
\end{prop}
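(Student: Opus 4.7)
The plan is to derive \eqref{tan-linear-pseudo-0} directly from the linearized system \eqref{eqns-sigma-lin-1} by a weighted $L^2$ energy identity. Since $\dot{\Lambda}_h^{\sigma}$ is a purely horizontal Fourier multiplier, it commutes with $\partial_t$, with $\nabla$, and with multiplication by $\bar{\rho}(x_1)$ (which depends only on $x_1$); applying it to \eqref{eqns-linear-1} therefore produces \eqref{eqns-sigma-lin-1} as stated, and from $\partial_t\xi=v$ one also has the kinematic relation $\partial_t(\dot{\Lambda}_h^{\sigma}\xi^1)=\dot{\Lambda}_h^{\sigma}v^1$ on $\Sigma_0$. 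Write $w:=\dot{\Lambda}_h^{\sigma}v$ and test the first equation of \eqref{eqns-sigma-lin-1} against $w$ in $L^2(\Omega)$. The time term gives $\tfrac{1}{2}\frac{d}{dt}\|\sqrt{\bar{\rho}(x_1)}\,w\|_{L^2}^{2}$, and the right-hand side is exactly $\mathfrak{K}_3$.

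For the pressure term, integrate by parts and use $w|_{\Sigma_b}=0$ together with $n_0=e_1$ on $\Sigma_0$:
\begin{equation*}
\int_{\Omega}\bar{\rho}(x_1)\nabla\dot{\Lambda}_h^{\sigma}q\cdot w\,dx = -\int_{\Omega}\dot{\Lambda}_h^{\sigma}q\,\nabla\cdot(\bar{\rho}(x_1)w)\,dx + \int_{\Sigma_0}\bar{\rho}(0)\,\dot{\Lambda}_h^{\sigma}q\,w^1\,dS_0.
\end{equation*}
Substituting the $\dot{\Lambda}_h^{\sigma}$-continuity-type relation $\nabla\cdot(\bar{\rho}(x_1)w)=g^{-1}\bar{\rho}'(x_1)\partial_t\dot{\Lambda}_h^{\sigma}q+\dot{\Lambda}_h^{\sigma}(B_{1,i}^{h,j}\partial_j v^i)$ and using $-\bar{\rho}'(x_1)>0$ from \eqref{steady-ODE-6} turns the volume piece into
\begin{equation*}
\frac{1}{2g}\frac{d}{dt}\bigl\|\sqrt{-\bar{\rho}'(x_1)}\,\dot{\Lambda}_h^{\sigma}q\bigr\|_{L^2(\Omega)}^{2}\;-\;\mathfrak{K}_2.
\end{equation*}
For the stress term, the pointwise identity $\mathbb{S}(u):\nabla u=\tfrac{\varepsilon}{2}|\mathbb{D}^0(u)|^{2}+\delta(\nabla\cdot u)^{2}$ (which uses that $\mathbb{D}^0$ is symmetric and trace-free) together with integration by parts gives
\begin{equation*}
-\int_{\Omega}\nabla\cdot\mathbb{S}(w)\cdot w\,dx=\frac{\varepsilon}{2}\|\mathbb{D}^0(w)\|_{L^2(\Omega)}^{2}+\delta\|\nabla\cdot w\|_{L^2(\Omega)}^{2}-\int_{\Sigma_0}\mathbb{S}(w)e_1\cdot w\,dS_0.
\end{equation*}

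Combining the two boundary contributions yields $\int_{\Sigma_0}\bigl(\bar{\rho}(0)\dot{\Lambda}_h^{\sigma}q\,e_1-\mathbb{S}(w)e_1\bigr)\cdot w\,dS_0$, into which I insert the third line of \eqref{eqns-sigma-lin-1}. The only linear contribution from this dynamic boundary condition is the hydrostatic piece $g\bar{\rho}(0)\dot{\Lambda}_h^{\sigma}\xi^1\,w^1$, and by the kinematic relation $\partial_t(\dot{\Lambda}_h^{\sigma}\xi^1)=w^1|_{\Sigma_0}$ it is exactly $\tfrac{g\bar{\rho}(0)}{2}\frac{d}{dt}\|\dot{\Lambda}_h^{\sigma}\xi^1\|_{L^2(\Sigma_0)}^{2}$, the potential-energy term in \eqref{tan-linear-pseudo-0}. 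All remaining boundary contributions are nonlinear and assemble into $\mathfrak{K}_1$, completing the identity.

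There is no serious obstacle here beyond careful bookkeeping; the one structural point worth emphasizing is that the signs on the left-hand side of \eqref{tan-linear-pseudo-0} depend sharply on two things: the Rayleigh--Taylor stable assumption \eqref{steady-ODE-6} (which makes $-\bar{\rho}'(x_1)>0$ and hence turns the $g^{-1}\bar{\rho}'\partial_t q$ coupling in the continuity-type equation into the positive energy $\tfrac{1}{g}\|\sqrt{-\bar{\rho}'}\,\dot{\Lambda}_h^{\sigma}q\|^{2}$), and the consistency between the factor $g$ in the hydrostatic boundary term and the kinematic condition $\partial_t\xi^1=v^1$ (which produces the positive surface energy $g\bar{\rho}(0)\|\dot{\Lambda}_h^{\sigma}\xi^1\|_{L^2(\Sigma_0)}^{2}$). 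The Korn-type control of $\|w\|_{H^1}$ by $\|\mathbb{D}^0(w)\|_{L^2}$ furnished by Lemma \ref{lem-korn-2} is not needed for \eqref{tan-linear-pseudo-0} itself but will be used later to absorb $\mathfrak{K}_{1},\mathfrak{K}_{2},\mathfrak{K}_{3}$ into the dissipation.
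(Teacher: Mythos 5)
Your argument is correct and is precisely the one in the paper: test the $\dot{\Lambda}_h^{\sigma}$-filtered momentum equation against $\dot{\Lambda}_h^{\sigma}v$, integrate by parts in the pressure and stress terms, feed the $\dot{\Lambda}_h^{\sigma}$-filtered divergence relation into the pressure volume integral to produce $g^{-1}\|\sqrt{-\bar{\rho}'}\,\dot{\Lambda}_h^{\sigma}q\|^2$ and $\mathfrak{K}_2$, use $\mathbb{S}(w):\nabla w=\tfrac{\varepsilon}{2}|\mathbb{D}^0(w)|^2+\delta(\nabla\cdot w)^2$, and then insert the dynamic boundary condition together with $\partial_t\xi^1=v^1$ on $\Sigma_0$ to peel off the surface potential energy and $\mathfrak{K}_1$. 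One small point worth pinning down when you write this up in full: after moving the combined boundary contribution to the right-hand side, the hydrostatic piece stays as $\tfrac{g\bar\rho(0)}{2}\frac{d}{dt}\|\dot\Lambda_h^{\sigma}\xi^1\|^2$ on the left, while the nonlinear remainder enters the right-hand side with a sign opposite to the $\mathfrak{K}_1$ as literally defined in \eqref{tan-linear-pseudo-0a}; since all subsequent estimates bound $|\mathfrak{K}_1|$, this is harmless, but your "assemble into $\mathfrak{K}_1$" sentence should track that sign explicitly rather than leave it to bookkeeping.
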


\begin{proof}
We multiply the $i$-th component of the momentum equations of \eqref{eqns-sigma-lin-1} by $\dot{\Lambda}_h^{\sigma}v^i$ , sum over $i$, and integrate over $\Omega$ to find
\begin{equation}\label{linear-v-L2-1-1}
\begin{split}
 & \frac{1}{2}\frac{d}{dt}\int_{ \Omega}\bar{\rho}(x_1)|\dot{\Lambda}_h^{\sigma}v|^2\, dx+I+II=\int_{ \Omega} \dot{\Lambda}_h^{\sigma}\mathfrak{g}\cdot \dot{\Lambda}_h^{\sigma}v\, dx
\end{split}
\end{equation}
with $I=\int_{ \Omega}  \bar{\rho}(x_1)\nabla\,\dot{\Lambda}_h^{\sigma}q\,\cdot \dot{\Lambda}_h^{\sigma}v\,dx$, $II=-\int_{ \Omega}(\grad \cdot \mathbb{S}(\dot{\Lambda}_h^{\sigma}v)) \cdot \dot{\Lambda}_h^{\sigma}v\,dx$.

Integrating by parts in $I$ and $II$ yields
\begin{equation*}
\begin{split}
 &I=\int_{\Sigma_{0}}\dot{\Lambda}_h^{\sigma}v\cdot (\bar{\rho}(0) \dot{\Lambda}_h^{\sigma}q\, e_1) \,dS_0-\int_{ \Omega}\dot{\Lambda}_h^{\sigma}q\,\grad \cdot (\bar{\rho}(x_1) \dot{\Lambda}_h^{\sigma}v) \, dx,
\end{split}
\end{equation*}
and
\begin{equation*}
\begin{split}
 II&=\int_{\Sigma_{0}} (- \mathbb{S}(\dot{\Lambda}_h^{\sigma}v)e_1 ) \cdot \dot{\Lambda}_h^{\sigma}v \, dS_0+\int_{\Omega} \mathbb{S}(\dot{\Lambda}_h^{\sigma}v): \grad\dot{\Lambda}_h^{\sigma} v  \, dx.
 \end{split}
\end{equation*}
Thanks to the second equation in \eqref{eqns-sigma-lin-1}, we have
\begin{equation*}
\begin{split}
 &\int_{ \Omega}\dot{\Lambda}_h^{\sigma}q\,\grad \cdot (\bar{\rho}(x_1)  \dot{\Lambda}_h^{\sigma}v) \, dx\\
 &=\frac{1}{2g}\frac{d}{dt}\int_{ \Omega} \bar{\rho}'(x_1) |\dot{\Lambda}_h^{\sigma}q|^2  \, dx+\int_{ \Omega} \dot{\Lambda}_h^{\sigma}q\,\dot{\Lambda}_h^{\sigma}(B_{1, i}^{h, j}\partial_jv^i)\, dx,
\end{split}
\end{equation*}
so
\begin{equation}\label{linear-v-L2-1-2}
\begin{split}
 I=\int_{\Sigma_{0}}\dot{\Lambda}_h^{\sigma}v\cdot (\bar{\rho}(0)\dot{\Lambda}_h^{\sigma}q\, e_1) \,dS_0&+\frac{1}{2g}\frac{d}{dt}\int_{ \Omega} (-\bar{\rho}'(x_1)) |\dot{\Lambda}_h^{\sigma}q|^2  \, dx\\
 &-\int_{ \Omega} \dot{\Lambda}_h^{\sigma}q\,\dot{\Lambda}_h^{\sigma}(B_{1, i}^{h, j}\partial_jv^i)\, dx.
\end{split}
\end{equation}
On the other hand, thanks to the identity $ \mathbb{S}(\dot{\Lambda}_h^{\sigma}v): \grad \dot{\Lambda}_h^{\sigma}v=\frac{\varepsilon}{2}|\mathbb{D}^0(\dot{\Lambda}_h^{\sigma}v)|^2+\delta\, |\grad \cdot \dot{\Lambda}_h^{\sigma}v|^2 $, one gets
\begin{equation}\label{linear-v-L2-1-3}
\begin{split}
 II=\int_{\Sigma_{0}} (- \mathbb{S}(\dot{\Lambda}_h^{\sigma}v)e_1 ) \cdot \dot{\Lambda}_h^{\sigma}v \, dS_0+\int_{\Omega}\bigg(\frac{\varepsilon}{2}|\mathbb{D}^0(\dot{\Lambda}_h^{\sigma}v)|^2+\delta\, |\grad \cdot \dot{\Lambda}_h^{\sigma}v|^2\bigg) \, dx.
 \end{split}
\end{equation}
Plugging \eqref{linear-v-L2-1-2} and \eqref{linear-v-L2-1-3} into \eqref{linear-v-L2-1-1} results in
\begin{equation}\label{linear-v-L2-1-6}
\begin{split}
 & \frac{1}{2}\frac{d}{dt}\int_{ \Omega}\bigg(\bar{\rho}(x_1)|\dot{\Lambda}_h^{\sigma}v|^2+\frac{-\bar{\rho}'(x_1)}{g} |\dot{\Lambda}_h^{\sigma}q|^2 \bigg) \, dx+\bigg(\frac{\varepsilon}{2}\|\mathbb{D}^0(\dot{\Lambda}_h^{\sigma}v)\|_{L^2(\Omega)}^2+\delta\, \|\grad \cdot \dot{\Lambda}_h^{\sigma}v\|_{L^2(\Omega)}^2\bigg)\\
 &\qquad+\int_{\Sigma_{0}}\dot{\Lambda}_h^{\sigma}v\cdot \bigg(\bar{\rho}(0)\dot{\Lambda}_h^{\sigma}q\, e_1- \mathbb{S}(\dot{\Lambda}_h^{\sigma}v)e_1\bigg)\,dS_0=\int_{ \Omega} \dot{\Lambda}_h^{\sigma}q\,\dot{\Lambda}_h^{\sigma}(B_{1, i}^{h, j}\partial_jv^i) \, dx.
\end{split}
\end{equation}

For the boundary integral in \eqref{linear-v-L2-1-6}, making use of the interface boundary condition in \eqref{eqns-sigma-lin-1} leads to
\begin{equation}\label{linear-v-L2-1-7}
\begin{split}
&\int_{\Sigma_{0}}\dot{\Lambda}_h^{\sigma}v\cdot \bigg(\bar{\rho}(0)\dot{\Lambda}_h^{\sigma}q\, e_1- \mathbb{S}(v)e_1\bigg)\,dS_0=\frac{1}{2}\frac{d}{dt}\int_{\Sigma_{0}} g\,\bar{\rho}(0)|\xi^1|^2\,dS_0\\
&\qquad\qquad+\int_{\Sigma_{0}} \bigg(\dot{\Lambda}_h^{\sigma}v^1\dot{\Lambda}_h^{\sigma}\mathcal{B}(\partial_{\alpha}v, {Q}, \partial_t{Q})-\varepsilon\,\dot{\Lambda}_h^{\sigma}v^2\dot{\Lambda}_h^{\sigma}(\mathcal{B}_{7, i}^{\alpha}\partial_{\alpha}v^{i}+\widetilde{\mathcal{B}}_{2}\partial_t{Q})\\
&\qquad\qquad\qquad\qquad\qquad\qquad\qquad\qquad\qquad\qquad
    - \varepsilon\,\dot{\Lambda}_h^{\sigma}v^3\dot{\Lambda}_h^{\sigma}(\mathcal{B}_{8, i}^{\alpha}\partial_{\alpha}v^{i}+\widetilde{\mathcal{B}}_{3}\partial_t{Q})
\bigg) \,dS_0.
\end{split}
\end{equation}
Inserting  \eqref{linear-v-L2-1-7} into \eqref{linear-v-L2-1-6} yields \eqref{tan-linear-pseudo-0}, which ends the proof of Proposition \ref{lem-tan-pseudo-energy-1}.
\end{proof}

With Proposition \ref{lem-tan-pseudo-energy-1}in hand, we will deal with the estimates $\|\dot{\Lambda}_h^{\sigma}(v, \,q)\|_{L^\infty_t(L^2(\Omega))}
+\|\dot{\Lambda}_h^{\sigma}\xi^1\|_{L^\infty_t(L^2(\Sigma_0))}$ with $\sigma=\sigma_0,\,s,\,-\lambda$.

\subsubsection{Estimate of the tangential derivatives $\|\dot{\Lambda}_h^{s}(v,\,q)\|_{L^\infty_t(L^2(\Omega))}+\|\dot{\Lambda}_h^{s}\xi^1\|_{L^\infty_t(L^2(\Sigma_0))}$}

\begin{lem}\label{lem-tan-decay-total-1}
Let $\sigma>2$, $s_0 >2$, under the assumption of Lemma \ref{lem-tan-pseudo-energy-1}, if $E_{s_0}(t) \leq 1$ for all $t\in [0, T]$, then there holds that $\forall\,t\in [0, T]$
\begin{equation}\label{tan-decay-total-1}
\begin{split}
&\frac{d}{dt}\bigg(\|\sqrt{\bar{\rho}(x_1)}\,\dot{\Lambda}_h^{\sigma_0}\Lambda_h^{\sigma-\sigma_0}v\|_{L^2(\Omega)}^2+g^{-1} \|\sqrt{-\bar{\rho}'(x_1)}\, \dot{\Lambda}_h^{\sigma_0}\Lambda_h^{\sigma-\sigma_0}q\|_{L^2(\Omega)}^2 \\
&\qquad \qquad \qquad \qquad \qquad + g\,\bar{\rho}(0)\|\dot{\Lambda}_h^{\sigma_0}\Lambda_h^{\sigma-\sigma_0}\xi^1\|_{L^2(\Sigma_0)}^2\bigg)+ c_1 \|\dot{\Lambda}_h^{\sigma_0}\Lambda_h^{\sigma-\sigma_0}\nabla\,v\|_{L^2(\Omega)}^2 \\
&\leq C_1 (E_{\sigma}^{\frac{1}{2}}\dot{\mathcal{D}}_{s_0}^{\frac{1}{2}}
+E_{s_0}^{\frac{1}{2}} \dot{\mathcal{D}}_{\sigma}^{\frac{1}{2}})\dot{\mathcal{D}}_{\sigma}^{\frac{1}{2}}.
\end{split}
\end{equation}
\end{lem}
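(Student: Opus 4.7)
The plan is to apply Proposition~\ref{lem-tan-pseudo-energy-1} with the pseudo-differential operator $\dot{\Lambda}_h^{\sigma_0}\Lambda_h^{\sigma-\sigma_0}$ in place of $\dot{\Lambda}_h^{\sigma}$. Since this operator has the symbol $|\varsigma_h|^{\sigma_0}\langle\varsigma_h\rangle^{\sigma-\sigma_0}$ acting only on the horizontal frequencies, the derivation of Proposition~\ref{lem-tan-pseudo-energy-1} goes through verbatim, producing on the left-hand side the desired time derivative of the weighted $L^2$-energy (with the $\sqrt{-\bar{\rho}'}$ factor coming from the divergence equation of \eqref{eqns-linear-1} and the $g\bar{\rho}(0)$ factor from the kinematic boundary identity) plus the full symmetrized dissipation $\tfrac{\varepsilon}{2}\|\mathbb{D}^0(\dot{\Lambda}_h^{\sigma_0}\Lambda_h^{\sigma-\sigma_0}v)\|_{L^2(\Omega)}^2+\delta\|\nabla\cdot\dot{\Lambda}_h^{\sigma_0}\Lambda_h^{\sigma-\sigma_0}v\|_{L^2(\Omega)}^2$. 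Because horizontal operators commute with the bottom trace, $\dot{\Lambda}_h^{\sigma_0}\Lambda_h^{\sigma-\sigma_0}v$ still vanishes on $\Sigma_b$, so Korn's inequality (Lemma~\ref{lem-korn-2}) bounds this dissipation below by $2c_1\|\dot{\Lambda}_h^{\sigma_0}\Lambda_h^{\sigma-\sigma_0}\nabla v\|_{L^2(\Omega)}^2$, leaving a surplus of $c_1\|\dot{\Lambda}_h^{\sigma_0}\Lambda_h^{\sigma-\sigma_0}\nabla v\|_{L^2}^2$ that can absorb small dissipative contributions from the remainders.

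I would then reduce the proof to estimating the three remainders $\mathfrak{K}_1,\mathfrak{K}_2,\mathfrak{K}_3$ (read with $\dot{\Lambda}_h^{\sigma_0}\Lambda_h^{\sigma-\sigma_0}$ replacing $\dot{\Lambda}_h^{\sigma}$) by $(E_\sigma^{1/2}\dot{\mathcal{D}}_{s_0}^{1/2}+E_{s_0}^{1/2}\dot{\mathcal{D}}_\sigma^{1/2})\dot{\mathcal{D}}_\sigma^{1/2}$, up to an amount that the Korn surplus can swallow via Young's inequality. The term $\mathfrak{K}_3$ is the simplest: Cauchy--Schwarz combined with the Poincar\'e-type estimate $\|\dot{\Lambda}_h^{\sigma_0}\Lambda_h^{\sigma-\sigma_0}v\|_{L^2(\Omega)}\lesssim\|\dot{\Lambda}_h^{\sigma_0}\Lambda_h^{\sigma-\sigma_0}\partial_1 v\|_{L^2(\Omega)}\lesssim\dot{\mathcal{D}}_\sigma^{1/2}$ (using $v|_{\Sigma_b}=0$) reduces the task to bounding $\|\dot{\Lambda}_h^{\sigma_0}\Lambda_h^{\sigma-\sigma_0}\mathfrak{g}\|_{L^2}$, which falls under Lemma~\ref{lem-est-g-1}. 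For $\mathfrak{K}_2$ I would write $q=Q+g\xi^1$ and use Poincar\'e in $x_1$ to control $\|\dot{\Lambda}_h^{\sigma_0}\Lambda_h^{\sigma-\sigma_0}q\|_{L^2(\Omega)}$ by the appropriate boundary traces plus $\dot{\mathcal{D}}_\sigma^{1/2}$, then apply Lemma~\ref{lem-est-B-Bv-1} to bound $\|\dot{\Lambda}_h^{\sigma_0}\Lambda_h^{\sigma-\sigma_0}(B_{1,i}^{h,j}\partial_j v^i)\|_{L^2}$ by the required product.

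The main work lies in the boundary remainder $\mathfrak{K}_1$, which bundles several awkward ingredients: the trace nonlinearity $\mathcal{B}(\partial_\alpha v,Q,\partial_tQ)$ carrying both a $\partial_t Q$ and a quadratic $Q^2\mathcal{R}(Q)$ piece, together with the tangential pressure--flux forms $\mathcal{B}_{7,i}^\alpha\partial_\alpha v^i+\widetilde{\mathcal{B}}_2\partial_tQ$ and its index-$3$ analogue. I plan to transfer the estimate to $\Sigma_0$ via the trace inequality $\|\dot{\Lambda}_h^{\sigma_0}\Lambda_h^{\sigma-\sigma_0}v\|_{H^{1/2}(\Sigma_0)}\lesssim\|\dot{\Lambda}_h^{\sigma_0}\Lambda_h^{\sigma-\sigma_0}v\|_{H^1(\Omega)}$ (which is controlled by $\dot{\mathcal{D}}_\sigma^{1/2}$), then distribute the fractional horizontal derivatives by the boundary-level product laws drawn from Lemma~\ref{lem-productlaw-1aa}, placing the coefficient factors $\mathcal{B}_{7,\cdot}^\alpha,\mathcal{B}_{8,\cdot}^\alpha,\widetilde{\mathcal{B}}_2,\widetilde{\mathcal{B}}_3$ in $E^{1/2}_\cdot$-type norms via Lemmas~\ref{lem-est-aij-1}--\ref{lem-est-B-Bv-1}, linearizing the quadratic contribution $Q^2\mathcal{R}(Q)$ via Lemma~\ref{prop-composition-2}, and collecting the remaining $\partial_\alpha v,Q,\partial_tQ$ factors into $\dot{\mathcal{D}}^{1/2}_\cdot$ using the very definition \eqref{def-bdd-energy-dissi-v-1} of the dissipation, which includes $\partial_t Q$ explicitly and so avoids any need for integration by parts in time.

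The hardest obstacle will be tracking the interplay between the two horizontal indices $\sigma_0$ and $\sigma-\sigma_0$ and matching each product factor to the right summand of the target bound: one must always arrange that the high-index factor lands on the trace/velocity side (yielding a clean $\dot{\mathcal{D}}_\sigma^{1/2}$) while the low-index factor is absorbed into the coefficient (yielding $E_{s_0}^{1/2}$), or the mirror-image pairing that yields $E_\sigma^{1/2}\dot{\mathcal{D}}_{s_0}^{1/2}$. After this indexing is settled, Young's inequality transfers a small multiple of $c_1\|\dot{\Lambda}_h^{\sigma_0}\Lambda_h^{\sigma-\sigma_0}\nabla v\|_{L^2}^2$ from the left-hand dissipation to the right-hand remainders, yielding the clean estimate \eqref{tan-decay-total-1}.
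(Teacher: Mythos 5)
Your framework---applying the energy identity of Proposition~\ref{lem-tan-pseudo-energy-1} directly with the mixed Fourier multiplier $\dot{\Lambda}_h^{\sigma_0}\Lambda_h^{\sigma-\sigma_0}$, then using Korn's inequality---is sound and, if anything, slightly cleaner than the paper, which instead runs the identity at the two endpoints $\dot{\Lambda}_h^{\sigma_0}$ and $\dot{\Lambda}_h^{\sigma}$ and adds them (obtaining a left side equivalent, but not literally equal, to the stated one). The identity and the Korn step both go through verbatim for your operator.

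The gap is in the remainder estimates, and it shows up most starkly in precisely the term you call ``the simplest,'' $\mathfrak{K}_3$. A symmetric Cauchy--Schwarz
$|\mathfrak{K}_3|\lesssim\|\dot{\Lambda}_h^{\sigma_0}\Lambda_h^{\sigma-\sigma_0}\mathfrak{g}\|_{L^2}\,\|\dot{\Lambda}_h^{\sigma_0}\Lambda_h^{\sigma-\sigma_0}v\|_{L^2}$
followed by Lemma~\ref{lem-est-g-1} (applied via the split $\|\dot{\Lambda}_h^{\sigma_0}\Lambda_h^{\sigma-\sigma_0}\mathfrak{g}\|\lesssim\|\dot{\Lambda}_h^{\sigma_0}\mathfrak{g}\|+\|\dot{\Lambda}_h^{\sigma}\mathfrak{g}\|$) gives, for the high-frequency piece,
$\|\dot{\Lambda}_h^{\sigma}\mathfrak{g}\|_{L^2}\lesssim E_{\sigma+1}^{1/2}\dot{\mathcal{D}}_{s_0}^{1/2}+E_{s_0}^{1/2}\dot{\mathcal{D}}_{\sigma+1}^{1/2}$,
so you land on $(E_{\sigma+1}^{1/2}\dot{\mathcal{D}}_{s_0}^{1/2}+E_{s_0}^{1/2}\dot{\mathcal{D}}_{\sigma+1}^{1/2})\dot{\mathcal{D}}_{\sigma}^{1/2}$, which is one full horizontal derivative stronger than the claimed right-hand side $(E_{\sigma}^{1/2}\dot{\mathcal{D}}_{s_0}^{1/2}+E_{s_0}^{1/2}\dot{\mathcal{D}}_{\sigma}^{1/2})\dot{\mathcal{D}}_{\sigma}^{1/2}$. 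The surplus term $\dot{\mathcal{D}}_{\sigma+1}^{1/2}\dot{\mathcal{D}}_{\sigma}^{1/2}$ cannot be absorbed by your Korn surplus $c_1\|\dot{\Lambda}_h^{\sigma_0}\Lambda_h^{\sigma-\sigma_0}\nabla v\|_{L^2}^2$, since $\dot{\mathcal{D}}_{\sigma+1}$ contains $\|\dot{\Lambda}_h^{\sigma_0}\Lambda_h^{\sigma+1-\sigma_0}\nabla v\|_{L^2}^2$, which is strictly stronger. The paper avoids this by redistributing derivatives asymmetrically,
$|\mathfrak{K}_3|\lesssim\|\dot{\Lambda}_h^{\sigma-1}\mathfrak{g}\|_{L^2}\,\|\dot{\Lambda}_h^{\sigma+1}v\|_{L^2}$,
so that Lemma~\ref{lem-est-g-1} applied at index $\sigma-1$ produces $E_{\sigma}^{1/2}\dot{\mathcal{D}}_{s_0}^{1/2}+E_{s_0}^{1/2}\dot{\mathcal{D}}_{\sigma}^{1/2}$, while $\|\dot{\Lambda}_h^{\sigma+1}v\|_{L^2}=\|\dot{\Lambda}_h^{\sigma}\dot{\Lambda}_h v\|_{L^2}\lesssim\|\dot{\Lambda}_h^{\sigma}\nabla_h v\|_{L^2}\lesssim\dot{\mathcal{D}}_{\sigma}^{1/2}$. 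You do acknowledge, for $\mathfrak{K}_1$, that the derivative count must be matched carefully between the two factors, but you then treat $\mathfrak{K}_3$ symmetrically; the same lopsided split is required there.

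Two smaller remarks. For $\mathfrak{K}_2$, the detour through $q=Q+g\xi^1$ and boundary Poincar\'e is awkward because the trace norms $\|\dot{\Lambda}_h^{\sigma_0-1}\Lambda_h^{\sigma+\frac12-\sigma_0}Q\|_{L^2(\Sigma_0)}$ and $\|\dot{\Lambda}_h^{\sigma_0+1}\Lambda_h^{\sigma-2-\sigma_0}\xi^1\|_{H^{1/2}(\Sigma_0)}$ inside $\dot{\mathcal{D}}_{\sigma}$ carry fewer horizontal derivatives than the $\|\dot{\Lambda}_h^{\sigma_0}\Lambda_h^{\sigma-\sigma_0}q\|_{L^2}$ you need; the direct route $\|\dot{\Lambda}_h^{\sigma}q\|_{L^2}=\|\dot{\Lambda}_h^{\sigma-1}\nabla_h q\|_{L^2}\lesssim\|\dot{\Lambda}_h^{\sigma-1}\nabla q\|_{L^2}\lesssim\dot{\mathcal{D}}_{\sigma}^{1/2}$ is both shorter and gives the right index count. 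For $\mathfrak{K}_1$, pairing $v$ in $H^{1/2}(\Sigma_0)$ against the nonlinearity would require controlling the latter in $H^{-1/2}(\Sigma_0)$, whereas the product-law toolkit (Lemmas~\ref{lem-product-law-1}, \ref{lem-productlaw-1aa}) is set up for $L^2$ traces; the cleaner pairing is $\|\dot{\Lambda}_h^{\sigma-\frac12}(\text{nonlinearity})\|_{L^2(\Sigma_0)}\cdot\|\dot{\Lambda}_h^{\sigma+\frac12}v^1\|_{L^2(\Sigma_0)}$, converting both to $H^1(\Omega)$ norms by the trace theorem, again with the asymmetric $\sigma\pm\frac12$ distribution.
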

\begin{proof}
We first take $\sigma=\sigma_0$ in \eqref{tan-linear-pseudo-0} to get
\begin{equation}\label{linear-v-sigma0-0}
\begin{split}
 & \frac{d}{dt}\bigg(\|\sqrt{\bar{\rho}(x_1)}\,\dot{\Lambda}_h^{\sigma_0}v\|_{L^2(\Omega)}^2+g^{-1} \|\sqrt{-\bar{\rho}'(x_1)}\, \dot{\Lambda}_h^{\sigma_0}q\|_{L^2(\Omega)}^2 + g\,\bar{\rho}(0)\|\dot{\Lambda}_h^{\sigma_0}\xi^1\|_{L^2(\Sigma_0)}^2\bigg)\\
 &\quad+\bigg(\varepsilon \|\mathbb{D}^0(\dot{\Lambda}_h^{\sigma_0}v)\|_{L^2(\Omega)}^2+2\delta\, \|\grad \cdot \dot{\Lambda}_h^{\sigma_0}v\|_{L^2(\Omega)}^2\bigg)=2\sum_{j=1}^3\mathfrak{K}_j,
\end{split}
\end{equation}
where the remainder terms $\mathfrak{K}_j$ with $j=1, 2, 3$ are defined in \eqref{tan-linear-pseudo-0a}.

For the first integral $\mathfrak{K}_1$, we first observe
\begin{equation*}
\begin{split}
&|\int_{\Sigma_{0}}\dot{\Lambda}_h^{\sigma_0} \mathcal{B}(\partial_{\alpha}v, {Q}, \partial_t{Q})\, \dot{\Lambda}_h^{\sigma_0}v^1\, dS_0|\\
&\lesssim (\|\dot{\Lambda}_h^{\sigma_0}(\widetilde{\mathcal{B}}_5\partial_t{Q} ,\,{Q}^2\mathcal{R}({Q}),\,\mathcal{B}_{6, i}^{\alpha}\partial_{\alpha}v^{i},\,\mathcal{B}_{9, i}^{\alpha}\partial_{\alpha}v^{i})\|_{L^2(\Sigma_0)}) \|\dot{\Lambda}_h^{\sigma_0}v^1\|_{L^2(\Sigma_0)}\\
  &\lesssim (\|\dot{\Lambda}_h^{\sigma_0}\Lambda_h^{s_0-1-\sigma_0}\widetilde{\mathcal{B}}_5 \|_{L^2(\Sigma_0)}\|\dot{\Lambda}_h^{\sigma_0}\partial_t{Q} \|_{L^2(\Sigma_0)}+\|\dot{\Lambda}_h^{\sigma_0}\Lambda_h^{s_0-1-\sigma_0}{Q} \|_{L^2(\Sigma_0)}^2 \\
  &\qquad\qquad\qquad\qquad\qquad\qquad\qquad+\|\dot{\Lambda}_h^{\sigma_0}(\mathcal{B}_{6, i}^{\alpha}\partial_{\alpha}v^{i},\,\mathcal{B}_{9, i}^{\alpha}\partial_{\alpha}v^{i})\|_{H^1(\Omega)})  \|\dot{\Lambda}_h^{\sigma_0}v^1\|_{H^1(\Omega)},
\end{split}
\end{equation*}
which, along with the fact $\|\dot{\Lambda}_h^{\sigma_0}\Lambda_h^{s_0-1-\sigma_0}\widetilde{\mathcal{B}}_5 \|_{L^2(\Sigma_0)}\lesssim \|\dot{\Lambda}_h^{\sigma_0}\Lambda_h^{s_0-1-\sigma_0}\widetilde{\mathcal{B}}_5 \|_{H^1(\Omega)}\lesssim E_{s_0}^{\frac{1}{2}}$ and Lemma \ref{lem-est-B-Bv-1}, follows
\begin{equation*}
|\int_{\Sigma_{0}}\dot{\Lambda}_h^{\sigma_0} \mathcal{B}(\partial_{\alpha}v, {Q}, \partial_t{Q})\, \dot{\Lambda}_h^{\sigma_0}v^1\, dS_0|\lesssim E_{s_0}^{\frac{1}{2}}  \dot{\mathcal{D}}_{s_0}.
\end{equation*}
Along the same line, it can be obtained that
\begin{equation*}
\begin{split}
 & |\int_{\Sigma_{0}}(-\varepsilon\,\dot{\Lambda}_h^{\sigma_0}v^2\,\dot{\Lambda}_h^{\sigma_0}(\mathcal{B}_{7, i}^{\alpha}\partial_{\alpha}v^{i}+\widetilde{\mathcal{B}}_{2}\partial_t{Q})- \varepsilon\,\dot{\Lambda}_h^{\sigma_0}v^3\,\dot{\Lambda}_h^{\sigma_0}(\mathcal{B}_{8, i}^{\alpha}\partial_{\alpha}v^{i}+\widetilde{\mathcal{B}}_{3}\partial_t{Q})\, dS_0|\lesssim E_{s_0}^{\frac{1}{2}}  \dot{\mathcal{D}}_{s_0}.
\end{split}
\end{equation*}
We thus get
\begin{equation}\label{nonlin-v-sigma0-3}
\begin{split}
 & |\mathfrak{K}_1|\lesssim E_{s_0}^{\frac{1}{2}}  \dot{\mathcal{D}}_{s_0}.
\end{split}
\end{equation}
On the other hand, due to the product law \eqref{product-law-1} and Lemma \ref{lem-est-B-Bv-1}, one can see that
\begin{equation}\label{nonlin-v-sigma0-4}
\begin{split}
&|\mathfrak{K}_2|\lesssim\|\dot{\Lambda}_h^{\sigma_0+1}q\|_{L^2}\,
(\|\dot{\Lambda}_h^{\sigma_0-1}(B_{1, i}^{h, j}\partial_jv^i )\|_{L^2}
+\|\dot{\Lambda}_h^{\sigma_0-1}(B_{2, i}^hv^i)\|_{L^2})\lesssim E_{s_0}^{\frac{1}{2}}  \dot{\mathcal{D}}_{s_0}.
\end{split}
\end{equation}
For the term $\mathfrak{K}_3=\int_{ \Omega}\dot{\Lambda}_h^{\sigma_0} \mathfrak{g}\cdot \dot{\Lambda}_h^{\sigma_0} v\,dx$, from \eqref{est-g-2}, it follows that
\begin{equation}\label{nonlin-v-sigma0-6}
\begin{split}
&|\mathfrak{K}_3|\lesssim \|\dot{\Lambda}_h^{\sigma_0} \mathfrak{g}\|_{L^2}\|\dot{\Lambda}_h^{\sigma_0} v\|_{L^2}\lesssim \|\dot{\Lambda}_h^{\sigma_0} \mathfrak{g}\|_{L^2}\|\dot{\Lambda}_h^{\sigma_0} v\|_{H^1}\lesssim E_{s_0}^{\frac{1}{2}}\dot{\mathcal{D}}_{s_0}.
\end{split}
\end{equation}
Substituting \eqref{nonlin-v-sigma0-3}-\eqref{nonlin-v-sigma0-6} into \eqref{linear-v-sigma0-0} leads to \begin{equation}\label{linear-decay-sigma0-1}
\begin{split}
& \frac{d}{dt}\bigg(\|\sqrt{\bar{\rho}(x_1)}\,\dot{\Lambda}_h^{\sigma_0}v\|_{L^2(\Omega)}^2+g^{-1} \|\sqrt{-\bar{\rho}'(x_1)}\, \dot{\Lambda}_h^{\sigma_0}q\|_{L^2(\Omega)}^2 + g\,\bar{\rho}(0)\|\dot{\Lambda}_h^{\sigma_0}\xi^1\|_{L^2(\Sigma_0)}^2\bigg)\\
 &\quad+\bigg(\varepsilon \|\mathbb{D}^0(\dot{\Lambda}_h^{\sigma_0}v)\|_{L^2(\Omega)}^2+2\delta\, \|\grad \cdot \dot{\Lambda}_h^{\sigma_0}v\|_{L^2(\Omega)}^2\bigg) \lesssim E_{s_0}^{\frac{1}{2}}\dot{\mathcal{D}}_{s_0}.
\end{split}
\end{equation}

Next, we estimate the remainder terms $\mathfrak{K}_j$ with $j=1, 2, 3$ in \eqref{tan-linear-pseudo-0} with $\sigma>2$.
\begin{equation}\label{linear-v-Ntan-0}
\begin{split}
 &\frac{d}{dt}\bigg(\|\sqrt{\bar{\rho}(x_1)}\,\dot{\Lambda}_h^{s}v\|_{L^2(\Omega)}^2+g^{-1} \|\sqrt{-\bar{\rho}'(x_1)}\, \dot{\Lambda}_h^{s}q\|_{L^2(\Omega)}^2 + g\,\bar{\rho}(0)\|\dot{\Lambda}_h^{s}\xi^1\|_{L^2(\Sigma_0)}^2\bigg)\\
 &\quad+2\bigg(\frac{\varepsilon}{2} \|\mathbb{D}^0(\dot{\Lambda}_h^{s}v)\|_{L^2(\Omega)}^2+\delta\, \|\grad \cdot \dot{\Lambda}_h^{s}v\|_{L^2(\Omega)}^2\bigg)=2\sum_{j=1}^3\mathfrak{K}_j,
\end{split}
\end{equation}
where $\mathfrak{K}_j$ with $j=1, 2, 3$ are defined in \eqref{tan-linear-pseudo-0a} in which we take $\sigma=s$.
For the boundary integral $\mathfrak{K}_1$, making use of the trace theorem ensures
\begin{equation*}
\begin{split}
|\int_{\Sigma_{0}}\dot{\Lambda}_h^{\sigma}(\mathcal{B}_{6, i}^{\alpha} \partial_{\alpha}v^{i})\, \dot{\Lambda}_h^{\sigma}v^1\, dS_0|&\lesssim \|\dot{\Lambda}_h^{\sigma-\frac{1}{2}}(\mathcal{B}_{6, i}^{\alpha} \partial_{\alpha}v^{i})\|_{L^2(\Sigma_0)}\, \|\dot{\Lambda}_h^{\sigma+\frac{1}{2}}v^1\|_{L^2(\Sigma_0)}\\
&\lesssim \|\dot{\Lambda}_h^{\sigma-1}(\mathcal{B}_{6, i}^{\alpha} \partial_{\alpha}v^{i})\|_{H^1(\Omega)}\, \|\dot{\Lambda}_h^{\sigma}v^1\|_{H^1(\Omega)}.
\end{split}
\end{equation*}
We thus obtain from Lemma \ref{lem-est-B-Bv-1} that
\begin{equation*}
\begin{split}
&|\int_{\Sigma_{0}}\dot{\Lambda}_h^{\sigma}(\mathcal{B}_{6, i}^{\alpha} \partial_{\alpha}v^{i})\,\dot{\Lambda}_h^{\sigma}v^1\, dS_0|\lesssim \bigg( E_{\sigma}^{\frac{1}{2}}\dot{\mathcal{D}}_{s_0}^{\frac{1}{2}}
+E_{s_0}^{\frac{1}{2}} \dot{\mathcal{D}}_{\sigma}^{\frac{1}{2}}\bigg)\,\dot{\mathcal{D}}_{\sigma}^{\frac{1}{2}}.
\end{split}
\end{equation*}
It can be similarly proved that
\begin{equation*}\label{linear-v-Ntan-5}
\begin{split}
 & |\int_{\Sigma_{0}}\dot{\Lambda}_h^{\sigma}(\mathcal{B}_{9, i}^{\alpha}\partial_{\alpha}v^{i})\dot{\Lambda}_h^{\sigma}v^1\, dS_0|+|\int_{\Sigma_{0}}\dot{\Lambda}_h^{\sigma}(\mathcal{B}_{7, i}^{\alpha}\partial_{\alpha}v^{i}) \dot{\Lambda}_h^{\sigma}v^2\, dS_0|\\
&\qquad+|\int_{\Sigma_{0}}\dot{\Lambda}_h^{\sigma}(\mathcal{B}_{8, i}^{\alpha}\partial_{\alpha}v^{i})\, \dot{\Lambda}_h^{\sigma}v^3\, dS_0|\lesssim \bigg( E_{\sigma}^{\frac{1}{2}}\dot{\mathcal{D}}_{s_0}^{\frac{1}{2}}
+E_{s_0}^{\frac{1}{2}} \dot{\mathcal{D}}_{\sigma}^{\frac{1}{2}}\bigg)\,\dot{\mathcal{D}}_{\sigma}^{\frac{1}{2}}.
\end{split}
\end{equation*}
While for the integrals $\int_{\Sigma_{0}}\dot{\Lambda}_h^{\sigma}(\widetilde{\mathcal{B}}_5\partial_t{Q})\, \dot{\Lambda}_h^{\sigma}v^1\, dS_0$ and $\int_{\Sigma_{0}}\dot{\Lambda}_h^{\sigma}({Q}^2\mathcal{R}({Q}))\, \dot{\Lambda}_h^{\sigma}v^1\, dS_0$, thanks to Lemmas \ref{lem-product-law-1} and \ref{lem-est-B-Bv-1}, we obtain
\begin{equation*}
\begin{split}
&|\int_{\Sigma_{0}}\dot{\Lambda}_h^{\sigma}(\widetilde{\mathcal{B}}_5\partial_t{Q})\, \dot{\Lambda}_h^{\sigma}v^1\, dS_0|\lesssim \|\dot{\Lambda}_h^{\sigma-\frac{1}{2}}(\widetilde{\mathcal{B}}_5\partial_t{Q})\|_{L^2(\Sigma_0)}\, \|\dot{\Lambda}_h^{\sigma+\frac{1}{2}}v^1\|_{L^2(\Sigma_0)}\\
&\lesssim (\|\dot{\Lambda}_h^{\sigma-1}\widetilde{\mathcal{B}}_5\|_{H^1(\Omega)}
\|\dot{\Lambda}_h^{\sigma_0}\Lambda_h^{s_0-\frac{1}{2}-\sigma_0}\partial_t{Q}\|_{L^2(\Sigma_0)}
+\|\dot{\Lambda}_h^{\sigma-\frac{1}{2}} \partial_t{Q} \|_{L^2(\Sigma_0)}
\|\dot{\Lambda}_h^{\sigma_0}\Lambda_h^{s_0-1-\sigma_0}\widetilde{\mathcal{B}}_5\|_{H^1(\Omega)})\, \|\dot{\Lambda}_h^{\sigma}v^1\|_{H^1(\Omega)}\\
&\lesssim \bigg( E_{\sigma}^{\frac{1}{2}}\dot{\mathcal{D}}_{s_0}^{\frac{1}{2}}
+E_{s_0}^{\frac{1}{2}} \dot{\mathcal{D}}_{\sigma}^{\frac{1}{2}}\bigg)\,\dot{\mathcal{D}}_{\sigma}^{\frac{1}{2}},
\end{split}
\end{equation*}
and
\begin{equation*}
\begin{split}
&|\int_{\Sigma_{0}}\dot{\Lambda}_h^{\sigma}({Q}^2\mathcal{R}({Q}))\, \dot{\Lambda}_h^{\sigma}v^1\, dS_0|\lesssim \|\dot{\Lambda}_h^{\sigma-\frac{1}{2}}({Q}^2\mathcal{R}({Q}))\|_{L^2(\Sigma_0)}\, \|\dot{\Lambda}_h^{\sigma+\frac{1}{2}}v^1\|_{L^2(\Sigma_0)}\\
&\lesssim (\|\dot{\Lambda}_h^{\sigma-\frac{1}{2}}{Q}^2\|_{L^2(\Sigma_0)}
+\|\dot{\Lambda}_h^{\sigma-\frac{1}{2}}({Q}^2)\|_{L^2(\Sigma_0)}
\|\mathcal{R}({Q})-\mathcal{R}(0)\|_{L^\infty(\Sigma_0)}\\
&\qquad\qquad
+\|\dot{\Lambda}_h^{\sigma-\frac{1}{2}}(\mathcal{R}({Q})
-\mathcal{R}(0))\|_{L^2(\Sigma_0)}\|{Q}^2\|_{L^\infty(\Sigma_0)})\, \|\dot{\Lambda}_h^{\sigma}v^1\|_{H^1(\Omega)}\lesssim
 {E}_{s_0}^{\frac{1}{2}} \dot{\mathcal{D}}_{\sigma}.
\end{split}
\end{equation*}
We thus immediately get
\begin{equation}\label{linear-v-Ntan-6}
\begin{split}
 & |\mathfrak{K}_1|\lesssim \bigg( E_{\sigma}^{\frac{1}{2}}\dot{\mathcal{D}}_{s_0}^{\frac{1}{2}}
+E_{s_0}^{\frac{1}{2}} \dot{\mathcal{D}}_{\sigma}^{\frac{1}{2}}\bigg)\,\dot{\mathcal{D}}_{\sigma}^{\frac{1}{2}}.\\
\end{split}
\end{equation}
On the other hand, for $\mathfrak{K}_2$, it is easy to produce
\begin{equation*}\label{linear-v-Ntan-7}
\begin{split}
&|\mathfrak{K}_2|\lesssim\|\dot{\Lambda}_h^{\sigma}q\|_{L^2}\,
 \|\dot{\Lambda}_h^{\sigma}B_{1, i}^{h, j}\partial_jv^i\|_{L^2},
\end{split}
\end{equation*}
then using Lemma \ref{lem-est-B-Bv-1} again, one has
\begin{equation*}
\begin{split}
&|\mathfrak{K}_2| \lesssim\|\dot{\Lambda}_h^{\sigma}q\|_{L^2}\,
\bigg( E_{\sigma}^{\frac{1}{2}}\dot{\mathcal{D}}_{s_0}^{\frac{1}{2}}
+E_{s_0}^{\frac{1}{2}} \dot{\mathcal{D}}_{\sigma}^{\frac{1}{2}}\bigg)\lesssim \bigg( E_{\sigma}^{\frac{1}{2}}\dot{\mathcal{D}}_{s_0}^{\frac{1}{2}}
+E_{s_0}^{\frac{1}{2}} \dot{\mathcal{D}}_{\sigma}^{\frac{1}{2}}\bigg)\,\dot{\mathcal{D}}_{\sigma}^{\frac{1}{2}}.
\end{split}
\end{equation*}
Finally, for $\mathfrak{K}_3=\int_{ \Omega}\dot{\Lambda}_h^{\sigma} \mathfrak{g}\cdot \dot{\Lambda}_h^{\sigma}v\,dx$, we use \eqref{est-g-2} to get
\begin{equation}\label{linear-v-Ntan-10}
\begin{split}
&|\mathfrak{K}_3|\lesssim \|\dot{\Lambda}_h^{\sigma-1} \mathfrak{g}\|_{L^2}\|\dot{\Lambda}_h^{\sigma+1}v\|_{L^2}\lesssim ( E_{\sigma}^{\frac{1}{2}}\dot{\mathcal{D}}_{s_0}^{\frac{1}{2}}
+E_{s_0}^{\frac{1}{2}} \dot{\mathcal{D}}_{\sigma}^{\frac{1}{2}})\dot{\mathcal{D}}_{\sigma}^{\frac{1}{2}}.
\end{split}
\end{equation}
Therefore, inserting \eqref{linear-v-Ntan-6}-\eqref{linear-v-Ntan-10} into \eqref{linear-v-Ntan-0} yields
\begin{equation}\label{linear-decay-s-1}
\begin{split}
&\frac{d}{dt}\bigg(\|\sqrt{\bar{\rho}(x_1)}\,\dot{\Lambda}_h^{\sigma}v\|_{L^2(\Omega)}^2+g^{-1} \|\sqrt{-\bar{\rho}'(x_1)}\, \dot{\Lambda}_h^{\sigma}q\|_{L^2(\Omega)}^2 + g\,\bar{\rho}(0)\|\dot{\Lambda}_h^{\sigma}\xi^1\|_{L^2(\Sigma_0)}^2\bigg)\\
 &\quad+2\bigg(\frac{\varepsilon}{2} \|\mathbb{D}^0(\dot{{\Lambda}}_h^{\sigma}v)\|_{L^2(\Omega)}^2+\delta\, \|\grad \cdot \dot{\Lambda}_h^{\sigma}v\|_{L^2(\Omega)}^2\bigg)\lesssim (E_{\sigma}^{\frac{1}{2}}\dot{\mathcal{D}}_{s_0}^{\frac{1}{2}}
+E_{s_0}^{\frac{1}{2}} \dot{\mathcal{D}}_{\sigma}^{\frac{1}{2}})\dot{\mathcal{D}}_{\sigma}^{\frac{1}{2}}.
\end{split}
\end{equation}
Thanks to \eqref{linear-decay-sigma0-1}, \eqref{linear-decay-s-1}, and Korn's inequality \eqref{korn-comp-2}, we obtain \eqref{tan-decay-total-1}, which gives the desired result.
\end{proof}

\subsubsection{Estimate of $\|\dot{\Lambda}_h^{-\lambda} v\|_{L^\infty_t(L^2)}$}

\begin{lem}\label{lem-tan-bdd-lambda-1}
Under the assumption of Lemma \ref{lem-tan-pseudo-energy-1}, if $(\lambda,\,\sigma_0) \in (0, 1)$ satisfies $1-\lambda< \sigma_0\leq 1-\frac{1}{2}\lambda$, and $E_{s_0}(t) \leq 1$ for all $t\in [0, T]$, then there holds that $\forall\,t\in [0, T]$
\begin{equation}\label{linear-bdd-lambda-1}
\begin{split}
&\frac{d}{dt}\bigg(\|\sqrt{\bar{\rho}(x_1)}\,\dot{\Lambda}_h^{-\lambda}v\|_{L^2(\Omega)}^2+g^{-1} \|\sqrt{-\bar{\rho}'(x_1)}\, \dot{\Lambda}_h^{-\lambda}q\|_{L^2(\Omega)}^2 + g\,\bar{\rho}(0)\|\dot{\Lambda}_h^{-\lambda}\xi^1\|_{L^2(\Sigma_0)}^2\bigg)\\
 &\quad+c_1\|\dot{\Lambda}_h^{-\lambda}\nabla\,v\|_{L^2(\Omega)}^2\leq C_1   E_{s_0} (\dot{\mathcal{D}}_{s_0}^{\frac{1}{2}}+   \dot{\mathcal{E}}_{s_0}+   \dot{\mathcal{D}}_{s_0}).
\end{split}
\end{equation}
\end{lem}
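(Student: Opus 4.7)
The approach is to apply Proposition~\ref{lem-tan-pseudo-energy-1} with the specific choice $\sigma=-\lambda$. The identity \eqref{tan-linear-pseudo-0} then already produces exactly the time derivative appearing in~\eqref{linear-bdd-lambda-1}, together with the viscous dissipation $\tfrac{\varepsilon}{2}\|\mathbb{D}^0(\dot\Lambda_h^{-\lambda}v)\|_{L^2}^2+\delta\|\nabla\cdot\dot\Lambda_h^{-\lambda}v\|_{L^2}^2$, balanced against the three remainder integrals $\mathfrak K_1,\mathfrak K_2,\mathfrak K_3$ from \eqref{tan-linear-pseudo-0a}. Because $\dot\Lambda_h^{-\lambda}$ commutes with the trace on $\Sigma_b$, $\dot\Lambda_h^{-\lambda}v|_{\Sigma_b}=0$, and Korn's inequality (Lemma~\ref{lem-korn-2}) lifts the viscous term to a coercive lower bound $c_*\|\dot\Lambda_h^{-\lambda}v\|_{H^1(\Omega)}^2$. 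A small fraction of this will be reserved to absorb right-hand side contributions via Young's inequality, with the remainder becoming the $c_1\|\dot\Lambda_h^{-\lambda}\nabla v\|_{L^2}^2$ in the statement.

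For $\mathfrak K_3=\int_\Omega\dot\Lambda_h^{-\lambda}\mathfrak g\cdot\dot\Lambda_h^{-\lambda}v\,dx$, Cauchy--Schwarz together with the first bound of Lemma~\ref{lem-est-g-1}, combined with Poincar\'e on $\dot\Lambda_h^{-\lambda}v$ and the monotone embeddings $\dot{\mathcal E}_{\sigma_0+1/2}\le\dot{\mathcal E}_{s_0}$, $\dot{\mathcal D}_{\sigma_0+1}\le\dot{\mathcal D}_{s_0}$ (valid since $s_0>2>\sigma_0+1$), yields a contribution of the form $\eta\|\dot\Lambda_h^{-\lambda}\nabla v\|_{L^2}^2+C_\eta E_{s_0}(\dot{\mathcal E}_{s_0}+\dot{\mathcal D}_{s_0})$. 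For $\mathfrak K_2=\int_\Omega\dot\Lambda_h^{-\lambda}q\,\dot\Lambda_h^{-\lambda}(B_{1,i}^{h,j}\partial_j v^i)\,dx$, the coefficient $B_{1,i}^{h,j}$ is constructed from $\widetilde{\mathcal A}$ and $p'(f)-p'(\bar\rho)$, both of which vanish at $\xi=0$; the anisotropic product argument underlying Lemma~\ref{lem-est-B-Bv-1}, applied via the range $\sigma=-\lambda$ of Lemma~\ref{lem-productlaw-1aa}, yields $\|\dot\Lambda_h^{-\lambda}(B_{1,i}^{h,j}\partial_j v^i)\|_{L^2}\lesssim E_{s_0}^{1/2}\dot{\mathcal D}_{s_0}^{1/2}$. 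Paired with $\|\dot\Lambda_h^{-\lambda}q\|_{L^2}\le\mathcal E_{s_0}^{1/2}\le E_{s_0}^{1/2}$, built into the definition of $\mathcal E_{s_0}$, this produces a term of size $E_{s_0}\dot{\mathcal D}_{s_0}^{1/2}$, which matches the right-hand side of~\eqref{linear-bdd-lambda-1}.

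The main obstacle is the boundary integral $\mathfrak K_1$, which pairs $\dot\Lambda_h^{-\lambda}v^i$ on $\Sigma_0$ against three nonlinear blocks: the $\partial_\alpha v$-terms $\mathcal B_{6,7,8,9,i}^\alpha\partial_\alpha v^i$, the $\partial_tQ$-terms $\widetilde{\mathcal B}_{2,3,5}\partial_tQ$, and the purely boundary nonlinearity $Q^2\mathcal R(Q)$. My plan is to bound one trace factor by $\|\dot\Lambda_h^{-\lambda}v\|_{H^{1/2}(\Sigma_0)}\lesssim\|\dot\Lambda_h^{-\lambda}v\|_{H^1(\Omega)}$, absorbed into the Korn dissipation, and to control each nonlinearity in $H^1(\Omega)$ via its trace into $H^{1/2}(\Sigma_0)$: the $\mathcal B_{\cdot,i}^\alpha\partial_\alpha v^i$ blocks carry at least one $\partial_\alpha\xi^i$ factor and fall directly under the second line of~\eqref{est-sigma-B-2}; the $\widetilde{\mathcal B}_k\partial_tQ$ blocks are treated by Lemma~\ref{lem-productlaw-1aa} together with the $\partial_tQ$ component of $\dot{\mathcal D}_{s_0}$; and $Q^2\mathcal R(Q)$ is treated by splitting $\mathcal R(Q)=\mathcal R(0)+[\mathcal R(Q)-\mathcal R(0)]$ and invoking the composition bound Lemma~\ref{prop-composition-2} combined with the anisotropic embedding~\eqref{embedding-ineq-1}. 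The delicate point throughout is that the negative horizontal regularity $-\lambda$ rules out the $\sigma\ge\sigma_0$ product laws; passage through Lemma~\ref{lem-productlaw-1aa} requires $2(1-\ell_0)=2-\sigma_0-\lambda\in(0,1)$ and $\sigma_0-1\ge-\lambda$, which together are exactly the parameter restriction $1-\lambda<\sigma_0\le 1-\lambda/2$ assumed in the lemma, so this is where that hypothesis is fully consumed. Summing the three blocks of $\mathfrak K_1$ bounded by $E_{s_0}^{1/2}\dot{\mathcal D}_{s_0}^{1/2}\cdot\|\dot\Lambda_h^{-\lambda}v\|_{H^1(\Omega)}$-type expressions and absorbing $\eta\|\dot\Lambda_h^{-\lambda}\nabla v\|_{L^2}^2$ into the coercive Korn term delivers~\eqref{linear-bdd-lambda-1}.
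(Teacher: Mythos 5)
Your proposal is correct and follows essentially the same route as the paper: take $\sigma=-\lambda$ in Proposition~\ref{lem-tan-pseudo-energy-1}, estimate $\mathfrak K_1,\mathfrak K_2,\mathfrak K_3$ via trace, composition, and the product laws of Lemma~\ref{lem-productlaw-1aa} at negative horizontal regularity, then close with Young's inequality and Korn. The one notable stylistic difference is in $\mathfrak K_2$: the paper distributes the two $\dot\Lambda_h^{-\lambda}$ copies asymmetrically, pairing $\dot\Lambda_h^{2(1-\sigma_0-\lambda)}q$ against $\dot\Lambda_h^{2(\sigma_0-1)}(B_{1,i}^{h,j}\partial_j v^i)$, thereby unloading regularity from the nonlinear factor onto $q$ (whose $\dot\Lambda_h^{2(1-\sigma_0-\lambda)}$-norm interpolates between the $-\lambda$ and $\sigma_0$ scales in $\mathcal E_{s_0}$), whereas you estimate $\|\dot\Lambda_h^{-\lambda}(B_{1,i}^{h,j}\partial_j v^i)\|_{L^2}$ directly; both land on $E_{s_0}\dot{\mathcal D}_{s_0}^{1/2}$ and both rely in the same way on the window $1-\lambda<\sigma_0\le 1-\lambda/2$.
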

\begin{proof}
First, taking $\sigma=-\lambda$ in \eqref{tan-linear-pseudo-0} shows
\begin{equation}\label{linear-bdd-lambda-2}
\begin{split}
 &\frac{d}{dt}\bigg(\|\sqrt{\bar{\rho}(x_1)}\,\dot{\Lambda}_h^{-\lambda}v\|_{L^2(\Omega)}^2+g^{-1} \|\sqrt{-\bar{\rho}'(x_1)}\, \dot{\Lambda}_h^{-\lambda}q\|_{L^2(\Omega)}^2 + g\,\bar{\rho}(0)\|\dot{\Lambda}_h^{-\lambda}\xi^1\|_{L^2(\Sigma_0)}^2\bigg)\\
 &\quad+2\bigg(\frac{\varepsilon}{2} \|\mathbb{D}^0(\dot{\Lambda}_h^{-\lambda}v)\|_{L^2(\Omega)}^2+\delta\, \|\grad \cdot \dot{\Lambda}_h^{-\lambda}v\|_{L^2(\Omega)}^2\bigg)=\mathfrak{K}_1+\mathfrak{K}_2+\mathfrak{K}_3
\end{split}
\end{equation}
Let's estimate $\mathfrak{K}_i$ ($i=1, 2, 3$). For $\mathfrak{K}_1$, due to the product law in $\dot{H}^s$, we obtain
\begin{equation*}
\begin{split}
&|\int_{\Sigma_{0}} \dot{\Lambda}_h^{-\lambda}v^1\,\dot{\Lambda}_h^{-\lambda}(\widetilde{\mathcal{B}}_5\partial_t{Q})\,dS_0|
\lesssim\|\dot{\Lambda}_h^{-\lambda}v^1\|_{L^2(\Sigma_0)}
\|\dot{\Lambda}_h^{-\lambda}(\widetilde{\mathcal{B}}_5\partial_t{Q})\|_{L^2(\Sigma_0)}\\
&\lesssim\|\dot{\Lambda}_h^{-\lambda}v^1\|_{H^1(\Omega)}
\|\dot{\Lambda}_h^{\sigma_0-1}\widetilde{\mathcal{B}}_5\|_{L^2(\Sigma_0)}
\|\dot{\Lambda}_h^{2-\sigma_0-\lambda}\partial_t{Q}\|_{L^2(\Sigma_0)}\\
&\lesssim\|\dot{\Lambda}_h^{-\lambda}\nabla\,v\|_{L^2(\Omega)}
\|\dot{\Lambda}_h^{\sigma_0-1}\widetilde{\mathcal{B}}_5\|_{H^1(\Omega)}
\|\dot{\Lambda}_h^{2-\sigma_0-\lambda}\partial_t{Q}\|_{L^2(\Sigma_0)}\lesssim E_{s_0}^{\frac{1}{2}}\|\dot{\Lambda}_h^{-\lambda}\nabla\,v\|_{L^2(\Omega)}
\dot{\mathcal{D}}_{s_0}^{\frac{1}{2}},
\end{split}
\end{equation*}
where we used the fact $2-\sigma_0-\lambda\geq \sigma_0$ in the last inequality.

While
\begin{equation*}
\begin{split}
&|\int_{\Sigma_{0}} \dot{\Lambda}_h^{-\lambda}v^1\,\dot{\Lambda}_h^{-\lambda}(\mathcal{B}_{6, i}^{\alpha}\partial_{\alpha}v^{i} ) \,dS_0|\lesssim \|\dot{\Lambda}_h^{-\lambda}v^1\|_{L^2(\Sigma_0)}\|\dot{\Lambda}_h^{-\lambda}(\mathcal{B}_{6, i}^{\alpha}\partial_{\alpha}v^{i} )\|_{L^2(\Sigma_0)}\\
&\lesssim \|\dot{\Lambda}_h^{-\lambda}\nabla\,v\|_{L^2(\Omega)}\|\dot{\Lambda}_h^{2-\sigma_0-\lambda}(\mathcal{B}_{6, i}^{\alpha})\|_{H^1(\Omega)}\|\dot{\Lambda}_h^{\sigma_0}\nabla\,v\|_{L^2(\Omega)}\\
&\lesssim \|\dot{\Lambda}_h^{-\lambda}\nabla\,v\|_{L^2(\Omega)}
E_{s_0}^{\frac{1}{2}}\|\dot{\Lambda}_h^{\sigma_0}\nabla\,v\|_{L^2(\Omega)},
 \end{split}
\end{equation*}
and
\begin{equation*}
\begin{split}
&|\int_{\Sigma_{0}} \dot{\Lambda}_h^{-\lambda}v^1\,\dot{\Lambda}_h^{-\lambda}({Q}^2\mathcal{R}({Q}))\,dS_0|
\lesssim\|\dot{\Lambda}_h^{-\lambda}v^1\|_{L^2(\Sigma_0)}
\|\dot{\Lambda}_h^{-\lambda}({Q}^2\mathcal{R}({Q}))\|_{L^2(\Sigma_0)}\\
&\lesssim\|\dot{\Lambda}_h^{-\lambda}v^1\|_{H^1(\Omega)}
(\|\dot{\Lambda}_h^{-\lambda}({Q}^2)\|_{L^2(\Sigma_0)}
+\|\dot{\Lambda}_h^{-\lambda}({Q}^2(\mathcal{R}({Q})-\mathcal{R}(0)))\|_{L^2(\Sigma_0)})\\
&\lesssim\|\dot{\Lambda}_h^{-\lambda}\nabla\,v\|_{L^2(\Omega)}
\|\dot{\Lambda}_h^{-\lambda}{Q}\|_{L^2(\Sigma_0)}
 \|\dot{\Lambda}_h^{\sigma_0} {\Lambda}_h^{s_0-1-\sigma_0}{Q}\|_{L^2(\Sigma_0)}\lesssim \|\dot{\Lambda}_h^{-\lambda}\nabla\,v\|_{L^2(\Omega)} E_{s_0}^{\frac{1}{2}}
\dot{\mathcal{D}}_{s_0}^{\frac{1}{2}}.
\end{split}
\end{equation*}
In the same line, we have
\begin{equation*}
\begin{split}
&|\int_{\Sigma_{0}} \dot{\Lambda}_h^{-\lambda}v^1\,\dot{\Lambda}_h^{-\lambda}((\mathcal{B}_{7, i}^{\alpha},\,\mathcal{B}_{8, i}^{\alpha},\,\mathcal{B}_{9, i}^{\alpha})\partial_{\alpha}v^{i} ) \,dS_0|\lesssim \|\dot{\Lambda}_h^{-\lambda}\nabla\,v\|_{L^2(\Omega)}E_3^{\frac{1}{2}}
\|\dot{\Lambda}_h^{\sigma_0}\nabla\,v\|_{L^2(\Omega)},\\
&|\varepsilon\,\int_{\Sigma_{0}} \bigg(\dot{\Lambda}_h^{-\lambda}v^2\,\dot{\Lambda}_h^{-\lambda}(\mathcal{B}_{7, i}^{\alpha}\partial_{\alpha}v^{i}+\widetilde{\mathcal{B}}_{2}\partial_t{Q})+\dot{\Lambda}_h^{-\lambda}v^3\,\dot{\Lambda}_h^{-\lambda}(\mathcal{B}_{8, i}^{\alpha}\partial_{\alpha}v^{i}+\widetilde{\mathcal{B}}_{3}\partial_t{Q})
\bigg) \,dS_0\\
&\lesssim \|\dot{\Lambda}_h^{-\lambda}\nabla\,v\|_{L^2(\Omega)} E_{s_0}^{\frac{1}{2}}
\dot{\mathcal{D}}_{s_0}^{\frac{1}{2}}.
\end{split}
\end{equation*}
Hence, we infer
\begin{equation*}
\begin{split}
&|\mathfrak{K}_1|\lesssim \|\dot{\Lambda}_h^{-\lambda}\nabla\,v\|_{L^2(\Omega)} E_{s_0}^{\frac{1}{2}}
\dot{\mathcal{D}}_{s_0}^{\frac{1}{2}}.
\end{split}
\end{equation*}
For $\mathfrak{K}_3=\int_{ \Omega}\dot{\Lambda}_h^{-\lambda} \mathfrak{g}\cdot \dot{\Lambda}_h^{-\lambda}v\,dx$, we use \eqref{est-g-2} to get
\begin{equation*}
\begin{split}
&|\mathfrak{K}_3|\lesssim \|\dot{\Lambda}_h^{-\lambda} \mathfrak{g}\|_{L^2}\|\dot{\Lambda}_h^{-\lambda}v\|_{L^2}\lesssim  \|\dot{\Lambda}_h^{-\lambda}\nabla\,v\|_{L^2(\Omega)}E_{s_0}^{\frac{1}{2}}(\dot{\mathcal{E}}_{\sigma_0+\frac{1}{2}}^{\frac{1}{2}}  + \dot{\mathcal{D}}_{\sigma_0+1}^{\frac{1}{2}}).
\end{split}
\end{equation*}
Therefore, we get
\begin{equation}\label{linear-bdd-lambda-4ff}
\begin{split}
&|\mathfrak{K}_1|+|\mathfrak{K}_3| \lesssim  \|\dot{\Lambda}_h^{-\lambda}\nabla\,v\|_{L^2(\Omega)}E_{s_0}^{\frac{1}{2}}(\dot{\mathcal{E}}_{s_0}^{\frac{1}{2}}  + \dot{\mathcal{D}}_{s_0}^{\frac{1}{2}}).
\end{split}
\end{equation}
On the other hand, thanks to  Lemma \ref{lem-est-B-Bv-1}, we get
\begin{equation}\label{linear-bdd-lambda-4ee}
\begin{split}
 |\mathfrak{K}_2|&\lesssim \|\dot{\Lambda}_h^{2(1-\sigma_0-\lambda)}q\|_{L^2(\Omega)}\|\dot{\Lambda}_h^{2(\sigma_0-1)} B_{1, i}^{h, j}\partial_jv^i\|_{L^2(\Omega)}\\
  &\lesssim  \|\dot{\Lambda}_h^{2(1-\sigma_0-\lambda)}q\|_{L^2(\Omega)}
E_{s_0}^{\frac{1}{2}}
 \|\dot{\Lambda}_h^{\sigma_0}\nabla\,v\|_{L^2(\Omega)}\lesssim  E_{s_0}
\dot{\mathcal{D}}_{s_0}^{\frac{1}{2}},
 \end{split}
\end{equation}
where in the last inequality, we have used the fact $1>2(1-\sigma_0-\lambda)>-\lambda$ (from the condition $1-\lambda<\sigma_0\leq 1-\frac{\lambda}{2}$).

Therefore, substituting \eqref{linear-bdd-lambda-4ff} and \eqref{linear-bdd-lambda-4ee} into \eqref{linear-bdd-lambda-2} yields
\begin{equation*}\label{linear-bdd-lambda-14}
\begin{split}
 &\frac{d}{dt}\bigg(\|\sqrt{\bar{\rho}(x_1)}\,\dot{\Lambda}_h^{-\lambda}v\|_{L^2(\Omega)}^2+g^{-1} \|\sqrt{-\bar{\rho}'(x_1)}\, \dot{\Lambda}_h^{-\lambda}q\|_{L^2(\Omega)}^2 + g\,\bar{\rho}(0)\|\dot{\Lambda}_h^{-\lambda}\xi^1\|_{L^2(\Sigma_0)}^2\bigg)\\
 &\quad+2\bigg(\frac{\varepsilon}{2} \|\mathbb{D}^0(\dot{\Lambda}_h^{-\lambda}v)\|_{L^2(\Omega)}^2+\delta\, \|\grad \cdot \dot{\Lambda}_h^{-\lambda}v\|_{L^2(\Omega)}^2\bigg)\\
  &\lesssim \|\dot{\Lambda}_h^{-\lambda}\nabla\,v\|_{L^2(\Omega)} E_{s_0}^{\frac{1}{2}}(\dot{\mathcal{E}}_{s_0}^{\frac{1}{2}}  + \dot{\mathcal{D}}_{s_0}^{\frac{1}{2}})+   E_{s_0}
\dot{\mathcal{D}}_{s_0}^{\frac{1}{2}}.
\end{split}
\end{equation*}
Applying Young's inequality and Korn's inequality \eqref{korn-comp-2} implies \eqref{linear-bdd-lambda-1}, which is what we wanted to prove.
\end{proof}

Thanks to Lemmas \ref{lem-tan-decay-total-1} and \ref{lem-tan-bdd-lambda-1}, we have
\begin{lem}\label{lem-tan-bdd-N+1-1}
Let $\sigma>2$, $s_0>2$, under the assumption of Lemma \ref{lem-tan-pseudo-energy-1}, if $(\lambda,\,\sigma_0) \in (0, 1)$ satisfies $1-\lambda< \sigma_0\leq 1-\frac{1}{2}\lambda$, and $E_{s_0}(t) \leq 1$ for all $t\in [0, T]$, then there holds that $\forall\,t\in [0, T]$
\begin{equation}\label{tan-bdd-s+1-1}
\begin{split}
&\frac{d}{dt}\bigg(\|\sqrt{\bar{\rho}(x_1)}\,(\dot{\Lambda}_h^{-\lambda}v, \dot{\Lambda}_h^{\sigma_0}\Lambda_h^{\sigma-\sigma_0}v)\|_{L^2(\Omega)}^2+g^{-1} \|\sqrt{-\bar{\rho}'(x_1)}\, (\dot{\Lambda}_h^{-\lambda}q, \dot{\Lambda}_h^{\sigma_0}\Lambda_h^{\sigma-\sigma_0}q)\|_{L^2(\Omega)}^2 \\
&\qquad\qquad+ g\,\bar{\rho}(0)\|(\dot{\Lambda}_h^{-\lambda}\xi^1, \dot{\Lambda}_h^{\sigma_0}\Lambda_h^{\sigma-\sigma_0}\xi^1)\|_{L^2(\Sigma_0)}^2\bigg) \\
&\qquad+ c_1\|\nabla(\dot{\Lambda}_h^{-\lambda}v, \dot{\Lambda}_h^{\sigma_0}\Lambda_h^{\sigma-\sigma_0}v)\|_{L^2(\Omega)}^2\leq C_1 \bigg(E_{\sigma}^{\frac{1}{2}}\dot{\mathcal{D}}_{s_0}^{\frac{1}{2}}\dot{\mathcal{D}}_{\sigma}^{\frac{1}{2}}
+E_{s_0}^{\frac{1}{2}} \dot{\mathcal{D}}_{\sigma}+ E_{s_0} (\dot{\mathcal{D}}_{s_0}^{\frac{1}{2}}+\dot{\mathcal{E}}_{s_0})\bigg).
\end{split}
\end{equation}
\end{lem}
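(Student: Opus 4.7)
The plan is to obtain \eqref{tan-bdd-s+1-1} as a direct superposition of Lemma~\ref{lem-tan-decay-total-1} and Lemma~\ref{lem-tan-bdd-lambda-1}, since the target statement is essentially their sum: the inequality \eqref{tan-decay-total-1} already controls the $\dot{\Lambda}_h^{\sigma_0}\Lambda_h^{\sigma-\sigma_0}$-block of the combined energy in \eqref{tan-bdd-s+1-1}, while \eqref{linear-bdd-lambda-1} controls the $\dot{\Lambda}_h^{-\lambda}$-block. First I would just add the two inequalities. Because $\dot{\Lambda}_h^{-\lambda}$ and $\dot{\Lambda}_h^{\sigma_0}\Lambda_h^{\sigma-\sigma_0}$ act linearly and independently on $(v,q,\xi^1)$, the time-derivatives on the LHSs assemble into the quadratic functional written in \eqref{tan-bdd-s+1-1}, and the two coercive pieces $\|\dot{\Lambda}_h^{-\lambda}\nabla v\|_{L^2}^2$ and $\|\dot{\Lambda}_h^{\sigma_0}\Lambda_h^{\sigma-\sigma_0}\nabla v\|_{L^2}^2$ combine to the required $\|\nabla(\dot{\Lambda}_h^{-\lambda}v,\dot{\Lambda}_h^{\sigma_0}\Lambda_h^{\sigma-\sigma_0}v)\|_{L^2}^2$, with Korn's inequality (Lemma~\ref{lem-korn-2}) already absorbed into each constituent lemma.

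Next I would reorganize the sum of the two RHSs to match the target bound. Distributing the factor $\dot{\mathcal D}_\sigma^{1/2}$ in the RHS of \eqref{tan-decay-total-1} yields
\[
E_\sigma^{1/2}\dot{\mathcal D}_{s_0}^{1/2}\dot{\mathcal D}_\sigma^{1/2} + E_{s_0}^{1/2}\dot{\mathcal D}_\sigma,
\]
which are exactly the first two terms on the RHS of \eqref{tan-bdd-s+1-1}. From the RHS $E_{s_0}(\dot{\mathcal D}_{s_0}^{1/2}+\dot{\mathcal E}_{s_0}+\dot{\mathcal D}_{s_0})$ of \eqref{linear-bdd-lambda-1}, the first two pieces already appear in the target as $E_{s_0}(\dot{\mathcal D}_{s_0}^{1/2}+\dot{\mathcal E}_{s_0})$. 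The leftover $E_{s_0}\dot{\mathcal D}_{s_0}$ term is disposed of using the standing smallness hypothesis $E_{s_0}(t)\leq 1$ together with the monotonicity $\dot{\mathcal D}_{s_0}\lesssim\dot{\mathcal D}_\sigma$ (which follows from the symbol bound $\Lambda_h^{s_0-\sigma_0}\lesssim\Lambda_h^{\sigma-\sigma_0}$ in the regime $\sigma\geq s_0$ used in the applications), giving
\[
E_{s_0}\dot{\mathcal D}_{s_0}\;\leq\;E_{s_0}^{1/2}\dot{\mathcal D}_{s_0}\;\lesssim\;E_{s_0}^{1/2}\dot{\mathcal D}_\sigma,
\]
which is then absorbed into the second term of the target RHS.

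If one wishes to cover the borderline case $\sigma$ close to $s_0$ without appealing to the monotonicity above, an equivalent route is to split $E_{s_0}\dot{\mathcal D}_{s_0}=E_{s_0}^{1/2}\cdot E_{s_0}^{1/2}\dot{\mathcal D}_{s_0}$ and apply Young's inequality to bury a small multiple of $E_{s_0}\dot{\mathcal D}_{s_0}^{1/2}\cdot\dot{\mathcal D}_{s_0}^{1/2}$ into the dissipation $\|\dot{\Lambda}_h^{-\lambda}\nabla v\|_{L^2}^2$ on the LHS, leaving only a remainder of the form $E_{s_0}\dot{\mathcal D}_{s_0}^{1/2}$ that is already present in the target. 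The only bookkeeping points are the uniform two-sided bounds on the weights $\sqrt{\bar\rho(x_1)}$ and $\sqrt{-\bar\rho'(x_1)}$, guaranteed by \eqref{steady-ODE-5} and \eqref{steady-ODE-6}, which let us freely interchange the weighted $L^2$-norms with the intrinsic norms of $\dot{\mathcal E}_\sigma,\dot{\mathcal D}_\sigma$. There is no real obstacle in this proof: all the nonlinear analysis has already been done in Lemmas~\ref{lem-tan-decay-total-1} and~\ref{lem-tan-bdd-lambda-1}, and the present statement is essentially their algebraic sum.
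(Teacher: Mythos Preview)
Your proposal is correct and matches the paper exactly: the paper states this lemma with no proof beyond the sentence ``Thanks to Lemmas~\ref{lem-tan-decay-total-1} and~\ref{lem-tan-bdd-lambda-1}, we have'', i.e., it is obtained simply by adding \eqref{tan-decay-total-1} and \eqref{linear-bdd-lambda-1}. Your main handling of the leftover $E_{s_0}\dot{\mathcal D}_{s_0}$ via $E_{s_0}\le 1$ and the monotonicity $\dot{\mathcal D}_{s_0}\lesssim\dot{\mathcal D}_\sigma$ (valid under the paper's standing convention $s_0\le s\le\sigma$) is exactly right; the alternative absorption-into-dissipation route you sketch at the end is not needed and, as written, does not quite close since $\dot{\mathcal D}_{s_0}$ is not dominated by $\|\dot\Lambda_h^{-\lambda}\nabla v\|_{L^2}^2$.
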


\subsection{Estimate of $Q$ on the interface}
In this subsection, we derive estimate in terms of $Q$ on the interface.

\begin{lem}\label{lem-bdry-energy-1}
Let $s_0>2$, under the assumption of Lemma \ref{lem-tan-pseudo-energy-1}, if $(\lambda,\,\sigma_0) \in (0, 1)$ satisfies $1-\lambda< \sigma_0\leq 1-\frac{1}{2}\lambda$, and $E_{s_0}(t) \leq 1$ for all $t\in [0, T]$, then there hold that $\forall\,t\in [0, T]$
\begin{equation}\label{est-Q-bdry-low-1}
  \begin{split}
 \frac{d}{dt}\|\dot{\Lambda}_h^{-\lambda}{Q}\|_{L^2(\Sigma_0)}^2
&+2c_2\|(\dot{\Lambda}_h^{-\lambda} {Q},\,\dot{\Lambda}_h^{-\lambda}\partial_t{Q})\|_{L^2(\Sigma_0)}^2 \\
&\leq C_2   \|\dot{\Lambda}_h^{\frac{1}{2}-\lambda} \nabla\,v \|_{L^2(\Omega)}^2+C_2 E_{s_0}\|(\dot{\Lambda}_h^{-\lambda} {Q},\,\dot{\Lambda}_h^{-\lambda}\partial_t{Q})\|_{L^2(\Sigma_0)}^2,
\end{split}
\end{equation}
\begin{equation}\label{est-Q-bdry-decay-1}
  \begin{split}
&\frac{d}{dt}\|\dot{\Lambda}_h^{\sigma_0-1} {\Lambda}_h^{s_0+\frac{1}{2}-\sigma_0}{Q}\|_{L^2(\Sigma_0)}^2
+ 2c_2\|\dot{\Lambda}_h^{\sigma_0-1} {\Lambda}_h^{s_0+\frac{1}{2}-\sigma_0}({Q},\,\partial_t{Q})\|_{L^2(\Sigma_0)}^2 \\
&\qquad\leq  C_2(\|\dot{\Lambda}_h^{\sigma_0} {\Lambda}_h^{s_0 -\sigma_0}\nabla\,v \|_{L^2(\Omega)}^2 +E_{s_0}\|\dot{\Lambda}_h^{\sigma_0-1} {\Lambda}_h^{s_0+\frac{1}{2}-\sigma_0}({Q},\,\partial_t{Q})\|_{L^2(\Sigma_0)}^2),
\end{split}
\end{equation}
and
\begin{equation}\label{est-Q-bdry-decay-2}
  \begin{split}
&\frac{d}{dt}\|\dot{\Lambda}_h^{\sigma}{Q}\|_{L^2(\Sigma_0)}^2
+c_2\|(\dot{\Lambda}_h^{\sigma} {Q},\,\dot{\Lambda}_h^{\sigma}\partial_t{Q})\|_{L^2(\Sigma_0)}^2 \\
&\qquad\leq C_2( \|\dot{\Lambda}_h^{\sigma+\frac{1}{2}} \nabla\,v \|_{L^2(\Omega)}^2+ E_{s_0}\dot{\mathcal{D}}_{\sigma+\frac{1}{2}}+E_{\sigma+\frac{1}{2}}\dot{\mathcal{D}}_{s_0})\quad(\forall\,\,\sigma \geq s_0-\frac{1}{2}),
\end{split}
\end{equation}
\end{lem}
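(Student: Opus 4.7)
\textbf{Proof proposal for Lemma \ref{lem-bdry-energy-1}.}

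The starting point is the pointwise identity \eqref{eqns-Q-bdry-1} on $\Sigma_0$, which reads
\begin{equation*}
\bar{\rho}(0)\,Q + b\,\partial_t Q = \mathfrak{F},\qquad b := (\delta+\tfrac{4}{3}\varepsilon)\,\tfrac{-\bar{\rho}'(0)}{g\,\bar{\rho}(0)}>0,
\end{equation*}
where $\mathfrak{F} := -2\varepsilon\,\nabla_h\cdot v^h + \mathcal{B}_{9,i}^{\alpha}\partial_{\alpha}v^{i} - Q^2\mathcal{R}(Q) + \widetilde{\mathcal{B}}_4\,\partial_t Q$. The key algebraic observation is that for any pseudo-differential operator $\mathcal{P}=\mathcal{P}(\partial_h)$, we have
\begin{equation*}
\bar{\rho}(0)\,\mathcal{P}Q + b\,\mathcal{P}\partial_t Q = \mathcal{P}\mathfrak{F}.
\end{equation*}
Squaring and integrating over $\Sigma_0$ produces, via the cross term,
\begin{equation*}
b\,\bar{\rho}(0)\,\tfrac{d}{dt}\|\mathcal{P}Q\|_{L^2(\Sigma_0)}^{2} + \bar{\rho}(0)^{2}\|\mathcal{P}Q\|_{L^2(\Sigma_0)}^{2} + b^{2}\|\mathcal{P}\partial_t Q\|_{L^2(\Sigma_0)}^{2} = \|\mathcal{P}\mathfrak{F}\|_{L^2(\Sigma_0)}^{2}.
\end{equation*}
This single identity simultaneously produces the $\tfrac{d}{dt}\|Q\|^{2}$ evolution and the full dissipative norm $\|(Q,\partial_tQ)\|^{2}$ on the left-hand side, so the three estimates will all follow by choosing $\mathcal{P}=\dot{\Lambda}_h^{-\lambda}$, $\mathcal{P}=\dot{\Lambda}_h^{\sigma_0-1}\Lambda_h^{s_0+1/2-\sigma_0}$, and $\mathcal{P}=\dot{\Lambda}_h^{\sigma}$ respectively, and bounding $\|\mathcal{P}\mathfrak{F}\|_{L^2(\Sigma_0)}^{2}$ term by term.

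For the principal linear part $-2\varepsilon\nabla_h\!\cdot v^h$, I will use the trace inequality $\|f\|_{L^{2}(\Sigma_0)}\lesssim \|f\|_{H^{1/2}(\Omega)}$ combined with the exchange $\mathcal{P}\nabla_h\leftrightarrow \Lambda_h^{1/2}\mathcal{P}$ and the vanishing of $v$ on $\Sigma_b$ to produce exactly the targets $\|\dot{\Lambda}_h^{1/2-\lambda}\nabla v\|_{L^2(\Omega)}^{2}$, $\|\dot{\Lambda}_h^{\sigma_0}\Lambda_h^{s_0-\sigma_0}\nabla v\|_{L^2(\Omega)}^{2}$, and $\|\dot{\Lambda}_h^{\sigma+1/2}\nabla v\|_{L^2(\Omega)}^{2}$ appearing on the right-hand sides. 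For the nonlinear piece $\mathcal{B}_{9,i}^{\alpha}\partial_\alpha v^i$, I will use trace $\|\cdot\|_{L^{2}(\Sigma_0)}\lesssim \|\cdot\|_{H^{1}(\Omega)}$ together with Lemma \ref{lem-est-B-Bv-1} (noting that $\mathcal{B}_{9,i}^\alpha$ is built from $\widetilde{\mathcal{A}}$ and has at least one horizontal derivative factor), which furnishes the bound $E_{s_0}^{1/2}\dot{\mathcal{D}}_{s_0}^{1/2}$ at low regularity and $E_{\sigma+1/2}^{1/2}\dot{\mathcal{D}}_{s_0}^{1/2}+E_{s_0}^{1/2}\dot{\mathcal{D}}_{\sigma+1/2}^{1/2}$ at high regularity.

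The two residual pieces of $\mathfrak{F}$ are handled by Lemma \ref{lem-productlaw-1aa} applied on $\Sigma_0$ with appropriate interior embeddings. The term $\widetilde{\mathcal{B}}_4\,\partial_t Q$ is linear in $\partial_t Q$ with coefficient of size $E_{s_0}^{1/2}$, so $\|\mathcal{P}(\widetilde{\mathcal{B}}_4\partial_tQ)\|_{L^2(\Sigma_0)}^{2}\lesssim E_{s_0}\|\mathcal{P}\partial_t Q\|_{L^2(\Sigma_0)}^{2}$ plus higher-regularity remainders; this term, together with an analogous bound for $Q^2\mathcal{R}(Q)$ obtained from the composition estimate (Lemma \ref{prop-composition-2}) applied to $\mathcal{R}$ and the product law with one $Q$-factor absorbed as the small coefficient, produces exactly the tolerated quantity $E_{s_0}\|\dot{\Lambda}_h^{\sigma}(Q,\partial_tQ)\|_{L^2(\Sigma_0)}^{2}$ on the right-hand side after absorbing the small $\|\mathcal{P}\partial_tQ\|^2$ contribution into the left-hand side dissipation, which is legitimate when $E_{s_0}$ is small enough that $C_2 E_{s_0}\leq b^{2}/2$.

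The main technical obstacle is the third estimate \eqref{est-Q-bdry-decay-2} at index $\sigma\geq s_0-\tfrac12$: at this regularity level, the product-law estimate of $Q^{2}\mathcal{R}(Q)$ must carefully split into two "high-low" pieces, one of which places the high frequencies on $Q$ (producing $E_{\sigma+1/2}\dot{\mathcal{D}}_{s_0}$) and the other on $\mathcal{R}(Q)$ (producing $E_{s_0}\dot{\mathcal{D}}_{\sigma+1/2}$). This requires the composition bound of Lemma \ref{prop-composition-2} on $\Sigma_0$ at regularity index $\sigma$, together with the trace-type passage from $H^{\sigma_0}\Lambda_h^{s_0-1-\sigma_0}$ norms in $\Omega$ to $L^{\infty}(\Sigma_0)$ via the anisotropic embedding of Corollary \ref{cor-embedding-ineq-1}; everything else in the proof is a routine combination of the trace theorem, the product laws in Lemma \ref{lem-productlaw-1aa}, and the coefficient estimates of Lemmas \ref{lem-est-aij-1}--\ref{lem-est-B-Bv-1}.
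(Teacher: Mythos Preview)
Your approach is essentially the one in the paper: both proofs apply a horizontal multiplier $\mathcal{P}(\partial_h)$ to the boundary ODE \eqref{eqns-Q-bdry-1}, square it, and read off from the cross term $2b\,\bar\rho(0)\,\mathcal{P}Q\cdot\mathcal{P}\partial_tQ$ simultaneously the evolution of $\|\mathcal{P}Q\|_{L^2(\Sigma_0)}^2$ and the dissipation $\|\mathcal{P}(Q,\partial_tQ)\|_{L^2(\Sigma_0)}^2$. The handling of the linear source $-2\varepsilon\nabla_h\!\cdot v^h$ via the anisotropic trace, and of $\widetilde{\mathcal{B}}_4\partial_tQ$ and $Q^2\mathcal R(Q)$ via product/composition estimates with one small factor, is also the same.

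One technical point deserves care. For the term $\mathcal{B}_{9,i}^{\alpha}\partial_\alpha v^i$, the route you describe --- trace $\|\cdot\|_{L^2(\Sigma_0)}\lesssim\|\cdot\|_{H^1(\Omega)}$ followed by Lemma~\ref{lem-est-B-Bv-1} --- spends a full vertical derivative and delivers $E_{\sigma+1}^{1/2}\dot{\mathcal D}_{s_0}^{1/2}+E_{s_0}^{1/2}\dot{\mathcal D}_{\sigma+1}^{1/2}$, not the $\sigma+\tfrac12$ index you claim and that \eqref{est-Q-bdry-decay-2} requires. The paper instead performs the product law \emph{directly on $\Sigma_0$},
\[
\|\dot\Lambda_h^{\sigma}(\mathcal B_{9,i}^\alpha\partial_\alpha v^i)\|_{L^2(\Sigma_0)}
\lesssim \|\dot\Lambda_h^{\sigma}\partial_h v\|_{L^2(\Sigma_0)}\|\dot\Lambda_h^{\sigma_0}\Lambda_h^{s_0-1-\sigma_0}\mathcal B_{9}\|_{L^2(\Sigma_0)}
+\|\dot\Lambda_h^{\sigma}\mathcal B_{9}\|_{L^2(\Sigma_0)}\|\dot\Lambda_h^{\sigma_0}\Lambda_h^{s_0-1-\sigma_0}\partial_h v\|_{L^2(\Sigma_0)},
\]
and only then traces the \emph{individual} factors into $H^1(\Omega)$; since $\|\dot\Lambda_h^{\sigma}\mathcal B_{9}\|_{L^2(\Sigma_0)}\lesssim\|\dot\Lambda_h^{\sigma-1/2}\mathcal B_{9}\|_{H^1(\Omega)}\lesssim E_{\sigma+1/2}^{1/2}$, this recovers the sharp index. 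Likewise at $\sigma=-\lambda$ the paper obtains $E_{s_0}\|\dot\Lambda_h^{1-\lambda}v\|_{L^2(\Sigma_0)}^2$, which is absorbed into $C_2\|\dot\Lambda_h^{1/2-\lambda}\nabla v\|_{L^2(\Omega)}^2$ and so matches the right-hand side of \eqref{est-Q-bdry-low-1} exactly, whereas your cruder $E_{s_0}\dot{\mathcal D}_{s_0}$ contains terms not present there. The fix is exactly this reordering: product on $\Sigma_0$ first, trace the factors afterwards.
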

\begin{proof}
From the boundary equation \eqref{eqns-Q-bdry-1}, we find for any $\sigma \in \mathbb{R}$
\begin{equation*}
  \begin{split}
\frac{d}{dt}\|\dot{\Lambda}_h^{\sigma}{Q}\|_{L^2(\Sigma_0)}^2
&+2c_2(\|\dot{\Lambda}_h^{\sigma}{Q}\|_{L^2(\Sigma_0)}^2+\|\dot{\Lambda}_h^{\sigma}\partial_t{Q}\|_{L^2(\Sigma_0)}^2)\\
&\lesssim \|\dot{\Lambda}_h^{\sigma}\partial_h v\|_{L^2(\Sigma_0)}^2+\|\dot{\Lambda}_h^{\sigma}(\mathcal{B}_{9, i}^{\alpha}\partial_{\alpha}v^{i}, {Q}^2\mathcal{R}({Q}), \widetilde{\mathcal{B}}_{4}\partial_t{Q})\|_{L^2(\Sigma_0)}^2.
\end{split}
\end{equation*}
Thanks to Lemma \ref{lem-product-law-1}, we find $\forall \,\, \sigma \in [-\lambda, 1)$
\begin{equation*}
  \begin{split}
\|\dot{\Lambda}_h^{\sigma}(\mathcal{B}_{9, i}^{\alpha}\partial_{\alpha}v^{i})\|_{L^2(\Sigma_0)} &\lesssim \|\dot{\Lambda}_h^{\sigma} \partial_{\alpha}v^{i} \|_{L^2(\Sigma_0)}\| \dot{\Lambda}_h^{\sigma_0}  \Lambda_h^{s_0-1-\sigma_0}\mathcal{B}_{9, i}^{\alpha} \|_{L^2(\Sigma_0)},\\
 \|\dot{\Lambda}_h^{\sigma}({Q}^2\mathcal{R}({Q}))\|_{L^2(\Sigma_0)}&\lesssim \|\dot{\Lambda}_h^{\sigma} {Q} \|_{L^2(\Sigma_0)} \|\dot{\Lambda}_h^{\sigma_0}  \Lambda_h^{s_0-1-\sigma_0}(Q\mathcal{R}({Q})) \|_{L^2(\Sigma_0)},\\
 \|\dot{\Lambda}_h^{\sigma}(\widetilde{\mathcal{B}}_{4}\partial_t{Q})\|_{L^2(\Sigma_0)} &\lesssim \|\dot{\Lambda}_h^{\sigma} \partial_t{Q} \|_{L^2(\Sigma_0)}
\|\dot{\Lambda}_h^{\sigma_0}  \Lambda_h^{s_0-1-\sigma_0}\widetilde{\mathcal{B}}_{4} \|_{L^2(\Sigma_0)}.
\end{split}
\end{equation*}
Notice that
\begin{equation}\label{est-infty-B-interface-1}
  \begin{split}
&\| \dot{\Lambda}_h^{\sigma_0}  \Lambda_h^{s_0-1-\sigma_0}(\mathcal{B}_{9, i}^{\alpha}, \widetilde{\mathcal{B}}_{4}) \|_{L^2(\Sigma_0)}\lesssim \| \dot{\Lambda}_h^{\sigma_0}  \Lambda_h^{s_0-1-\sigma_0}(\mathcal{B}_{9, i}^{\alpha}, \widetilde{\mathcal{B}}_{4}) \|_{H^1(\Omega)}\lesssim E_{s_0}^{\frac{1}{2}},\\
 &\|\dot{\Lambda}_h^{\sigma_0}  \Lambda_h^{s_0-1-\sigma_0}(Q\mathcal{R}({Q})) \|_{L^2(\Sigma_0)}\lesssim \|\dot{\Lambda}_h^{\sigma_0}  \Lambda_h^{s_0-1-\sigma_0}{Q} \|_{L^2(\Sigma_0)} \lesssim E_{s_0}^{\frac{1}{2}},
\end{split}
\end{equation}
we infer
\begin{equation*}
  \begin{split}
  \|\dot{\Lambda}_h^{-\lambda}(\mathcal{B}_{9, i}^{\alpha}\partial_{\alpha}v^{i},\,&{Q}^2\mathcal{R}({Q}),
  \,\widetilde{\mathcal{B}}_{4}\partial_t{Q})\|_{L^2(\Sigma_0)}^2 \\
  &\lesssim E_{s_0}(\| \dot{\Lambda}_h^{1-\lambda}v \|_{L^2(\Sigma_0)}^2+\|\dot{\Lambda}_h^{-\lambda} {Q} \|_{L^2(\Sigma_0)}^2+\|\dot{\Lambda}_h^{-\lambda} \partial_t{Q} \|_{L^2(\Sigma_0)}^2),
\end{split}
\end{equation*}
and  $\forall\,\,\sigma \in [\sigma_0-1, 1)$
\begin{equation*}
  \begin{split}
  \|\dot{\Lambda}_h^{\sigma}(\mathcal{B}_{9, i}^{\alpha}\partial_{\alpha}v^{i},\,&{Q}^2\mathcal{R}({Q}),
  \,\widetilde{\mathcal{B}}_{4}\partial_t{Q})\|_{L^2(\Sigma_0)}^2 \\
  &\lesssim E_{s_0}(\| \dot{\Lambda}_h^{1+\sigma}v \|_{L^2(\Sigma_0)}^2+\|\dot{\Lambda}_h^{\sigma} {Q} \|_{L^2(\Sigma_0)}^2+\|\dot{\Lambda}_h^{\sigma} \partial_t{Q} \|_{L^2(\Sigma_0)}^2).
\end{split}
\end{equation*}
Similarly, for any $\sigma \geq 1$, we have
\begin{equation*}
  \begin{split}
 \|\dot{\Lambda}_h^{\sigma}(\mathcal{B}_{9, i}^{\alpha}\partial_{\alpha}v^{i})\|_{L^2(\Sigma_0)}
 \lesssim &\|\dot{\Lambda}_h^{\sigma}\partial_{h}v\|_{L^2(\Sigma_0)}\| \dot{\Lambda}_h^{\sigma_0}\Lambda_h^{s_0-1-\sigma_0}\mathcal{B}_{9, i}^{\alpha} \|_{L^2(\Sigma_0)}\\
 &+\|\dot{\Lambda}_h^{\sigma}\mathcal{B}_{9, i}^{\alpha}\|_{L^2(\Sigma_0)}\| \dot{\Lambda}_h^{\sigma_0}\Lambda_h^{s_0-1-\sigma_0}\partial_{\alpha}v\|_{L^2(\Sigma_0)},
\end{split}
\end{equation*}
\begin{equation*}
  \begin{split}
\|\dot{\Lambda}_h^{\sigma}({Q}^2\mathcal{R}({Q}))\|_{L^2(\Sigma_0)}
\lesssim &\|\dot{\Lambda}_h^{\sigma}Q\|_{L^2(\Sigma_0)}
\|\dot{\Lambda}_h^{\sigma_0}\Lambda_h^{s_0-1-\sigma_0}({Q}\mathcal{R}({Q}))\|_{L^2(\Sigma_0)}\\
&
+\|\dot{\Lambda}_h^{\sigma}({Q}\mathcal{R}({Q})) \|_{L^2(\Sigma_0)}
\|\dot{\Lambda}_h^{\sigma_0}\Lambda_h^{s_0-1-\sigma_0}Q\|_{L^2(\Sigma_0)}\\
\lesssim &\|\dot{\Lambda}_h^{\sigma}Q\|_{L^2(\Sigma_0)}
\|\dot{\Lambda}_h^{\sigma_0}\Lambda_h^{s_0-1-\sigma_0}Q\|_{L^2(\Sigma_0)},
\end{split}
\end{equation*}
\begin{equation*}
  \begin{split}
 \|\dot{\Lambda}_h^{\sigma}(\widetilde{\mathcal{B}}_{4}\partial_t{Q})\|_{L^2(\Sigma_0)} \lesssim &\|\dot{\Lambda}_h^{\sigma}\widetilde{\mathcal{B}}_{4} \|_{L^2(\Sigma_0)}
\|\dot{\Lambda}_h^{\sigma_0}\Lambda_h^{s_0-1-\sigma_0} \partial_t{Q}\|_{L^2(\Sigma_0)}\\
&
+\|\dot{\Lambda}_h^{\sigma} \partial_t{Q} \|_{L^2(\Sigma_0)}
\|\dot{\Lambda}_h^{\sigma_0}\Lambda_h^{s_0-1-\sigma_0}\widetilde{\mathcal{B}}_{4} \|_{L^2(\Sigma_0)}.
\end{split}
\end{equation*}
Hence, thanks to \eqref{est-infty-B-interface-1} and
\begin{equation*}
  \begin{split}
   &\|\dot{\Lambda}_h^{\sigma}(\mathcal{B}_{9, i}^{\alpha}, \widetilde{\mathcal{B}}_{4})\|_{L^2(\Sigma_0)} \lesssim \|\dot{\Lambda}_h^{\sigma_0}{\Lambda}_h^{\sigma-\sigma_0}(\mathcal{B}_{9, i}^{\alpha}, \widetilde{\mathcal{B}}_{4})\|_{H^1(\Omega)} \lesssim E_{s_0}^{\frac{1}{2}}\quad (\forall\,\,\sigma \in [1, s_0-1]),\\
      &\|\dot{\Lambda}_h^{\sigma}(\mathcal{B}_{9, i}^{\alpha}, \widetilde{\mathcal{B}}_{4})\|_{L^2(\Sigma_0)} \lesssim \|\dot{\Lambda}_h^{\sigma_0}{\Lambda}_h^{\sigma-\sigma_0-\frac{1}{2}}(\mathcal{B}_{9, i}^{\alpha}, \widetilde{\mathcal{B}}_{4})\|_{H^1(\Omega)} \lesssim E_{s_0}^{\frac{1}{2}}\quad (\forall\,\,\sigma \in [s_0-1, s_0-\frac{1}{2}]),\\
 &\|\dot{\Lambda}_h^{\sigma}(\mathcal{B}_{9, i}^{\alpha}, \widetilde{\mathcal{B}}_{4})\|_{L^2(\Sigma_0)} \lesssim \|\dot{\Lambda}_h^{\sigma_0}{\Lambda}_h^{\sigma-\sigma_0-\frac{1}{2}}(\mathcal{B}_{9, i}^{\alpha}, \widetilde{\mathcal{B}}_{4})\|_{H^1(\Omega)} \lesssim E_{\sigma+\frac{1}{2}}^{\frac{1}{2}}\quad (\forall\,\,\sigma \geq s_0-\frac{1}{2}),
\end{split}
\end{equation*}
we find that, for $\sigma \in [1, s_0-\frac{1}{2}]$,
\begin{equation*}
  \begin{split}
 &\|\dot{\Lambda}_h^{\sigma}(\mathcal{B}_{9, i}^{\alpha}\partial_{\alpha}v^{i},\,{Q}^2\mathcal{R}({Q}),
  \,\widetilde{\mathcal{B}}_{4}\partial_t{Q})\|_{L^2(\Sigma_0)}^2 \\
 & \lesssim E_{s_0}^{\frac{1}{2}}(\| \dot{\Lambda}_h^{\sigma_0}(\Lambda_h^{\sigma+1-\sigma_0} v,\,\Lambda_h^{s_0-\sigma_0} v)\|_{L^2(\Sigma_0)}
  +\|\dot{\Lambda}_h^{\sigma}(Q, \partial_t{Q})\|_{L^2(\Sigma_0)} ),
\end{split}
\end{equation*}
and for $\sigma \geq s_0-\frac{1}{2}$,
\begin{equation*}
  \begin{split}
 &\|\dot{\Lambda}_h^{\sigma}(\mathcal{B}_{9, i}^{\alpha}\partial_{\alpha}v^{i},\,{Q}^2\mathcal{R}({Q}),
  \,\widetilde{\mathcal{B}}_{4}\partial_t{Q})\|_{L^2(\Sigma_0)}^2 \\
 &\lesssim E_{s_0}^{\frac{1}{2}}\| \dot{\Lambda}_h^{\sigma_0}\Lambda_h^{\sigma+1-\sigma_0} v\|_{L^2(\Sigma_0)} + E_{s_0}^{\frac{1}{2}}\dot{\mathcal{D}}_{\sigma+\frac{1}{2}}^{\frac{1}{2}}
 +E_{\sigma+\frac{1}{2}}^{\frac{1}{2}}\dot{\mathcal{D}}_{s_0}^{\frac{1}{2}}.
\end{split}
\end{equation*}
Therefore, we immediately get \eqref{est-Q-bdry-low-1}, \eqref{est-Q-bdry-decay-1}, and \eqref{est-Q-bdry-decay-2}, and then finish the proof of Lemma \ref{lem-bdry-energy-1}.
\end{proof}

\begin{col}\label{cor-bdry-energy-1}
Let $s_0>2$, under the assumption of Lemma \ref{lem-bdry-energy-1}, if, in addition, $E_{s_0}(t) \leq \frac{c_2}{C_2}$ for all $t\in [0, T]$, then there hold that $\forall\,t\in [0, T]$
\begin{equation}\label{est-Q-bdry-low-1aa1}
  \begin{split}
 \frac{d}{dt}\|\dot{\Lambda}_h^{-\lambda}{Q}\|_{L^2(\Sigma_0)}^2
&+c_2\|(\dot{\Lambda}_h^{-\lambda} {Q},\,\dot{\Lambda}_h^{-\lambda}\partial_t{Q})\|_{L^2(\Sigma_0)}^2\leq C_2   \|\dot{\Lambda}_h^{\frac{1}{2}-\lambda}\nabla\, v \|_{L^2(\Omega)}^2,
\end{split}
\end{equation}
\begin{equation}\label{est-Q-bdry-decay-1bb1}
  \begin{split}
&\frac{d}{dt}\|\dot{\Lambda}_h^{\sigma_0-1} {\Lambda}_h^{s_0+\frac{1}{2}-\sigma_0}{Q}\|_{L^2(\Sigma_0)}^2
+ c_2\|\dot{\Lambda}_h^{\sigma_0-1} {\Lambda}_h^{s_0+\frac{1}{2}-\sigma_0}({Q},\,\partial_t{Q})\|_{L^2(\Sigma_0)}^2 \\
&\qquad\leq  C_2\|\dot{\Lambda}_h^{\sigma_0} {\Lambda}_h^{s_0 -\sigma_0}\nabla\, v \|_{L^2(\Omega)}^2,
\end{split}
\end{equation}
\begin{equation}\label{est-Q-bdry-decay-2cc1}
  \begin{split}
&\frac{d}{dt}\|\dot{\Lambda}_h^{\sigma_0-1} {\Lambda}_h^{\sigma+1-\sigma_0}{Q}\|_{L^2(\Sigma_0)}^2
+c_2\| \dot{\Lambda}_h^{\sigma_0-1} {\Lambda}_h^{\sigma+1-\sigma_0}( {Q},\,\partial_t{Q})\|_{L^2(\Sigma_0)}^2 \\
&\qquad\leq C_2( \|\dot{\Lambda}_h^{\sigma_0} {\Lambda}_h^{\sigma+\frac{1}{2}-\sigma_0} \nabla\, v \|_{L^2(\Omega)}^2+ E_{s_0}\dot{\mathcal{D}}_{\sigma+\frac{1}{2}}+E_{\sigma+\frac{1}{2}}\dot{\mathcal{D}}_{s_0})\quad(\forall\,\,\sigma \geq s_0-\frac{1}{2}),
\end{split}
\end{equation}
and
\begin{equation}\label{est-Q-bdry-decay-2dd1}
  \begin{split}
&\frac{d}{dt}\|(\dot{\Lambda}_h^{-\lambda}{Q},\dot{\Lambda}_h^{\sigma_0-1} {\Lambda}_h^{\sigma+1-\sigma_0}{Q})\|_{L^2(\Sigma_0)}^2\\
&
+c_2(\| \dot{\Lambda}_h^{-\lambda}( {Q},\,\partial_t{Q})\|_{L^2(\Sigma_0)}^2+\| \dot{\Lambda}_h^{\sigma_0-1} {\Lambda}_h^{\sigma+1-\sigma_0}( {Q},\,\partial_t{Q})\|_{L^2(\Sigma_0)}^2) \\
&\,\leq C_2(  \|(\dot{\Lambda}_h^{\frac{1}{2}-\lambda}\nabla\, v , \dot{\Lambda}_h^{\sigma_0} {\Lambda}_h^{\sigma+\frac{1}{2}-\sigma_0} \nabla\, v) \|_{L^2(\Omega)}^2+ E_{s_0}\dot{\mathcal{D}}_{\sigma+\frac{1}{2}}+E_{\sigma+\frac{1}{2}}\dot{\mathcal{D}}_{s_0})\quad(\forall\,\,\sigma \geq s_0-\frac{1}{2}).
\end{split}
\end{equation}
\end{col}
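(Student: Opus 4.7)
The strategy is to treat the four inequalities as routine consequences of Lemma~\ref{lem-bdry-energy-1}, with the smallness assumption $E_{s_0}(t)\leq c_2/C_2$ used to absorb the nonlinear remainders into the dissipation.

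First, for \eqref{est-Q-bdry-low-1aa1} and \eqref{est-Q-bdry-decay-1bb1}, the plan is simply to take \eqref{est-Q-bdry-low-1} and \eqref{est-Q-bdry-decay-1}, transpose the term $C_2 E_{s_0}\|(\dot{\Lambda}_h^{-\lambda}Q,\dot{\Lambda}_h^{-\lambda}\partial_tQ)\|_{L^2(\Sigma_0)}^2$ (respectively its high-regularity analogue) to the left-hand side, and invoke the hypothesis $C_2 E_{s_0}(t)\leq c_2$ to absorb it into the coefficient $2c_2$ in front of the dissipation, leaving $c_2$ on the left. No new computation is required.

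Second, for \eqref{est-Q-bdry-decay-2cc1}, I will redo the energy identity of Lemma~\ref{lem-bdry-energy-1}, but this time apply the mixed Fourier multiplier $T\eqdefa \dot{\Lambda}_h^{\sigma_0-1}{\Lambda}_h^{\sigma+1-\sigma_0}$ (whose symbol is $|\varsigma_h|^{\sigma_0-1}\langle\varsigma_h\rangle^{\sigma+1-\sigma_0}$, and in particular satisfies the elementary equivalence $\|Tf\|_{L^2}^2\thicksim \|\dot{\Lambda}_h^{\sigma_0-1}f\|_{L^2}^2+\|\dot{\Lambda}_h^{\sigma}f\|_{L^2}^2$ since $\sigma+1-\sigma_0>0$) to the boundary ODE \eqref{eqns-Q-bdry-1}, then test against $TQ+T\partial_tQ$ on $\Sigma_0$. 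The linear forcing $T\nabla_h\cdot v^h$ on $\Sigma_0$ is handled by the trace interpolation $\|g\|_{L^2(\Sigma_0)}^2\lesssim \|g\|_{L^2(\Omega)}\|g\|_{H^1(\Omega)}$, which symbolically pays exactly one half-derivative in the vertical direction; the symbol bookkeeping then yields a control by $\|\dot{\Lambda}_h^{\sigma_0}{\Lambda}_h^{\sigma+\frac12-\sigma_0}\nabla v\|_{L^2(\Omega)}^2$, matching the right-hand side of \eqref{est-Q-bdry-decay-2cc1}. For the nonlinear terms $\mathcal{B}_{9,i}^\alpha\partial_\alpha v^i$, $Q^2\mathcal{R}(Q)$, and $\widetilde{\mathcal{B}}_4\partial_tQ$, I will apply the same style of product estimates used inside the proof of Lemma~\ref{lem-bdry-energy-1} (i.e.\ Lemmas~\ref{lem-product-law-1}, \ref{lem-est-B-Bv-1}, together with the trace into $\Sigma_0$), distributing the operator $T$ by splitting into its ``low'' piece $\dot{\Lambda}_h^{\sigma_0-1}$ and its ``high'' piece of order $\sigma$; this produces two groups of bounds, one proportional to $E_{s_0}^{\frac12}\dot{\mathcal{D}}_{\sigma+\frac12}^{\frac12}$ and one proportional to $E_{\sigma+\frac12}^{\frac12}\dot{\mathcal{D}}_{s_0}^{\frac12}$, plus a self-similar piece $E_{s_0}^{\frac12}\|T(Q,\partial_tQ)\|_{L^2(\Sigma_0)}$ that will be absorbed thanks to the smallness $C_2E_{s_0}\leq c_2$, exactly as in the previous step.

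Finally, \eqref{est-Q-bdry-decay-2dd1} is obtained by simply summing \eqref{est-Q-bdry-low-1aa1} and \eqref{est-Q-bdry-decay-2cc1}, noting that the right-hand sides add and the dissipation norms on the left also add cleanly. The only delicate step in the whole argument is the symbol manipulation in step~2, where I must ensure that the trace loss is exactly absorbed by the extra power $\langle\varsigma_h\rangle^{1/2}$ that appears when passing from $T$ on $\Sigma_0$ to the interior norm $\dot{\Lambda}_h^{\sigma_0}\Lambda_h^{\sigma+\frac12-\sigma_0}$ of $\nabla v$; everything else is a straightforward application of the already-established product and commutator lemmas combined with smallness absorption.
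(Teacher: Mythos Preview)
Your proposal is correct and follows the same route as the paper, which states the corollary without proof because it is an immediate consequence of Lemma~\ref{lem-bdry-energy-1}: the smallness $C_2E_{s_0}\le c_2$ absorbs the self-similar remainder in \eqref{est-Q-bdry-low-1} and \eqref{est-Q-bdry-decay-1} to give \eqref{est-Q-bdry-low-1aa1} and \eqref{est-Q-bdry-decay-1bb1}, while \eqref{est-Q-bdry-decay-2cc1} is obtained by running the same energy identity with the mixed horizontal multiplier $\dot{\Lambda}_h^{\sigma_0-1}\Lambda_h^{\sigma+1-\sigma_0}$ (exactly as already done for \eqref{est-Q-bdry-decay-1}) and combining the low- and high-frequency product bounds from the proof of Lemma~\ref{lem-bdry-energy-1}; \eqref{est-Q-bdry-decay-2dd1} is indeed just the sum of \eqref{est-Q-bdry-low-1aa1} and \eqref{est-Q-bdry-decay-2cc1}. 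One small clarification: in your step for \eqref{est-Q-bdry-decay-2cc1}, only the low-frequency piece $\dot{\Lambda}_h^{\sigma_0-1}$ of $T$ actually produces a self-similar term requiring absorption; the high-frequency piece $\dot{\Lambda}_h^{\sigma}$ already lands in the cross form $E_{s_0}\dot{\mathcal{D}}_{\sigma+\frac12}+E_{\sigma+\frac12}\dot{\mathcal{D}}_{s_0}$ (cf.\ \eqref{est-Q-bdry-decay-2}), which is kept on the right-hand side rather than absorbed.
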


\subsection{Energy estimates of the gradient of the velocity}

In this subsection, we derive energy estimates in terms of $\nabla\,v$ as well as its horizontal derivatives.
For this, we still use the linearized form \eqref{eqns-linear-1} of the system \eqref{eqns-pert-1} as in Section \ref{subsect-est-hori-1}. Set
\begin{equation*}
\begin{split}
 &   \mathring{\mathcal{E}}( \mathcal{P}(\partial_h)\nabla\, v)\eqdefa \frac{\varepsilon}{2}\sum_{\beta=2}^3(\|\mathcal{P}(\partial_h) \mathcal{G}^{\beta}\|_{L^2}^2-\|\mathcal{P}(\partial_h)(-\partial_{\beta}v^1
+\widetilde{\mathcal{G}}^{\beta})\|_{L^2}^2)\\
&\,+ \frac{(\delta+\frac{4}{3}\varepsilon)}{2}\bigg(\|\mathcal{P}(\partial_h)\mathcal{G}^1\|_{L^2}^2
-\|\mathcal{P}(\partial_h)
(-\frac{\delta-\frac{2}{3}\varepsilon}{\delta+\frac{4}{3}\varepsilon} \nabla_h\cdot v^h+\frac{\bar{\rho}(0)}{\delta+\frac{4}{3}\varepsilon}\, \mathcal{H}({Q})+\widetilde{\mathcal{G}}^1)\|_{L^2(\Omega)}^2\\
& \qquad\qquad\qquad\qquad\qquad-\frac{2\bar{\rho}(0)}{\delta+\frac{4}{3}\varepsilon}
\int_{\Omega}\mathcal{P}(\partial_h) v^1\mathcal{P}(\partial_h)\partial_1  \mathcal{H}({Q}) \,dx\bigg)\\
   &\qquad +\bar{\rho}(0) \int_{\Sigma_0}\mathcal{P}(\partial_h) v^1\, \mathcal{P}(\partial_h)q\,dS_0-\int_{\Omega} \mathcal{P}(\partial_h)\nabla\cdot(\bar{\rho}(x_1)v ) \mathcal{P}(\partial_h)q\,dx.
        \end{split}
\end{equation*}

The standard procedure for getting the energy estimates can be adopted to get
\begin{lem}\label{lem-pseudo-energy-tv-1}
Let $(v, \xi)$ be smooth solution to the system \eqref{eqns-pert-1}, there holds that
\begin{equation}\label{pseudo-energy-tv-1}
\begin{split}
 & \frac{d}{dt}\mathring{\mathcal{E}}(\dot{\Lambda}_h^{\sigma}\nabla\, v)+\| \sqrt{\bar{\rho}(x_1)}\dot{\Lambda}_h^{\sigma} \partial_t v\|_{L^2}^2=-\sum_{j=1}^{6}\mathfrak{J}_j,
     \end{split}
\end{equation}
where
\begin{equation*}
\begin{split}
&\mathfrak{J}_1:=-\sum_{i=1}^3\int_{\Omega}\dot{\Lambda}_h^{\sigma} \partial_t v^i  \dot{\Lambda}_h^{\sigma}\mathcal{L}_{i}(\partial_h\nabla\,v)\,dx\\
&\qquad\qquad+(\delta+\frac{1}{3}\varepsilon)\int_{\Omega}(\dot{\Lambda}_h^{\sigma}
 \partial_t v^h\cdot \dot{\Lambda}_h^{\sigma}\nabla_h \partial_1v^1 +\dot{\Lambda}_h^{\sigma}\partial_t v^1 \dot{\Lambda}_h^{\sigma}\partial_1 \nabla_h\cdot v^h) \,dx,\\
&\mathfrak{J}_2:=-\bar{\rho}(0) \int_{\Sigma_0} \dot{\Lambda}_h^{\sigma} v^1 \dot{\Lambda}_h^{\sigma} \partial_t {Q}\,dS_0-g\,\bar{\rho}(0)  \|\dot{\Lambda}_h^{\sigma} v^1 \|_{L^2(\Sigma_0)}^2,
    \end{split}
\end{equation*}
\begin{equation*}
\begin{split}
&\mathfrak{J}_3:=g\int_{\Omega} \dot{\Lambda}_h^{\sigma} \nabla\cdot(v\,\bar{\rho}(x_1))\, ( \bar{\rho}(x_1)(\bar{\rho}'(x_1))^{-1}\nabla \cdot \dot{\Lambda}_h^{\sigma} v+\dot{\Lambda}_h^{\sigma} v^1)\,dx\\
 &\qquad-g\int_{\Omega}(\bar{\rho}'(x_1))^{-1}\, \dot{\Lambda}_h^{\sigma} \nabla\cdot(v\,\bar{\rho}(x_1))\,  \dot{\Lambda}_h^{\sigma} (B_{1, i}^{h, j}\partial_jv^i)\,dx,\\
&\mathfrak{J}_4:=  \bar{\rho}(0) \int_{\Omega}\dot{\Lambda}_h^{\sigma} \, \mathcal{H}( \partial_t{Q})\dot{\Lambda}_h^{\sigma} \partial_1v^1  \,dx+ \bar{\rho}(0)\int_{\Omega}\dot{\Lambda}_h^{\sigma} v^1\dot{\Lambda}_h^{\sigma}\partial_1 \mathcal{H}(\partial_t {Q}) \,dx,
        \end{split}
\end{equation*}
\begin{equation*}
\begin{split}
 &\mathfrak{J}_5:=-(\delta+\frac{4}{3}\varepsilon)\int_{\Omega}\dot{\Lambda}_h^{\sigma}\partial_t v^1 \dot{\Lambda}_h^{\sigma}\partial_1 \widetilde{\mathcal{G}}^1  \,dx-\varepsilon\int_{\Omega}\dot{\Lambda}_h^{\sigma}\partial_t v^{\beta}\, \dot{\Lambda}_h^{\sigma}\partial_1\widetilde{\mathcal{G}}^{\beta} \,dx,\\
 &\mathfrak{J}_6:=(\delta+\frac{4}{3}\varepsilon)\int_{\Omega}\dot{\Lambda}_h^{\sigma}
 \partial_t \widetilde{\mathcal{G}}^1 \dot{\Lambda}_h^{\sigma} \partial_1v^1  \,dx+\varepsilon\int_{\Omega}\dot{\Lambda}_h^{\sigma}\partial_t  \widetilde{\mathcal{G}}^{\beta} \, \dot{\Lambda}_h^{\sigma} \partial_1v^{\beta} \,dx.
        \end{split}
\end{equation*}
\end{lem}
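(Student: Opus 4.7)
The plan is to test the $\dot\Lambda_h^\sigma$-differentiated momentum equation (the second line of \eqref{eqns-linear-1}) against $\dot\Lambda_h^\sigma\partial_t v$ in $L^2(\Omega)$. This directly produces $\|\sqrt{\bar\rho(x_1)}\,\dot\Lambda_h^\sigma\partial_t v\|_{L^2}^2$ on the left. On the right, the viscous divergence will be split with the decomposition $(\nabla\cdot\mathbb S(v))^1=(\delta+\tfrac43\varepsilon)\partial_1^2 v^1+\mathcal L_1$, $(\nabla\cdot\mathbb S(v))^\alpha=\varepsilon\partial_1^2 v^\alpha+\mathcal L_\alpha$; the $\mathcal L_i$ parts (together with a symmetrization of the $(\delta+\tfrac13\varepsilon)$ mixed-derivative contributions) give $\mathfrak J_1$, while the $\partial_1^2$ parts are integrated by parts in $x_1$. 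The $\Sigma_b$ contribution vanishes thanks to $v|_{\Sigma_b}=0\Rightarrow\partial_t v|_{\Sigma_b}=0$; the $\Sigma_0$ contribution is where the four good unknowns enter. For the pressure gradient, I will integrate by parts to produce a surface term $\bar\rho(0)\int_{\Sigma_0}\dot\Lambda_h^\sigma v^1\,\dot\Lambda_h^\sigma q\,dS_0$ plus an interior term $-\int\dot\Lambda_h^\sigma\partial_t(\nabla\cdot(\bar\rho v))\,\dot\Lambda_h^\sigma q\,dx$; substituting $q=Q+g\xi^1$ on $\Sigma_0$ together with $\partial_t\xi^1=v^1$ extracts $g\bar\rho(0)\|\dot\Lambda_h^\sigma v^1\|_{L^2(\Sigma_0)}^2$ and the coupling $\bar\rho(0)\int_{\Sigma_0}\dot\Lambda_h^\sigma v^1\,\dot\Lambda_h^\sigma\partial_t Q\,dS_0$ in $\mathfrak J_2$.

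The central algebraic step is the rewriting of $\tfrac{d}{dt}\|\dot\Lambda_h^\sigma\partial_1 v^i\|_{L^2}^2$ arising from the $x_1$-integration by parts. Writing $\partial_1 v^1=\mathcal G^1+R^1$ and $\partial_1 v^\beta=\mathcal G^\beta+R^\beta$ in the interior according to \eqref{d-1-v-unknown-1}, with $R^1=-\tfrac{\delta-\frac23\varepsilon}{\delta+\frac43\varepsilon}\nabla_h\!\cdot\! v^h+\tfrac{\bar\rho(0)}{\delta+\frac43\varepsilon}\mathcal H(Q)+\widetilde{\mathcal G}^1$ and $R^\beta=-\partial_\beta v^1+\widetilde{\mathcal G}^\beta$, I will invoke the identity
\begin{equation*}
-\tfrac12\|\mathcal P(\partial_h)\partial_1 v^i\|_{L^2}^2=\tfrac12\bigl(\|\mathcal P(\partial_h)\mathcal G^i\|_{L^2}^2-\|\mathcal P(\partial_h)R^i\|_{L^2}^2\bigr)-\int\mathcal P(\partial_h)\partial_1 v^i\,\mathcal P(\partial_h)\mathcal G^i\,dx,
\end{equation*}
so that each $-\tfrac12\tfrac{d}{dt}\|\dot\Lambda_h^\sigma\partial_1 v^i\|_{L^2}^2$ is converted into $\tfrac12\tfrac{d}{dt}(\|\dot\Lambda_h^\sigma\mathcal G^i\|^2-\|\dot\Lambda_h^\sigma R^i\|^2)$ plus a correction. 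In the correction, integrating by parts once more in $x_1$ against $\partial_t v^i$ kills the $\Sigma_0$ trace because $\mathcal G^i|_{\Sigma_0}=0$, and the remaining interior integrals involving $\partial_t\widetilde{\mathcal G}^i$ and $\partial_1\widetilde{\mathcal G}^i$ are exactly $\mathfrak J_5$ and $\mathfrak J_6$.

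For the interior piece of the pressure integration by parts, I will invoke the modified mass equation $\nabla\cdot(\bar\rho v)=g^{-1}\bar\rho'\partial_t q+B_{1,i}^{h,j}\partial_j v^i$ to replace $\partial_t\nabla\cdot(\bar\rho v)$; the diagonal part generates a $\tfrac{d}{dt}$ of a quadratic in $\dot\Lambda_h^\sigma q$ plus the term $-\int\dot\Lambda_h^\sigma\nabla\cdot(\bar\rho v)\,\dot\Lambda_h^\sigma q\,dx$ inside $\mathring{\mathcal E}$, while the off-diagonal and $B$-terms are gathered into $\mathfrak J_3$. The harmonic-extension cross term $\tfrac{2\bar\rho(0)}{\delta+\frac43\varepsilon}\int\mathcal P(\partial_h) v^1\mathcal P(\partial_h)\partial_1\mathcal H(Q)\,dx$ sitting inside $\mathring{\mathcal E}$ appears when differentiating $\|\mathcal P(\partial_h) R^1\|_{L^2}^2$ in time and shifting the $\partial_t\mathcal H(Q)=\mathcal H(\partial_t Q)$ factor symmetrically onto $v^1$; the remainder of that differentiation, involving $\mathcal H(\partial_t Q)$ and $\partial_1\mathcal H(\partial_t Q)$, gives precisely $\mathfrak J_4$.

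The hard part will be the bookkeeping: each of the six $\mathfrak J_j$ must appear with exactly the stated coefficients, which requires carefully tracking the $\tfrac{\delta-\frac23\varepsilon}{\delta+\frac43\varepsilon}$ factors inherited from the trace formula \eqref{partialv-q-1}, correctly symmetrizing the $(\delta+\tfrac13\varepsilon)$-mixed terms from $\mathcal L_i(\partial_h\nabla v)$ into $\mathfrak J_1$, and performing the double integration by parts that turns the $\Sigma_0$-trace of $\mathcal H(Q)$ into an interior integral. Once this bookkeeping is executed, combining all the pieces yields \eqref{pseudo-energy-tv-1} with the functional $\mathring{\mathcal E}(\dot\Lambda_h^\sigma\nabla v)$ as defined.
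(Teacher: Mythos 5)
Your outline correctly identifies the test function $\dot\Lambda_h^\sigma\partial_t v$, the split of the viscous divergence, the role of the modified mass equation, the harmonic extension, and the vanishing trace $\mathcal G^i|_{\Sigma_0}=0$. But the algebraic route you propose for the dominant viscous pieces $II_{i,1}:=-\int_\Omega\dot\Lambda_h^\sigma\partial_t v^i\,\dot\Lambda_h^\sigma\partial_1^2 v^i\,dx$ does not close. You integrate by parts in $x_1$ first, producing $+\tfrac12\tfrac{d}{dt}\|\dot\Lambda_h^\sigma\partial_1 v^i\|_{L^2}^2-\int_{\Sigma_0}\dot\Lambda_h^\sigma\partial_t v^i\,\dot\Lambda_h^\sigma\partial_1 v^i\,dS_0$, and then apply your static identity. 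Two things go wrong: the $\Sigma_0$ boundary integral carries $\partial_t v^i|_{\Sigma_0}$, over which the equations give no control (avoiding precisely this object is the purpose of the good-unknown construction); and the sign is off, since after the integration by parts you have $+\tfrac12\tfrac{d}{dt}\|\partial_1 v^i\|^2$ rather than the ``$-\tfrac12\tfrac{d}{dt}$'' you write, so your identity delivers $-\tfrac12\tfrac{d}{dt}(\|\mathcal G^i\|^2-\|R^i\|^2)$ where $\mathring{\mathcal E}$ requires a $+$. Indeed, pushing your cross term $\tfrac{d}{dt}\int\partial_1 v^i\,\mathcal G^i\,dx$ through the second integration by parts (using $\mathcal G^i|_{\Sigma_0}=0$) and expanding $\mathcal G^i=\partial_1 v^i-R^i$ makes the two $\tfrac12\tfrac{d}{dt}(\|\mathcal G^i\|^2-\|R^i\|^2)$ contributions cancel and collapses everything back to the starting expression --- the manipulation is circular.

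The working move is to substitute $\partial_1^2 v^i=\partial_1(\mathcal G^i+R^i)$ from \eqref{partialv-q-1} \emph{before} any integration by parts, and integrate by parts only on $-\int\partial_t v^i\,\partial_1\mathcal G^i\,dx$; then $\mathcal G^i|_{\Sigma_0}=0$ and $\partial_t v^i|_{\Sigma_b}=0$ kill the boundary terms identically, and substituting $\partial_t\partial_1 v^i=\partial_t\mathcal G^i+\partial_t R^i$ followed by $\mathcal G^i=\partial_1 v^i-R^i$ produces $+\tfrac12\tfrac{d}{dt}(\|\mathcal G^i\|^2-\|R^i\|^2)+\int\partial_t R^i\,\partial_1 v^i\,dx$ with the correct sign, the remaining piece $-\int\partial_t v^i\,\partial_1 R^i\,dx$ being handled termwise. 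Two further inaccuracies: the cross term $-\tfrac{2\bar\rho(0)}{\delta+\frac43\varepsilon}\int v^1\,\partial_1\mathcal H(Q)\,dx$ inside $\mathring{\mathcal E}$ is extracted from the $\mathcal H(Q)$-piece of $-\int\partial_t v^1\,\partial_1 R^1\,dx$, not from time-differentiating $\|R^1\|_{L^2}^2$ as you describe; and the $(\delta+\frac13\varepsilon)$-coefficients in $\mathfrak J_1$ come from combining the $\nabla_h\cdot v^h$- and $\partial_\beta v^1$-contributions of $\int\partial_t R^i\,\partial_1 v^i\,dx$ and $-\int\partial_t v^i\,\partial_1 R^i\,dx$, not from symmetrizing $\mathcal L_i$. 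Finally, your outline (like the stated lemma) omits the source term $\int_\Omega\dot\Lambda_h^\sigma\mathfrak g\cdot\dot\Lambda_h^\sigma\partial_t v\,dx$ that arises when the linearized momentum equation is tested against $\partial_t v$.
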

\begin{proof}
We multiply the $i$-th component of the momentum equations of \eqref{eqns-sigma-lin-1} by $\dot{\Lambda}_h^{\sigma}\partial_tv^i$, sum over $i$, and integrate
over $\Omega$ to find that
\begin{equation}\label{pseudo-energy-tv-2}
\begin{split}
 & \| \sqrt{\bar{\rho}(x_1)}\dot{\Lambda}_h^{\sigma} \partial_t v\|_{L^2}^2 +I+ II=\int_{\Omega} \dot{\Lambda}_h^{\sigma} \partial_t v\,\cdot\, \dot{\Lambda}_h^{\sigma}\mathfrak{g}\,dx,
     \end{split}
\end{equation}
where
$I:=\int_{\Omega} \dot{\Lambda}_h^{\sigma} \partial_t v\,\cdot\,\bar{\rho}(x_1)\nabla\,\dot{\Lambda}_h^{\sigma}q\,dx$, $II:=-\int_{\Omega} \dot{\Lambda}_h^{\sigma} \partial_t v\,\cdot\,[\nabla\cdot\mathbb{S}(\dot{\Lambda}_h^{\sigma}v)]\,dx$.

For $I_1$, we have
\begin{equation*}
\begin{split}
 &I=\frac{d}{dt}\int_{\Omega} \dot{\Lambda}_h^{\sigma} v\,\cdot\,\bar{\rho}(x_1)\nabla\,\dot{\Lambda}_h^{\sigma}q\,dx-\int_{\Omega} \dot{\Lambda}_h^{\sigma} v\,\cdot\,\bar{\rho}(x_1)\nabla\,\dot{\Lambda}_h^{\sigma} \partial_t q\,dx.
\end{split}
\end{equation*}
Integrating by parts gives rise to
\begin{equation*}
\begin{split}
 &\int_{\Omega} \dot{\Lambda}_h^{\sigma} v\,\cdot\,\bar{\rho}(x_1)\nabla\,\dot{\Lambda}_h^{\sigma}q\,dx=\int_{\Sigma_0} \dot{\Lambda}_h^{\sigma} v^1\, \bar{\rho}(x_1) \dot{\Lambda}_h^{\sigma}q\,dS_0-\int_{\Omega} \dot{\Lambda}_h^{\sigma}\nabla\cdot(\bar{\rho}(x_1)v ) \dot{\Lambda}_h^{\sigma}q\,dx,
\end{split}
\end{equation*}
and
\begin{equation*}
\begin{split}
 &-\int_{\Omega} \dot{\Lambda}_h^{\sigma} v\,\cdot\,\bar{\rho}(x_1)\nabla\,\dot{\Lambda}_h^{\sigma} \partial_t q\,dx\\
  &=-\int_{\Sigma_0} \dot{\Lambda}_h^{\sigma} v^1\bar{\rho}(0) \dot{\Lambda}_h^{\sigma} \partial_t q\,dS_0+ \int_{\Omega} \dot{\Lambda}_h^{\sigma} \nabla\cdot(v\,\bar{\rho}(x_1))\,\dot{\Lambda}_h^{\sigma} \partial_t q\,dx.
\end{split}
\end{equation*}
Due to the second equation in \eqref{eqns-sigma-lin-1}, we have
\begin{equation*}
\begin{split}
\int_{\Omega} \dot{\Lambda}_h^{\sigma} \nabla\cdot(v\,\bar{\rho}(x_1))\,\dot{\Lambda}_h^{\sigma} \partial_t q\,dx=&g\int_{\Omega} \dot{\Lambda}_h^{\sigma} \nabla\cdot(v\,\bar{\rho}(x_1))\, ( \bar{\rho}(x_1)(\bar{\rho}'(x_1))^{-1}\nabla \cdot \dot{\Lambda}_h^{\sigma} v+\dot{\Lambda}_h^{\sigma} v^1)\,dx\\
 &-g\int_{\Omega}(\bar{\rho}'(x_1))^{-1}\, \dot{\Lambda}_h^{\sigma} \nabla\cdot(v\,\bar{\rho}(x_1))\,  \dot{\Lambda}_h^{\sigma} (B_{1, i}^{h, j}\partial_jv^i)\,dx.
 \end{split}
\end{equation*}
While thanks to $q= {Q}+g\xi^1$ and $\partial_tq= \partial_t{Q}+g v^1$, one can see
\begin{equation*}
\begin{split}
 &-\int_{\Sigma_0} \dot{\Lambda}_h^{\sigma} v^1\bar{\rho}(0) \dot{\Lambda}_h^{\sigma} \partial_t q\,dS_0=-\int_{\Sigma_0} \dot{\Lambda}_h^{\sigma} v^1\bar{\rho}(0) \dot{\Lambda}_h^{\sigma} \partial_t {Q}\,dS_0-g\,\bar{\rho}(0)  \|\dot{\Lambda}_h^{\sigma} v^1 \|_{L^2(\Sigma_0)}^2,
\end{split}
\end{equation*}
which follows
\begin{equation*}
\begin{split}
 -\int_{\Omega} \dot{\Lambda}_h^{\sigma} v\,\cdot\,&\bar{\rho}(x_1)\nabla\,\dot{\Lambda}_h^{\sigma} \partial_t q\,dx
  =-\int_{\Sigma_0} \dot{\Lambda}_h^{\sigma} v^1\bar{\rho}(0) \dot{\Lambda}_h^{\sigma} \partial_t {Q}\,dS_0-g\,\bar{\rho}(0)  \|\dot{\Lambda}_h^{\sigma} v^1 \|_{L^2(\Sigma_0)}^2\\
  &\qquad+g\int_{\Omega} \dot{\Lambda}_h^{\sigma} \nabla\cdot(v\,\bar{\rho}(x_1))\, ( \bar{\rho}(x_1)(\bar{\rho}'(x_1))^{-1}\nabla \cdot \dot{\Lambda}_h^{\sigma} v+\dot{\Lambda}_h^{\sigma} v^1)\,dx\\
 &\qquad-g\int_{\Omega}(\bar{\rho}'(x_1))^{-1}\, \dot{\Lambda}_h^{\sigma} \nabla\cdot(v\,\bar{\rho}(x_1))\,  \dot{\Lambda}_h^{\sigma} (B_{1, i}^{h, j}\partial_jv^i)\,dx.
\end{split}
\end{equation*}
Therefore, we get
\begin{equation*}
\begin{split}
 I=\frac{d}{dt}\mathcal{E}_{I}(\dot{\Lambda}_h^{\sigma}\nabla\,v)+\mathcal{R}_{I}(\dot{\Lambda}_h^{\sigma}\nabla\,v)
\end{split}
\end{equation*}
with
\begin{equation*}
\begin{split}
 \mathcal{E}_{I}(\dot{\Lambda}_h^{\sigma}\nabla\,v):=&\bar{\rho}(0) \int_{\Sigma_0} \dot{\Lambda}_h^{\sigma} v^1\, \dot{\Lambda}_h^{\sigma}q\,dS_0-\int_{\Omega} \dot{\Lambda}_h^{\sigma}\nabla\cdot(\bar{\rho}(x_1)v ) \dot{\Lambda}_h^{\sigma}q\,dx,\\
\mathcal{R}_{I}(\dot{\Lambda}_h^{\sigma}\nabla\,v):=&-\bar{\rho}(0) \int_{\Sigma_0} \dot{\Lambda}_h^{\sigma} v^1 \dot{\Lambda}_h^{\sigma} \partial_t {Q}\,dS_0-g\,\bar{\rho}(0)  \|\dot{\Lambda}_h^{\sigma} v^1 \|_{L^2(\Sigma_0)}^2\\
  &+g\int_{\Omega} \dot{\Lambda}_h^{\sigma} \nabla\cdot(v\,\bar{\rho}(x_1))\, ( \bar{\rho}(x_1)(\bar{\rho}'(x_1))^{-1}\nabla \cdot \dot{\Lambda}_h^{\sigma} v+\dot{\Lambda}_h^{\sigma} v^1)\,dx\\
 &\qquad-g\int_{\Omega}(\bar{\rho}'(x_1))^{-1}\, \dot{\Lambda}_h^{\sigma} \nabla\cdot(v\,\bar{\rho}(x_1))\,  \dot{\Lambda}_h^{\sigma} (B_{1, i}^{h, j}\partial_jv^i)\,dx.
\end{split}
\end{equation*}
For $II$, we split the integral $II$ into three parts:
\begin{equation}\label{pseudo-energy-tv-10}
\begin{split}
   &II=II_1+II_2+II_3
    \end{split}
\end{equation}
with
\begin{equation*}
\begin{split}
   &II_i:=-\int_{\Omega} \dot{\Lambda}_h^{\sigma} \partial_t v^i  [\nabla\cdot\mathbb{S}(\dot{\Lambda}_h^{\sigma}v)]^i\,dx\quad\text{for} \quad i=1, 2, 3.
    \end{split}
\end{equation*}
For $II_1$, we have
\begin{equation*}
\begin{split}
   &II_1=-\int_{\Omega} \dot{\Lambda}_h^{\sigma} \partial_t v^1 [\nabla\cdot\mathbb{S}(\dot{\Lambda}_h^{\sigma}v)]^1\,dx=(\delta+\frac{4}{3}\varepsilon)II_{1, 1}+II_{1, 2},
    \end{split}
\end{equation*}
where
\begin{equation*}
\begin{split}
   &II_{1, 1}:=-\int_{\Omega} \dot{\Lambda}_h^{\sigma} \partial_t v^1 \partial_1^2 \dot{\Lambda}_h^{\sigma}v^1\,dx,\quad II_{1, 2}:= -\int_{\Omega}\dot{\Lambda}_h^{\sigma} \partial_t v^1  \dot{\Lambda}_h^{\sigma}\mathcal{L}_{1}(\partial_h\nabla\,v)\,dx.
    \end{split}
\end{equation*}
Thanks to the definition of $\mathcal{G}^1$ in \eqref{d-1-v-unknown-1}, one can see
\begin{equation}\label{pseudo-energy-tv-12}
\begin{split}
   II_{1, 1}&=-\int_{\Omega}\dot{\Lambda}_h^{\sigma}\partial_t v^1 \dot{\Lambda}_h^{\sigma}\partial_1(\mathcal{G}^1-\frac{\delta-\frac{2}{3}\varepsilon}{\delta+\frac{4}{3}\varepsilon} \nabla_h\cdot v^h+\frac{\bar{\rho}(0)}{\delta+\frac{4}{3}\varepsilon}\, \mathcal{H}({Q})+\widetilde{\mathcal{G}}^1) \,dx=:\sum_{j=1}^4II_{1, 1}^j
    \end{split}
\end{equation}
with
\begin{equation*}
\begin{split}
&II_{1, 1}^1:=-\int_{\Omega}\dot{\Lambda}_h^{\sigma}\partial_t v^1\dot{\Lambda}_h^{\sigma}\partial_1 \mathcal{G}^1 \,dx,\,II_{1, 1}^2:=-\frac{\bar{\rho}(0)}{\delta+\frac{4}{3}\varepsilon}\int_{\Omega}\dot{\Lambda}_h^{\sigma}\partial_t v^1\dot{\Lambda}_h^{\sigma}\partial_1  \mathcal{H}({Q}) \,dx,\\
   &II_{1, 1}^3:=\frac{\delta-\frac{2}{3}\varepsilon}{\delta+\frac{4}{3}\varepsilon}\int_{\Omega}\dot{\Lambda}_h^{\sigma}\partial_t v^1 \dot{\Lambda}_h^{\sigma}\partial_1 \nabla_h\cdot v^h  \,dx,\quad II_{1, 1}^4:=-\int_{\Omega}\dot{\Lambda}_h^{\sigma}\partial_t v^1 \dot{\Lambda}_h^{\sigma}\partial_1 \widetilde{\mathcal{G}}^1  \,dx.
    \end{split}
\end{equation*}
Thanks to the boundary conditions $v|_{\Sigma_b}=0$ and $\mathcal{G}^1|_{\Sigma_0}=0$, we use integration by parts to find
\begin{equation*}
\begin{split}
II_{1, 1}^1&=-\int_{\Omega}\dot{\Lambda}_h^{\sigma}\partial_t v^1\dot{\Lambda}_h^{\sigma}\partial_1 \mathcal{G}^1 \,dx=\int_{\Omega}\dot{\Lambda}_h^{\sigma}\partial_t\partial_1  v^1 \dot{\Lambda}_h^{\sigma} \mathcal{G}^1 \,dx\\
    &=\int_{\Omega}\dot{\Lambda}_h^{\sigma}\partial_t(\mathcal{G}^1-\frac{\delta-\frac{2}{3}\varepsilon}{\delta+\frac{4}{3}\varepsilon} \nabla_h\cdot v^h+\frac{\bar{\rho}(0)}{\delta+\frac{4}{3}\varepsilon}\, \mathcal{H}({Q})+\widetilde{\mathcal{G}}^1)
    \dot{\Lambda}_h^{\sigma} \mathcal{G}^1 \,dx\\
    &=\frac{1}{2}\frac{d}{dt}\|\dot{\Lambda}_h^{\sigma}\mathcal{G}^1\|_{L^2}^2
    +\int_{\Omega}\dot{\Lambda}_h^{\sigma}\partial_t(-\frac{\delta-\frac{2}{3}\varepsilon}{\delta+\frac{4}{3}\varepsilon} \nabla_h\cdot v^h+\frac{\bar{\rho}(0)}{\delta+\frac{4}{3}\varepsilon}\, \mathcal{H}({Q})+\widetilde{\mathcal{G}}^1)\dot{\Lambda}_h^{\sigma} \mathcal{G}^1 \,dx.
        \end{split}
\end{equation*}
Thanks to the definition of $\mathcal{G}^1$ in \eqref{d-1-v-unknown-1} again, we get
\begin{equation*}
\begin{split}
&\int_{\Omega}\dot{\Lambda}_h^{\sigma}\partial_t(-\frac{\delta-\frac{2}{3}\varepsilon}{\delta+\frac{4}{3}\varepsilon} \nabla_h\cdot v^h+\frac{\bar{\rho}(0)}{\delta+\frac{4}{3}\varepsilon}\, \mathcal{H}({Q})+\widetilde{\mathcal{G}}^1)\dot{\Lambda}_h^{\sigma} \mathcal{G}^1 \,dx\\
 &= -\frac{1}{2}\frac{d}{dt}\|\dot{\Lambda}_h^{\sigma}
(-\frac{\delta-\frac{2}{3}\varepsilon}{\delta+\frac{4}{3}\varepsilon} \nabla_h\cdot v^h+\frac{\bar{\rho}(0)}{\delta+\frac{4}{3}\varepsilon}\, \mathcal{H}({Q})+\widetilde{\mathcal{G}}^1)\|_{L^2(\Omega)}^2 \\
&\qquad+ \int_{\Omega}\dot{\Lambda}_h^{\sigma}
 \partial_t(-\frac{\delta-\frac{2}{3}\varepsilon}{\delta+\frac{4}{3}\varepsilon} \nabla_h\cdot v^h+\frac{\bar{\rho}(0)}{\delta+\frac{4}{3}\varepsilon}\, \mathcal{H}({Q})+\widetilde{\mathcal{G}}^1)\dot{\Lambda}_h^{\sigma} \partial_1v^1  \,dx,
        \end{split}
\end{equation*}
which follows immediately that
\begin{equation}\label{pseudo-energy-tv-13}
\begin{split}
II_{1, 1}^1&=\frac{1}{2}\frac{d}{dt}(\|\dot{\Lambda}_h^{\sigma}\mathcal{G}^1\|_{L^2}^2-\|\dot{\Lambda}_h^{\sigma}
(-\frac{\delta-\frac{2}{3}\varepsilon}{\delta+\frac{4}{3}\varepsilon} \nabla_h\cdot v^h+\frac{\bar{\rho}(0)}{\delta+\frac{4}{3}\varepsilon}\, \mathcal{H}({Q})+\widetilde{\mathcal{G}}^1)\|_{L^2(\Omega)}^2)\\
&\qquad+ \int_{\Omega}\dot{\Lambda}_h^{\sigma}
 \partial_t(-\frac{\delta-\frac{2}{3}\varepsilon}{\delta+\frac{4}{3}\varepsilon} \nabla_h\cdot v^h+\frac{\bar{\rho}(0)}{\delta+\frac{4}{3}\varepsilon}\, \mathcal{H}({Q})+\widetilde{\mathcal{G}}^1)\dot{\Lambda}_h^{\sigma} \partial_1v^1  \,dx.
        \end{split}
\end{equation}
On the other hand, for $II_{1, 1}^2$, we have
\begin{equation}\label{pseudo-energy-tv-14}
\begin{split}
&II_{1, 1}^2=-\frac{\bar{\rho}(0)}{\delta+\frac{4}{3}\varepsilon}\frac{d}{dt}\int_{\Omega}\dot{\Lambda}_h^{\sigma} v^1\dot{\Lambda}_h^{\sigma}\partial_1  \mathcal{H}({Q}) \,dx+\frac{\bar{\rho}(0)}{\delta+\frac{4}{3}\varepsilon}\int_{\Omega}\dot{\Lambda}_h^{\sigma} v^1\dot{\Lambda}_h^{\sigma}\partial_1 \partial_t \mathcal{H}({Q}) \,dx.
    \end{split}
\end{equation}
Plugging \eqref{pseudo-energy-tv-13} and \eqref{pseudo-energy-tv-14}  into \eqref{pseudo-energy-tv-12} yields
\begin{equation*}\label{pseudo-energy-tv-15aa}
\begin{split}
II_{1, 1}
=\frac{1}{2}\frac{d}{dt}\bigg(&\|\dot{\Lambda}_h^{\sigma}\mathcal{G}^1\|_{L^2}^2-\|\dot{\Lambda}_h^{\sigma}
(-\frac{\delta-\frac{2}{3}\varepsilon}{\delta+\frac{4}{3}\varepsilon} \nabla_h\cdot v^h+\frac{\bar{\rho}(0)}{\delta+\frac{4}{3}\varepsilon}\, \mathcal{H}({Q})+\widetilde{\mathcal{G}}^1)\|_{L^2(\Omega)}^2\\
& -\frac{2\bar{\rho}(0)}{\delta+\frac{4}{3}\varepsilon}\frac{d}{dt}\int_{\Omega}\dot{\Lambda}_h^{\sigma} v^1\dot{\Lambda}_h^{\sigma}\partial_1  \mathcal{H}({Q}) \,dx\bigg)+II_{1, 1}^R,
        \end{split}
\end{equation*}
where
\begin{equation*}
\begin{split}
II_{1, 1}^R:=&\int_{\Omega}\dot{\Lambda}_h^{\sigma}
 \partial_t(-\frac{\delta-\frac{2}{3}\varepsilon}{\delta+\frac{4}{3}\varepsilon} \nabla_h\cdot v^h+\frac{\bar{\rho}(0)}{\delta+\frac{4}{3}\varepsilon}\, \mathcal{H}({Q})+\widetilde{\mathcal{G}}^1)\dot{\Lambda}_h^{\sigma} \partial_1v^1  \,dx\\
 &+\frac{\bar{\rho}(0)}{\delta+\frac{4}{3}\varepsilon}\int_{\Omega}\dot{\Lambda}_h^{\sigma} v^1\dot{\Lambda}_h^{\sigma}\partial_1 \partial_t \mathcal{H}({Q}) \,dx\\
 &+\frac{\delta-\frac{2}{3}\varepsilon}{\delta+\frac{4}{3}\varepsilon}\int_{\Omega}\dot{\Lambda}_h^{\sigma}\partial_t v^1 \dot{\Lambda}_h^{\sigma}\partial_1 \nabla_h\cdot v^h  \,dx-\int_{\Omega}\dot{\Lambda}_h^{\sigma}\partial_t v^1 \dot{\Lambda}_h^{\sigma}\partial_1 \widetilde{\mathcal{G}}^1  \,dx.
        \end{split}
\end{equation*}
Therefore, we obtain
\begin{equation}\label{pseudo-energy-tv-15}
\begin{split}
   &II_1=  \frac{\delta+\frac{4}{3}\varepsilon}{2}\frac{d}{dt}\bigg(\|\dot{\Lambda}_h^{\sigma}\mathcal{G}^1\|_{L^2}^2-\|\dot{\Lambda}_h^{\sigma}
(-\frac{\delta-\frac{2}{3}\varepsilon}{\delta+\frac{4}{3}\varepsilon} \nabla_h\cdot v^h+\frac{\bar{\rho}(0)}{\delta+\frac{4}{3}\varepsilon}\, \mathcal{H}({Q})+\widetilde{\mathcal{G}}^1)\|_{L^2(\Omega)}^2\\
& \,-\frac{2\bar{\rho}(0)}{\delta+\frac{4}{3}\varepsilon}\int_{\Omega}\dot{\Lambda}_h^{\sigma} v^1\dot{\Lambda}_h^{\sigma}\partial_1  \mathcal{H}({Q}) \,dx\bigg)+(\delta+\frac{4}{3}\varepsilon)II_{1, 1}^R-\int_{\Omega}\dot{\Lambda}_h^{\sigma} \partial_t v^1  \dot{\Lambda}_h^{\sigma}\mathcal{L}_{1}(\partial_h\nabla\,v)\,dx.
    \end{split}
\end{equation}
For $II_{\beta}$ with $\beta=2,\,3$, we have
\begin{equation}\label{pseudo-energy-tv-16}
\begin{split}
   &II_{\beta}=-\int_{\Omega} \dot{\Lambda}_h^{\sigma} \partial_t v^{\beta} [\nabla\cdot\mathbb{S}(\dot{\Lambda}_h^{\sigma}v)]^{\beta}\,dx=\varepsilon\,II_{\beta, 1}-\int_{\Omega} \dot{\Lambda}_h^{\sigma} \partial_t v^{\beta} \dot{\Lambda}_h^{\sigma}\mathcal{L}_{\beta}(\partial_h\nabla\,v)\,dx
    \end{split}
\end{equation}
with $II_{\beta, 1}:=-\int_{\Omega} \dot{\Lambda}_h^{\sigma} \partial_t v^{\beta} \dot{\Lambda}_h^{\sigma}\partial_1^2v^{\beta} \,dx$.

Following our analysis of the term $II_{1, 1}$, we use
$\partial_1v^{\beta}=\mathcal{G}^{\beta}-\partial_{\beta}v^1
+\widetilde{\mathcal{G}}^{\beta}$ with $\widetilde{\mathcal{G}}^{\beta}:=\mathcal{G}^{\beta}_{{Q}, 1}+\mathcal{G}^{\beta}_{{Q}, 2}
+\mathcal{G}^{\beta}_{\partial_hv}$ in \eqref{partialv-q-1} to the integral $II_{\beta, 1}$ to obtain
\begin{equation}\label{pseudo-energy-tv-17}
\begin{split}
   II_{\beta, 1}&=-\int_{\Omega}\dot{\Lambda}_h^{\sigma}\partial_t v^{\beta}\, \dot{\Lambda}_h^{\sigma}\partial_1(\mathcal{G}^{\beta}-\partial_{\beta}v^1
+\widetilde{\mathcal{G}}^{\beta}) \,dx\\
&=:II_{\beta, 1}^1+\int_{\Omega}\dot{\Lambda}_h^{\sigma}\partial_t v^{\beta}\, \dot{\Lambda}_h^{\sigma}\partial_{\beta}\partial_1v^1\,dx-\int_{\Omega}\dot{\Lambda}_h^{\sigma}\partial_t v^{\beta}\, \dot{\Lambda}_h^{\sigma}\partial_1\widetilde{\mathcal{G}}^{\beta} \,dx
    \end{split}
\end{equation}
with
\begin{equation*}
\begin{split}
   &II_{\beta, 1}^1:=-\int_{\Omega}\dot{\Lambda}_h^{\sigma}\partial_t v^{\beta}\,  \dot{\Lambda}_h^{\sigma}\partial_1 \mathcal{G}^{\beta} \,dx.
    \end{split}
\end{equation*}
Thanks to the definition of $\mathcal{G}^{\beta}$ in \eqref{d-1-v-unknown-1}, one can see
\begin{equation}\label{pseudo-energy-tv-18}
\begin{split}
   &II_{\beta, 1}^1=-\int_{\Omega}\dot{\Lambda}_h^{\sigma}\partial_t v^{\beta} \dot{\Lambda}_h^{\sigma}\partial_1 \mathcal{G}^{\beta} \,dx=\int_{\Omega}\dot{\Lambda}_h^{\sigma}\partial_t \partial_1v^{\beta} \dot{\Lambda}_h^{\sigma} \mathcal{G}^{\beta} \,dx\\
 &=\frac{1}{2}\frac{d}{dt}\|\dot{\Lambda}_h^{\sigma} \mathcal{G}^{\beta}\|_{L^2}^2+\int_{\Omega}\dot{\Lambda}_h^{\sigma}\partial_t (-\partial_{\beta}v^1
+\widetilde{\mathcal{G}}^{\beta})\, \dot{\Lambda}_h^{\sigma} \mathcal{G}^{\beta} \,dx\\
&=\frac{1}{2}\frac{d}{dt}(\|\dot{\Lambda}_h^{\sigma} \mathcal{G}^{\beta}\|_{L^2}^2-\|\dot{\Lambda}_h^{\sigma} (-\partial_{\beta}v^1
+\widetilde{\mathcal{G}}^{\beta})\|_{L^2}^2)+\int_{\Omega}\dot{\Lambda}_h^{\sigma}\partial_t (-\partial_{\beta}v^1
+\widetilde{\mathcal{G}}^{\beta})\, \dot{\Lambda}_h^{\sigma} \partial_1v^{\beta} \,dx.
    \end{split}
\end{equation}
Inserting \eqref{pseudo-energy-tv-18} into \eqref{pseudo-energy-tv-17} gives rise to
\begin{equation*}
\begin{split}
II_{\beta, 1}=\frac{1}{2}\frac{d}{dt}(\|\dot{\Lambda}_h^{\sigma} \mathcal{G}^{\beta}\|_{L^2}^2-\|\dot{\Lambda}_h^{\sigma} (-\partial_{\beta}v^1
+\widetilde{\mathcal{G}}^{\beta})\|_{L^2}^2)+II_{\beta, 1}^{R}
    \end{split}
\end{equation*}
with
\begin{equation*}
\begin{split}
II_{\beta, 1}^{R}:=&\int_{\Omega}\dot{\Lambda}_h^{\sigma}\partial_t (-\partial_{\beta}v^1
+\widetilde{\mathcal{G}}^{\beta})\, \dot{\Lambda}_h^{\sigma} \partial_1v^{\beta} \,dx\\
&+\int_{\Omega}\dot{\Lambda}_h^{\sigma}\partial_t v^{\beta}\, \dot{\Lambda}_h^{\sigma}\partial_{\beta}\partial_1v^1\,dx-\int_{\Omega}\dot{\Lambda}_h^{\sigma}\partial_t v^{\beta}\, \dot{\Lambda}_h^{\sigma}\partial_1\widetilde{\mathcal{G}}^{\beta} \,dx,
    \end{split}
\end{equation*}
which along with \eqref{pseudo-energy-tv-16} yields
\begin{equation}\label{pseudo-energy-tv-19}
\begin{split}
   II_{\beta}=&\frac{\varepsilon}{2}\frac{d}{dt}\bigg(\|\dot{\Lambda}_h^{\sigma} \mathcal{G}^{\beta}\|_{L^2}^2-\|\dot{\Lambda}_h^{\sigma} (-\partial_{\beta}v^1
+\widetilde{\mathcal{G}}^{\beta})\|_{L^2}^2\bigg)\\
&\qquad\qquad\qquad\qquad+\varepsilon\,II_{\beta, 1}^{R}-\int_{\Omega} \dot{\Lambda}_h^{\sigma} \partial_t v^{\beta} \dot{\Lambda}_h^{\sigma}\mathcal{L}_{\beta}(\partial_h\nabla\,v)\,dx \quad \text{with}\,\,\,\beta=2, 3.
    \end{split}
\end{equation}
Plugging \eqref{pseudo-energy-tv-15} and \eqref{pseudo-energy-tv-19} into \eqref{pseudo-energy-tv-10} gives rise to
\begin{equation}\label{pseudo-energy-tv-20}
\begin{split}
   &II=\frac{d}{dt}\mathcal{E}_{II}(\dot{\Lambda}_h^{\sigma}\nabla\,v)+\mathcal{R}_{II}(\dot{\Lambda}_h^{\sigma}\nabla\,v),
        \end{split}
\end{equation}
where
\begin{equation*}
\begin{split}
   &\mathcal{E}_{II}(\dot{\Lambda}_h^{\sigma}\nabla\,v)\eqdefa \frac{\varepsilon}{2}\sum_{\beta=2}^3(\|\dot{\Lambda}_h^{\sigma} \mathcal{G}^{\beta}\|_{L^2}^2-\|\dot{\Lambda}_h^{\sigma} (-\partial_{\beta}v^1
+\widetilde{\mathcal{G}}^{\beta})\|_{L^2}^2)\\
   &\qquad+\frac{(\delta+\frac{4}{3}\varepsilon)}{2}\bigg(\|\dot{\Lambda}_h^{\sigma}\mathcal{G}^1\|_{L^2}^2-\|\dot{\Lambda}_h^{\sigma}
(-\frac{\delta-\frac{2}{3}\varepsilon}{\delta+\frac{4}{3}\varepsilon} \nabla_h\cdot v^h+\frac{\bar{\rho}(0)}{\delta+\frac{4}{3}\varepsilon}\, \mathcal{H}({Q})+\widetilde{\mathcal{G}}^1)\|_{L^2(\Omega)}^2\\
& \qquad\qquad\qquad\qquad\qquad-\frac{2\bar{\rho}(0)}{\delta+\frac{4}{3}\varepsilon}\int_{\Omega}\dot{\Lambda}_h^{\sigma} v^1\dot{\Lambda}_h^{\sigma}\partial_1  \mathcal{H}({Q}) \,dx\bigg),
        \end{split}
\end{equation*}
and
\begin{equation*}
\begin{split}
     &  \mathcal{R}_{II}(\dot{\Lambda}_h^{\sigma}\nabla\,v)\eqdefa(\delta+\frac{4}{3}\varepsilon)II_{1, 1}^R +\varepsilon\sum_{\beta=2}^3II_{\beta, 1}^{R}-\sum_{i=1}^3\int_{\Omega}\dot{\Lambda}_h^{\sigma} \partial_t v^i  \dot{\Lambda}_h^{\sigma}\mathcal{L}_{i}(\partial_h\nabla\,v)\,dx.
        \end{split}
\end{equation*}
Inserting \eqref{pseudo-energy-tv-20} into \eqref{pseudo-energy-tv-2} yields \eqref{pseudo-energy-tv-1}, which is the desired result.
\end{proof}

\subsection{Some estimates}

\subsubsection{Estimate of $\|\dot{\Lambda}_h^{\sigma_0}\nabla\,v\|_{L^2}$}

\begin{lem}\label{lem-grad-sigma0-s-1}
Under the assumption of Lemma \ref{lem-pseudo-energy-tv-1}, if $\sigma \geq s_0-1$, $E_{s_0}(t) \leq 1$ for all $t\in [0, T]$, then there holds that $\forall\,t\in [0, T]$
\begin{equation}\label{grad-sigma0-s-tv-1}
\begin{split}
 &\frac{d}{dt}\mathring{\mathcal{E}}(\dot{\Lambda}_h^{\sigma_0}{\Lambda}_h^{\sigma-\sigma_0}\nabla\, v)+2c_3\|\dot{\Lambda}_h^{\sigma_0}{\Lambda}_h^{\sigma-\sigma_0}\partial_t v\|_{L^2}^2 \\
 &\leq C_3(\|\dot{\Lambda}_h^{\sigma_0}{\Lambda}_h^{\sigma+1-\sigma_0}\nabla\,v\|_{L^2}^2
 +\|\dot{\Lambda}_h^{\sigma_0}\Lambda_h^{\sigma-\frac{1}{2}-\sigma_0} \partial_t{Q} \|_{L^2(\Sigma_0)}^2)\\
 &\qquad\qquad\qquad\qquad\qquad\qquad+ C_3 \dot{\mathcal{D}}_{\sigma+1}^{\frac{1}{2}}(E_{s_0}^{\frac{1}{2}}\dot{\mathcal{D}}_{\sigma+1}^{\frac{1}{2}}
   +E_{\sigma+1}^{\frac{1}{2}}\,\dot{\mathcal{D}}_{s_0}^{\frac{1}{2}}).
        \end{split}
\end{equation}
\end{lem}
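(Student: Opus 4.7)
The plan is to apply Lemma \ref{lem-pseudo-energy-tv-1} with the pseudo-differential operator
$\mathcal{P}(\partial_h)=\dot{\Lambda}_h^{\sigma_0}{\Lambda}_h^{\sigma-\sigma_0}$ in place of $\dot{\Lambda}_h^{\sigma}$, which yields the identity
\[
\frac{d}{dt}\mathring{\mathcal{E}}(\dot{\Lambda}_h^{\sigma_0}{\Lambda}_h^{\sigma-\sigma_0}\nabla v)
+\|\sqrt{\bar{\rho}}\,\dot{\Lambda}_h^{\sigma_0}{\Lambda}_h^{\sigma-\sigma_0}\partial_t v\|_{L^2}^2
=-\sum_{j=1}^{6}\mathfrak{J}_j.
\]
The goal is then to bound every $\mathfrak{J}_j$ so as to absorb all occurrences of $\partial_t v$ into the coercive term on the left while leaving only the quantities that appear on the right-hand side of \eqref{grad-sigma0-s-tv-1}.

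First I would handle the ``linear'' terms $\mathfrak{J}_1$ and $\mathfrak{J}_2$. For $\mathfrak{J}_1$, the factors $\mathcal{L}_i(\partial_h\nabla v)$ and $\nabla_h\partial_1 v^{j}$ are controlled in $L^2$ by $\|\dot{\Lambda}_h^{\sigma_0}{\Lambda}_h^{\sigma+1-\sigma_0}\nabla v\|_{L^2}$, so Cauchy--Schwarz followed by Young's inequality (with a small parameter in front of $\|\partial_tv\|_{L^2}^2$) produces a term absorbed into the dissipation on the left and the $\|\dot{\Lambda}_h^{\sigma_0}{\Lambda}_h^{\sigma+1-\sigma_0}\nabla v\|_{L^2}^2$ term on the right. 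For $\mathfrak{J}_2$ we use the trace theorem and the Poincar\'e-type control
$\|\dot{\Lambda}_h^{\sigma_0}{\Lambda}_h^{\sigma-\sigma_0}v^1\|_{L^2(\Sigma_0)}\lesssim \|\dot{\Lambda}_h^{\sigma_0}{\Lambda}_h^{\sigma-\sigma_0}\nabla v\|_{L^2(\Omega)}$ to dominate both pieces by $\|\dot{\Lambda}_h^{\sigma_0}{\Lambda}_h^{\sigma+1-\sigma_0}\nabla v\|_{L^2}^2$ and $\|\dot{\Lambda}_h^{\sigma_0}{\Lambda}_h^{\sigma-\frac12-\sigma_0}\partial_tQ\|_{L^2(\Sigma_0)}^2$, which are precisely the leading terms on the right of \eqref{grad-sigma0-s-tv-1}.

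Next come $\mathfrak{J}_3$ and $\mathfrak{J}_4$. In $\mathfrak{J}_3$ one rewrites $\nabla\cdot(\bar{\rho}(x_1)v)$ using the continuity relation of \eqref{eqns-linear-1}; every resulting factor is either $\nabla v$ itself, contributing to the $\dot{\mathcal{D}}$-terms, or of the form $B_{1,i}^{h,j}\partial_jv^i$, which by Lemma \ref{lem-est-B-Bv-1} is bounded by $E^{1/2}\dot{\mathcal{D}}^{1/2}$-type products. In $\mathfrak{J}_4$ I would use Lemma \ref{lem-est-harmonic-ext-1} to control the harmonic extension $\mathcal{H}(\partial_tQ)$ in $H^1(\Omega)$ by $\|\partial_tQ\|_{H^{1/2}(\Sigma_0)}$; this delivers exactly the $\|\dot{\Lambda}_h^{\sigma_0}{\Lambda}_h^{\sigma-\frac12-\sigma_0}\partial_tQ\|_{L^2(\Sigma_0)}^2$ contribution on the right, paired via Young's inequality with the $\partial_1 v^1$ (or $v^1$) factor that sits inside $\dot{\mathcal{D}}_{\sigma+1}^{1/2}$.

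The main obstacle, and the last step, will be $\mathfrak{J}_5$ and $\mathfrak{J}_6$, which are genuinely nonlinear since they involve $\partial_1\widetilde{\mathcal{G}}^{j}$ and $\partial_t\widetilde{\mathcal{G}}^{j}$, where $\widetilde{\mathcal{G}}^j=\mathcal{G}^j_{Q,1}+\mathcal{G}^j_{Q,2}+\mathcal{G}^j_{\partial_hv}$. Here I would invoke the sharp product estimates of Lemma \ref{lem-est-G-norm-1}: the $H^1$-bounds on $\dot{\Lambda}_h^{\sigma}\widetilde{\mathcal{G}}^j$ give
$\|\dot{\Lambda}_h^{\sigma_0}{\Lambda}_h^{\sigma-\sigma_0}\partial_1\widetilde{\mathcal{G}}^j\|_{L^2}
\lesssim E_{\sigma+1}^{1/2}\dot{\mathcal{D}}_{s_0}^{1/2}+E_{s_0}^{1/2}\dot{\mathcal{D}}_{\sigma+1}^{1/2}$,
and the corresponding $L^2$-bounds on time derivatives give the same structural control on $\partial_t\widetilde{\mathcal{G}}^j$. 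Pairing these against $\partial_t v$ (respectively $\partial_1 v$) with Cauchy--Schwarz and Young's inequality (absorbing a small fraction of $\|\partial_t v\|_{L^2}^2$ into the left-hand side) produces the claimed remainder
$\dot{\mathcal{D}}_{\sigma+1}^{1/2}(E_{s_0}^{1/2}\dot{\mathcal{D}}_{\sigma+1}^{1/2}
+E_{\sigma+1}^{1/2}\dot{\mathcal{D}}_{s_0}^{1/2})$. Collecting all six bounds and choosing the Young parameters small enough to leave a coercive constant $2c_3$ in front of $\|\dot{\Lambda}_h^{\sigma_0}{\Lambda}_h^{\sigma-\sigma_0}\partial_tv\|_{L^2}^2$ yields \eqref{grad-sigma0-s-tv-1}. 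The delicate point, and the one that dictates the hypothesis $\sigma\geq s_0-1$, is that in the $\widetilde{\mathcal{G}}^j_{Q,2}$-estimates the nonlinear composition $Q^2\mathcal{R}(Q)$ forces the ``high-low'' split in Lemma \ref{lem-est-G-norm-1}, and only the high-regularity tail $\sigma\geq s_0-1$ allows one to put the Sobolev factor on the right-hand side of the product law.
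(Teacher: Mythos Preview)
Your approach is correct and essentially the same as the paper's. The only organizational difference is that the paper treats the endpoint $\sigma_0$ and the high-order case $\sigma\ge s_0-1$ separately (obtaining two inequalities \eqref{grad-sigma0-tv-1} and \eqref{tan-grad-s-tv-1}) and then adds them, whereas you apply Lemma~\ref{lem-pseudo-energy-tv-1} once with the compound multiplier $\dot{\Lambda}_h^{\sigma_0}\Lambda_h^{\sigma-\sigma_0}$; the estimates of $\mathfrak{J}_1$--$\mathfrak{J}_6$ are otherwise identical in spirit and use the same auxiliary lemmas.
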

\begin{proof}
We first take $\sigma=\sigma_0$ in \eqref{pseudo-energy-tv-1}, and estimate all the integrals in the right hand side of \eqref{pseudo-energy-tv-1} one by one.

For $\mathfrak{J}_1$ and $\mathfrak{J}_2$, we directly bound them by
\begin{equation*}
\begin{split}
&|\mathfrak{J}_1|\lesssim \|\dot{\Lambda}_h^{\sigma_0} \partial_t v\|_{L^2} \|\dot{\Lambda}_h^{\sigma_0}\nabla\partial_{h}v\|_{L^2}
 \end{split}
\end{equation*}
and
\begin{equation*}
\begin{split}
|\mathfrak{J}_2|&\lesssim  \|\dot{\Lambda}_h^{\sigma_0} v^1\|_{L^2(\Sigma_0)}^2+\|\dot{\Lambda}_h^{\sigma_0} v^1\|_{L^2(\Sigma_0)}\| \dot{\Lambda}_h^{\sigma_0}\partial_t {Q}\|_{L^2(\Sigma_0)}\\
&\lesssim \|\dot{\Lambda}_h^{\sigma_0}\nabla v\|_{L^2(\Omega)}^2+\|\dot{\Lambda}_h^{\sigma_0}\nabla v\|_{L^2(\Omega)}\| \dot{\Lambda}_h^{\sigma_0}\partial_t {Q}\|_{L^2(\Sigma_0)}.
 \end{split}
\end{equation*}
While for $\mathfrak{J}_3$, due to Poincare's inequality, we get
\begin{equation*}
\begin{split}|\mathfrak{J}_3|\lesssim \|\dot{\Lambda}_h^{\sigma_0} \nabla v\|_{L^2}^2+\|\dot{\Lambda}_h^{\sigma_0}B_{1, i}^{h, j}\partial_jv^i\|_{L^2}^2,
     \end{split}
\end{equation*}
 then applying Lemmas \ref{lem-productlaw-1aa} and \ref{lem-est-aij-1} ensures that
\begin{equation*}
\begin{split}
|\mathfrak{J}_3|&\lesssim \|\dot{\Lambda}_h^{\sigma_0}\nabla\,v\|_{L^2}^2+ \|\dot{\Lambda}_h^{\sigma_0}(B_{1, i}^{h, j}) \|_{H^1}^2\|\dot{\Lambda}_h^{\sigma_0} \Lambda_h \nabla\,v\|_{L^2}^2\lesssim \|\dot{\Lambda}_h^{\sigma_0}\nabla\,v\|_{L^2}^2+ E_{s_0}\dot{\mathcal{D}}_{s_0}.
\end{split}
\end{equation*}
For $\mathfrak{J}_4$, one has
\begin{equation*}
\begin{split}
|\mathfrak{J}_4|&\lesssim \|\dot{\Lambda}_h^{\sigma_0}\mathcal{H}( \partial_t{Q})\|_{L^2}\|\dot{\Lambda}_h^{\sigma_0} \nabla\,v\|_{L^2}+ \|\dot{\Lambda}_h^{\sigma_0} v\|_{L^2}\|\dot{\Lambda}_h^{\sigma_0}\partial_1 \mathcal{H}(\partial_t {Q})\|_{L^2} \\
 &\lesssim \|\dot{\Lambda}_h^{\sigma_0} \nabla\,v\|_{L^2}\|\dot{\Lambda}_h^{\sigma_0}\Lambda_h^{\frac{1}{2}} \partial_t {Q} \|_{L^2(\Sigma_0)}.
        \end{split}
\end{equation*}
Thanks to Lemma \ref{lem-est-G-norm-1}, we deal with the integrals $\mathfrak{J}_5$ and $\mathfrak{J}_6$ to obtain
\begin{equation*}
  \begin{split}
|\mathfrak{J}_5|\lesssim \|\dot{\Lambda}_h^{\sigma_0}\partial_t v\|_{L^2}\|\dot{\Lambda}_h^{\sigma_0}\partial_1 \widetilde{\mathcal{G}}^j\|_{L^2}\lesssim E_{s_0}^{\frac{1}{2}}\dot{\mathcal{D}}_{s_0},\quad  |\mathfrak{J}_6|\lesssim \|\dot{\Lambda}_h^{\sigma_0}
 \partial_t \widetilde{\mathcal{G}}^j\|_{L^2}\|\dot{\Lambda}_h^{\sigma_0} \partial_1v\|_{L^2} \lesssim E_{s_0}^{\frac{1}{2}}\dot{\mathcal{D}}_{s_0}.
\end{split}
\end{equation*}
Therefore, we conclude that
\begin{equation*}
  \begin{split}
  \sum_{j=1}^6|\mathfrak{J}_j|\lesssim  &\|\dot{\Lambda}_h^{\sigma_0}\nabla v\|_{L^2(\Omega)}^2+\|\dot{\Lambda}_h^{\sigma_0} \partial_t v\|_{L^2} \|\dot{\Lambda}_h^{\sigma_0}\nabla\partial_{h}v\|_{L^2} + \| \dot{\Lambda}_h^{\sigma_0}\Lambda_h^{\frac{1}{2}}\partial_t {Q}\|_{L^2(\Sigma_0)}^2+E_{s_0}^{\frac{1}{2}}\dot{\mathcal{D}}_{s_0},
  \end{split}
\end{equation*}
which leads to
\begin{equation}\label{grad-sigma0-tv-1}
\begin{split}
 &\frac{d}{dt}\mathring{\mathcal{E}}(\dot{\Lambda}_h^{\sigma_0}\nabla\, v)+2c_3\|\dot{\Lambda}_h^{\sigma_0}\partial_t v\|_{L^2}^2\lesssim  \|\dot{\Lambda}_h^{\sigma_0}\Lambda_h\nabla v\|_{L^2(\Omega)}^2+\| \dot{\Lambda}_h^{\sigma_0}\Lambda_h^{\frac{1}{2}}\partial_t {Q}\|_{L^2(\Sigma_0)}^2+E_{s_0}^{\frac{1}{2}}\dot{\mathcal{D}}_{s_0}.
        \end{split}
\end{equation}
Let's now consider the case $\sigma \geq s_0-1$ in \eqref{pseudo-energy-tv-1}, and estimate all the integrals in the right hand side of \eqref{pseudo-energy-tv-1}.

For $\mathfrak{J}_1$ and $\mathfrak{J}_2$, we directly bound them by
\begin{equation*}
\begin{split}
&|\mathfrak{J}_1|\lesssim \|\dot{\Lambda}_h^{\sigma}\partial_t v\|_{L^2} \|\dot{\Lambda}_h^{\sigma+1}\nabla\,v\|_{L^2}
 \end{split}
\end{equation*}
and
\begin{equation*}
\begin{split}
|\mathfrak{J}_2|&\lesssim  \| \dot{\Lambda}_h^{\sigma}v^1\|_{L^2(\Sigma_0)}^2+\|\dot{\Lambda}_h^{\sigma+\frac{1}{2}} v^1\|_{L^2(\Sigma_0)}\| \dot{\Lambda}_h^{\sigma-\frac{1}{2}} \partial_t {Q}\|_{L^2(\Sigma_0)} \lesssim \|\dot{\Lambda}_h^{\sigma}\nabla v\|_{L^2(\Omega)}^2+ \| \dot{\Lambda}_h^{\sigma-\frac{1}{2}} \partial_t {Q}\|_{L^2(\Sigma_0)}^2.
 \end{split}
\end{equation*}
For $\mathfrak{J}_3$, we get
\begin{equation*}
\begin{split}|\mathfrak{J}_3|\lesssim \|\dot{\Lambda}_h^{\sigma}\nabla v\|_{L^2}^2+\|\dot{\Lambda}_h^{\sigma}\nabla v\|_{L^2}\|\dot{\Lambda}_h^{\sigma}(B_{1, i}^{h, j}\partial_jv^i)\|_{L^2}^2,
     \end{split}
\end{equation*}
 then applying Lemmas \ref{lem-productlaw-1aa} and \ref{lem-est-aij-1} ensures that
\begin{equation*}
\begin{split}
|\mathfrak{J}_3|&\lesssim \|\dot{\Lambda}_h^{\sigma}\nabla\,v\|_{L^2}^2+ \|\dot{\Lambda}_h^{\sigma}\nabla v\|_{L^2}(\|\dot{\Lambda}_h^{\sigma}(B_{1, i}^{h, j}) \|_{L^2}\| \dot{\Lambda}_h^{\sigma_0} \Lambda_h^{s_0-1-\sigma_0}(\nabla\,v,\,v)\|_{H^1}\\
&\qquad +\|\dot{\Lambda}_h^{\sigma_0} \Lambda_h^{s_0-1-\sigma_0}(B_{1, i}^{h, j}) \|_{H^1}\|\dot{\Lambda}_h^{\sigma} (v,\,\nabla\,v)\|_{L^2})\\
&\lesssim \|\dot{\Lambda}_h^{\sigma}\nabla\,v\|_{L^2}^2+ (E_{\sigma}^{\frac{1}{2}}\dot{\mathcal{D}}_{s_0}^{\frac{1}{2}}
   +E_{s_0}^{\frac{1}{2}}\,\dot{\mathcal{D}}_{\sigma}^{\frac{1}{2}})\dot{\mathcal{D}}_{\sigma}^{\frac{1}{2}}.
\end{split}
\end{equation*}
For $\mathfrak{J}_4$, one has
\begin{equation*}
\begin{split}
|\mathfrak{J}_4|&\lesssim \|\dot{\Lambda}_h^{\sigma-1}\mathcal{H}( \partial_t{Q})\|_{L^2}\|\dot{\Lambda}_h^{\sigma+1} \nabla\,v\|_{L^2}+ \|\dot{\Lambda}_h^{\sigma+1} v\|_{L^2}\|\dot{\Lambda}_h^{\sigma-1}\partial_1 \mathcal{H}(\partial_t {Q})\|_{L^2} \\
&\lesssim  \|\dot{\Lambda}_h^{\sigma-1}\Lambda_h^{\frac{1}{2}} \partial_t{Q} \|_{L^2(\Sigma_0)}^2+ \|\dot{\Lambda}_h^{\sigma+1} \nabla\,v\|_{L^2}^2.
        \end{split}
\end{equation*}
Thanks to Lemma \ref{lem-est-G-norm-1}, we deal with the integrals $\mathfrak{J}_5$ and $\mathfrak{J}_6$ to obtain
\begin{equation*}
  \begin{split}
|\mathfrak{J}_5|\lesssim \|\dot{\Lambda}_h^{\sigma}\partial_t v\|_{L^2}\|\dot{\Lambda}_h^{\sigma}\partial_1 \widetilde{\mathcal{G}}^j\|_{L^2}\lesssim \dot{\mathcal{D}}_{\sigma+1}^{\frac{1}{2}}(E_{s_0}^{\frac{1}{2}}\dot{\mathcal{D}}_{\sigma+1}^{\frac{1}{2}}
   +E_{\sigma+1}^{\frac{1}{2}}\,\dot{\mathcal{D}}_{s_0}^{\frac{1}{2}}),
\end{split}
\end{equation*}
and
\begin{equation*}
  \begin{split}
|\mathfrak{J}_6|\lesssim \|\dot{\Lambda}_h^{\sigma}
 \partial_t \widetilde{\mathcal{G}}^j\|_{L^2}\|\dot{\Lambda}_h^{\sigma}\partial_1v\|_{L^2} \lesssim  \dot{\mathcal{D}}_{\sigma}^{\frac{1}{2}}(E_{s_0}^{\frac{1}{2}}\dot{\mathcal{D}}_{\sigma+1}^{\frac{1}{2}}
   +E_{\sigma+1}^{\frac{1}{2}}\,\dot{\mathcal{D}}_{s_0}^{\frac{1}{2}}).
\end{split}
\end{equation*}
Therefore, we conclude that
\begin{equation*}
  \begin{split}
  \sum_{j=1}^6|\mathfrak{J}_j|\lesssim  & \|\dot{\Lambda}_h^{\sigma}\partial_t v\|_{L^2} \|\dot{\Lambda}_h^{\sigma+1}\nabla\,v\|_{L^2}+\|\dot{\Lambda}_h^{\sigma}\Lambda_h\nabla\,v\|_{L^2}^2\\
  &+\|\dot{\Lambda}_h^{\sigma-1}\Lambda_h^{\frac{1}{2}} \partial_t{Q} \|_{L^2(\Sigma_0)}^2+ \dot{\mathcal{D}}_{\sigma+1}^{\frac{1}{2}}(E_{s_0}^{\frac{1}{2}}\dot{\mathcal{D}}_{\sigma+1}^{\frac{1}{2}}
   +E_{\sigma+1}^{\frac{1}{2}}\,\dot{\mathcal{D}}_{s_0}^{\frac{1}{2}}),
  \end{split}
\end{equation*}
which leads to
\begin{equation}\label{tan-grad-s-tv-1}
\begin{split}
 &\frac{d}{dt}\mathring{\mathcal{E}}(\dot{\Lambda}_h^{\sigma}\nabla\, v)+2c_3\|\dot{\Lambda}_h^{\sigma}\partial_t v\|_{L^2}^2 \\
 &\lesssim \|\dot{\Lambda}_h^{\sigma}\Lambda_h\nabla\,v\|_{L^2}^2+\|\dot{\Lambda}_h^{\sigma-1}\Lambda_h^{\frac{1}{2}} \partial_t{Q} \|_{L^2(\Sigma_0)}^2+ \dot{\mathcal{D}}_{\sigma+1}^{\frac{1}{2}}(E_{s_0}^{\frac{1}{2}}\dot{\mathcal{D}}_{\sigma+1}^{\frac{1}{2}}
   +E_{\sigma+1}^{\frac{1}{2}}\,\dot{\mathcal{D}}_{s_0}^{\frac{1}{2}}).
        \end{split}
\end{equation}
Combining \eqref{grad-sigma0-tv-1} with \eqref{tan-grad-s-tv-1}, we reach \eqref{grad-sigma0-s-tv-1}, which completes the proof of Lemma \ref{lem-grad-sigma0-s-1}.
\end{proof}

\subsubsection{Estimate of $\|\dot{\Lambda}_h^{-\lambda}\nabla\,v\|_{L^\infty_t(L^2)}$}

\begin{lem}\label{lem-grad-bdd-lambda-1}
Under the assumption of Lemma \ref{lem-tan-pseudo-energy-1}, if $(\lambda,\,\sigma_0) \in (0, 1)$ satisfies $1-\lambda< \sigma_0\leq 1-\frac{1}{2}\lambda$, and $E_{s_0}(t) \leq 1$ for all $t\in [0, T]$, then there holds that $\forall\,t\in [0, T]$
\begin{equation}\label{grad-lambda-tv-1}
\begin{split}
 & \frac{d}{dt}\mathring{\mathcal{E}}(\dot{\Lambda}_h^{-\lambda}\nabla\, v)+c_2\|\dot{\Lambda}_h^{-\lambda}\partial_t v\|_{L^2}^2  \\
 &\leq C_2\bigg(\|\dot{\Lambda}_h^{-\lambda}\Lambda_h\nabla v\|_{L^2(\Omega)}^2+ \| \dot{\Lambda}_h^{-\lambda}\Lambda_h^{\frac{1}{2}}\partial_t {Q}\|_{L^2(\Sigma_0)}^2+E_{s_0}^{\frac{1}{2}}\dot{\mathcal{D}}_{s_0}\bigg).
     \end{split}
\end{equation}
\end{lem}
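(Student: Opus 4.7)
The strategy mirrors the proof of Lemma \ref{lem-grad-sigma0-s-1}: apply the pseudo-energy identity \eqref{pseudo-energy-tv-1} of Lemma \ref{lem-pseudo-energy-tv-1} with $\sigma=-\lambda$, and then estimate each of the six remainder integrals $\mathfrak{J}_1,\ldots,\mathfrak{J}_6$ using the negative-horizontal-regularity product laws of Lemma \ref{lem-productlaw-1aa}, the coefficient estimates in Lemmas \ref{lem-est-aij-1}--\ref{lem-est-B-Bv-1}, the auxiliary estimates for the harmonic extension (Lemma \ref{lem-est-harmonic-ext-1}), and the new-unknown estimates of Lemma \ref{lem-est-G-norm-1} specialized to $\sigma=-\lambda$. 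After all the $\mathfrak{J}_j$-bounds are collected, the term $\|\dot{\Lambda}_h^{-\lambda}\partial_tv\|_{L^2}^2$ appearing in several of them can be absorbed into the LHS by Young's inequality, producing the desired inequality \eqref{grad-lambda-tv-1}.

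The linear terms $\mathfrak{J}_1,\mathfrak{J}_2,\mathfrak{J}_3$ are handled as follows. For $\mathfrak{J}_1$, a straightforward Cauchy--Schwarz gives $|\mathfrak{J}_1|\lesssim \|\dot{\Lambda}_h^{-\lambda}\partial_tv\|_{L^2}\|\dot{\Lambda}_h^{-\lambda}\Lambda_h\nabla v\|_{L^2}$. For $\mathfrak{J}_2$, the trace inequality $\|f\|_{L^2(\Sigma_0)}\lesssim \|f\|_{L^2(\Omega)}^{1/2}\|\nabla f\|_{L^2(\Omega)}^{1/2}$ combined with Poincar\'e's inequality (since $v|_{\Sigma_b}=0$) shifts the boundary norm of $\dot{\Lambda}_h^{-\lambda}v^1$ to a bulk norm of $\dot{\Lambda}_h^{-\lambda}\nabla v$, while the pairing with $\dot{\Lambda}_h^{-\lambda}\partial_tQ$ is split symmetrically as $\dot{\Lambda}_h^{-\lambda}\Lambda_h^{1/2}\partial_tQ$ against $\dot{\Lambda}_h^{-\lambda}\Lambda_h^{-1/2}v^1$, the latter being again controlled by trace. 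For $\mathfrak{J}_3$ the linear piece is dominated by $\|\dot{\Lambda}_h^{-\lambda}\nabla v\|_{L^2}^2$, while the commutator piece containing $B_{1,i}^{h,j}\partial_jv^i$ is treated by the $\dot{\Lambda}_h^{-\lambda}$-version of the product law in Lemma \ref{lem-est-B-Bv-1} (with $\sigma=-\lambda$, taking $2(1-\sigma_0-\lambda)\in[-\lambda,1-\lambda)$ thanks to $1-\lambda<\sigma_0\leq 1-\lambda/2$), yielding a bound of the form $E_{s_0}^{1/2}\dot{\mathcal{D}}_{s_0}$.

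For the remaining three nonlinear pieces we invoke the estimates of Lemma \ref{lem-est-G-norm-1}. The integral $\mathfrak{J}_4$ is bounded by $\|\dot{\Lambda}_h^{-\lambda}\nabla v\|_{L^2}\|\dot{\Lambda}_h^{-\lambda}\Lambda_h^{1/2}\partial_tQ\|_{L^2(\Sigma_0)}$ via Lemma \ref{lem-est-harmonic-ext-1}. For $\mathfrak{J}_5$, Cauchy--Schwarz plus the $\dot{\Lambda}_h^{-\lambda}$-estimate of $\widetilde{\mathcal{G}}^j=\mathcal{G}^j_{Q,1}+\mathcal{G}^j_{\partial_h v}+\mathcal{G}^j_{Q,2}$ (Lemma \ref{lem-est-G-norm-1}) gives $|\mathfrak{J}_5|\lesssim \|\dot{\Lambda}_h^{-\lambda}\partial_tv\|_{L^2}\,E_{s_0}^{1/2}\dot{\mathcal{D}}_{s_0}^{1/2}$. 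The term $\mathfrak{J}_6$ is analogous, using the $\partial_t\widetilde{\mathcal{G}}^j$ estimate in $\dot\Lambda_h^{-\lambda}$ from the same lemma together with the trivial bound $\|\dot{\Lambda}_h^{-\lambda}\partial_1 v\|_{L^2}\lesssim \|\dot{\Lambda}_h^{-\lambda}\nabla v\|_{L^2}$.

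Summing the six bounds and using Young's inequality to absorb a small multiple of $\|\dot{\Lambda}_h^{-\lambda}\partial_tv\|_{L^2}^2$ into the coercive term on the LHS of \eqref{pseudo-energy-tv-1} yields \eqref{grad-lambda-tv-1}. The delicate point is purely technical: in every place where one factor has \emph{negative} horizontal regularity, we must carefully choose in Lemma \ref{lem-productlaw-1aa} which factor absorbs the $\dot{\Lambda}_h^{\sigma_0-1}$-regularity and which absorbs the $\dot{\Lambda}_h^{2(1-\ell_0)}$-regularity, exploiting the constraint $1-\lambda<\sigma_0\leq 1-\lambda/2$ to guarantee that the two exponents stay within the admissible range. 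This is the main obstacle, but it is already handled by the product lemmas in Section \ref{sect-tool} and needs only to be invoked correctly term by term.
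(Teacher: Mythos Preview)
Your proposal is correct and follows essentially the same approach as the paper: take $\sigma=-\lambda$ in the identity \eqref{pseudo-energy-tv-1}, bound each $\mathfrak{J}_j$ exactly as you describe using Lemmas \ref{lem-productlaw-1aa}, \ref{lem-est-aij-1}, \ref{lem-est-B-Bv-1}, \ref{lem-est-harmonic-ext-1}, and \ref{lem-est-G-norm-1}, and absorb the $\|\dot{\Lambda}_h^{-\lambda}\partial_t v\|_{L^2}^2$ contributions by Young's inequality. The only cosmetic difference is that the paper bounds $\mathfrak{J}_2$ directly by $\|\dot{\Lambda}_h^{-\lambda}\nabla v\|_{L^2}^2+\|\dot{\Lambda}_h^{-\lambda}\partial_t Q\|_{L^2(\Sigma_0)}^2$ (and then uses $\|\cdot\|\le\|\Lambda_h^{1/2}\cdot\|$) rather than your $\Lambda_h^{1/2}$--$\Lambda_h^{-1/2}$ splitting, but both routes land on the same right-hand side.
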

\begin{proof}
Taking $ \sigma=-\lambda$ in  \eqref{pseudo-energy-tv-1}, we will estimate all the integrals in the right hand side of \eqref{pseudo-energy-tv-1}.

For $\mathfrak{J}_1$ and $\mathfrak{J}_2$, we directly bound them by
\begin{equation*}
\begin{split}
&|\mathfrak{J}_1|\lesssim \|\dot{\Lambda}_h^{-\lambda} \partial_t v\|_{L^2} \|\dot{\Lambda}_h^{-\lambda}\nabla\partial_{h}v\|_{L^2}
 \end{split}
\end{equation*}
and
\begin{equation*}
\begin{split}
|\mathfrak{J}_2|&\lesssim  \|\dot{\Lambda}_h^{-\lambda} v^1\|_{L^2(\Sigma_0)}^2+\|\dot{\Lambda}_h^{-\lambda} v^1\|_{L^2(\Sigma_0)}\| \dot{\Lambda}_h^{-\lambda}\partial_t {Q}\|_{L^2(\Sigma_0)} \lesssim \|\dot{\Lambda}_h^{-\lambda}\nabla v\|_{L^2(\Omega)}^2+ \| \dot{\Lambda}_h^{-\lambda}\partial_t {Q}\|_{L^2(\Sigma_0)}^2.
 \end{split}
\end{equation*}
While for $\mathfrak{J}_3$, we get
\begin{equation*}
\begin{split}|\mathfrak{J}_3|\lesssim \|\dot{\Lambda}_h^{-\lambda} v\|_{L^2}^2+\|\dot{\Lambda}_h^{-\lambda}(B_{1, i}^{h, j}\partial_jv^i)\|_{L^2}^2,
     \end{split}
\end{equation*}
 then applying Lemmas \ref{lem-productlaw-1aa} and \ref{lem-est-aij-1} ensures that
\begin{equation*}
\begin{split}
|\mathfrak{J}_3|&\lesssim \|\dot{\Lambda}_h^{-\lambda}\nabla\,v\|_{L^2}^2+ \|\dot{\Lambda}_h^{\sigma_0-1}(B_{1, i}^{h, j}) \|_{H^1}^2\|\dot{\Lambda}_h^{\sigma_0} \Lambda_h^{s_0-1-\sigma_0} \nabla\,v\|_{L^2}^2\lesssim \|\dot{\Lambda}_h^{-\lambda}\nabla\,v\|_{L^2}^2+ E_{s_0}\dot{\mathcal{D}}_{s_0}.
\end{split}
\end{equation*}
For $\mathfrak{J}_4$, one has
\begin{equation*}
\begin{split}
|\mathfrak{J}_4|&\lesssim \|\dot{\Lambda}_h^{-\lambda}\mathcal{H}( \partial_t{Q})\|_{L^2}\|\dot{\Lambda}_h^{-\lambda} \nabla\,v\|_{L^2}+ \|\dot{\Lambda}_h^{-\lambda} v\|_{L^2}\|\dot{\Lambda}_h^{-\lambda}\partial_1 \mathcal{H}(\partial_t {Q})\|_{L^2} \\
 &\lesssim \|\dot{\Lambda}_h^{-\lambda}\nabla\, v\|_{L^2}\|\dot{\Lambda}_h^{-\lambda}\Lambda_h^{\frac{1}{2}} \partial_t {Q} \|_{L^2(\Sigma_0)}.
        \end{split}
\end{equation*}
Thanks to Lemma \ref{lem-est-G-norm-1}, we deal with the integrals $\mathfrak{J}_5$ and $\mathfrak{J}_6$ to obtain
\begin{equation*}
  \begin{split}
|\mathfrak{J}_5|\lesssim \|\dot{\Lambda}_h^{-\lambda}\partial_t v\|_{L^2}\|\dot{\Lambda}_h^{-\lambda}\partial_1 \widetilde{\mathcal{G}}^j\|_{L^2}\lesssim \|\dot{\Lambda}_h^{-\lambda}\partial_t v\|_{L^2}E_{s_0}^{\frac{1}{2}}\dot{\mathcal{D}}_{s_0}^{\frac{1}{2}},
\end{split}
\end{equation*}
and
\begin{equation*}
  \begin{split}
|\mathfrak{J}_6|\lesssim \|\dot{\Lambda}_h^{-\lambda}
 \partial_t \widetilde{\mathcal{G}}^j\|_{L^2}\|\dot{\Lambda}_h^{-\lambda} \partial_1v\|_{L^2} \lesssim E_{s_0}^{\frac{1}{2}}\dot{\mathcal{D}}_{s_0}^{\frac{1}{2}}\|\dot{\Lambda}_h^{-\lambda} \partial_1v\|_{L^2}.
\end{split}
\end{equation*}
Therefore, we conclude that
\begin{equation*}
  \begin{split}
  \sum_{j=1}^6|\mathfrak{J}_j|\lesssim  &\|\dot{\Lambda}_h^{-\lambda}\nabla v\|_{L^2(\Omega)}^2+\|\dot{\Lambda}_h^{-\lambda}\partial_t v\|_{L^2} \|\dot{\Lambda}_h^{-\lambda}\nabla\partial_{h}v\|_{L^2} + \| \dot{\Lambda}_h^{-\lambda}\Lambda_h^{\frac{1}{2}}\partial_t {Q}\|_{L^2(\Sigma_0)}^2+E_{s_0}^{\frac{1}{2}}\dot{\mathcal{D}}_{s_0},
  \end{split}
\end{equation*}
which leads to \eqref{grad-lambda-tv-1}, and the desired result is proved.
\end{proof}

With Lemmas \ref{lem-grad-sigma0-s-1} in hand, we have
\begin{lem}\label{lem-tan-grad-total-1}
Let $\sigma>2$, $s_0>2$, under the assumption of Lemma \ref{lem-tan-pseudo-energy-1}, if $E_{s_0}(t) \leq 1$ for all $t\in [0, T]$, then there holds that $\forall\,t\in [0, T]$
\begin{equation}\label{tan-grad-decay-total-1}
\begin{split}
&\frac{d}{dt}\bigg(\mathring{\mathcal{E}}(\dot{\Lambda}_h^{\sigma_0}{\Lambda}_h^{\sigma-1-\sigma_0}\nabla\, v)\bigg)+ 2c_3\|\dot{\Lambda}_h^{\sigma_0}{\Lambda}_h^{\sigma-1-\sigma_0}\partial_t v\|_{L^2}^2 \\
&\leq C_3 \bigg(\|\dot{\Lambda}_h^{\sigma_0}\Lambda_h^{\sigma-\sigma_0}\nabla\,v\|_{L^2}^2 +\|\dot{\Lambda}_h^{\sigma_0}\Lambda_h^{\sigma-\frac{1}{2}-\sigma_0} \partial_t{Q} \|_{L^2(\Sigma_0)}^2 + \dot{\mathcal{D}}_{\sigma}^{\frac{1}{2}}(E_{s_0}^{\frac{1}{2}}\dot{\mathcal{D}}_{\sigma}^{\frac{1}{2}}
   +E_{\sigma}^{\frac{1}{2}}\,\dot{\mathcal{D}}_{s_0}^{\frac{1}{2}})\bigg),
\end{split}
\end{equation}
and
\begin{equation}\label{tan-grad-bdd-total-1}
\begin{split}
&\frac{d}{dt}\bigg(\mathring{\mathcal{E}}(\dot{\Lambda}_h^{\sigma_0}{\Lambda}_h^{\sigma-1-\sigma_0}\nabla\, v)+\mathring{\mathcal{E}}(\dot{\Lambda}_h^{-\lambda}\nabla\, v)\bigg)+ 2c_3(\|\dot{\Lambda}_h^{\sigma_0}{\Lambda}_h^{\sigma-1-\sigma_0}\partial_t v\|_{L^2}^2+\|\dot{\Lambda}_h^{-\lambda}\partial_t v\|_{L^2}^2) \\
&\leq C_3 \bigg(\|\dot{\Lambda}_h^{-\lambda} \nabla v\|_{L^2(\Omega)}^2+\|\dot{\Lambda}_h^{\sigma_0}\Lambda_h^{\sigma-\sigma_0}\nabla\,v\|_{L^2}^2 + \| \dot{\Lambda}_h^{-\lambda} \partial_t {Q}\|_{L^2(\Sigma_0)}^2\\
&\qquad\qquad \qquad\qquad+\|\dot{\Lambda}_h^{\sigma_0}\Lambda_h^{\sigma-\frac{1}{2}-\sigma_0} \partial_t{Q} \|_{L^2(\Sigma_0)}^2 + \dot{\mathcal{D}}_{\sigma}^{\frac{1}{2}}(E_{s_0}^{\frac{1}{2}}\dot{\mathcal{D}}_{\sigma}^{\frac{1}{2}}
   +E_{\sigma}^{\frac{1}{2}}\,\dot{\mathcal{D}}_{s_0}^{\frac{1}{2}})\bigg).
\end{split}
\end{equation}
\end{lem}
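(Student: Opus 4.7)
The plan is to apply Lemma \ref{lem-pseudo-energy-tv-1} with the operator $\dot\Lambda_h^{\sigma}$ replaced by the combined pseudo-differential operator $\mathcal{P}(\partial_h)=\dot\Lambda_h^{\sigma_0}\Lambda_h^{\sigma-1-\sigma_0}$. Since the derivation of that identity used only that the symbol depended on $\varsigma_h$, it applies verbatim to yield
\begin{equation*}
\frac{d}{dt}\mathring{\mathcal{E}}(\dot\Lambda_h^{\sigma_0}\Lambda_h^{\sigma-1-\sigma_0}\nabla v)+\|\sqrt{\bar\rho(x_1)}\,\dot\Lambda_h^{\sigma_0}\Lambda_h^{\sigma-1-\sigma_0}\partial_tv\|_{L^2}^2=-\sum_{j=1}^{6}\mathfrak{J}_j,
\end{equation*}
where the $\mathfrak{J}_j$'s are obtained from those in Lemma \ref{lem-pseudo-energy-tv-1} by the same substitution.

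Next I would estimate each $\mathfrak{J}_j$ following the high-index branch of the proof of Lemma \ref{lem-grad-sigma0-s-1}, now with effective index $\sigma-1$ rather than $\sigma$. Concretely: for $\mathfrak{J}_1$, a direct Cauchy--Schwarz gives $|\mathfrak{J}_1|\lesssim \|\mathcal{P}(\partial_h)\partial_tv\|_{L^2}\|\mathcal{P}(\partial_h)\nabla\partial_h v\|_{L^2}$; for $\mathfrak{J}_2$, the trace theorem followed by Young's inequality yields $|\mathfrak{J}_2|\lesssim \|\dot\Lambda_h^{\sigma_0}\Lambda_h^{\sigma-\sigma_0}\nabla v\|_{L^2}^2+\|\dot\Lambda_h^{\sigma_0}\Lambda_h^{\sigma-\tfrac{1}{2}-\sigma_0}\partial_tQ\|_{L^2(\Sigma_0)}^2$ (using that the resulting index can be harmlessly enlarged); for $\mathfrak{J}_3$, Poincar\'e's inequality combined with the product laws of Lemma \ref{lem-productlaw-1aa} and the coefficient bounds of Lemma \ref{lem-est-B-Bv-1} applied to $B_{1,i}^{h,j}\partial_jv^i$ controls it by $\|\dot\Lambda_h^{\sigma_0}\Lambda_h^{\sigma-\sigma_0}\nabla v\|_{L^2}^2+\dot{\mathcal{D}}_\sigma^{1/2}(E_{s_0}^{1/2}\dot{\mathcal{D}}_\sigma^{1/2}+E_\sigma^{1/2}\dot{\mathcal{D}}_{s_0}^{1/2})$; for $\mathfrak{J}_4$, the harmonic extension bounds of Lemma \ref{lem-est-harmonic-ext-1} convert the interior integral into $\|\dot\Lambda_h^{\sigma_0}\Lambda_h^{\sigma-\tfrac{1}{2}-\sigma_0}\partial_tQ\|_{L^2(\Sigma_0)}^2+\|\dot\Lambda_h^{\sigma_0}\Lambda_h^{\sigma-\sigma_0}\nabla v\|_{L^2}^2$.

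The remaining and most delicate contributions $\mathfrak{J}_5,\mathfrak{J}_6$ are controlled by invoking Lemma \ref{lem-est-G-norm-1} on $\widetilde{\mathcal{G}}^j$ and $\partial_t\widetilde{\mathcal{G}}^j$: these precisely produce the nonlinear factor $\dot{\mathcal{D}}_\sigma^{1/2}(E_{s_0}^{1/2}\dot{\mathcal{D}}_\sigma^{1/2}+E_\sigma^{1/2}\dot{\mathcal{D}}_{s_0}^{1/2})$, taking advantage of the $\partial_1$-gain in the definitions of $\mathfrak{J}_5,\mathfrak{J}_6$ so that one spatial derivative is paid by $\widetilde{\mathcal{G}}^j$ and the companion factor $\|\mathcal{P}(\partial_h)\partial_1 v\|_{L^2}$ stays inside the dissipation norm. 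Summing all six bounds, absorbing the square of $\|\mathcal{P}(\partial_h)\partial_tv\|_{L^2}$ on the left via Young's inequality, and choosing the constants $c_3,\,C_3$ appropriately gives \eqref{tan-grad-decay-total-1}. The second estimate \eqref{tan-grad-bdd-total-1} then follows by simply adding \eqref{grad-lambda-tv-1} from Lemma \ref{lem-grad-bdd-lambda-1}.

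The main obstacle I anticipate is the bookkeeping of horizontal regularity indices in the nonlinear remainders $\mathfrak{J}_5,\mathfrak{J}_6$: one needs the sharp bilinear split delivered by Lemma \ref{lem-est-G-norm-1}, splitting the regularity between the coefficient $\widetilde{\mathcal{G}}^j$ and the dissipative factor, so as to recover precisely the asymmetric factor $E_{s_0}^{1/2}\dot{\mathcal{D}}_\sigma^{1/2}+E_\sigma^{1/2}\dot{\mathcal{D}}_{s_0}^{1/2}$ rather than a suboptimal symmetric version; any looser split would break the absorption step when this lemma is later used in the total energy estimate.
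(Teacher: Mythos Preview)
Your proposal is correct and is essentially the same as the paper's approach. The paper simply states that the result follows from Lemmas \ref{lem-grad-sigma0-s-1} and \ref{lem-grad-bdd-lambda-1}: inequality \eqref{tan-grad-decay-total-1} is exactly \eqref{grad-sigma0-s-tv-1} with $\sigma$ replaced by $\sigma-1$ (the $\partial_tQ$ index is then harmlessly enlarged, as you note), and \eqref{tan-grad-bdd-total-1} is obtained by adding \eqref{grad-lambda-tv-1}. Your re-derivation from Lemma \ref{lem-pseudo-energy-tv-1} with $\mathcal{P}(\partial_h)=\dot\Lambda_h^{\sigma_0}\Lambda_h^{\sigma-1-\sigma_0}$ repeats precisely the bounds already established in the proof of Lemma \ref{lem-grad-sigma0-s-1}, so the content is identical.
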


Define
\begin{equation*}
\begin{split}
& \widehat{\dot{\mathcal{E}}}_{\sigma,\text{tan},  \epsilon}(t)\eqdefa \|\sqrt{\bar{\rho}(x_1)}\,\dot{\Lambda}_h^{\sigma_0}\Lambda_h^{\sigma-\sigma_0}v\|_{L^2(\Omega)}^2+g^{-1} \|\sqrt{-\bar{\rho}'(x_1)}\, \dot{\Lambda}_h^{\sigma_0}\Lambda_h^{\sigma-\sigma_0}q\|_{L^2(\Omega)}^2 \\
&\,+ g\,\bar{\rho}(0)\|\dot{\Lambda}_h^{\sigma_0}\Lambda_h^{\sigma-\sigma_0}\xi^1\|_{L^2(\Sigma_0)}^2
+\epsilon\,\|\dot{\Lambda}_h^{\sigma_0-1} {\Lambda}_h^{\sigma+\frac{1}{2}-\sigma_0}{Q}\|_{L^2(\Sigma_0)}^2 +\epsilon^2 \, \mathring{\mathcal{E}}(\dot{\Lambda}_h^{\sigma_0}{\Lambda}_h^{\sigma-1-\sigma_0}\nabla\, v)\\
& \widehat{\dot{\mathcal{D}}}_{\sigma, \text{tan},\epsilon}(t)\eqdefa \frac{1}{2}c_1 \|\dot{\Lambda}_h^{\sigma_0}\Lambda_h^{\sigma-\sigma_0}\nabla\,v\|_{L^2(\Omega)}^2 +\frac{1}{2}c_2\,\epsilon\, \| \dot{\Lambda}_h^{\sigma_0-1} {\Lambda}_h^{\sigma+\frac{1}{2}-\sigma_0}( {Q},\,\partial_t{Q})\|_{L^2(\Sigma_0)}^2 \\
&\qquad\qquad\qquad\qquad+\frac{1}{2}c_3\,\epsilon^2\,\|\dot{\Lambda}_h^{\sigma_0}{\Lambda}_h^{\sigma-1-\sigma_0}\partial_t v\|_{L^2}^2.
\end{split}
\end{equation*}
Thanks to \eqref{tan-decay-total-1}, \eqref{est-Q-bdry-decay-2cc1}, and \eqref{tan-grad-decay-total-1}, we obtain
\begin{equation}\label{ineq-tan-decay-energy-1}
  \begin{split}
&\frac{d}{dt}\widehat{\dot{\mathcal{E}}}_{\sigma, \text{tan},\epsilon}(t)+2\,\widehat{\dot{\mathcal{D}}}_{\sigma, \text{tan},\epsilon}(t)\\
&\leq  (C_2\,\epsilon+C_3 \,\epsilon^2)\, \|\dot{\Lambda}_h^{\sigma_0} {\Lambda}_h^{\sigma-\sigma_0} \nabla\,v \|_{L^2(\Omega)}^2 +C_3 \,\epsilon^2\,\|\dot{\Lambda}_h^{\sigma_0}\Lambda_h^{\sigma-\frac{1}{2}-\sigma_0} \partial_t{Q} \|_{L^2(\Sigma_0)}^2\\
&\qquad+(C_1+C_3 \,\epsilon^2)\,(E_{\sigma}^{\frac{1}{2}}\dot{\mathcal{D}}_{s_0}^{\frac{1}{2}}
+E_{s_0}^{\frac{1}{2}} \dot{\mathcal{D}}_{\sigma}^{\frac{1}{2}})\dot{\mathcal{D}}_{\sigma}^{\frac{1}{2}}+ C_2\,\epsilon\,(E_{s_0}\dot{\mathcal{D}}_{s}+E_{\sigma}\dot{\mathcal{D}}_{s_0}).
\end{split}
\end{equation}
From this, for any small positive constant $\epsilon \leq \min\{\frac{c_1}{3C_2}, \frac{c_2}{2C_3}\}$ (which will be determined later), we have
\begin{lem}\label{lem-decay-total-1}
Under the assumption of Lemma \ref{lem-tan-pseudo-energy-1}, if $(\lambda,\,\sigma_0) \in (0, 1)$ satisfies $1-\lambda< \sigma_0\leq 1-\frac{1}{2}\lambda$, and $E_{s_0}(t) \leq 1$ for all $t\in [0, T]$, then there holds that $\forall\,t\in [0, T]$
\begin{equation*}
\begin{split}
\frac{d}{dt}\widehat{\dot{\mathcal{E}}}_{s, \text{tan},\epsilon}(t)&+ \widehat{\dot{\mathcal{D}}}_{s, \text{tan},\epsilon}(t)\\
&
\leq (C_1+C_3 \,\epsilon^2)(E_{s}^{\frac{1}{2}}\dot{\mathcal{D}}_{s_0}^{\frac{1}{2}}
+E_{s_0}^{\frac{1}{2}} \dot{\mathcal{D}}_s^{\frac{1}{2}})\dot{\mathcal{D}}_{s}^{\frac{1}{2}}+ C_2\,\epsilon\,(E_{s_0} \dot{\mathcal{D}}_{s}
+E_s \dot{\mathcal{D}}_{s_0}).
\end{split}
\end{equation*}
\end{lem}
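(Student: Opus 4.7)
The statement is essentially a consequence of the combined differential inequality \eqref{ineq-tan-decay-energy-1} after specializing to $\sigma = s$ and choosing the auxiliary parameter $\epsilon$ small enough. The plan is to absorb the first two ``linear'' terms on the right-hand side of \eqref{ineq-tan-decay-energy-1} into the dissipation functional $\widehat{\dot{\mathcal{D}}}_{s,\text{tan},\epsilon}(t)$ on the left-hand side, and then to keep only the genuinely nonlinear remainders on the right-hand side.

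More concretely, I would proceed as follows. First, take $\sigma = s$ in \eqref{ineq-tan-decay-energy-1}, which was itself built by combining the tangential energy identity from Lemma \ref{lem-tan-decay-total-1} for $(v,q,\xi^1)$, the boundary energy identity \eqref{est-Q-bdry-decay-2cc1} for $Q$, and the pseudo-energy identity from Lemma \ref{lem-grad-sigma0-s-1} for $\nabla v$ with the weights $1$, $\epsilon$ and $\epsilon^2$ respectively. Recall that the dissipation $\widehat{\dot{\mathcal{D}}}_{s,\text{tan},\epsilon}(t)$ contains the three positive contributions
\[
\tfrac12 c_1\,\|\dot{\Lambda}_h^{\sigma_0}\Lambda_h^{s-\sigma_0}\nabla v\|_{L^2(\Omega)}^2,\qquad \tfrac12 c_2\,\epsilon\,\|\dot{\Lambda}_h^{\sigma_0-1}\Lambda_h^{s+\frac12-\sigma_0}(Q,\partial_tQ)\|_{L^2(\Sigma_0)}^2,\qquad \tfrac12 c_3\,\epsilon^2\,\|\dot{\Lambda}_h^{\sigma_0}\Lambda_h^{s-1-\sigma_0}\partial_t v\|_{L^2(\Omega)}^2,
\]
each of which exactly matches one of the problematic terms appearing on the right-hand side of \eqref{ineq-tan-decay-energy-1}.

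The absorption step is then straightforward: I choose
\[
\epsilon \;\leq\; \min\!\Bigl\{\tfrac{c_1}{4C_2},\ \sqrt{\tfrac{c_1}{4C_3}},\ \tfrac{c_2}{4C_3}\Bigr\},
\]
so that
\[
(C_2\epsilon + C_3\epsilon^2)\|\dot{\Lambda}_h^{\sigma_0}\Lambda_h^{s-\sigma_0}\nabla v\|_{L^2(\Omega)}^2 \;\leq\; \tfrac12 c_1\,\|\dot{\Lambda}_h^{\sigma_0}\Lambda_h^{s-\sigma_0}\nabla v\|_{L^2(\Omega)}^2,
\]
\[
C_3\epsilon^2\|\dot{\Lambda}_h^{\sigma_0}\Lambda_h^{s-\frac12-\sigma_0}\partial_tQ\|_{L^2(\Sigma_0)}^2 \;\leq\; \tfrac12 c_2\,\epsilon\,\|\dot{\Lambda}_h^{\sigma_0-1}\Lambda_h^{s+\frac12-\sigma_0}\partial_tQ\|_{L^2(\Sigma_0)}^2.
\]
Moving these bounds to the left in \eqref{ineq-tan-decay-energy-1} converts $2\widehat{\dot{\mathcal{D}}}_{s,\text{tan},\epsilon}(t)$ into at least $\widehat{\dot{\mathcal{D}}}_{s,\text{tan},\epsilon}(t)$, which is exactly what appears in the statement.

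The remaining nonlinear terms from \eqref{ineq-tan-decay-energy-1}, namely $(C_1+C_3\epsilon^2)(E_s^{1/2}\dot{\mathcal{D}}_{s_0}^{1/2}+E_{s_0}^{1/2}\dot{\mathcal{D}}_s^{1/2})\dot{\mathcal{D}}_s^{1/2}$ and $C_2\epsilon(E_{s_0}\dot{\mathcal{D}}_s + E_s\dot{\mathcal{D}}_{s_0})$, are unaffected by the absorption and appear verbatim on the right-hand side of the lemma. There is essentially no obstacle here, as the hard work—the tangential estimates, the Stokes-type boundary estimates for $Q$, and the pseudo-energy identity for $\nabla v$—has already been carried out in Lemmas \ref{lem-tan-decay-total-1}, \ref{lem-bdry-energy-1} and \ref{lem-grad-sigma0-s-1}. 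The only point requiring mild care is to verify that $\epsilon \leq \min\{c_1/(3C_2),c_2/(2C_3)\}$ (assumed before \eqref{ineq-tan-decay-energy-1}) is compatible with the stricter upper bound needed for absorption above; this is immediate by taking $\epsilon$ smaller still if necessary, and the smallness requirement remains uniform in the solution.
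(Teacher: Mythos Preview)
Your proposal is correct and follows exactly the paper's approach: specialize the combined inequality \eqref{ineq-tan-decay-energy-1} to $\sigma=s$ and choose $\epsilon$ small enough to absorb the two linear terms $(C_2\epsilon+C_3\epsilon^2)\|\dot{\Lambda}_h^{\sigma_0}\Lambda_h^{s-\sigma_0}\nabla v\|_{L^2}^2$ and $C_3\epsilon^2\|\dot{\Lambda}_h^{\sigma_0}\Lambda_h^{s-\frac12-\sigma_0}\partial_tQ\|_{L^2(\Sigma_0)}^2$ into $2\widehat{\dot{\mathcal{D}}}_{s,\text{tan},\epsilon}$. The one implicit step you use---that $\|\dot{\Lambda}_h^{\sigma_0}\Lambda_h^{s-\frac12-\sigma_0}\partial_tQ\|_{L^2(\Sigma_0)}\le \|\dot{\Lambda}_h^{\sigma_0-1}\Lambda_h^{s+\frac12-\sigma_0}\partial_tQ\|_{L^2(\Sigma_0)}$ because $|\varsigma_h|\le\langle\varsigma_h\rangle$---is trivially true, so nothing is missing.
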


With Lemmas \ref{lem-tan-grad-total-1} and \ref{lem-grad-bdd-lambda-1} in hand, we have
\begin{lem}\label{lem-tan-grad-N+1-1}
Let $N\geq 3$, under the assumption of Lemma \ref{lem-tan-pseudo-energy-1}, if $(\lambda,\,\sigma_0) \in (0, 1)$ satisfies $1-\lambda< \sigma_0\leq 1-\frac{1}{2}\lambda$, and $E_3(t) \leq 1$ for all $t\in [0, T]$, then there holds that $\forall\,t\in [0, T]$
\begin{equation*}
\begin{split}
&\frac{d}{dt}\bigg(\mathring{\mathcal{E}}(\dot{\Lambda}_h^{-\lambda}\nabla\, v) + \mathring{\mathcal{E}}(\dot{\Lambda}_h^{\sigma_0}\Lambda_h^{s-1-\sigma_0}\nabla\, v)\bigg) + c_2(\|\dot{\Lambda}_h^{-\lambda}\partial_t v\|_{L^2}^2  + \|\dot{\Lambda}_h^{\sigma_0}\Lambda_h^{s-1-\sigma_0}\partial_t v\|_{L^2}^2)\\
&\leq C_2 \bigg(\|\dot{\Lambda}_h^{-\lambda} \nabla\,v\|_{L^2}^2 +\|\dot{\Lambda}_h^{\sigma_0}\Lambda_h^{s-\sigma_0}\nabla\,v\|_{L^2}^2 +\| \dot{\Lambda}_h^{-\lambda} \partial_t {Q}\|_{L^2(\Sigma_0)}^2\\
&\qquad\qquad\qquad +\|\dot{\Lambda}_h^{\sigma_0}\Lambda_h^{s-\frac{1}{2}-\sigma_0} \partial_t{Q} \|_{L^2(\Sigma_0)}^2 + \dot{\mathcal{D}}_{s}^{\frac{1}{2}}(E_{s_0}^{\frac{1}{2}}\dot{\mathcal{D}}_{s}^{\frac{1}{2}}
   +E_{s}^{\frac{1}{2}}\,\dot{\mathcal{D}}_{s_0}^{\frac{1}{2}})\bigg).
\end{split}
\end{equation*}
\end{lem}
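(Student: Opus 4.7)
\medskip

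\noindent\textbf{Proof proposal for Lemma \ref{lem-tan-grad-N+1-1}.}
The plan is to view this lemma as a direct combination of the two energy identities already proved: the $\dot{\Lambda}_h^{-\lambda}$ estimate \eqref{grad-lambda-tv-1} from Lemma \ref{lem-grad-bdd-lambda-1}, and the $\dot{\Lambda}_h^{\sigma_0}\Lambda_h^{\sigma-1-\sigma_0}$ estimate \eqref{grad-sigma0-s-tv-1} from Lemma \ref{lem-grad-sigma0-s-1}, specialized to the exponent $\sigma = s$ with $s > 2$. Since the admissibility hypothesis $\sigma \geq s_0 - 1$ for \eqref{grad-sigma0-s-tv-1} is compatible with taking $\sigma = s$ (recall $s_0 \in [2+\sigma_0, s]$ is the standing choice of reference exponent), both identities hold simultaneously under the size hypothesis $E_{s_0}(t)\leq 1$ enforced here.

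The main steps are: first, sum the two differential inequalities \eqref{grad-lambda-tv-1} and \eqref{grad-sigma0-s-tv-1} (the latter applied at $\sigma=s$). This immediately delivers the derivative of the combined pseudo-energy $\mathring{\mathcal{E}}(\dot{\Lambda}_h^{-\lambda}\nabla v)+\mathring{\mathcal{E}}(\dot{\Lambda}_h^{\sigma_0}\Lambda_h^{s-1-\sigma_0}\nabla v)$ together with the two desired $\partial_t v$ dissipative terms on the left, at the cost of constants $\min\{c_2, 2c_3\}$ that we absorb into a single constant still denoted $c_2$ (by choosing the smaller one and relabeling, as is consistent with the paper's convention that all $c$-forms may change line by line). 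Second, we inspect the right-hand side terms produced by the sum and check that each is already listed in the statement: the velocity gradient terms $\|\dot{\Lambda}_h^{-\lambda}\nabla v\|_{L^2}^2$ and $\|\dot{\Lambda}_h^{\sigma_0}\Lambda_h^{s-\sigma_0}\nabla v\|_{L^2}^2$ are exactly the two appearing from the two identities; the boundary terms $\|\dot{\Lambda}_h^{-\lambda}\partial_t Q\|_{L^2(\Sigma_0)}^2$ from \eqref{grad-lambda-tv-1} and $\|\dot{\Lambda}_h^{\sigma_0}\Lambda_h^{s-\frac{1}{2}-\sigma_0}\partial_t Q\|_{L^2(\Sigma_0)}^2$ from \eqref{grad-sigma0-s-tv-1} (where the latter comes from the trivial monotonicity $\|\Lambda_h^{a}f\|_{L^2} \leq \|\Lambda_h^{b}f\|_{L^2}$ for $a\leq b$, used to pass from the exponent $s-\tfrac{3}{2}-\sigma_0$ produced by taking $\sigma = s-1$ in \eqref{grad-sigma0-s-tv-1} to the larger exponent $s-\tfrac{1}{2}-\sigma_0$ that appears in the statement) are precisely the listed terms; and the remaining nonlinear contributions $E_{s_0}^{1/2}\dot{\mathcal{D}}_{s_0}$ from the $-\lambda$ identity and $\dot{\mathcal{D}}_s^{1/2}(E_{s_0}^{1/2}\dot{\mathcal{D}}_s^{1/2} + E_s^{1/2}\dot{\mathcal{D}}_{s_0}^{1/2})$ from the $s$ identity are both dominated by the quantity displayed in the last line of the stated RHS (since $E_{s_0}\dot{\mathcal{D}}_{s_0} \lesssim E_{s_0}^{1/2}\dot{\mathcal{D}}_s\cdot \dot{\mathcal{D}}_{s_0}^{1/2}/\dot{\mathcal{D}}_s^{1/2}$, or more cleanly, both are controlled by $\dot{\mathcal{D}}_s^{1/2}(E_{s_0}^{1/2}\dot{\mathcal{D}}_s^{1/2}+E_s^{1/2}\dot{\mathcal{D}}_{s_0}^{1/2})$ using $\dot{\mathcal{D}}_{s_0}\leq \dot{\mathcal{D}}_s$).

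No new nonlinear estimate is needed at this stage, since every remainder has already been bounded at the level of Lemmas \ref{lem-grad-sigma0-s-1} and \ref{lem-grad-bdd-lambda-1} via the product estimates of Section \ref{sect-tool} (Lemmas \ref{lem-productlaw-1aa}, \ref{lem-est-aij-1}, \ref{lem-est-B-Bv-1}, \ref{lem-est-G-norm-1}). The only mildly delicate point — and the one I would highlight as the main bookkeeping obstacle — is matching the various $\Lambda_h^a$-exponents on the $\partial_t Q$ boundary term produced by the two identities with the single set of exponents displayed in the conclusion; this is where one uses the monotonicity of the inhomogeneous horizontal operator $\Lambda_h$ to replace a weaker horizontal weight by the (larger) one appearing in the statement, at no cost. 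Once these exponent identifications are made and constants consolidated, the proof is complete.
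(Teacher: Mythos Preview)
Your proposal is correct and follows essentially the same route as the paper, which simply records that Lemma \ref{lem-tan-grad-N+1-1} follows from Lemmas \ref{lem-tan-grad-total-1} and \ref{lem-grad-bdd-lambda-1} (the former being itself the sum of \eqref{grad-sigma0-s-tv-1} and \eqref{grad-lambda-tv-1}, then specialized). One small bookkeeping slip: inequality \eqref{grad-sigma0-s-tv-1} carries the operator $\dot{\Lambda}_h^{\sigma_0}\Lambda_h^{\sigma-\sigma_0}$ (not $\dot{\Lambda}_h^{\sigma_0}\Lambda_h^{\sigma-1-\sigma_0}$), so to land on $\dot{\Lambda}_h^{\sigma_0}\Lambda_h^{s-1-\sigma_0}$ you must take $\sigma=s-1$ throughout, as you yourself correctly do when tracking the $\partial_t Q$ exponent; the initial mention of ``$\sigma=s$'' should be amended, but the rest of the argument is unaffected.
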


 Define
\begin{equation*}
\begin{split}
& \widehat{{\mathcal{E}}}_{\sigma,\text{tan},\epsilon}\eqdefa \|\sqrt{\bar{\rho}(x_1)}\,(\dot{\Lambda}_h^{-\lambda}v, \dot{\Lambda}_h^{\sigma_0}\Lambda_h^{\sigma-\sigma_0}v)\|_{L^2(\Omega)}^2+ g\,\bar{\rho}(0)\|(\dot{\Lambda}_h^{-\lambda}\xi^1, \dot{\Lambda}_h^{\sigma_0}\Lambda_h^{\sigma-\sigma_0}\xi^1)\|_{L^2(\Sigma_0)}^2\\
&\quad+g^{-1} \|\sqrt{-\bar{\rho}'(x_1)}\, (\dot{\Lambda}_h^{-\lambda}q, \dot{\Lambda}_h^{\sigma_0}\Lambda_h^{\sigma-\sigma_0}q)\|_{L^2(\Omega)}^2
+\epsilon\,\|(\dot{\Lambda}_h^{-\lambda}Q,\,\dot{\Lambda}_h^{\sigma_0-1} {\Lambda}_h^{\sigma+\frac{1}{2}-\sigma_0}{Q})\|_{L^2(\Sigma_0)}^2 \\
 &\qquad+\epsilon^2 \, (\mathring{\mathcal{E}}(\dot{\Lambda}_h^{-\lambda}\nabla\, v)+\mathring{\mathcal{E}}(\dot{\Lambda}_h^{\sigma_0}{\Lambda}_h^{\sigma-1-\sigma_0}\nabla\, v)),\\
& \widehat{{\mathcal{D}}}_{\sigma, \text{tan},\epsilon}\eqdefa \frac{c_1 }{2} \|(\dot{\Lambda}_h^{-\lambda}\nabla\,v,\,\dot{\Lambda}_h^{\sigma_0}\Lambda_h^{\sigma-\sigma_0}\nabla\,v)\|_{L^2(\Omega)}^2 +\frac{c_3\,\epsilon^2}{2}\,\|(\dot{\Lambda}_h^{-\lambda}\partial_tv,\,
\dot{\Lambda}_h^{\sigma_0}{\Lambda}_h^{\sigma-1-\sigma_0}\partial_t v)\|_{L^2}^2\\
&\qquad\qquad+\frac{c_2\,\epsilon}{2}\,( \|\dot{\Lambda}_h^{-\lambda}({Q},\,\partial_t{Q})\|_{L^2(\Sigma_0)}^2+\| \dot{\Lambda}_h^{\sigma_0-1} {\Lambda}_h^{\sigma+\frac{1}{2}-\sigma_0}( {Q},\,\partial_t{Q})\|_{L^2(\Sigma_0)}^2 ).
\end{split}
\end{equation*}
Thanks to \eqref{tan-bdd-s+1-1}, \eqref{est-Q-bdry-decay-2dd1}, and \eqref{tan-grad-bdd-total-1}, we obtain
\begin{equation*}
  \begin{split}
&\frac{d}{dt}\widehat{{\mathcal{E}}}_{s+\ell_0, \text{tan},\epsilon}(t)+2\,\widehat{{\mathcal{D}}}_{s+\ell_0, \text{tan},\epsilon}(t)
\leq C_0 \bigg(E_{s+\ell_0}^{\frac{1}{2}}\dot{\mathcal{D}}_{s_0}^{\frac{1}{2}}\dot{\mathcal{D}}_{s+\ell_0}^{\frac{1}{2}}
+E_{s_0}^{\frac{1}{2}} \dot{\mathcal{D}}_{s+\ell_0}+ E_{s_0} (\dot{\mathcal{D}}_{s_0}^{\frac{1}{2}}+ \dot{\mathcal{E}}_{s_0})\bigg)\\
&\qquad + C_2\,\epsilon\,(  \|(\dot{\Lambda}_h^{\frac{1}{2}-\lambda}\nabla\, v , \dot{\Lambda}_h^{\sigma_0} {\Lambda}_h^{s+\ell_0-\sigma_0} \nabla\, v) \|_{L^2(\Omega)}^2+ E_{s_0}\dot{\mathcal{D}}_{s+\ell_0}+E_{s+\ell_0}\dot{\mathcal{D}}_{s_0})\\
&\qquad +C_3\,\epsilon^2\,\bigg(\|\dot{\Lambda}_h^{-\lambda} \nabla v\|_{L^2(\Omega)}^2+\|\dot{\Lambda}_h^{\sigma_0}\Lambda_h^{s+\ell_0-\sigma_0}\nabla\,v\|_{L^2}^2 + \| \dot{\Lambda}_h^{-\lambda} \partial_t {Q}\|_{L^2(\Sigma_0)}^2\\
&\qquad\qquad \qquad\qquad+\|\dot{\Lambda}_h^{\sigma_0}\Lambda_h^{s+\ell_0-\frac{1}{2}-\sigma_0} \partial_t{Q} \|_{L^2(\Sigma_0)}^2 + \dot{\mathcal{D}}_{s+\ell_0}^{\frac{1}{2}}(E_{s_0}^{\frac{1}{2}}\dot{\mathcal{D}}_{s+\ell_0}^{\frac{1}{2}}
   +E_{s+\ell_0}^{\frac{1}{2}}\,\dot{\mathcal{D}}_{s_0}^{\frac{1}{2}})\bigg).
\end{split}
\end{equation*}
From this, for any small positive constant $\epsilon \leq \min\{\frac{c_1}{6C_2}, \frac{c_2}{4C_3}\}$ (which will be determined later), we have
\begin{lem}\label{lem-tan-energy-total-1}
Under the assumption of Lemma \ref{lem-tan-pseudo-energy-1}, if $(\lambda,\,\sigma_0) \in (0, 1)$ satisfies $1-\lambda< \sigma_0\leq 1-\frac{1}{2}\lambda$, and $E_{s_0}(t) \leq 1$ for all $t\in [0, T]$, then there holds that $\forall\,t\in [0, T]$
\begin{equation*}
\begin{split}
 \frac{d}{dt}\widehat{{\mathcal{E}}}_{s+\ell_0, \text{tan},\epsilon}(t)&+ \widehat{{\mathcal{D}}}_{s+\ell_0, \text{tan},\epsilon}(t)
\leq  C_2\,\epsilon\,(E_{s_0}  {\mathcal{D}}_{s+\ell_0}
+E_{s+\ell_0}  {\mathcal{D}}_{s_0})\\
& + (C_1+C_3 \,\epsilon^2) (E_{s+\ell_0}^{\frac{1}{2}}\dot{\mathcal{D}}_{s_0}^{\frac{1}{2}}\dot{\mathcal{D}}_{s+\ell_0}^{\frac{1}{2}}
+E_{s_0}^{\frac{1}{2}} \dot{\mathcal{D}}_{s+\ell_0}+ E_{s_0}  (\dot{\mathcal{D}}_{s_0}^{\frac{1}{2}}+ \dot{\mathcal{E}}_{s_0})).
\end{split}
\end{equation*}
\end{lem}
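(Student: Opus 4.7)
The plan is to obtain Lemma 4.9 by a weighted linear combination of three previously established building blocks, then absorb the undesirable dissipative contributions on the right-hand side into the dissipation on the left by choosing $\epsilon$ small.

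First, I would set up the combination: take the estimate \eqref{tan-bdd-s+1-1} from Lemma 4.3 with $\sigma = s+\ell_0$ (weight $1$), then add $\epsilon$ times \eqref{est-Q-bdry-decay-2dd1} from Corollary 4.1 with $\sigma = s+\ell_0$, and finally add $\epsilon^2$ times the estimate from Lemma 4.8. Summing these three inequalities immediately produces the time derivative of $\widehat{\mathcal{E}}_{s+\ell_0,\text{tan},\epsilon}$ on the left, along with a non-negative lower bound of the form
\begin{equation*}
c_1 \|(\dot{\Lambda}_h^{-\lambda}\nabla v,\, \dot{\Lambda}_h^{\sigma_0}\Lambda_h^{s+\ell_0-\sigma_0}\nabla v)\|_{L^2}^2 + c_2\epsilon\,\|\cdots(Q,\partial_tQ)\|_{L^2(\Sigma_0)}^2 + c_3\epsilon^2\|\cdots\partial_tv\|_{L^2}^2.
\end{equation*}

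Next comes the absorption step, which is the crux. On the right side of the sum, the boundary-$Q$ inequality contributes $\epsilon C_2 \|\dot{\Lambda}_h^{\frac{1}{2}-\lambda}\nabla v, \dot{\Lambda}_h^{\sigma_0}\Lambda_h^{s+\ell_0-\sigma_0}\nabla v\|_{L^2(\Omega)}^2$, while the gradient-of-$v$ inequality contributes $\epsilon^2 C_3$ times norms of $\partial_tQ$ and $\nabla v$. I would interpolate the $\dot{\Lambda}_h^{\frac{1}{2}-\lambda}$ factor between $\dot{\Lambda}_h^{-\lambda}$ and $\dot{\Lambda}_h^{\sigma_0}$ (note $\frac{1}{2}-\lambda < \sigma_0$) via Young's inequality, so that this bad term is controlled by a small multiple of $\|\dot\Lambda_h^{-\lambda}\nabla v\|_{L^2}^2 + \|\dot\Lambda_h^{\sigma_0}\Lambda_h^{s+\ell_0-\sigma_0}\nabla v\|_{L^2}^2$ plus lower-order energy. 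Then, choosing $\epsilon \le \min\{c_1/(6C_2),\, c_2/(4C_3)\}$ (the bound anticipated in the lemma), both the $\epsilon C_2 \|\nabla v\|_{L^2}^2$ and the $\epsilon^2 C_3 (\|\nabla v\|_{L^2}^2 + \|\partial_t Q\|_{L^2(\Sigma_0)}^2)$ terms are strictly dominated by the dissipation already on the left, leaving at most half of each dissipation piece; relabeling the remaining halves as $\widehat{\mathcal{D}}_{s+\ell_0,\text{tan},\epsilon}$ gives exactly the left-hand side claimed.

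Finally, the nonlinear remainders surviving the absorption are of three types: (i) $(C_1 + C_3\epsilon^2)(E_{s+\ell_0}^{1/2}\dot{\mathcal{D}}_{s_0}^{1/2}\dot{\mathcal{D}}_{s+\ell_0}^{1/2} + E_{s_0}^{1/2}\dot{\mathcal{D}}_{s+\ell_0})$ inherited from \eqref{tan-bdd-s+1-1}; (ii) $\epsilon C_2(E_{s_0}\mathcal{D}_{s+\ell_0} + E_{s+\ell_0}\mathcal{D}_{s_0})$ coming from the boundary $Q$-estimate after the same absorption; (iii) the $E_{s_0}(\dot{\mathcal{D}}_{s_0}^{1/2} + \dot{\mathcal{E}}_{s_0})$ term from \eqref{tan-bdd-s+1-1}. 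Since $s+\ell_0 > s_0$ under the standing assumption, the last two classes may be swallowed into the combined right-hand side of the lemma without any further work. The main technical obstacle is handling type (i): one must verify the interpolation inequality $\|\dot{\Lambda}_h^{\frac{1}{2}-\lambda}\nabla v\|_{L^2}^2 \lesssim \|\dot{\Lambda}_h^{-\lambda}\nabla v\|_{L^2}^2 + \|\dot{\Lambda}_h^{\sigma_0}\Lambda_h^{s+\ell_0-\sigma_0}\nabla v\|_{L^2}^2$ carefully using $1-\lambda < \sigma_0 \le 1-\lambda/2$, ensuring that the interpolation exponent is truly inside the admissible range; this is precisely the only place where the precise condition relating $\lambda$ and $\sigma_0$ is used, and all subsequent bookkeeping is routine.
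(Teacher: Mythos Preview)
Your proposal is correct and follows essentially the same route as the paper: the paper likewise forms the weighted sum of \eqref{tan-bdd-s+1-1}, $\epsilon\times$\eqref{est-Q-bdry-decay-2dd1}, and $\epsilon^2\times$\eqref{tan-grad-bdd-total-1} (your ``Lemma~4.8''), then absorbs the $\epsilon C_2\|(\dot\Lambda_h^{\frac12-\lambda}\nabla v,\dot\Lambda_h^{\sigma_0}\Lambda_h^{s+\ell_0-\sigma_0}\nabla v)\|_{L^2}^2$ and $\epsilon^2 C_3(\|\nabla v\|^2+\|\partial_tQ\|^2)$ contributions into the left-hand dissipation under the same smallness condition $\epsilon\le\min\{c_1/(6C_2),\,c_2/(4C_3)\}$. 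Your explicit mention of the interpolation $\|\dot\Lambda_h^{\frac12-\lambda}\nabla v\|_{L^2}^2\lesssim\|\dot\Lambda_h^{-\lambda}\nabla v\|_{L^2}^2+\|\dot\Lambda_h^{\sigma_0}\Lambda_h^{s+\ell_0-\sigma_0}\nabla v\|_{L^2}^2$ makes precise a step the paper leaves implicit, but otherwise the arguments coincide.
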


\renewcommand{\theequation}{\thesection.\arabic{equation}}
\setcounter{equation}{0}
\section{Stokes estimates}\label{sect-stokes}

\subsection{Estimate of the pressure term $\partial_1q$}

From the equation of $v^1$ in the momentum equations of the system \eqref{eqns-linear-1}, we deduce
\begin{equation}\label{comp-eqns-v1-1}
  \begin{split}
   &  (\delta+\frac{4}{3}\varepsilon)\partial_1^2v^1- \bar{\rho}(x_1)\partial_1\,q= \bar{\rho}(x_1)\partial_t v^1-\mathcal{L}_{1}(\partial_h\nabla\,v)-\mathfrak{g}_1.
    \end{split}
\end{equation}
While due to \eqref{eqns-q-form-3}, applying the operator $\partial_1$ to it ensures
\begin{equation}\label{comp-second-v1-1}
  \begin{split}
 \partial_1^2v^1 =&\frac{\bar{\rho}'(x_1) }{g\bar{\rho}(x_1)} \partial_t\partial_1q+\mathcal{K}_{1}+\widetilde{\mathcal{K}}_{1}.
 \end{split}
\end{equation}
with
\begin{equation*}
  \begin{split}
 &\mathcal{K}_{1}\eqdefa-\partial_1\nabla_h\cdot v^h+ \bigg(\frac{ \bar{\rho}''(x_1)}{ \bar{\rho}'(x_1)}-\frac{2\bar{\rho}'(x_1)}{\bar{\rho}(x_1)}\bigg) \partial_1v^1+ \bigg(\frac{ \bar{\rho}''(x_1)}{ \bar{\rho}'(x_1)}-\frac{\bar{\rho}'(x_1)}{\bar{\rho}(x_1)}\bigg) \nabla_h\cdot v^h,\\
 &\widetilde{\mathcal{K}}_{1}\eqdefa-\frac{ \bar{\rho}''(x_1)}{\bar{\rho}(x_1)\bar{\rho}'(x_1)}B_{1, i}^{h, j}\partial_jv^i +\frac{1}{\bar{\rho}(x_1)}\partial_1(B_{1, i}^{h, j}\partial_jv^i).
 \end{split}
\end{equation*}
Substituting \eqref{comp-second-v1-1} into \eqref{comp-eqns-v1-1} implies
\begin{equation}\label{comp-eqns-v1-2}
  \begin{split}
   & \partial_t\partial_1q+\frac{g\bar{\rho}(x_1)^2}{-(\delta+\frac{4}{3}\varepsilon)\bar{\rho}'(x_1) } \partial_1\,q= \frac{g\bar{\rho}(x_1)^2}{(\delta+\frac{4}{3}\varepsilon)\bar{\rho}'(x_1) } \partial_t v^1+\mathcal{K}_{2}+\widetilde{\mathcal{K}}_{2},\\
    \end{split}
\end{equation}
where
\begin{equation*}
  \begin{split}
   & \mathcal{K}_2\eqdefa \frac{g\bar{\rho}(x_1)}{-(\delta+\frac{4}{3}\varepsilon)\bar{\rho}'(x_1)}\mathcal{L}_{1}(\partial_h\nabla\,v)
   +\frac{g\bar{\rho}(x_1)}{-\bar{\rho}'(x_1) }\mathcal{K}_{1},\,\,\widetilde{\mathcal{K}}_{2}\eqdefa  \frac{g\bar{\rho}(x_1)}{-\bar{\rho}'(x_1) } \widetilde{\mathcal{K}}_{1}
   +\frac{g\bar{\rho}(x_1)}{-(\delta+\frac{4}{3}\varepsilon)\bar{\rho}'(x_1) }\mathfrak{g}_1.
    \end{split}
\end{equation*}
\begin{lem}\label{lem-est-pseudo-1q-energy-1}
  Let $\sigma \in \mathbb{R}$, there holds
  \begin{equation}\label{comp-eqns-v1-9}
  \begin{split}
   &\frac{d}{dt} \|\dot{\Lambda}_h^{\sigma}\partial_1q\|_{L^2}^2
   +c_4\|\dot{\Lambda}_h^{\sigma}(\partial_1\,q,\,\partial_t\partial_1\,q,\,\partial_1^2v^1)\|_{L^2}^2\\
   &\lesssim\|\dot{\Lambda}_h^{\sigma} (\nabla\,v,\,\partial_h\nabla\,v,\,\partial_tv)\|_{L^2}^2+\|\dot{\Lambda}_h^{\sigma} \mathfrak{g}_1\|_{L^2}^2+\|\dot{\Lambda}_h^{\sigma}(B_{1, i}^{h, j}\partial_jv^i)\|_{H^1}^2.
    \end{split}
\end{equation}
\end{lem}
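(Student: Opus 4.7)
The plan is to exploit \eqref{comp-eqns-v1-2} as a relaxation-type ODE (in $t$) for $\partial_1 q$, with a coefficient in front of $\partial_1 q$ that is strictly positive thanks to the Rayleigh--Taylor-stability assumption \eqref{steady-ODE-6} (so that $-\bar{\rho}'(x_1) \geq c > 0$) together with \eqref{steady-ODE-5}. Since all the coefficients depending only on $x_1$ commute with the horizontal Fourier multiplier $\dot{\Lambda}_h^{\sigma}$ (and are smooth and bounded above and below on $[-\underline{b},0]$), no commutator issues arise when $\dot{\Lambda}_h^{\sigma}$ is applied.

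First I would apply $\dot{\Lambda}_h^{\sigma}$ to \eqref{comp-eqns-v1-2}, multiply the resulting equation by $\dot{\Lambda}_h^{\sigma}\partial_1 q$, and integrate over $\Omega$. The left-hand side yields
\[
\tfrac{1}{2}\tfrac{d}{dt}\|\dot{\Lambda}_h^{\sigma}\partial_1 q\|_{L^2}^2 + \int_{\Omega}\tfrac{g\,\bar{\rho}(x_1)^2}{-(\delta+\tfrac{4}{3}\varepsilon)\bar{\rho}'(x_1)}\,|\dot{\Lambda}_h^{\sigma}\partial_1 q|^2\,dx,
\]
and the coercivity $\frac{g\,\bar{\rho}^2}{-(\delta+\frac{4}{3}\varepsilon)\bar{\rho}'}\geq c>0$ produces a dissipation term $\gtrsim\|\dot{\Lambda}_h^{\sigma}\partial_1 q\|_{L^2}^2$. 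The right-hand side is bounded by Cauchy--Schwarz and Young's inequality using
\[
\|\dot{\Lambda}_h^{\sigma}\mathcal{K}_2\|_{L^2}\lesssim\|\dot{\Lambda}_h^{\sigma}(\nabla v,\,\partial_h\nabla v)\|_{L^2},\quad \|\dot{\Lambda}_h^{\sigma}\widetilde{\mathcal{K}}_2\|_{L^2}\lesssim\|\dot{\Lambda}_h^{\sigma}\mathfrak{g}_1\|_{L^2}+\|\dot{\Lambda}_h^{\sigma}(B_{1,i}^{h,j}\partial_j v^i)\|_{H^1},
\]
which follow directly from the definitions of $\mathcal{K}_1,\widetilde{\mathcal{K}}_1,\mathcal{K}_2,\widetilde{\mathcal{K}}_2$ together with the explicit form of $\mathcal{L}_1(\partial_h\nabla v)$ and the boundedness of the $x_1$-dependent coefficients. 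This already gives a bound of the form
\[
\tfrac{d}{dt}\|\dot{\Lambda}_h^{\sigma}\partial_1 q\|_{L^2}^2 + c\,\|\dot{\Lambda}_h^{\sigma}\partial_1 q\|_{L^2}^2 \lesssim \|\dot{\Lambda}_h^{\sigma}(\nabla v,\,\partial_h\nabla v,\,\partial_t v)\|_{L^2}^2 + \|\dot{\Lambda}_h^{\sigma}\mathfrak{g}_1\|_{L^2}^2 + \|\dot{\Lambda}_h^{\sigma}(B_{1,i}^{h,j}\partial_j v^i)\|_{H^1}^2.
\]

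Next, to recover the dissipation of $\partial_t\partial_1 q$ and $\partial_1^2 v^1$, I would read \eqref{comp-eqns-v1-2} algebraically: since
\[
\dot{\Lambda}_h^{\sigma}\partial_t\partial_1 q = -\tfrac{g\bar{\rho}^2}{-(\delta+\frac{4}{3}\varepsilon)\bar{\rho}'}\dot{\Lambda}_h^{\sigma}\partial_1 q + \tfrac{g\bar{\rho}^2}{(\delta+\frac{4}{3}\varepsilon)\bar{\rho}'}\dot{\Lambda}_h^{\sigma}\partial_t v^1 + \dot{\Lambda}_h^{\sigma}(\mathcal{K}_2+\widetilde{\mathcal{K}}_2),
\]
taking $L^2$ norms yields
\[
\|\dot{\Lambda}_h^{\sigma}\partial_t\partial_1 q\|_{L^2}^2 \lesssim \|\dot{\Lambda}_h^{\sigma}\partial_1 q\|_{L^2}^2 + \|\dot{\Lambda}_h^{\sigma}(\partial_t v,\,\nabla v,\,\partial_h\nabla v)\|_{L^2}^2 + \|\dot{\Lambda}_h^{\sigma}\mathfrak{g}_1\|_{L^2}^2 + \|\dot{\Lambda}_h^{\sigma}(B_{1,i}^{h,j}\partial_j v^i)\|_{H^1}^2.
\]
Similarly, reading \eqref{comp-second-v1-1} as an algebraic identity for $\partial_1^2 v^1$ in terms of $\partial_t\partial_1 q$ and $\mathcal{K}_1,\widetilde{\mathcal{K}}_1$ gives
\[
\|\dot{\Lambda}_h^{\sigma}\partial_1^2 v^1\|_{L^2}^2 \lesssim \|\dot{\Lambda}_h^{\sigma}\partial_t\partial_1 q\|_{L^2}^2 + \|\dot{\Lambda}_h^{\sigma}(\nabla v,\,\partial_h\nabla v)\|_{L^2}^2 + \|\dot{\Lambda}_h^{\sigma}(B_{1,i}^{h,j}\partial_j v^i)\|_{H^1}^2.
\]

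Combining the three displays, choosing $c_4$ sufficiently small (absorbing the $\partial_1 q$ terms using the first dissipation), yields \eqref{comp-eqns-v1-9}. No step is particularly delicate here: the only substantive point is that the Rayleigh--Taylor-stability hypothesis \eqref{steady-ODE-6} is what makes the relaxation coefficient in \eqref{comp-eqns-v1-2} positive, turning the equation into a genuine damped ODE in $t$ for $\partial_1 q$. The rest is bookkeeping of the right-hand side terms, which can all be absorbed into the three groups $\nabla v$/$\partial_h\nabla v$/$\partial_t v$, $\mathfrak{g}_1$, and $B_{1,i}^{h,j}\partial_j v^i$ appearing on the right-hand side of \eqref{comp-eqns-v1-9}.
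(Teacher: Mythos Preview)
Your proposal is correct and follows essentially the same approach as the paper: multiply \eqref{comp-eqns-v1-2} by $\dot{\Lambda}_h^{\sigma}\partial_1 q$ to get the damped energy identity (using the positivity of the coefficient from \eqref{steady-ODE-5}--\eqref{steady-ODE-6}), then read off $\partial_t\partial_1 q$ and $\partial_1^2 v^1$ algebraically. The only cosmetic difference is that the paper recovers $\partial_1^2 v^1$ from \eqref{comp-eqns-v1-1} (in terms of $\partial_1 q$, $\partial_t v^1$, $\mathcal{L}_1$, $\mathfrak{g}_1$) rather than from \eqref{comp-second-v1-1} as you do; both routes are equivalent once $\partial_t\partial_1 q$ has been controlled.
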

\begin{proof}
From the equation \eqref{comp-eqns-v1-2}, the basic energy estimate gives rise to
\begin{equation*}
  \begin{split}
   &\frac{1}{2}\frac{d}{dt} \|\dot{\Lambda}_h^{\sigma}\partial_1\,q\|_{L^2}^2+\|\sqrt{\frac{g\bar{\rho}(x_1)^2}{-(\delta+\frac{4}{3}\varepsilon)\bar{\rho}'(x_1) }}\dot{\Lambda}_h^{\sigma}\partial_1\,q\|_{L^2}^2\\
   &=   \int_{\Omega}\frac{g\bar{\rho}(x_1)^2}{(\delta+\frac{4}{3}\varepsilon)\bar{\rho}'(x_1) }\partial_t\dot{\Lambda}_h^{\sigma} v^1\,\dot{\Lambda}_h^{\sigma}\partial_1q\,dx+  \int_{\Omega}\dot{\Lambda}_h^{\sigma}(\mathcal{K}_{2}+\widetilde{\mathcal{K}}_{2})
   \,\dot{\Lambda}_h^{\sigma}\partial_1q\,dx,
    \end{split}
\end{equation*}
which along with the fact $\mathfrak{e}_1:=\frac{1}{2}\inf_{x_1 \in [-\underline{b}, 0]}\frac{g\bar{\rho}(x_1)^2}{-(\delta+\frac{4}{3}\varepsilon)\bar{\rho}'(x_1) }>0$ implies
\begin{equation*}
  \begin{split}
   &\frac{1}{2}\frac{d}{dt} \|\dot{\Lambda}_h^{\sigma}\partial_1\,q\|_{L^2}^2+\mathfrak{e}_1\|\dot{\Lambda}_h^{\sigma}\partial_1\,q\|_{L^2}^2\lesssim (\|\dot{\Lambda}_h^{\sigma} \partial_tv^1\|_{L^2}+\|\dot{\Lambda}_h^{\sigma}(\mathcal{K}_{2}+\widetilde{\mathcal{K}}_{2})\|_{L^2})
   \|\dot{\Lambda}_h^{\sigma}\partial_1q\|_{L^2}.
    \end{split}
\end{equation*}
Hence, we have
\begin{equation}\label{comp-eqns-v1-5}
  \begin{split}
   &\frac{d}{dt} \|\dot{\Lambda}_h^{\sigma}\partial_1\,q\|_{L^2}^2+\mathfrak{e}_1\|\dot{\Lambda}_h^{\sigma}\partial_1\,q\|_{L^2}^2
   \lesssim\|\dot{\Lambda}_h^{\sigma}\partial_tv\|_{L^2}^2
   +\|\dot{\Lambda}_h^{\sigma}(\mathcal{K}_{2}+\widetilde{\mathcal{K}}_{2})\|_{L^2}^2.
    \end{split}
\end{equation}
Thanks to \eqref{comp-eqns-v1-1} and \eqref{comp-eqns-v1-2}, we obtain
\begin{equation}\label{comp-eqns-v1-6}
  \begin{split}
  \|\dot{\Lambda}_h^{\sigma}\partial_1^2v^1\|_{L^2} \lesssim&\|\dot{\Lambda}_h^{\sigma}\partial_1\,q\|_{L^2}+ \|\dot{\Lambda}_h^{\sigma}\partial_t v^1\|_{L^2}+\|\dot{\Lambda}_h^{\sigma}\mathcal{L}_{1}(\partial_h\nabla\,v)\|_{L^2}
  +\|\dot{\Lambda}_h^{\sigma}\mathfrak{g}_1\|_{L^2},\\
          \|\dot{\Lambda}_h^{\sigma}\partial_t\partial_1q\|_{L^2}\lesssim& \|\dot{\Lambda}_h^{\sigma}\partial_1\,q\|_{L^2}+\| \dot{\Lambda}_h^{\sigma}\partial_t v^1\|_{L^2}+\|\dot{\Lambda}_h^{\sigma}(\mathcal{K}_{2}+\widetilde{\mathcal{K}}_{2})\|_{L^2}.
    \end{split}
\end{equation}
Combining \eqref{comp-eqns-v1-5} with \eqref{comp-eqns-v1-6} ensures
\begin{equation*}
  \begin{split}
  &\frac{d}{dt}\|\dot{\Lambda}_h^{\sigma}\partial_1\,q\|_{L^2}^2
   +2c_4\|\dot{\Lambda}_h^{\sigma}(\partial_1\,q,\,\partial_t\partial_1\,q, \partial_1^2v^1)\|_{L^2}^2\\
   &\lesssim \|\dot{\Lambda}_h^{\sigma} (v, \, \nabla\,v,\,\partial_h\nabla\,v,\,\partial_tv)\|_{L^2}^2+\|\dot{\Lambda}_h^{\sigma} \widetilde{\mathcal{K}}_{2}\|_{L^2}^2+\|\dot{\Lambda}_h^{\sigma}\mathfrak{g}_1\|_{L^2}^2
    \end{split}
\end{equation*}
for some positive constant $c_4$, which follows \eqref{comp-eqns-v1-9}. This ends the proof of Lemma \ref{lem-est-pseudo-1q-energy-1}.
\end{proof}

\begin{lem}\label{lem-est-1q-1}
Under the assumption of Lemma \ref{lem-tan-pseudo-energy-1}, if $\sigma\geq \sigma_0$, $(\lambda,\,\sigma_0) \in (0, 1)$ satisfies $1-\lambda< \sigma_0\leq 1-\frac{1}{2}\lambda$, and $E_{s_0}(t) \leq 1$ for all $t\in [0, T]$, then there holds that $\forall\,t\in [0, T]$
\begin{equation}\label{comp-eqns-v11-s-1}
  \begin{split}
   &\frac{d}{dt} \|\dot{\Lambda}_h^{\sigma_0}{\Lambda}_h^{\sigma-\sigma_0}\partial_1q\|_{L^2}^2
   +c_4\|\dot{\Lambda}_h^{\sigma_0}{\Lambda}_h^{\sigma-\sigma_0}(\partial_1\,q,\,\partial_t\partial_1\,q,\,\partial_1^2v^1)\|_{L^2}^2\\
   &\leq C_4(\|\dot{\Lambda}_h^{\sigma_0}{\Lambda}_h^{\sigma-\sigma_0} (\nabla\,v,\,\partial_h\nabla\,v,\,\partial_tv)\|_{L^2}^2+E_{s_0}\,\dot{\mathcal{D}}_{\sigma+1}+E_{\sigma+1}\,
   \dot{\mathcal{D}}_{s_0}),
    \end{split}
    \end{equation}
    and
\begin{equation}\label{comp-eqns-v11-lambda-1}
  \begin{split}
   &\frac{d}{dt} \|\dot{\Lambda}_h^{-\lambda}\partial_1q\|_{L^2}^2
   +c_4\|\dot{\Lambda}_h^{-\lambda}(\partial_1\,q,\,\partial_t\partial_1\,q,\,\partial_1^2v^1)\|_{L^2}^2\\
     &\leq C_4(\|\dot{\Lambda}_h^{-\lambda} (\nabla\,v,\,\partial_h\nabla\,v,\,\partial_tv)\|_{L^2}^2+E_{s_0} \dot{\mathcal{E}}_{s_0}+E_{s_0}\,
   \dot{\mathcal{D}}_{s_0}).
    \end{split}
\end{equation}
\end{lem}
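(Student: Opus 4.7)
The plan is to derive the two inequalities by applying Lemma \ref{lem-est-pseudo-1q-energy-1} with the horizontal Fourier multiplier $\dot{\Lambda}_h^{\sigma_0}\Lambda_h^{\sigma-\sigma_0}$ (resp.\ $\dot{\Lambda}_h^{-\lambda}$) in place of $\dot{\Lambda}_h^{\sigma}$. Since $\dot{\Lambda}_h^{\sigma_0}\Lambda_h^{\sigma-\sigma_0}$ is just another Fourier multiplier in the horizontal variable, it commutes with $\partial_t$, $\partial_1$, and with multiplication by coefficients depending only on $x_1$, so the derivation that produced \eqref{comp-eqns-v1-9} goes through verbatim and yields the analogous pointwise-in-time differential inequality. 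The task then reduces to bounding the nonlinear right-hand sides
\begin{equation*}
\|\dot{\Lambda}_h^{\sigma_0}{\Lambda}_h^{\sigma-\sigma_0}\mathfrak{g}_1\|_{L^2}^2 + \|\dot{\Lambda}_h^{\sigma_0}{\Lambda}_h^{\sigma-\sigma_0}(B_{1,i}^{h,j}\partial_j v^i)\|_{H^1}^2
\end{equation*}
and its $\dot{\Lambda}_h^{-\lambda}$ counterpart by the quantities claimed in \eqref{comp-eqns-v11-s-1} and \eqref{comp-eqns-v11-lambda-1}.

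For the first estimate, I would treat the $\mathfrak{g}_1$ contribution by invoking Lemma \ref{lem-est-g-1}: squaring the bounds stated there for $\|\dot{\Lambda}_h^{\sigma}\mathfrak{g}\|_{L^2}$ in the two ranges $\sigma\in[\sigma_0,s_0-1)$ and $\sigma\geq s_0-1$ directly produces $E_{s_0}\dot{\mathcal{D}}_{\sigma+1}+E_{\sigma+1}\dot{\mathcal{D}}_{s_0}$, and the same proof (using the anisotropic product laws of Lemma \ref{lem-productlaw-1aa}) upgrades to the mixed operator $\dot{\Lambda}_h^{\sigma_0}\Lambda_h^{\sigma-\sigma_0}$ of total horizontal order $\sigma$. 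For the $B_{1,i}^{h,j}\partial_jv^i$ term, I would decompose it into its two constituents: the piece $\bar{\rho}(x_1)\nabla_{\widetilde{\mathcal{A}}}\cdot v$ is of the form $B(\widetilde{\mathcal{A}})\nabla v$ with every summand carrying a $\partial_\alpha\xi^i$ factor, so the third line of \eqref{est-B-k-1} in Lemma \ref{lem-est-B-Bv-1} yields exactly the target bound; the piece $g^{-1}\bar{\rho}'(x_1)(p'(f)-p'(\bar{\rho}(x_1)))\nabla_{\mathcal{A}}\cdot v$ is controlled by combining the composition estimates of Lemmas \ref{lem-composition-1} and \ref{prop-composition-2} applied to $p'\circ\varphi-p'(\bar{\rho})$ (so that the factor vanishes at $Q=0$ and is controlled by $\|Q\|$-norms) with the same product-law scheme.

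For the second estimate (the low-regularity case $\sigma=-\lambda$) I would proceed analogously, but now use the endpoint bounds: the $\|\dot{\Lambda}_h^{-\lambda}\mathfrak{g}\|_{L^2}\lesssim E_{s_0}^{1/2}(\dot{\mathcal{E}}_{\sigma_0+\frac12}^{1/2}+\dot{\mathcal{D}}_{\sigma_0+1}^{1/2})$ bound from Lemma \ref{lem-est-g-1} squared gives $E_{s_0}\dot{\mathcal{E}}_{s_0}+E_{s_0}\dot{\mathcal{D}}_{s_0}$, exactly the admissible terms on the right-hand side of \eqref{comp-eqns-v11-lambda-1}. For the $B$-term, I would apply the second inequality of \eqref{est-sigma-B-2}, which requires each summand of $B(\widetilde{\mathcal{A}})$ to carry a $\partial_\alpha\xi^i$ factor; the structural decomposition of $B_{1,i}^{h,j}\partial_jv^i$ again splits naturally into an $\widetilde{\mathcal{A}}$-factor part and a $(p'(f)-p'(\bar{\rho}))$-factor part, both of which can be written with an explicit $\partial_h\xi$ (or $Q$, which via the boundary value controls horizontal derivatives of $\xi^1$) factor, so that the low-regularity anisotropic product law applies.

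The main technical obstacle will be keeping the horizontal regularity bookkeeping consistent when extending the (purely-horizontal-isotropic) bounds of Lemmas \ref{lem-est-g-1} and \ref{lem-est-B-Bv-1} to the anisotropic weight $\dot{\Lambda}_h^{\sigma_0}\Lambda_h^{\sigma-\sigma_0}$; this is handled by distributing the derivatives via the product laws of Lemma \ref{lem-productlaw-1aa} and Corollary \ref{cor-embedding-ineq-1} so that the low-regularity factor always sits at horizontal level $\dot\Lambda_h^{\sigma_0}\Lambda_h^{s_0-1-\sigma_0}$ (absorbed into $E_{s_0}^{1/2}$) while the high-regularity factor contributes the $\dot{\mathcal{D}}_{\sigma+1}^{1/2}$ or $\dot{\mathcal{E}}_{s_0}^{1/2}$ norm. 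Once these nonlinear bounds are established, plugging them into the horizontal-multiplier analog of \eqref{comp-eqns-v1-9} produces \eqref{comp-eqns-v11-s-1} and \eqref{comp-eqns-v11-lambda-1}, completing the proof.
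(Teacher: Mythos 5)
Your proposal is correct and follows essentially the same route as the paper: apply the $\dot{\Lambda}_h^{\sigma_0}\Lambda_h^{\sigma-\sigma_0}$ (resp.\ $\dot{\Lambda}_h^{-\lambda}$) multiplier version of Lemma~\ref{lem-est-pseudo-1q-energy-1}, then bound $\|\cdot\,\mathfrak{g}_1\|_{L^2}^2+\|\cdot\,(B_{1,i}^{h,j}\partial_jv^i)\|_{H^1}^2$ by $E_{s_0}\dot{\mathcal{D}}_{\sigma+1}+E_{\sigma+1}\dot{\mathcal{D}}_{s_0}$ (resp.\ $E_{s_0}\dot{\mathcal{E}}_{s_0}+E_{s_0}\dot{\mathcal{D}}_{s_0}$) via Lemmas~\ref{lem-est-B-Bv-1} and~\ref{lem-est-g-1}, and substitute into \eqref{comp-eqns-v1-9}. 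The paper's own proof consists of exactly these two substitution steps; your discussion of the structural decomposition of $B_{1,i}^{h,j}\partial_jv^i$ and the composition estimates for $p'(f)-p'(\bar{\rho})$ supplies detail the paper leaves implicit but does not change the argument.
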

\begin{proof}
Thanks to Lemmas \ref{lem-est-B-Bv-1} and \ref{lem-est-g-1}, we obtain
\begin{equation}\label{comp-eqns-v11-s-1aa}
  \begin{split}
   & \|\dot{\Lambda}_h^{\sigma} \mathfrak{g}_1\|_{L^2}^2+\|\dot{\Lambda}_h^{\sigma}(B_{1, i}^{h, j}\partial_jv^i)\|_{H^1}^2 \lesssim E_{s_0}\,\dot{\mathcal{D}}_{\sigma+1}+E_{\sigma+1}\,
   \dot{\mathcal{D}}_{s_0} \quad (\forall \,\,\sigma \geq \sigma_0),
    \end{split}
\end{equation}
and
\begin{equation}\label{comp-eqns-v11-lambda-1aa}
  \begin{split}
   & \|\dot{\Lambda}_h^{-\lambda} \mathfrak{g}_1\|_{L^2}^2+\|\dot{\Lambda}_h^{-\lambda}(B_{1, i}^{h, j}\partial_jv^i)\|_{H^1}^2 \lesssim E_{s_0} \dot{\mathcal{E}}_{s_0}+E_{s_0}\,
   \dot{\mathcal{D}}_{s_0} .
    \end{split}
\end{equation}
Plugging \eqref{comp-eqns-v11-s-1aa} and \eqref{comp-eqns-v11-lambda-1aa} into \eqref{comp-eqns-v1-9} yields \eqref{comp-eqns-v11-s-1} and \eqref{comp-eqns-v11-lambda-1} respectively, which completes the proof of Lemma \ref{lem-est-1q-1}.
\end{proof}

\subsection{Stokes estimates}

In this subsection, we will investigate the dissipative estimates in terms of $\nabla^2v$ and $\nabla q$ as well as their horizontal derivatives in the $L^2$ framework, which is based on the Stokes estimates.

We first recall the classical regularity theory for the Stokes problem with mixed boundary conditions on the boundary stated in Theorem 10.5 on page 78 of \cite{Agmon-D-N-1964}.

\begin{lem}[\cite{Agmon-D-N-1964}]\label{lem-Stokes-1}
Suppose $v$, $q$ solve
\begin{equation*}\label{eqns-Stokes-1}
 \begin{cases}
  & -\nu\nabla\,\cdot \mathbb{D}(v )+\grad\, q=\phi \in H^{r-2}(\Omega),\\
   &\dive\,v=\psi \in H^{r-1}(\Omega),\\
   &(q\,\mathbb{I}-\nu\mathbb{D}(v))e_1|_{\Sigma_0}= \kappa \in H^{r-\frac{3}{2}}(\Sigma_0),\\
   &v|_{\Sigma_b}=0.
 \end{cases}
\end{equation*}
Then, for $r\geq 2$,
\begin{equation*}\label{est-Stokes-1}
 \begin{split}
  & \|v\|_{H^r(\Omega)}^2+ \|q\|_{H^{r-1}(\Omega)}^2\lesssim \|\phi\|_{H^{r-2}(\Omega)}^2+\|\psi\|_{H^{r-1}(\Omega)}^2
  +\|\kappa\|_{H^{r-\frac{3}{2}}(\Sigma_0)}^2.
 \end{split}
\end{equation*}
\end{lem}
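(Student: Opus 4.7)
The plan is to treat this as a standard elliptic regularity estimate for the Stokes system with mixed Neumann/Dirichlet boundary data on the flat slab $\Omega = \{-\underline{b} < x_1 < 0\} \times \mathbb{R}^2_h$, and to derive it by the Agmon--Douglis--Nirenberg (ADN) framework combined with a reduction to constant coefficients afforded by the flat geometry. Since the slab is invariant under horizontal translations, the natural first step is to use the horizontal Fourier transform $x_h \mapsto \varsigma_h$: applying $\cF_{x_h \to \varsigma_h}$ to the system turns the PDE into, for each fixed $\varsigma_h \in \mathbb{R}^2$, a one-dimensional boundary-value problem in $x_1 \in (-\underline{b}, 0)$ for the pair $(\hat v(\cdot,\varsigma_h), \hat q(\cdot,\varsigma_h))$, with mixed conditions at $x_1 = 0$ (traction) and $x_1 = -\underline{b}$ (Dirichlet). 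This reduces the question to a family of ODE systems parametrised by $|\varsigma_h|$, for which one can write an explicit resolvent and verify that the corresponding symbol satisfies the Lopatinski\u{\i}--\v Sapiro (complementing) conditions of ADN uniformly in $\varsigma_h$.

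Once the complementing conditions are in hand, the plan is to establish the base case $r = 2$ first. For $r = 2$, one tests the momentum equation against $v$, integrates by parts using the traction condition at $\Sigma_0$ and the Dirichlet condition at $\Sigma_b$, and uses Korn's inequality (Lemma~\ref{lem-korn-2}) to obtain an $H^1$ coercive estimate on $v$ in terms of $\phi, \psi, \kappa$; then one controls $q$ through the momentum equation itself (solving $\nabla q = \phi + \nu \nabla \cdot \mathbb{D}(v)$ together with the boundary trace, using that $q$ is determined on $\Sigma_0$ by $\kappa$ and the strain). To upgrade to $H^2 \times H^1$, I would differentiate the equations tangentially (that is, apply $\partial_h$): since the coefficients are constant and the domain is translation-invariant in $x_h$, $(\partial_h v, \partial_h q)$ solves the same Stokes system with data $(\partial_h \phi, \partial_h \psi, \partial_h \kappa)$, so the $H^1$ estimate just obtained yields control of $\partial_h v$ in $H^1$ and $\partial_h q$ in $L^2$. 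The remaining normal regularity -- $\partial_1^2 v$ and $\partial_1 q$ -- is then recovered algebraically from the Stokes equations $-\nu \nabla \cdot \mathbb{D}(v) + \nabla q = \phi$ and $\nabla \cdot v = \psi$, by solving pointwise for $\partial_1^2 v^1$, $\partial_1^2 v^\alpha$, and $\partial_1 q$ in terms of quantities already bounded (this is where the complementing condition manifests concretely: the normal derivatives decouple).

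For general $r \geq 2$, the strategy is the same bootstrap: apply horizontal derivatives $\partial_h^k$ with $k \leq r$, use the $H^1$ a priori estimate on the differentiated system, and then algebraically recover normal derivatives from the equations. Horizontal fractional regularity (if needed) can be handled by the Fourier characterisation, working frequency by frequency on the ODE system and estimating the resolvent norms; the uniform Lopatinski\u{\i} bound guarantees that the resulting multipliers are bounded, which upon Plancherel gives the claimed estimate. The main obstacle I anticipate is bookkeeping at horizontal frequency zero (or low frequencies $|\varsigma_h| \ll 1$), where the constant-in-$x_h$ modes must be handled carefully on the unbounded horizontal domain; in the present setting this is not a hurdle because the statement is in nonhomogeneous Sobolev spaces $H^r$ and the slab has finite vertical extent, so Poincar\'e's inequality (via $v|_{\Sigma_b}=0$) absorbs the zero mode and the low-frequency estimate closes uniformly. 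Everything else is routine ADN interior/boundary regularity once the complementing conditions have been checked.
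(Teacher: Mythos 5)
The paper does not prove this lemma at all: it is cited verbatim as Theorem 10.5 on page 78 of Agmon--Douglis--Nirenberg \cite{Agmon-D-N-1964}, and the surrounding text explicitly says ``We first recall the classical regularity theory for the Stokes problem with mixed boundary conditions.'' So there is no proof in the paper to compare against; what you have produced is a sketch of how one would derive the cited estimate rather than a reconstruction of an argument the authors give.

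That said, your sketch is a reasonable outline of the ADN-type proof specialized to the flat slab. The two ingredients you would still need to make it rigorous deserve flagging. First, you assert without verification that the mixed free-boundary/Dirichlet conditions for the Stokes operator satisfy the Lopatinski\u{\i}--\v{S}apiro condition uniformly in the horizontal frequency; this is true and classical, but it is precisely the content one would otherwise be taking from ADN's Theorem 10.5, so ``one can verify'' is carrying the whole weight of the complementing hypothesis. Second, in the base case $r=2$ your route to $q\in L^2$ is not quite right as stated: if $v\in H^1$ only, then $\nabla\cdot\mathbb{D}(v)\in H^{-1}$, so ``solving $\nabla q=\phi+\nu\nabla\cdot\mathbb{D}(v)$'' gives $\nabla q\in H^{-1}$, which does not by itself put $q$ in $L^2$. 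The standard way to recover $q\in L^2$ from the weak $H^1$ formulation is via the inf-sup (Ne\v{c}as/De Rham) inequality for $\operatorname{div}$ on the slab, not by integrating the momentum equation; once $q\in L^2$ and the tangentially differentiated system gives $\partial_h v\in H^1$, $\partial_h q\in L^2$, the algebraic recovery of $\partial_1^2 v$ (from the $\alpha$-momentum equations and $\partial_1 v^1=\psi-\nabla_h\cdot v^h$) and of $\partial_1 q$ (from the first momentum equation) closes the $H^2\times H^1$ estimate exactly as you describe. With those two points repaired, your bootstrap to general $r\geq 2$ via tangential differentiation and horizontal Fourier multipliers, with the Dirichlet condition at $\Sigma_b$ providing the Poincar\'e control needed at low horizontal frequency, is a legitimate proof of the lemma and is in the same spirit as the classical ADN derivation the paper cites.
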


Denote the cut-off function near the bottom by the smooth function
\begin{equation*}
\chi_{in}(x_1)=
\begin{cases}
&1, \quad x_1 \in [-\underline{b}, -\frac{2}{3}\underline{b});\\
& \mbox{smooth} \quad \in [0, 1], \quad x_1 \in (-\frac{2}{3}\underline{b}, -\frac{1}{3}\underline{b}];\\
&0, \quad x_1 \in (-\frac{1}{3}\underline{b}, 0],
\end{cases}
\end{equation*}
with $|\frac{d^n}{d x_1^n}\chi_{in}(x_1)| \lesssim \underline{b}^{-n}$ for any $n \in \mathbb{N}$, and the cut-off function near the free surface by the smooth function
\begin{equation*}
\chi_{f}(x_1)=
\begin{cases}
&1, \quad x_1 \in (-\frac{2}{3}\underline{b}, 0];\\
& \mbox{smooth} \quad \in [0, 1], \quad x_1 \in (-\frac{5}{6}\underline{b}, -\frac{2}{3}\underline{b}];\\
&0, \quad x_1 \in [-\underline{b}, -\frac{5}{6}\underline{b}),
\end{cases}
\end{equation*}
with $|\frac{d^n}{d x_1^n}\chi_{f}(x_1)| \lesssim \underline{b}^{-n}$ for any $n \in \mathbb{N}$.
Notice that $\chi_{f}(\text{Supp}{\chi'_{in}})\equiv 1$. We also set $\Omega_{in}:=\Omega\,\chi_{in}$ with the measure $\chi_{in}\,dx$ and $\Omega_{f}:=\Omega\,\chi_{f}$ with the measure $\chi_{f}\,dx$.

Since there are different boundary conditions on the top boundary $\Sigma_0$ and the bottom boundary $\Sigma_b$ in \eqref{eqns-pert-1}, we need to deal with these two different situations separately.

\subsubsection{Estimates near the bottom}

Introduce
\begin{equation}\label{def-hat-q-v-1}
\begin{split}
 & \widehat{q}:=q-  \frac{(\delta-\frac{2}{3}\varepsilon)}{\bar{\rho}(x_1)}\nabla\cdot v,\quad \widehat{v}:=\frac{v}{\bar{\rho}(x_1)}.
    \end{split}
\end{equation}
Using these new variable, we may rewrite the system \eqref{eqns-pert-1} as
\begin{equation}\label{Stokes-eqns-equiv-1}
\begin{cases}
&\nabla\,\widehat{q}-\varepsilon\nabla\cdot\mathbb{D}(\widehat{v})=F,\\
 &\nabla\,\cdot  \widehat{v}=G\quad \text{in} \quad \Omega,\\
   &\widehat{q}\, e_1-\varepsilon\mathbb{D}(\widehat{v})\,e_1=H\quad \text{on} \quad \Sigma_0,\\
&\widehat{v}|_{\Sigma_b}=0,
    \end{cases}
\end{equation}
where
\begin{equation*}
\begin{split}
  &F:=-\varepsilon\nabla\cdot\bigg(v\otimes\nabla(\frac{1}{\bar{\rho}(x_1)})+\nabla(\frac{1}{\bar{\rho}(x_1)})\otimes v\bigg)-\varepsilon \nabla(\frac{1}{\bar{\rho}(x_1)}) \cdot\mathbb{D}(v)\\
  &\qquad\qquad\qquad\qquad+ (\delta-\frac{2}{3}\varepsilon)\frac{\bar{\rho}'(x_1)}{\bar{\rho}(x_1)^{2}}\nabla\cdot v- \partial_t v+\frac{1}{\bar{\rho}(x_1)}\mathfrak{g},\\
  &G:=\bar{\rho}(x_1)^{-1}\partial_1v^1+ \bar{\rho}(x_1)^{-1}\nabla_h\,\cdot\,v^h-v^1\bar{\rho}(x_1)^{-2}\bar{\rho}'(x_1),\\
 &H:=-\varepsilon(v\otimes\nabla(\frac{1}{\bar{\rho}(x_1)})+\nabla(\frac{1}{\bar{\rho}(x_1)})\otimes v)\,e_1+\frac{1}{\bar{\rho}(x_1)}
\left(
  \begin{array}{c}
   \bar{\rho}(0)\xi^1+\mathcal{B}(\partial_{\alpha}v, {Q}, \partial_t{Q})\\
    -\varepsilon\,(\mathcal{B}_{7, i}^{\alpha}\partial_{\alpha}v^{i}+\widetilde{\mathcal{B}}_{2}\partial_t{Q})\\
    - \varepsilon\,(\mathcal{B}_{8, i}^{\alpha}\partial_{\alpha}v^{i}+\widetilde{\mathcal{B}}_{3}\partial_t{Q})
  \end{array}
\right) .
    \end{split}
\end{equation*}
Define $\widehat{v}_{in}\eqdefa \widehat{v}\,\chi_{in}$, $\widehat{q}_{in}\eqdefa \widehat{q}\,\chi_{in}$, then, according to \eqref{Stokes-eqns-equiv-1}, $(\widehat{v}_{in}, \widehat{q}_{in})$ solves
\begin{equation}\label{eqns-linear-stokes-bottom-1}
 \begin{cases}
 &  -\varepsilon\nabla\,\cdot \mathbb{D}(\widehat{v}_{in} ) +\nabla\,\widehat{q}_{in}=F_{in},\\
   &\dive\,\widehat{v}_{in}=G_{in},\\
    &(\widehat{q}_{in}\,\mathbb{I}-\varepsilon\mathbb{D}(\widehat{v}_{in}))e_1|_{\Sigma_0}=0,\\
   &\widehat{v}_{in}|_{\Sigma_b}=0,
 \end{cases}
\end{equation}
where
\begin{equation*}\label{stokes-bottom-2}
 \begin{split}
 &F_{in} \thicksim F\chi_{in}+\chi_{in}'\grad v+\chi_{in}''\,v+\chi_{in}'\,\widehat{q},\quad  G_{in} \thicksim G\chi_{in}+\chi_{in}' v^1.
 \end{split}
\end{equation*}

\begin{prop}\label{prop-linear-stokes-bottom-1}
Let $\sigma \in \mathbb{R}$, there holds
\begin{equation}\label{est-linear-stokes-bottom-1}
 \begin{split}
    &\|\dot{\Lambda}_h^{\sigma}v\|_{H^2(\Omega_{in})}^2+\|\dot{\Lambda}_h^{\sigma}q\|_{H^{1}(\Omega_{in})}^2\\
    &\lesssim   \|\dot{\Lambda}_h^{\sigma}(\partial_1^2v^1, \nabla\,v,\,\nabla\partial_hv)\|_{L^2}^2+\|\dot{\Lambda}_h^{\sigma}q\|_{L^2(\Omega_{f})}^2
  +\|\dot{\Lambda}_h^{\sigma} \mathfrak{g} \|_{L^2(\Omega)}^2.
 \end{split}
\end{equation}
\end{prop}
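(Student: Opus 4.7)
The plan is to apply the classical Stokes regularity theorem (Lemma 5.1) to the tangentially differentiated cut-off system \eqref{eqns-linear-stokes-bottom-1} satisfied by $(\dot{\Lambda}_h^{\sigma}\widehat{v}_{in},\dot{\Lambda}_h^{\sigma}\widehat{q}_{in})$. Because $\chi_{in}$ vanishes identically near $\Sigma_0$, the top boundary datum is identically zero, while the bottom Dirichlet condition is inherited from $v|_{\Sigma_b}=0$. With $r=2$ this gives, immediately,
\begin{equation*}
\|\dot{\Lambda}_h^{\sigma}\widehat{v}_{in}\|_{H^{2}(\Omega)}^{2}+\|\dot{\Lambda}_h^{\sigma}\widehat{q}_{in}\|_{H^{1}(\Omega)}^{2} \lesssim \|\dot{\Lambda}_h^{\sigma}F_{in}\|_{L^{2}(\Omega)}^{2}+\|\dot{\Lambda}_h^{\sigma}G_{in}\|_{H^{1}(\Omega)}^{2}.
\end{equation*}

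Next I would translate back from $(\widehat{v},\widehat{q})$ to $(v,q)$ using the smoothness and boundedness of $\bar{\rho}(x_1)$: the product rule yields $\|\dot{\Lambda}_h^{\sigma}v\|_{H^{2}(\Omega_{in})}\lesssim\|\dot{\Lambda}_h^{\sigma}\widehat{v}_{in}\|_{H^{2}(\Omega)}+\|\dot{\Lambda}_h^{\sigma}\nabla v\|_{L^{2}(\Omega)}$, while the identity $q=\widehat{q}+(\delta-\frac{2}{3}\varepsilon)\bar{\rho}^{-1}\nabla\cdot v$ produces a correction whose $\dot{\Lambda}_h^{\sigma}H^{1}$-norm on $\Omega_{in}$ reduces to $\|\dot{\Lambda}_h^{\sigma}(\partial_1^{2}v^{1},\nabla v,\nabla\partial_h v)\|_{L^{2}}$, since $\nabla(\nabla\cdot v)=\nabla\partial_{1}v^{1}+\nabla(\nabla_h\cdot v^{h})$. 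For the cut-off commutator terms in $F_{in}=F\chi_{in}+\chi_{in}'\nabla v+\chi_{in}''v+\chi_{in}'\widehat{q}$ and $G_{in}=G\chi_{in}+\chi_{in}'v^{1}$, the key observation is that $\operatorname{supp}\chi_{in}'\cup\operatorname{supp}\chi_{in}''\subset\operatorname{supp}\chi_{f}$, so these commutators are bounded by $\|\dot{\Lambda}_h^{\sigma}q\|_{L^{2}(\Omega_{f})}^{2}+\|\dot{\Lambda}_h^{\sigma}\nabla v\|_{L^{2}(\Omega)}^{2}$ (after using $\widehat{q}=q-(\delta-\frac{2}{3}\varepsilon)\bar{\rho}^{-1}\nabla\cdot v$ and Poincaré through $v|_{\Sigma_{b}}=0$). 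The bulk term $F\chi_{in}$ contains only first-order derivatives of $v$ in the factors involving $\nabla(1/\bar{\rho})$, the source $\mathfrak{g}/\bar{\rho}$, and the problematic contribution $-\chi_{in}\partial_{t}v$; likewise $G\chi_{in}$ and $\nabla(G\chi_{in})$ generate only $\nabla v$ and $\partial_{h}\nabla v$ type terms together with $\partial_{1}^{2}v^{1}$ through $\partial_{1}(\partial_{1}v^{1})$.

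The main obstacle is absorbing the term $\chi_{in}\partial_{t}v$, which is absent from the right-hand side of the claim. The idea is to substitute from the momentum equations \eqref{prt1-q-vh-1}: for the first component, $\bar{\rho}\partial_{t}v^{1}=-\bar{\rho}\partial_{1}q+(\delta+\tfrac{4}{3}\varepsilon)\partial_{1}^{2}v^{1}+\mathcal{L}_{1}(\partial_h\nabla v)+\mathfrak{g}_{1}$, and for $\alpha=2,3$, $\bar{\rho}\partial_{t}v^{\alpha}=-\bar{\rho}\partial_{\alpha}q+\varepsilon\partial_{1}^{2}v^{\alpha}+\mathcal{L}_{\alpha}(\partial_h\nabla v)+\mathfrak{g}_{\alpha}$. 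The $\partial_{1}^{2}v^{1}$, $\partial_h\nabla v$ and $\mathfrak{g}$ contributions fit directly into the claim's right-hand side. For the horizontal pressure derivatives $\partial_{\alpha}q$, rewriting via $\partial_{\alpha}q=\partial_{\alpha}\widehat{q}+(\delta-\tfrac{2}{3}\varepsilon)\bar{\rho}^{-1}\partial_{\alpha}\nabla\cdot v$ allows one piece to be reabsorbed into $\|\dot{\Lambda}_h^{\sigma}\widehat{q}_{in}\|_{H^{1}}$ on the left of the Stokes inequality (by transferring one horizontal derivative onto $\widehat{q}$ in the commutator with $\chi_{in}$) and the other into $\|\partial_h\nabla v\|_{L^{2}}$. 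The remaining $\partial_{1}q$ piece arises with a coefficient supported in $\Omega_{in}$; it is reabsorbed into the $\|\dot{\Lambda}_h^{\sigma}\widehat{q}_{in}\|_{H^{1}}$ on the left-hand side together with a small-multiple bootstrap exploiting the fact that $\partial_{1}q=\partial_{1}\widehat{q}+(\delta-\tfrac{2}{3}\varepsilon)\bar{\rho}^{-1}\partial_{1}\nabla\cdot v+(\delta-\tfrac{2}{3}\varepsilon)(\bar{\rho}^{-1})'\nabla\cdot v$. Finally, the $\varepsilon\partial_{1}^{2}v^{\alpha}$ contributions in $\Omega_{in}$ sit inside $\|\dot{\Lambda}_h^{\sigma}v\|_{H^{2}(\Omega_{in})}^{2}$ on the left-hand side and can be absorbed after a standard rescaling of the coefficients, which closes the argument and yields \eqref{est-linear-stokes-bottom-1}.
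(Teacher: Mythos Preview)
Your overall strategy---apply Lemma~\ref{lem-Stokes-1} to the cut-off system \eqref{eqns-linear-stokes-bottom-1}, push the commutator terms to $\Omega_f$ via $\operatorname{supp}\chi_{in}'\subset\{\chi_f\equiv1\}$, and convert $(\widehat v,\widehat q)$ back to $(v,q)$---coincides with the paper's proof and correctly reproduces its penultimate inequality, with $\|\dot\Lambda_h^{\sigma}F\|_{L^2(\Omega)}^2$ still on the right.

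The genuine gap is your elimination of the $-\chi_{in}\partial_t v$ contribution inside $F$. Substituting from \eqref{prt1-q-vh-1} replaces $\partial_t v$ by $-\nabla q+\bar\rho^{-1}\nabla\!\cdot\mathbb{S}(v)+\bar\rho^{-1}\mathfrak{g}$, which reintroduces $\chi_{in}\partial_1q$ and $\varepsilon\bar\rho^{-1}\chi_{in}\partial_1^2v^{\alpha}$ with $O(1)$ coefficients (in $\varepsilon,\delta,\bar\rho$). These are exactly the quantities $\|\widehat q_{in}\|_{H^1}$ and $\|\widehat v_{in}\|_{H^2}$ on the left, up to lower-order corrections. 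Since the constant in Lemma~\ref{lem-Stokes-1} is a fixed structural number, the resulting inequality reads $\text{LHS}\le C_{\text{Stokes}}(\text{good})+C_{\text{Stokes}}C'\cdot\text{LHS}$ with $C_{\text{Stokes}}C'$ not small; no ``rescaling'' or ``small-multiple bootstrap'' changes that ratio, and the argument is circular. The paper's proof does not attempt any such absorption: it stops at $\|\dot\Lambda_h^{\sigma}F\|_{L^2}^2$ and declares the result, which indicates that the right-hand side of \eqref{est-linear-stokes-bottom-1} as stated is missing a $\|\dot\Lambda_h^{\sigma}\partial_tv\|_{L^2}^2$ term. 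This omission is harmless for the paper, since in every subsequent application (see the right-hand side of \eqref{laplace-sigma0-0} in Lemma~\ref{lem-laplace-sigma-1}) a $\|\dot\Lambda_h^{\sigma}\partial_tv\|_{L^2}^2$ contribution is present anyway. You should retain that term on the right rather than attempt to remove it.
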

\begin{proof}
Applying Lemma \ref{lem-Stokes-1} to \eqref{eqns-linear-stokes-bottom-1} yields
\begin{equation*}
 \begin{split}
  & \|\dot{\Lambda}_h^{\sigma}\widehat{v}_{in}\|_{H^2(\Omega)}^2+ \|\dot{\Lambda}_h^{\sigma}\widehat{q}_{in}\|_{H^{1}(\Omega)}^2\lesssim   \|\dot{\Lambda}_h^{\sigma}F_{in}\|_{L^2(\Omega)}^2
  +\|\dot{\Lambda}_h^{\sigma}G_{in}\|_{H^{1}(\Omega)}^2,
 \end{split}
\end{equation*}
which implies
\begin{equation*}
 \begin{split}
  & \|\dot{\Lambda}_h^{\sigma}\widehat{v}\|_{H^2(\Omega_{in})}^2+ \|\dot{\Lambda}_h^{\sigma}\widehat{q}\|_{H^{1}(\Omega_{in})}^2\\
  &\lesssim  \|\dot{\Lambda}_h^{\sigma}\widehat{v} \|_{H^1(\Omega_{f})}^2+ \|\dot{\Lambda}_h^{\sigma}\widehat{q}\|_{L^2(\Omega_{f})}^2+ \|\dot{\Lambda}_h^{\sigma}F\|_{L^2(\Omega)}^2
  +\|\dot{\Lambda}_h^{\sigma}G\|_{H^{1}(\Omega)}^2,
 \end{split}
\end{equation*}
where we used the fact $\chi_{f}(\text{Supp}{\chi'_{in}})\equiv 1$.

On the other hand, due to \eqref{def-hat-q-v-1}, one has
\begin{equation*}
 \begin{split}
  & \|\dot{\Lambda}_h^{\sigma}\widehat{v}\|_{H^2(\Omega_{in})}^2+ \|\dot{\Lambda}_h^{\sigma}\widehat{q}\|_{H^{1}(\Omega_{in})}^2\\
  &
  =\|\frac{1}{\bar{\rho}(x_1)}\dot{\Lambda}_h^{\sigma}v\|_{H^2(\Omega_{in})}^2+ \|\dot{\Lambda}_h^{\sigma}(q-  \frac{\delta-\frac{2}{3}\varepsilon}{\bar{\rho}(x_1)}\nabla\cdot v)\|_{H^{1}(\Omega_{in})}^2\\
  &\geq c_0 \|\dot{\Lambda}_h^{\sigma}v\|_{H^2(\Omega_{in})}^2+\|\dot{\Lambda}_h^{\sigma}q\|_{H^{1}(\Omega_{in})}^2-(\|\dot{\Lambda}_h^{\sigma}\partial_1^2v^1\|_{L^2}^2
  +\|\dot{\Lambda}_h^{\sigma}(v,\,\partial_hv)\|_{H^1}^2).
 \end{split}
\end{equation*}
Hence, we get
\begin{equation*}
 \begin{split}
  \|\dot{\Lambda}_h^{\sigma}v\|_{H^2(\Omega_{in})}^2+\|\dot{\Lambda}_h^{\sigma}q\|_{H^{1}(\Omega_{in})}^2\lesssim &\|\dot{\Lambda}_h^{\sigma}\partial_1^2v^1\|_{L^2}^2
  +\|\dot{\Lambda}_h^{\sigma}(v,\,\partial_hv)\|_{H^1}^2+\|\dot{\Lambda}_h^{\sigma}\widehat{v} \|_{H^1(\Omega_{f})}^2\\
  &+ \|\dot{\Lambda}_h^{\sigma}\widehat{q}\|_{L^2(\Omega_{f})}^2+ \|\dot{\Lambda}_h^{\sigma}F\|_{L^2(\Omega)}^2
  +\|\dot{\Lambda}_h^{\sigma}G\|_{H^{1}(\Omega)}^2,
 \end{split}
\end{equation*}
which immediately follows \eqref{est-linear-stokes-bottom-1}. This completes the proof of Proposition \ref{prop-linear-stokes-bottom-1}.
\end{proof}

\subsubsection{Estimates near the free boundary}
Recall the equations in terms of $\partial_1^2\,v^{\beta}$  (with $\beta=2, 3$)
\begin{equation}\label{eqns-vh-sec-1}
\begin{split}
  \bar{\rho}(x_1)\partial_{\beta}\,q-  \varepsilon\partial_1^2v^{\beta}=&-\bar{\rho}(x_1)\partial_t v^{\beta} +c_{\beta, ij}\partial_{\beta} \partial_i\,v^j
  +\mathfrak{g}_{\beta}
     \end{split}
\end{equation}
with
$c_{\beta, ij}\partial_{\beta} \partial_i\,v^j\eqdefa(\delta-\frac{2}{3}\varepsilon)\partial_{\beta}\nabla\cdot v
  +\varepsilon \partial_{\beta}\partial_1v^1+2\varepsilon\partial_{\beta}^2v^{\beta}
  +\varepsilon\partial_{\hat{\beta}}(\partial_{\beta}v^{\hat{\beta}}+\partial_{\hat{\beta}}v^{\beta})$.
\begin{lem}\label{lem-pseudo-near-interface-1}
Let $(v, \xi)$ be smooth solution to the system \eqref{eqns-pert-1}, then there holds that
  \begin{equation}\label{pseudo-near-interface-2}
\begin{split}
&\varepsilon\|\dot{\Lambda}_h^{\sigma}\partial_1^2v^{\beta}\|_{L^2(\Omega_f)}^2
+\frac{g\bar{\rho}(0)}{2}\frac{d}{dt}\|\dot{\Lambda}_h^{\sigma}\partial_{\beta}\xi^1\|_{L^2(\Sigma_0)}^2
=\sum_{j=1}^7\mathfrak{I}_j,\\
\end{split}
\end{equation}
where
\begin{equation*}\label{def-mfrak-J-1}
\begin{split}
&\mathfrak{I}_1:=\int_{\Omega}\bar{\rho}(x_1)\dot{\Lambda}_h^{\sigma}\partial_1^2v^{\beta}\,
\dot{\Lambda}_h^{\sigma}\partial_tv^{\beta} \,\chi_f\,dx,\,\mathfrak{I}_2:=-c_{\beta, ij}\int_{\Omega}\dot{\Lambda}_h^{\sigma}\partial_1^2v^{\beta}\, \dot{\Lambda}_h^{\sigma}\partial_{\beta} \partial_i\,v^j \,\chi_f\,dx\\
 &\mathfrak{I}_3:=\bar{\rho}(0) \int_{\Sigma_0} \dot{\Lambda}_h^{\sigma}\partial_{\beta} v^1\, \dot{\Lambda}_h^{\sigma} \partial_{\beta}Q \,dS_0,\quad \mathfrak{I}_4:=-\int_{\Omega} \bar{\rho}(x_1)\dot{\Lambda}_h^{\sigma}\partial_1v^{\beta}\, \dot{\Lambda}_h^{\sigma}\partial_{\beta}\partial_1q\,\chi_f\,dx,
\end{split}
\end{equation*}
\begin{equation*}\label{def-mfrak-J-3}
\begin{split}
 &\mathfrak{I}_5:=-\int_{\Omega} \dot{\Lambda}_h^{\sigma}\partial_1v^{\beta}\, \dot{\Lambda}_h^{\sigma}\partial_{\beta}q\,\bigg(\bar{\rho}(x_1)\chi_f'+\bar{\rho}'(x_1)\chi_f\bigg)\,dx,
\end{split}
\end{equation*}
\begin{equation*}\label{def-mfrak-J-2}
\begin{split}
 &\mathfrak{I}_6:=\int_{\Sigma_0}  \bar{\rho}(0) \dot{\Lambda}_h^{\sigma}(\mathcal{B}_{\beta+5, i}^{\alpha}\partial_{\alpha}v^{i}+\widetilde{\mathcal{B}}_{\beta}\partial_t{Q})\, \dot{\Lambda}_h^{\sigma}\partial_{\beta}q\,dS_0,\,\mathfrak{I}_7:=-\int_{\Omega}\dot{\Lambda}_h^{\sigma}\partial_1^2v^{\beta}\,
\dot{\Lambda}_h^{\sigma}\mathfrak{g}_{\beta}\,\chi_f\,dx.
\end{split}
\end{equation*}
\end{lem}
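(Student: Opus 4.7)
The plan is to multiply the momentum equation \eqref{eqns-vh-sec-1} (after applying $\dot{\Lambda}_h^{\sigma}$) by the test function $\dot{\Lambda}_h^{\sigma}\partial_1^2 v^{\beta}\,\chi_f$ and integrate over $\Omega$, so that the viscous term directly produces $\varepsilon\|\dot{\Lambda}_h^{\sigma}\partial_1^2v^{\beta}\|_{L^2(\Omega_f)}^2$. Rearranged, the identity reads
\begin{equation*}
\varepsilon\|\dot{\Lambda}_h^{\sigma}\partial_1^2v^{\beta}\|_{L^2(\Omega_f)}^2
=\int_{\Omega}\dot{\Lambda}_h^{\sigma}[\bar{\rho}(x_1)\partial_{\beta}q]\,\dot{\Lambda}_h^{\sigma}\partial_1^2v^{\beta}\,\chi_f\,dx
+\mathfrak{I}_1+\mathfrak{I}_2+\mathfrak{I}_7,
\end{equation*}
where the terms $\mathfrak{I}_1$ (acceleration), $\mathfrak{I}_2$ (tangential second derivatives coming from $c_{\beta,ij}\partial_\beta\partial_i v^j$), and $\mathfrak{I}_7$ (source $\mathfrak{g}_\beta$) match their definitions immediately because $\bar{\rho}$ depends only on $x_1$ and thus commutes with $\dot{\Lambda}_h^{\sigma}$, while $c_{\beta,ij}$ are constants.

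Next I would integrate by parts in $x_1$ in the pressure integral, transferring one normal derivative from $\partial_1^2 v^\beta$ onto $\bar{\rho}(x_1)\chi_f(x_1)\,\dot{\Lambda}_h^{\sigma}\partial_{\beta}q$. Because $\chi_f(-\underline{b})=0$, the bottom contribution vanishes, and since $\chi_f(0)=1$ the top contribution yields the surface integral $\int_{\Sigma_0}\bar{\rho}(0)\dot{\Lambda}_h^{\sigma}\partial_{\beta}q\,\dot{\Lambda}_h^{\sigma}\partial_1 v^{\beta}\,dS_0$. The interior piece splits into the term where $\partial_1$ falls on the cutoff or on the density profile (this is exactly $\mathfrak{I}_5$) and the one where it falls on $\partial_\beta q$ (this is $\mathfrak{I}_4$).

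For the surface integral, I would substitute the boundary relation $\partial_1 v^{\beta}=-\partial_{\beta}v^1+\mathcal{B}_{\beta+5,i}^{\alpha}\partial_{\alpha}v^i+\widetilde{\mathcal{B}}_{\beta}\partial_t Q$ on $\Sigma_0$ (coming from \eqref{v-com-interface-1}/\eqref{partialv-q-1} as recorded in the linearized form \eqref{prtv-interface-1}). The $\mathcal{B}$/$\widetilde{\mathcal{B}}$-contribution produces exactly $\mathfrak{I}_6$. In the remaining term $-\int_{\Sigma_0}\bar{\rho}(0)\dot{\Lambda}_h^{\sigma}\partial_{\beta}q\,\dot{\Lambda}_h^{\sigma}\partial_{\beta}v^1\,dS_0$ I would insert $q=Q+g\xi^1$: the $Q$-part reshuffles to $\mathfrak{I}_3$, while in the $g\xi^1$-part I use the kinematic identity $v^1=\partial_t\xi^1|_{\Sigma_0}$ (which follows from $\partial_t\xi=v$ in $\Omega$) to rewrite it as $-\tfrac{g\bar{\rho}(0)}{2}\tfrac{d}{dt}\|\dot{\Lambda}_h^{\sigma}\partial_{\beta}\xi^1\|_{L^2(\Sigma_0)}^2$, which is the time-derivative term migrated to the left-hand side.

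The main obstacle will not be any analytic inequality but the bookkeeping of signs and of the interplay between the $x_1$- and horizontal integrations by parts, together with the correct use of the interface relation for $\partial_1 v^{\beta}$ to produce precisely $\mathfrak{I}_3$ and $\mathfrak{I}_6$ without spurious remainders. Everything else is algebraic substitution and commutation of $\dot{\Lambda}_h^{\sigma}$ with the $x_1$-dependent multipliers $\bar{\rho}$, $\bar{\rho}'$, $\chi_f$, $\chi_f'$; no estimates are performed at this stage since this is purely an identity.
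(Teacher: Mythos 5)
Your proposal is correct and follows essentially the same route as the paper: multiply the $\dot{\Lambda}_h^{\sigma}$-filtered $\beta$-momentum equation \eqref{eqns-vh-sec-1} against $\dot{\Lambda}_h^{\sigma}\partial_1^2 v^{\beta}\chi_f$, integrate by parts in $x_1$ in the pressure term so that $\mathfrak{I}_4$ and $\mathfrak{I}_5$ appear in the interior and a $\Sigma_0$ boundary term survives, then substitute the tangential stress boundary relation for $\partial_1 v^{\beta}$ and the decomposition $q=Q+g\xi^1$ together with $v^1|_{\Sigma_0}=\partial_t\xi^1|_{\Sigma_0}$ to produce $\mathfrak{I}_3$, $\mathfrak{I}_6$, and the time-derivative term. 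As you anticipate, the only care needed is sign bookkeeping through the two integrations by parts and the boundary substitution; no new ideas beyond what you describe are used.
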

\begin{proof}
Applying the operator $\dot{\Lambda}_h^{\sigma}$ to \eqref{eqns-vh-sec-1} yields
\begin{equation}\label{eqns-vh-sec-2}
\begin{split}
&\bar{\rho}(x_1)\dot{\Lambda}_h^{\sigma}\partial_{\beta}\,q-  \varepsilon\dot{\Lambda}_h^{\sigma}\partial_1^2v^{\beta}=-\bar{\rho}(x_1)\partial_t \dot{\Lambda}_h^{\sigma}v^{\beta} +c_{\beta, ij}\dot{\Lambda}_h^{\sigma}\partial_{\beta} \partial_i\,v^j
  +\dot{\Lambda}_h^{\sigma}\mathfrak{g}_{\beta}.
     \end{split}
\end{equation}
Multiplying \eqref{eqns-vh-sec-2} by $-\dot{\Lambda}_h^{\sigma}\partial_1^2\,v^{\beta}\chi_f$ and integrating it in $\Omega$, we get
\begin{equation}\label{eqns-vh-sec-3}
\begin{split}
&\varepsilon\int_{\Omega} |\dot{\Lambda}_h^{\sigma}\partial_1^2v^{\beta}|^2\chi_f\,dx+I=
\int_{\Omega}\bar{\rho}(x_1)\dot{\Lambda}_h^{\sigma}\partial_1^2v^{\beta}\,
\partial_t \dot{\Lambda}_h^{\sigma}v^{\beta} \,\chi_f\,dx\\
 &\qquad\qquad\qquad-c_{\beta, ij}\int_{\Omega}\dot{\Lambda}_h^{\sigma}\partial_1^2v^{\beta}\, \dot{\Lambda}_h^{\sigma}\partial_{\beta} \partial_i\,v^j \,\chi_f\,dx-\int_{\Omega}\dot{\Lambda}_h^{\sigma}\partial_1^2v^{\beta}\,
\dot{\Lambda}_h^{\sigma}\mathfrak{g}_{\beta}\,\chi_f\,dx
\end{split}
\end{equation}
with $I:=-\int_{\Omega} \bar{\rho}(x_1) \dot{\Lambda}_h^{\sigma}\partial_1^2v^{\beta}\,\dot{\Lambda}_h^{\sigma}\partial_{\beta}q\,\chi_f\,dx$.

Integrating by parts in $I$ gives rise to
\begin{equation}\label{eqns-vh-sec-3a}
\begin{split}
&I=-\int_{\Sigma_0} \bar{\rho}(0)\dot{\Lambda}_h^{\sigma}\partial_1v^{\beta}\, \dot{\Lambda}_h^{\sigma}\partial_{\beta}q\,dS_0+\int_{\Omega} \bar{\rho}(x_1)\dot{\Lambda}_h^{\sigma}\partial_1v^{\beta}\, \dot{\Lambda}_h^{\sigma}\partial_{\beta}\partial_1q\,\chi_f\,dx\\
&\qquad\qquad+\int_{\Omega} \dot{\Lambda}_h^{\sigma}\partial_1v^{\beta}\, \dot{\Lambda}_h^{\sigma}\partial_{\beta}q\,(\bar{\rho}(x_1)\chi_f'+\bar{\rho}'(x_1)\chi_f)\,dx.
\end{split}
\end{equation}
Thanks to the boundary conditions in the system \eqref{eqns-pert-1} on $\Sigma_0$, we have
\begin{equation*}
\begin{split}
&-\int_{\Sigma_0}\bar{\rho}(0) \dot{\Lambda}_h^{\sigma}\partial_1v^{\beta}\, \dot{\Lambda}_h^{\sigma}\partial_{\beta}q\,dS_0\\
&=\int_{\Sigma_0} \bar{\rho}(0) \dot{\Lambda}_h^{\sigma}\partial_{\beta} v^1\,\dot{\Lambda}_h^{\sigma} \partial_{\beta}q\,dS_0-\int_{\Sigma_0} \bar{\rho}(0) \dot{\Lambda}_h^{\sigma}(\mathcal{B}_{\beta+5, i}^{\alpha}\partial_{\alpha}v^{i}+\widetilde{\mathcal{B}}_{\beta}\partial_t{Q})\, \dot{\Lambda}_h^{\sigma}\partial_{\beta}q\,dS_0\\
&= \bar{\rho}(0) \int_{\Sigma_0} \dot{\Lambda}_h^{\sigma}\partial_{\beta} v^1\, \dot{\Lambda}_h^{\sigma}(g\partial_{\beta}\xi^1+\partial_{\beta}Q )\,dS_0-\int_{\Sigma_0}  \bar{\rho}(0) \dot{\Lambda}_h^{\sigma}(\mathcal{B}_{\beta+5, i}^{\alpha}\partial_{\alpha}v^{i}+\widetilde{\mathcal{B}}_{\beta}\partial_t{Q})\, \dot{\Lambda}_h^{\sigma}\partial_{\beta}q\,dS_0,
\end{split}
\end{equation*}
which follows
\begin{equation}\label{eqns-vh-sec-3d}
\begin{split}
&-\int_{\Sigma_0}\bar{\rho}(0) \dot{\Lambda}_h^{\sigma}\partial_1v^{\beta}\, \dot{\Lambda}_h^{\sigma}\partial_{\beta}q\,dS_0=\frac{g\,\bar{\rho}(0)}{2}\frac{d}{dt}
\int_{\Sigma_0}|\dot{\Lambda}_h^{\sigma}\partial_{\beta}\xi^1|^2\,dS_0\\
&\quad+\bar{\rho}(0) \int_{\Sigma_0} \dot{\Lambda}_h^{\sigma}\partial_{\beta} v^1\, \dot{\Lambda}_h^{\sigma} \partial_{\beta}Q \,dS_0-\int_{\Sigma_0}  \bar{\rho}(0) \dot{\Lambda}_h^{\sigma}(\mathcal{B}_{\beta+5, i}^{\alpha}\partial_{\alpha}v^{i}+\widetilde{\mathcal{B}}_{\beta}\partial_t{Q})\, \dot{\Lambda}_h^{\sigma}\partial_{\beta}q\,dS_0.
\end{split}
\end{equation}
Therefore, inserting \eqref{eqns-vh-sec-3d} into \eqref{eqns-vh-sec-3a} implies
\begin{equation}\label{eqns-vh-sec-7}
\begin{split}
&I=\frac{g\,\bar{\rho}(0)}{2}\frac{d}{dt}
\int_{\Sigma_0}|\dot{\Lambda}_h^{\sigma}\partial_{\beta}\xi^1|^2\,dS_0+\bar{\rho}(0) \int_{\Sigma_0} \dot{\Lambda}_h^{\sigma}\partial_{\beta} v^1\, \dot{\Lambda}_h^{\sigma} \partial_{\beta}Q \,dS_0\\
&+\int_{\Omega} \dot{\Lambda}_h^{\sigma}\partial_1v^{\beta}\, \dot{\Lambda}_h^{\sigma}\partial_{\beta}q\,(\bar{\rho}(x_1)\chi_f'+\bar{\rho}'(x_1)\chi_f)\,dx+\int_{\Omega} \bar{\rho}(x_1)\dot{\Lambda}_h^{\sigma}\partial_1v^{\beta}\, \dot{\Lambda}_h^{\sigma}\partial_{\beta}\partial_1q\,\chi_f\,dx\\
&\qquad-\int_{\Sigma_0}  \bar{\rho}(0) \dot{\Lambda}_h^{\sigma}(\mathcal{B}_{\beta+5, i}^{\alpha}\partial_{\alpha}v^{i}+\widetilde{\mathcal{B}}_{\beta}\partial_t{Q})\, \dot{\Lambda}_h^{\sigma}\partial_{\beta}q\,dS_0.
\end{split}
\end{equation}
Plugging \eqref{eqns-vh-sec-7} into \eqref{eqns-vh-sec-3}, we get \eqref{pseudo-near-interface-2}.
\end{proof}

\subsection{Estimate of $\|(\dot{\Lambda}_h^{\sigma_0}\nabla^2v, \,\dot{\Lambda}_h^{\sigma_0}\nabla\,q)\|_{L^2_t(L^2)}$}

\begin{lem}\label{lem-laplace-sigma-1}
Let $\sigma \geq \max\{\sigma_0+1,\,s_0-1\}$. Under the assumption of Lemma \ref{lem-pseudo-energy-tv-1}, if $E_{s_0}(t) \leq 1$ for all $t\in [0, T]$, then there holds that $\forall\,t\in [0, T]$
  \begin{equation}\label{laplace-sigma0-0}
\begin{split}
&\frac{d}{dt}\|\dot{\Lambda}_h^{\sigma_0}{\Lambda}_h^{\sigma-\sigma_0}\partial_{h}\xi^1\|_{L^2(\Sigma_0)}^2
+c_6\|\dot{\Lambda}_h^{\sigma_0}{\Lambda}_h^{\sigma-\sigma_0}(\nabla^2\,v,\,\nabla\,q)\|_{L^2}^2\\
&\leq C_6\bigg(\|\dot{\Lambda}_h^{\sigma_0}{\Lambda}_h^{\sigma+1-\sigma_0}\nabla\,v\|_{L^2}^2
+\|\dot{\Lambda}_h^{\sigma_0}{\Lambda}_h^{\sigma-\sigma_0} \partial_tv\|_{L^2}^2+\|\dot{\Lambda}_h^{\sigma_0}{\Lambda}_h^{\sigma-\sigma_0}\partial_1q\|_{L^2}^2\\
 &\qquad\quad+\|\dot{\Lambda}_h^{\sigma_0}{\Lambda}_h^{\sigma-\sigma_0}{Q} \|_{\dot{H}^{\frac{1}{2}}(\Sigma_0)}^2
  +(E_{s_0}^{\frac{1}{2}}\dot{\mathcal{D}}_{\sigma+1}^{\frac{1}{2}}
+E_{\sigma+1}^{\frac{1}{2}}\dot{\mathcal{D}}_{s_0}^{\frac{1}{2}})\dot{\mathcal{D}}_{\sigma+1}^{\frac{1}{2}}
+E_{\sigma+1}\dot{\mathcal{D}}_{s_0}\bigg).
\end{split}
\end{equation}
\end{lem}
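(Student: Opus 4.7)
The strategy is to combine three ingredients: the pseudo-energy identity near the free interface (Lemma \ref{lem-pseudo-near-interface-1}), the Stokes estimate near the bottom (Proposition \ref{prop-linear-stokes-bottom-1}), and the algebraic consequences of the linearized momentum equations \eqref{prt1-q-vh-1} together with the pressure estimate (Lemma \ref{lem-est-1q-1}). The time-derivative term on the left-hand side comes directly from Lemma \ref{lem-pseudo-near-interface-1}; the dissipation in $\nabla^2 v$ and $\nabla q$ must be assembled piece by piece.

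First, I would apply Lemma \ref{lem-pseudo-near-interface-1} with the operator $\dot{\Lambda}_h^{\sigma_0}\Lambda_h^{\sigma-\sigma_0}$ in place of the $\dot{\Lambda}_h^{\sigma}$ used there; since this Fourier multiplier acts only in the horizontal variables it commutes with all $\partial_t$, $\partial_1$, $\partial_\beta$ and the whole derivation of \eqref{pseudo-near-interface-2} carries over. This produces, for $\beta=2,3$, control of $\|\dot{\Lambda}_h^{\sigma_0}\Lambda_h^{\sigma-\sigma_0}\partial_1^2 v^\beta\|_{L^2(\Omega_f)}^2$ together with $\frac{d}{dt}\|\dot{\Lambda}_h^{\sigma_0}\Lambda_h^{\sigma-\sigma_0}\partial_\beta \xi^1\|_{L^2(\Sigma_0)}^2$ in terms of the seven remainders $\mathfrak{I}_1,\dots,\mathfrak{I}_7$. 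Each $\mathfrak{I}_j$ is then estimated: $\mathfrak{I}_1,\mathfrak{I}_2,\mathfrak{I}_7$ are handled by Cauchy--Schwarz, producing a factor $\|\dot{\Lambda}_h^{\sigma_0}\Lambda_h^{\sigma-\sigma_0}\partial_1^2 v^\beta\|_{L^2(\Omega_f)}$ (to be absorbed via Young's inequality) times the permissible quantities $\|\dot{\Lambda}_h^{\sigma_0}\Lambda_h^{\sigma-\sigma_0}(\partial_t v,\partial_h\nabla v)\|_{L^2}$ and $\|\dot{\Lambda}_h^{\sigma_0}\Lambda_h^{\sigma-\sigma_0}\mathfrak{g}\|_{L^2}$; the latter is controlled by Lemma \ref{lem-est-g-1} at the right order. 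Term $\mathfrak{I}_3$ is a boundary integral $\int_{\Sigma_0}\partial_\beta v^1\,\partial_\beta Q$; by trace we bound it by $\|\partial_h v^1\|_{H^{1/2}(\Sigma_0)}\|Q\|_{\dot{H}^{1/2}(\Sigma_0)}$ (after shifting the horizontal derivative from $v^1$ onto $Q$), which matches the RHS term $\|\dot{\Lambda}_h^{\sigma_0}\Lambda_h^{\sigma-\sigma_0}Q\|_{\dot H^{1/2}(\Sigma_0)}^2$.

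Next, for $\mathfrak{I}_4$ and $\mathfrak{I}_5$ I integrate by parts in $x_\beta$ to push $\partial_\beta$ off the pressure, turning $\int \partial_1 v^\beta\,\partial_\beta\partial_1 q\,\chi_f\,dx$ into $-\int \partial_\beta\partial_1 v^\beta\,\partial_1 q\,\chi_f\,dx$, after which it is dominated by $\|\dot{\Lambda}_h^{\sigma_0}\Lambda_h^{\sigma-\sigma_0}\partial_h\partial_1 v\|_{L^2}\|\dot{\Lambda}_h^{\sigma_0}\Lambda_h^{\sigma-\sigma_0}\partial_1 q\|_{L^2}$, both of which sit in the RHS. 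For $\mathfrak{I}_6$, which is the delicate nonlinear boundary piece, I expand $\mathcal{B}_{\beta+5,i}^{\alpha}\partial_\alpha v^i + \widetilde{\mathcal{B}}_\beta \partial_t Q$ and use trace plus the product laws of Lemma \ref{lem-productlaw-1aa} and the coefficient estimates of Lemmas \ref{lem-est-B-Bv-1} and \ref{lem-est-G-norm-1}, so that it falls into the nonlinear remainder $(E_{s_0}^{1/2}\dot{\mathcal{D}}_{\sigma+1}^{1/2}+E_{\sigma+1}^{1/2}\dot{\mathcal{D}}_{s_0}^{1/2})\dot{\mathcal{D}}_{\sigma+1}^{1/2}+E_{\sigma+1}\dot{\mathcal{D}}_{s_0}$.

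At this stage I have $L^2$ control of $\partial_1^2 v^\beta$ on $\Omega_f$. To promote this to an estimate on all of $\Omega$ and to recover $\partial_1^2 v^1$ and $\nabla q$, I would next apply Proposition \ref{prop-linear-stokes-bottom-1} with the operator $\dot{\Lambda}_h^{\sigma_0}\Lambda_h^{\sigma-\sigma_0}$ to obtain full $H^2$ control of $v$ and $H^1$ control of $q$ on $\Omega_{in}$ in terms of $\|\partial_1^2 v^1\|_{L^2}$, $\|\partial_h\nabla v\|_{L^2}$, $\|q\|_{L^2(\Omega_f)}$ and $\|\mathfrak{g}\|_{L^2}$. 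Finally, I use the momentum equations \eqref{prt1-q-vh-1} to invert: the equation for $v^\beta$ gives $\partial_\beta q$ in terms of $\partial_1^2 v^\beta,\partial_t v^\beta,\mathcal{L}_\beta(\partial_h\nabla v),\mathfrak{g}_\beta$, and the equation for $v^1$ gives $\partial_1^2 v^1$ in terms of $\partial_1 q$ (already bounded by Lemma \ref{lem-est-1q-1}), $\partial_t v^1$, $\mathcal{L}_1(\partial_h\nabla v)$ and $\mathfrak{g}_1$. Assembling these, absorbing the small coefficient of $\|\partial_1^2 v^\beta\|_{L^2(\Omega_f)}^2$ from Step 2, and using Poincaré to control $\|q\|_{L^2(\Omega_f)}$ by $\|\partial_1 q\|_{L^2} + \|q\|_{L^2(\Sigma_0)}$ (the trace being absorbed into $\|\xi^1\|_{L^2(\Sigma_0)}+\|Q\|_{L^2(\Sigma_0)}$) yields \eqref{laplace-sigma0-0}.

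\emph{Main obstacle.} The central difficulty is that $\mathfrak{I}_4,\mathfrak{I}_5,\mathfrak{I}_6$ involve one horizontal derivative on $q$ (or $\partial_t Q$), which is \emph{not} in the target dissipation; the integration-by-parts in $x_\beta$ and the careful re-routing of these derivatives onto $v^\beta$ (whose $\partial_h\nabla v$ norm \emph{is} available) are the key technical moves. Equally delicate is keeping the nonlinear boundary piece $\widetilde{\mathcal{B}}_\beta\partial_t Q$ of $\mathfrak{I}_6$ out of a circular dependence on the LHS: this requires using the lower-order factor $\widetilde{\mathcal{B}}_\beta$ to produce an $E_{s_0}^{1/2}$ that makes the whole contribution part of the nonlinear remainder rather than the linear dissipation.
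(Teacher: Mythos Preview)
Your overall plan is correct and uses the same three ingredients as the paper (Lemma~\ref{lem-pseudo-near-interface-1}, Proposition~\ref{prop-linear-stokes-bottom-1}, and the momentum equations~\eqref{prt1-q-vh-1}). However, there is a genuine gap in how you handle $\mathfrak{I}_5$ and, relatedly, the $\|q\|_{L^2(\Omega_f)}$ term from the Stokes estimate.

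Your claim that integration by parts in $x_\beta$ on $\mathfrak{I}_5$ yields a bound by $\|\partial_h\partial_1 v\|_{L^2}\,\|\partial_1 q\|_{L^2}$ is incorrect: recall $\mathfrak{I}_5=-\int_\Omega \partial_1 v^\beta\,\partial_\beta q\,(\bar\rho\chi_f'+\bar\rho'\chi_f)\,dx$ contains $\partial_\beta q$, not $\partial_\beta\partial_1 q$, so after moving $\partial_\beta$ you are left with $\|\partial_h\partial_1 v\|_{L^2}\,\|q\|_{L^2}$. Controlling $\|\dot\Lambda_h^{\sigma_0}\Lambda_h^{\sigma-\sigma_0}q\|_{L^2}$ by Poincar\'e then forces the boundary term $\|\dot\Lambda_h^{\sigma_0}\Lambda_h^{\sigma-\sigma_0}\xi^1\|_{L^2(\Sigma_0)}$ (via $q=Q+g\xi^1$), and this quantity is \emph{not} present on the right-hand side of~\eqref{laplace-sigma0-0}. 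The same objection applies to your proposed Poincar\'e step for the Stokes estimate.

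The paper avoids this by reversing the order: it \emph{first} combines the $v^\beta$-momentum equation~\eqref{eqns-vh-sec-1} with Proposition~\ref{prop-linear-stokes-bottom-1} to obtain the purely algebraic bound~\eqref{laplace-s1-27aa},
\[
\|\dot\Lambda_h^{\sigma_0}\Lambda_h^{\sigma-\sigma_0}(\partial_1^2 v^h,\partial_h q)\|_{L^2(\Omega)}
\lesssim \|\dot\Lambda_h^{\sigma_0}\Lambda_h^{\sigma-\sigma_0}\partial_1^2 v^h\|_{L^2(\Omega_f)}
+\text{(admissible RHS terms)},
\]
which expresses $\|\partial_h q\|_{L^2}$ on the \emph{whole} domain in terms of the near-surface dissipation $\|\partial_1^2 v^h\|_{L^2(\Omega_f)}$ plus quantities already on the right of~\eqref{laplace-sigma0-0}. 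Only \emph{afterwards} does the paper invoke Lemma~\ref{lem-pseudo-near-interface-1}; then $\mathfrak{I}_5$ is bounded directly (no integration by parts) by $\|\partial_1 v\|_{L^2}\|\partial_h q\|_{L^2}$, and~\eqref{laplace-s1-27aa} converts the $\|\partial_h q\|_{L^2}$ factor into $\|\partial_1^2 v^h\|_{L^2(\Omega_f)}$, absorbable by Young. The same device handles the $\partial_\beta q$ trace in $\mathfrak{I}_6$ and replaces your Poincar\'e step entirely. In short, your ingredients are right but the order matters: the algebraic reduction~\eqref{laplace-s1-27aa} must precede the energy identity, precisely so that no undifferentiated $q$ (hence no $\xi^1$ boundary term) ever appears.
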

\begin{proof}
First, from the equations \eqref{eqns-vh-sec-1}, $\forall \,\,\sigma_1 \geq \sigma_0$, we get
  \begin{equation}\label{laplace-sigma0-2}
\begin{split}
   &\|-\varepsilon\dot{\Lambda}_h^{\sigma_1 }\partial_1^2\,v^{h}
   +\bar{\rho}(x_1)\dot{\Lambda}_h^{\sigma_1 }\partial_hq\|_{L^2(\Omega)}
  \lesssim \|\dot{\Lambda}_h^{\sigma_1 }\partial_t v^{h}\|_{L^2}+\|\dot{\Lambda}_h^{\sigma_1 }\partial_h\,v \|_{H^1} +\sum_{\beta=2}^3\|\dot{\Lambda}_h^{\sigma_1 } \mathfrak{g}_{\beta}\|_{L^2},\\
  &\|-\varepsilon\dot{\Lambda}_h^{\sigma_1 }\partial_1^2\,v^{h}+\bar{\rho}(x_1)\dot{\Lambda}_h^{\sigma_1 }\partial_hq\|_{L^2(\Omega_f)}
  \lesssim \|\dot{\Lambda}_h^{\sigma_1 }\partial_t v^{h}\|_{L^2}+\|\dot{\Lambda}_h^{\sigma_1 }\partial_h\,v \|_{H^1} +\sum_{\beta=2}^3\|\dot{\Lambda}_h^{\sigma_1 } \mathfrak{g}_{\beta}\|_{L^2}.
     \end{split}
\end{equation}
Thanks to Lemma \ref{lem-est-g-1}, for $\sigma \geq \max\{\sigma_0+1,\,s_0-1\}$, we have
  \begin{equation}\label{est-g-sigma-s1-1}
\begin{split}
  &\sum_{\beta=2}^3\|\dot{\Lambda}_h^{\sigma_0} \mathfrak{g}\|_{L^2}\lesssim E_{s_0}^{\frac{1}{2}}\dot{\mathcal{D}}_{\sigma_0+1}^{\frac{1}{2}},\quad\sum_{\beta=2}^3\|\dot{\Lambda}_h^{\sigma} \mathfrak{g}\|_{L^2}\lesssim E_{s_0}^{\frac{1}{2}}\dot{\mathcal{D}}_{\sigma+1}^{\frac{1}{2}}
  +E_{\sigma+1}^{\frac{1}{2}}\dot{\mathcal{D}}_{s_0}^{\frac{1}{2}},
     \end{split}
\end{equation}
from this, we thus get
\begin{equation}\label{laplace-sigma0-15}
\begin{split}
  &\|\dot{\Lambda}_h^{\sigma_0}\partial_hq\|_{L^2(\Omega_f)}
  \lesssim\|\dot{\Lambda}_h^{\sigma_0}\partial_1^2\,v^{h}\|_{L^2(\Omega_f)}+ \|\dot{\Lambda}_h^{\sigma_0}\partial_t v \|_{L^2}+\|\dot{\Lambda}_h^{\sigma_0}\partial_h\,v \|_{H^1} +E_{s_0}^{\frac{1}{2}}\dot{\mathcal{D}}_{\sigma_0+1}^{\frac{1}{2}}
     \end{split}
\end{equation}
and
\begin{equation}\label{laplace-s1-15}
\begin{split}
  &\|\dot{\Lambda}_h^{\sigma} \partial_hq\|_{L^2(\Omega_f)}
  \lesssim\|\dot{\Lambda}_h^{\sigma} \partial_1^2\,v^{h}\|_{L^2(\Omega_f)}+ \|\dot{\Lambda}_h^{\sigma} (\partial_t v ,\nabla\partial_h\,v) \|_{L^2} +E_{s_0}^{\frac{1}{2}}\dot{\mathcal{D}}_{\sigma+1}^{\frac{1}{2}}
  +E_{\sigma+1}^{\frac{1}{2}}\dot{\mathcal{D}}_{s_0}^{\frac{1}{2}}.
     \end{split}
\end{equation}
While due to \eqref{est-linear-stokes-bottom-1}, one can see
\begin{equation}\label{laplace-sigma0-18aa}
 \begin{split}
    &\|\dot{\Lambda}_h^{\sigma_0}\partial_hv\|_{H^2(\Omega_{in})}
    +\|\dot{\Lambda}_h^{\sigma_0}\partial_hq\|_{H^{1}(\Omega_{in})}\\
  &\lesssim   \|\dot{\Lambda}_h^{\sigma_0}\partial_h(\partial_1^2v^1, \nabla\,v,\,\nabla\partial_hv)\|_{L^2}
  +\|\dot{\Lambda}_h^{\sigma_0}\partial_hq\|_{L^2(\Omega_{f})}
  +\|\dot{\Lambda}_h^{\sigma_0}\partial_h \mathfrak{g} \|_{L^2(\Omega)},
 \end{split}
\end{equation}
and
\begin{equation*}
 \begin{split}
  &\|\dot{\Lambda}_h^{\sigma} v\|_{H^2(\Omega_{in})}^2
  +\|\dot{\Lambda}_h^{\sigma} q\|_{H^{1}(\Omega_{in})}^2\\
  &\lesssim   \|\dot{\Lambda}_h^{\sigma} (\partial_1^2v^1, \nabla\,v,\,\nabla\partial_hv)\|_{L^2}^2
  +\|\dot{\Lambda}_h^{\sigma} q\|_{L^2(\Omega_{f})}^2+ \|\dot{\Lambda}_h^{\sigma} \mathfrak{g} \|_{L^2(\Omega)}^2
 \\
  &\lesssim   \|\dot{\Lambda}_h^{\sigma} (\partial_1^2v^1,\,\nabla\,v,\,\nabla\partial_hv)\|_{L^2}^2+\|\dot{\Lambda}_h^{\sigma_0+1} q\|_{L^2(\Omega_{f})}^2+\|\dot{\Lambda}_h^{\sigma}\partial_h q\|_{L^2(\Omega_{f})}^2 +\|\dot{\Lambda}_h^{\sigma} \mathfrak{g} \|_{L^2(\Omega)}^2.
 \end{split}
\end{equation*}
Hence, thanks to \eqref{est-g-sigma-s1-1}, \eqref{laplace-sigma0-15}, and \eqref{laplace-s1-15}, we find
\begin{equation*}
 \begin{split}
   &\|\dot{\Lambda}_h^{\sigma_0}\partial_hv\|_{H^2(\Omega_{in})}^2
  +\|\dot{\Lambda}_h^{\sigma_0}\partial_hq\|_{H^{1}(\Omega_{in})}^2\\
  &\lesssim  \|\dot{\Lambda}_h^{\sigma_0}\partial_1^2\,v^{h}\|_{L^2(\Omega_f)}^2+ \|\dot{\Lambda}_h^{\sigma_0}\partial_h(\partial_1^2v^1,\, \nabla\,v,\,\nabla\partial_hv)\|_{L^2}^2+ \|\dot{\Lambda}_h^{\sigma_0}\partial_t v \|_{L^2}^2
  +E_{s_0} \dot{\mathcal{D}}_{s_0},
 \end{split}
\end{equation*}
and for $\sigma \geq \sigma_0$
\begin{equation*}
 \begin{split}
    \|\dot{\Lambda}_h^{\sigma}\partial_hv\|_{H^2(\Omega_{in})}^2
    &+\|\dot{\Lambda}_h^{\sigma}\partial_hq\|_{H^{1}(\Omega_{in})}^2\lesssim \|\dot{\Lambda}_h^{\sigma_0+1} q\|_{L^2(\Omega_{f})}^2+\|\dot{\Lambda}_h^{\sigma} \partial_1^2\,v^{h}\|_{L^2(\Omega_f)}^2\\
    &+ \|\dot{\Lambda}_h^{\sigma} (\partial_t v ,\nabla\,v,\,\nabla\partial_hv, \partial_1^2v^1) \|_{L^2}^2
  +E_{s_0} \dot{\mathcal{D}}_{\sigma+1} +E_{\sigma+1} \dot{\mathcal{D}}_{s_0},
 \end{split}
\end{equation*}
which follows
\begin{equation*}
\begin{split}
  &\|\dot{\Lambda}_h^{\sigma_0}\partial_hq\|_{L^2(\Omega)}\lesssim \|\dot{\Lambda}_h^{\sigma_0}\partial_hq\|_{L^2(\Omega_f)}+\|\dot{\Lambda}_h^{\sigma_0}\partial_hq\|_{L^2(\Omega_{in})}\\
  &
  \lesssim\|\dot{\Lambda}_h^{\sigma_0}\partial_1^2\,v^{h}\|_{L^2(\Omega_f)}+ \|\dot{\Lambda}_h^{\sigma_0}\partial_h(\partial_1^2v^1,\,\nabla\,v,\,\nabla\partial_hv)\|_{L^2}+ \|\dot{\Lambda}_h^{\sigma_0}\partial_t v \|_{L^2}
  +E_{s_0}^{\frac{1}{2}} \dot{\mathcal{D}}_{s_0}^{\frac{1}{2}}
     \end{split}
\end{equation*}
and
\begin{equation*}
\begin{split}
&\|\dot{\Lambda}_h^{\sigma}\partial_hq\|_{L^2(\Omega)}\lesssim \|\dot{\Lambda}_h^{\sigma}\partial_hq\|_{L^2(\Omega_f)}+\|\dot{\Lambda}_h^{\sigma}\partial_hq\|_{L^2(\Omega_{in})}\\
  &
  \lesssim\|\dot{\Lambda}_h^{\sigma} \partial_1^2\,v^{h}\|_{L^2(\Omega_f)}
  +\|\dot{\Lambda}_h^{\sigma_0}\partial_1^2\,v^{h}\|_{L^2(\Omega_f)}+ \|\dot{\Lambda}_h^{\sigma_0}\partial_h(\partial_1^2v^1,\,\nabla\,v,\,\nabla\partial_hv)\|_{L^2}+ \|\dot{\Lambda}_h^{\sigma_0}\partial_t v \|_{L^2}\\
  &\qquad\qquad+ \|\dot{\Lambda}_h^{\sigma} (\partial_t v ,\nabla\,v,\,\nabla\partial_hv, \partial_1^2v^1) \|_{L^2}
+E_{s_0}^{\frac{1}{2}} \dot{\mathcal{D}}_{s_0}^{\frac{1}{2}}+E_{s_0}^{\frac{1}{2}}\dot{\mathcal{D}}_{\sigma+1}^{\frac{1}{2}}+E_{\sigma+1}^{\frac{1}{2}}\dot{\mathcal{D}}_{s_0}^{\frac{1}{2}}
     \end{split}
\end{equation*}
with $\sigma \geq \sigma_0$.
Therefore, thanks to the second inequality in \eqref{laplace-sigma0-2}, we have
  \begin{equation}\label{laplace-sigma0-26}
\begin{split}
   &\|\dot{\Lambda}_h^{\sigma_0}(\partial_1^2\,v^{h},\,\partial_hq)\|_{L^2(\Omega)}\lesssim\|\dot{\Lambda}_h^{\sigma_0}\partial_hq\|_{L^2}
  +\|\dot{\Lambda}_h^{\sigma_0}(\partial_t v^{h},\,\partial_h\nabla\,v )\|_{L^2} +\sum_{\beta=2}^3\|\dot{\Lambda}_h^{\sigma_0} \mathfrak{g}_{\beta}\|_{L^2}\\
  & \lesssim\|\dot{\Lambda}_h^{\sigma_0}\partial_1^2\,v^{h}\|_{L^2(\Omega_f)}+ \|\dot{\Lambda}_h^{\sigma_0}\partial_h(\partial_1^2v^1,\, \nabla\,v,\,\nabla\partial_hv)\|_{L^2}+ \|\dot{\Lambda}_h^{\sigma_0}\partial_t v \|_{L^2}
  +E_{s_0}^{\frac{1}{2}} \dot{\mathcal{D}}_{s_0}^{\frac{1}{2}},
     \end{split}
\end{equation}
and
  \begin{equation}\label{laplace-s1-27}
\begin{split}
   &\|\dot{\Lambda}_h^{\sigma}(\partial_1^2\,v^{h},\,\partial_hq)\|_{L^2}
   \lesssim\|\dot{\Lambda}_h^{\sigma}\partial_hq\|_{L^2}
  +\|\dot{\Lambda}_h^{\sigma}\partial_t v^{h}\|_{L^2}+\|\dot{\Lambda}_h^{\sigma}\partial_h\,v \|_{H^1} +\sum_{\beta=2}^3\|\dot{\Lambda}_h^{\sigma} \mathfrak{g}_{\beta}\|_{L^2}\\
  & \lesssim \|\dot{\Lambda}_h^{\sigma} \partial_1^2\,v^{h}\|_{L^2(\Omega_f)} +\|\dot{\Lambda}_h^{\sigma_0}\partial_1^2\,v^{h}\|_{L^2(\Omega_f)}
  + \|\dot{\Lambda}_h^{\sigma} (\partial_t v ,\nabla\,v,\,\nabla\partial_hv,   \partial_1^2v^1) \|_{L^2}\\
  &+ \|\dot{\Lambda}_h^{\sigma_0}\partial_h(\partial_1^2v^1,\,\nabla\,v,\,\nabla\partial_hv)\|_{L^2}+ \|\dot{\Lambda}_h^{\sigma_0}\partial_t v \|_{L^2}
  +E_{s_0}^{\frac{1}{2}} \dot{\mathcal{D}}_{s_0}^{\frac{1}{2}}
+E_{s_0}^{\frac{1}{2}}\dot{\mathcal{D}}_{\sigma+1}^{\frac{1}{2}}+E_{\sigma+1}^{\frac{1}{2}}\dot{\mathcal{D}}_{s_0}^{\frac{1}{2}}.
     \end{split}
\end{equation}
Combining \eqref{laplace-sigma0-26} with \eqref{laplace-s1-27} yields that, for $\sigma \geq \max\{\sigma_0+1,\,s_0-1\}$,
  \begin{equation}\label{laplace-s1-27aa}
\begin{split}
   &\|\dot{\Lambda}_h^{\sigma_0}{\Lambda}_h^{\sigma-\sigma_0}(\partial_1^2\,v^{h},\,\partial_hq)\|_{L^2}\lesssim \|\dot{\Lambda}_h^{\sigma_0}{\Lambda}_h^{\sigma-\sigma_0} \partial_1^2\,v^{h}\|_{L^2(\Omega_f)}\\
  & \qquad\qquad
  + \|\dot{\Lambda}_h^{\sigma_0}{\Lambda}_h^{\sigma-\sigma_0}(\partial_t v ,\nabla\,v,\,\nabla\partial_hv,   \partial_1^2v^1) \|_{L^2}
+E_{s_0}^{\frac{1}{2}}\dot{\mathcal{D}}_{\sigma+1}^{\frac{1}{2}}
+E_{\sigma+1}^{\frac{1}{2}}\dot{\mathcal{D}}_{s_0}^{\frac{1}{2}}.
     \end{split}
\end{equation}
On the other hand, taking $\sigma=\sigma_0$ and $\sigma \geq \max\{1+\sigma_0,\,s_0-1\}$ in \eqref{pseudo-near-interface-2} respectively gives
 \begin{equation*}
\begin{split}
&\frac{d}{dt}(\|\dot{\Lambda}_h^{\sigma_0}\partial_{h}\xi^1\|_{L^2(\Sigma_0)}^2
+\|\dot{\Lambda}_h^{\sigma}\partial_{h}\xi^1\|_{L^2(\Sigma_0)}^2)
+2c_5\|\dot{\Lambda}_h^{\sigma_0}{\Lambda}_h^{\sigma-\sigma_0}\partial_1^2v^{h}\|_{L^2(\Omega_f)}^2
\lesssim\sum_{j=1}^9|\mathfrak{I}_j|,
\end{split}
\end{equation*}
where we replaced $\dot{\Lambda}_h^{\sigma}$ by $\dot{\Lambda}_h^{\sigma_0}{\Lambda}_h^{\sigma-\sigma_0}$ in $\mathfrak{I}_j$ ($j=1,...,7$) in Lemma \ref{lem-pseudo-near-interface-1}.

For the linear remainder terms $\mathfrak{I}_j$ with $j=1, ...,6$, we find
\begin{equation*}
\begin{split}
 |\mathfrak{I}_1|&\lesssim \|\dot{\Lambda}_h^{\sigma_0}{\Lambda}_h^{\sigma-\sigma_0}\partial_1^2v^{h}\|_{L^2(\Omega_f)}
\|\dot{\Lambda}_h^{\sigma_0}{\Lambda}_h^{\sigma-\sigma_0}\partial_tv \|_{L^2},\\
 |\mathfrak{I}_2| &\lesssim \|\dot{\Lambda}_h^{\sigma_0}{\Lambda}_h^{\sigma-\sigma_0}\partial_1^2v^{h}\|_{L^2(\Omega_f)}\, \|\dot{\Lambda}_h^{\sigma_0}{\Lambda}_h^{\sigma-\sigma_0}\partial_{h} \nabla\,v\|_{L^2},\\
 |\mathfrak{I}_3| &\lesssim \|\dot{\Lambda}_h^{\sigma_0}{\Lambda}_h^{\sigma-\sigma_0}\partial_{\beta} v^1\|_{\dot{H}^{\frac{1}{2}}(\Sigma_0)}\, \|\dot{\Lambda}_h^{\sigma_0}{\Lambda}_h^{\sigma-\sigma_0}Q\|_{\dot{H}^{\frac{1}{2}}(\Sigma_0)}\\
 &\lesssim \|\dot{\Lambda}_h^{\sigma_0}{\Lambda}_h^{\sigma-\sigma_0}\partial_{h} v\|_{H^1(\Omega)}^2+ \|\dot{\Lambda}_h^{\sigma_0}{\Lambda}_h^{\sigma-\sigma_0}Q\|_{\dot{H}^{\frac{1}{2}}(\Sigma_0)}^2,\\
\end{split}
\end{equation*}
\begin{equation*}
\begin{split}
  &|\mathfrak{I}_4|+|\mathfrak{I}_5|\lesssim \|\dot{\Lambda}_h^{\sigma_0}{\Lambda}_h^{\sigma-\sigma_0}\partial_h\partial_1v\|_{L^2}\, \|\dot{\Lambda}_h^{\sigma_0}{\Lambda}_h^{\sigma-\sigma_0}\partial_1q\|_{L^2}+\|\dot{\Lambda}_h^{\sigma_0}{\Lambda}_h^{\sigma-\sigma_0}\partial_1v \|_{L^2}\, \|\dot{\Lambda}_h^{\sigma_0}{\Lambda}_h^{\sigma-\sigma_0}\partial_{h}q\|_{L^2}\\
  &\qquad\qquad\lesssim (\|\dot{\Lambda}_h^{\sigma_0}{\Lambda}_h^{\sigma-\sigma_0}\partial_h\partial_1v\|_{L^2}\, +\|\dot{\Lambda}_h^{\sigma_0}{\Lambda}_h^{\sigma-\sigma_0}\partial_1v \|_{L^2})\, \|\dot{\Lambda}_h^{\sigma_0}{\Lambda}_h^{\sigma-\sigma_0}\nabla\,q\|_{L^2},\\
\end{split}
\end{equation*}
which along with \eqref{laplace-s1-27aa} ensures
\begin{equation}\label{def-mfrak-J-1to6}
\begin{split}
&\sum_{j=1}^5|\mathfrak{I}_j|\lesssim \|\dot{\Lambda}_h^{\sigma_0}{\Lambda}_h^{\sigma-\sigma_0}\partial_1^2v^{h}\|_{L^2(\Omega_f)}
\|\dot{\Lambda}_h^{\sigma_0}{\Lambda}_h^{\sigma-\sigma_0}(\partial_tv,\partial_{h} \nabla\,v)\|_{L^2}+ \|\dot{\Lambda}_h^{\sigma_0}{\Lambda}_h^{\sigma-\sigma_0}\partial_{h}\nabla\, v\|_{L^2}^2\\
 & \qquad
+\|\dot{\Lambda}_h^{\sigma_0}{\Lambda}_h^{\sigma-\sigma_0}\partial_t{Q}\|_{\dot{H}^{\frac{1}{2}}(\Sigma_0)}^2+\|\dot{\Lambda}_h^{\sigma_0}{\Lambda}_h^{\sigma-\sigma_0}(\partial_h\partial_1v,\partial_1v ) \|_{L^2}\, \|\dot{\Lambda}_h^{\sigma_0}{\Lambda}_h^{\sigma-\sigma_0}(\partial_1q,\,\partial_hq)\|_{L^2}\\
 &\lesssim \|\dot{\Lambda}_h^{\sigma_0}{\Lambda}_h^{\sigma-\sigma_0}\partial_1^2v^{h}\|_{L^2(\Omega_f)}
\|\dot{\Lambda}_h^{\sigma_0}{\Lambda}_h^{\sigma-\sigma_0}(\partial_tv, \,\partial_{h} \nabla\,v,\,\nabla\,v)\|_{L^2} +\|\dot{\Lambda}_h^{\sigma_0}{\Lambda}_h^{\sigma-\sigma_0}{Q} \|_{\dot{H}^{\frac{1}{2}}(\Sigma_0)}^2\\
 &\,+\|\dot{\Lambda}_h^{\sigma_0}{\Lambda}_h^{\sigma-\sigma_0}(\partial_h\nabla\,v,\,\nabla\,v,\,\partial_tv, \partial_1q)\|_{L^2}^2
  +(E_{s_0}^{\frac{1}{2}}\dot{\mathcal{D}}_{\sigma+1}^{\frac{1}{2}}
+E_{\sigma+1}^{\frac{1}{2}}\dot{\mathcal{D}}_{s_0}^{\frac{1}{2}})\dot{\mathcal{D}}_{\sigma+1}^{\frac{1}{2}}.
\end{split}
\end{equation}
For the nonlinear remainder terms $\mathfrak{I}_j$ with $j=6, 7$, we have
\begin{equation*}
\begin{split}
  |\mathfrak{I}_6|&\lesssim ( \|\dot{\Lambda}_h^{\sigma_0}{\Lambda}_h^{\sigma-\sigma_0}(\mathcal{B}_{\beta+5, i}^{\alpha}\partial_{\alpha}v^{i})\|_{L^2(\Sigma_0)}+\|\dot{\Lambda}_h^{\sigma_0}{\Lambda}_h^{\sigma-\sigma_0}(\widetilde{\mathcal{B}}_{\beta}\partial_t{Q})\|_{L^2(\Sigma_0)}) \|\dot{\Lambda}_h^{\sigma_0}{\Lambda}_h^{\sigma-\sigma_0}\partial_{\beta}q\|_{L^2(\Sigma_0)}\\
  &\lesssim ( \|\dot{\Lambda}_h^{\sigma_0}{\Lambda}_h^{\sigma-\sigma_0}(\mathcal{B}_{\beta+5, i}^{\alpha}\partial_{\alpha}v^{i})\|_{H^1(\Omega)}+\|\dot{\Lambda}_h^{\sigma_0}{\Lambda}_h^{\sigma-\sigma_0}(\widetilde{\mathcal{B}}_{\beta}\partial_t{Q})\|_{L^2(\Sigma_0)}) \|\dot{\Lambda}_h^{\sigma_0}{\Lambda}_h^{\sigma-\sigma_0}\partial_{h}q\|_{H^1(\Omega)}\\
  &\lesssim (E_{s_0}^{\frac{1}{2}}\dot{\mathcal{D}}_{\sigma+1}^{\frac{1}{2}}
+E_{\sigma+1}^{\frac{1}{2}}\dot{\mathcal{D}}_{s_0}^{\frac{1}{2}})\dot{\mathcal{D}}_{\sigma+1}^{\frac{1}{2}},\\
   |\mathfrak{I}_7|&\lesssim \|\dot{\Lambda}_h^{\sigma_0}{\Lambda}_h^{\sigma-\sigma_0}\partial_1^2v^{\beta}\|_{L^2(\Omega_f)}
\|\dot{\Lambda}_h^{\sigma_0}{\Lambda}_h^{\sigma-\sigma_0}\mathfrak{g}_{\beta}\|_{L^2}\lesssim (E_{s_0}^{\frac{1}{2}}\dot{\mathcal{D}}_{\sigma+1}^{\frac{1}{2}}
+E_{\sigma+1}^{\frac{1}{2}}\dot{\mathcal{D}}_{s_0}^{\frac{1}{2}})\dot{\mathcal{D}}_{\sigma+1}^{\frac{1}{2}},
\end{split}
\end{equation*}
which implies
\begin{equation}\label{def-mfrak-J-7to9}
\begin{split}
&|\mathfrak{I}_6|+|\mathfrak{I}_7|\lesssim (E_{s_0}^{\frac{1}{2}}\dot{\mathcal{D}}_{\sigma+1}^{\frac{1}{2}}
+E_{\sigma+1}^{\frac{1}{2}}\dot{\mathcal{D}}_{s_0}^{\frac{1}{2}})\dot{\mathcal{D}}_{\sigma+1}^{\frac{1}{2}}.
\end{split}
\end{equation}
Hence, we get from \eqref{def-mfrak-J-1to6} and \eqref{def-mfrak-J-7to9} that
 \begin{equation*}
\begin{split}
&\frac{d}{dt}\|\dot{\Lambda}_h^{\sigma_0}{\Lambda}_h^{\sigma-\sigma_0}\partial_{h}\xi^1\|_{L^2(\Sigma_0)}^2
+2c_5\|\dot{\Lambda}_h^{\sigma_0}{\Lambda}_h^{\sigma-\sigma_0}\partial_1^2v^{h}\|_{L^2(\Omega_f)}^2\\
&
\lesssim \|\dot{\Lambda}_h^{\sigma_0}{\Lambda}_h^{\sigma-\sigma_0}\partial_1^2v^{h}\|_{L^2(\Omega_f)}
\|\dot{\Lambda}_h^{\sigma_0}{\Lambda}_h^{\sigma-\sigma_0}(\partial_tv, \,\partial_{h} \nabla\,v,\,\nabla\,v)\|_{L^2}  +\|\dot{\Lambda}_h^{\sigma_0}{\Lambda}_h^{\sigma-\sigma_0}{Q} \|_{\dot{H}^{\frac{1}{2}}(\Sigma_0)}^2\\
 &\quad+\|\dot{\Lambda}_h^{\sigma_0}{\Lambda}_h^{\sigma-\sigma_0}(\partial_h\nabla\,v,\,\nabla\,v,\,\partial_tv, \partial_1q)\|_{L^2}^2
  +(E_{s_0}^{\frac{1}{2}}\dot{\mathcal{D}}_{\sigma+1}^{\frac{1}{2}}
+E_{\sigma+1}^{\frac{1}{2}}\dot{\mathcal{D}}_{s_0}^{\frac{1}{2}})\dot{\mathcal{D}}_{\sigma+1}^{\frac{1}{2}},
\end{split}
\end{equation*}
which follows
 \begin{equation}\label{laplace-sigma0-14}
\begin{split}
&\frac{d}{dt}\|\dot{\Lambda}_h^{\sigma_0}{\Lambda}_h^{\sigma-\sigma_0}\partial_{h}\xi^1\|_{L^2(\Sigma_0)}^2
+c_5\|\dot{\Lambda}_h^{\sigma_0}{\Lambda}_h^{\sigma-\sigma_0}\partial_1^2v^{h}\|_{L^2(\Omega_f)}^2\\
&\leq C_5\bigg(\|\dot{\Lambda}_h^{\sigma_0}{\Lambda}_h^{\sigma-\sigma_0}{Q} \|_{\dot{H}^{\frac{1}{2}}(\Sigma_0)}^2\\
 &\quad+\|\dot{\Lambda}_h^{\sigma_0}{\Lambda}_h^{\sigma-\sigma_0}(\partial_h\nabla\,v,\,\nabla\,v,\,\partial_tv, \partial_1q)\|_{L^2}^2
  +(E_{s_0}^{\frac{1}{2}}\dot{\mathcal{D}}_{\sigma+1}^{\frac{1}{2}}
+E_{\sigma+1}^{\frac{1}{2}}\dot{\mathcal{D}}_{s_0}^{\frac{1}{2}})\dot{\mathcal{D}}_{\sigma+1}^{\frac{1}{2}}\bigg).
\end{split}
\end{equation}
Combining \eqref{laplace-s1-27aa} with \eqref{laplace-sigma0-14}, we obtain
 \begin{equation*}
\begin{split}
&\frac{d}{dt}\|\dot{\Lambda}_h^{\sigma_0}{\Lambda}_h^{\sigma-\sigma_0}\partial_{h}\xi^1\|_{L^2(\Sigma_0)}^2
+2c_6\|\dot{\Lambda}_h^{\sigma_0}{\Lambda}_h^{\sigma-\sigma_0}(\partial_1^2\,v,\,\nabla\,q)\|_{L^2}^2\\
&\leq C_5\bigg(\|\dot{\Lambda}_h^{\sigma_0}{\Lambda}_h^{\sigma+1-\sigma_0}\nabla\,v\|_{L^2}^2
+\|\dot{\Lambda}_h^{\sigma_0}{\Lambda}_h^{\sigma-\sigma_0} \partial_tv\|_{L^2}^2+\|\dot{\Lambda}_h^{\sigma_0}{\Lambda}_h^{\sigma-\sigma_0}\partial_1q\|_{L^2}^2\\
 &\qquad\quad+\|\dot{\Lambda}_h^{\sigma_0}{\Lambda}_h^{\sigma-\sigma_0}{Q} \|_{\dot{H}^{\frac{1}{2}}(\Sigma_0)}^2
  +(E_{s_0}^{\frac{1}{2}}\dot{\mathcal{D}}_{\sigma+1}^{\frac{1}{2}}
+E_{\sigma+1}^{\frac{1}{2}}\dot{\mathcal{D}}_{s_0}^{\frac{1}{2}})\dot{\mathcal{D}}_{\sigma+1}^{\frac{1}{2}}
+E_{\sigma+1}\dot{\mathcal{D}}_{s_0}\bigg),
     \end{split}
\end{equation*}
which along with the second inequality in \eqref{laplace-sigma0-2} follows \eqref{laplace-sigma0-0}.
The proof of the lemma is accomplished.
\end{proof}

\subsection{Estimates of $\|\dot{\Lambda}_h^{\sigma_0+1}\xi^1\|_{H^\frac{1}{2}(\Sigma_0)}$ and $\|\dot{\Lambda}_h^{s-1}\xi^1\|_{H^\frac{1}{2}(\Sigma_0)}$}

With Lemmas \ref{lem-laplace-sigma-1} in hand, we may prove
\begin{lem}\label{lem-laplace-xi-decay-1}
Under the assumption of Lemma \ref{lem-pseudo-energy-tv-1}, if $E_{s_0}(t) \leq 1$ for all $t\in [0, T]$, then there holds that $\forall\,t\in [0, T]$
\begin{equation}\label{laplace-xi-decay-0}
\begin{split}
&\frac{d}{dt}\|\dot{\Lambda}_h^{\sigma_0}{\Lambda}_h^{\sigma-\sigma_0}\partial_h\xi^1\|_{L^2(\Sigma_0)}^2
+c_7(\|\dot{\Lambda}_h^{\sigma_0}{\Lambda}_h^{\sigma-\sigma_0}(\nabla^2\,v,\,\nabla\,q)\|_{L^2}^2
+\|\dot{\Lambda}_h^{\sigma_0+1}\Lambda_h^{\sigma-1-\sigma_0}\xi^1\|_{H^\frac{1}{2}(\Sigma_0)}^2 )\\
&\leq C_7\bigg(\|\dot{\Lambda}_h^{\sigma_0}{\Lambda}_h^{\sigma+1-\sigma_0}\nabla\,v\|_{L^2}^2
+\|\dot{\Lambda}_h^{\sigma_0}{\Lambda}_h^{\sigma-\sigma_0} \partial_tv\|_{L^2}^2+\|\dot{\Lambda}_h^{\sigma_0}{\Lambda}_h^{\sigma-\sigma_0}\partial_1q\|_{L^2}^2\\
 &\qquad\quad+\|\dot{\Lambda}_h^{\sigma_0}{\Lambda}_h^{\sigma-\sigma_0}{Q} \|_{{H}^{\frac{1}{2}}(\Sigma_0)}^2
  +(E_{s_0}^{\frac{1}{2}}\dot{\mathcal{D}}_{\sigma+1}^{\frac{1}{2}}
+E_{\sigma+1}^{\frac{1}{2}}\dot{\mathcal{D}}_{s_0}^{\frac{1}{2}})\dot{\mathcal{D}}_{\sigma+1}^{\frac{1}{2}}
+E_{\sigma+1}\dot{\mathcal{D}}_{s_0}\bigg).
     \end{split}
\end{equation}

\end{lem}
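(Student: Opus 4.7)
The plan is to build directly on Lemma \ref{lem-laplace-sigma-1}, whose conclusion \eqref{laplace-sigma0-0} already supplies the Stokes dissipation $c_6\|\dot{\Lambda}_h^{\sigma_0}{\Lambda}_h^{\sigma-\sigma_0}(\nabla^2 v,\nabla q)\|_{L^2}^2$ on the left. All that remains is to produce the extra boundary dissipation $\|\dot{\Lambda}_h^{\sigma_0+1}\Lambda_h^{\sigma-1-\sigma_0}\xi^1\|_{H^{1/2}(\Sigma_0)}^2$, at the cost of upgrading $\|Q\|_{\dot H^{1/2}(\Sigma_0)}^2$ to $\|Q\|_{H^{1/2}(\Sigma_0)}^2$ on the right. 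The pivotal identity is $g\xi^1|_{\Sigma_0}=(q-Q)|_{\Sigma_0}$, which is immediate from the definition $q=Q+g\xi^1$ in \eqref{def-q-Q-1}. Applying the triangle inequality,
\begin{equation*}
g\|\dot{\Lambda}_h^{\sigma_0+1}\Lambda_h^{\sigma-1-\sigma_0}\xi^1\|_{H^{1/2}(\Sigma_0)} \le \|\dot{\Lambda}_h^{\sigma_0+1}\Lambda_h^{\sigma-1-\sigma_0}q\|_{H^{1/2}(\Sigma_0)}+\|\dot{\Lambda}_h^{\sigma_0+1}\Lambda_h^{\sigma-1-\sigma_0}Q\|_{H^{1/2}(\Sigma_0)}.
\end{equation*}
The $Q$ piece is harmless: the pointwise Fourier symbol bound $|\xi_h|^{\sigma_0+1}\langle\xi_h\rangle^{\sigma-1-\sigma_0}\le|\xi_h|^{\sigma_0}\langle\xi_h\rangle^{\sigma-\sigma_0}$ shows it is dominated by $\|\dot{\Lambda}_h^{\sigma_0}\Lambda_h^{\sigma-\sigma_0}Q\|_{H^{1/2}(\Sigma_0)}^2$, which is exactly the $Q$-term on the right-hand side of \eqref{laplace-xi-decay-0}.

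For the pressure piece I would use the trace inequality $\|\cdot\|_{H^{1/2}(\Sigma_0)}\lesssim\|\cdot\|_{H^1(\Omega)}$ together with the same frequency comparison, obtaining
\begin{equation*}
\|\dot{\Lambda}_h^{\sigma_0+1}\Lambda_h^{\sigma-1-\sigma_0}q\|_{H^{1/2}(\Sigma_0)}^2\lesssim \|\dot{\Lambda}_h^{\sigma_0}\Lambda_h^{\sigma-\sigma_0}q\|_{L^2(\Omega)}^2+\|\dot{\Lambda}_h^{\sigma_0}\Lambda_h^{\sigma-\sigma_0}\nabla q\|_{L^2(\Omega)}^2.
\end{equation*}
The gradient term is already part of the Stokes dissipation in \eqref{laplace-sigma0-0}, so it is absorbed by choosing $c_7<c_6$. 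For the bulk $L^2$ norm of $q$, a Poincar\'e-type inequality from the free surface gives $\|f\|_{L^2(\Omega)}^2\lesssim\underline{b}\|f|_{\Sigma_0}\|_{L^2(\Sigma_0)}^2+\underline{b}^2\|\partial_1 f\|_{L^2(\Omega)}^2$; the $\partial_1 q$ contribution sits on the right-hand side of \eqref{laplace-xi-decay-0} as $\|\dot{\Lambda}_h^{\sigma_0}\Lambda_h^{\sigma-\sigma_0}\partial_1 q\|_{L^2}^2$. To bound the trace $q|_{\Sigma_0}$ I would substitute $q|_{\Sigma_0}=Q+g\xi^1$ and then use \eqref{bdry-q-eqns-1} to express $Q|_{\Sigma_0}$ as a combination of $\partial_1 v^1$, $\nabla\cdot v$, and the nonlinear $\mathcal{B}$-term, so that the velocity contribution is controlled via trace by $\|\nabla^2 v\|_{L^2}+\|\nabla v\|_{L^2}$, i.e., again by the Stokes dissipation.

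The main obstacle, and what I expect to be the only genuinely delicate point, is the residual term $\|\dot{\Lambda}_h^{\sigma_0+1}\Lambda_h^{\sigma-1-\sigma_0}\xi^1\|_{L^2(\Sigma_0)}^2$ that arises from the $g\xi^1$ piece inside $q|_{\Sigma_0}$. This term cannot be absorbed by smallness into the new $H^{1/2}$-dissipation directly, but the frequency comparison $|\xi_h|^{\sigma_0+1}\langle\xi_h\rangle^{\sigma-1-\sigma_0}\le|\xi_h|^{\sigma_0+1}\langle\xi_h\rangle^{\sigma-\sigma_0}$ shows that it is pointwise-in-Fourier dominated by the storage quantity $\|\dot{\Lambda}_h^{\sigma_0}\Lambda_h^{\sigma-\sigma_0}\partial_h\xi^1\|_{L^2(\Sigma_0)}^2$ already encoded in the time-derivative term on the left. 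Taking $c_7$ sufficiently small with respect to the constant in Poincar\'e and using the smallness $E_{s_0}\le 1$, this residual merges with the storage and with the nonlinear $E\dot{\mathcal D}$-type remainders produced by the $\mathcal{B}_{j,i}^{\alpha}$ estimates of Lemmas \ref{lem-est-B-Bv-1}--\ref{lem-est-G-norm-1}. Collecting everything with the inequality from Lemma \ref{lem-laplace-sigma-1} yields \eqref{laplace-xi-decay-0}.
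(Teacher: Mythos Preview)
Your overall plan—start from Lemma~\ref{lem-laplace-sigma-1}, write $g\xi^1|_{\Sigma_0}=q|_{\Sigma_0}-Q|_{\Sigma_0}$, use the triangle inequality and the trace theorem for $q$—is exactly how the paper proceeds. The $Q$-piece and the $\nabla q$-piece are handled just as you describe.

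The gap is in your treatment of the bulk term $\|\dot\Lambda_h^{\sigma_0}\Lambda_h^{\sigma-\sigma_0}q\|_{L^2(\Omega)}$, and more precisely in the attempt to close it. After your Poincar\'e step and the substitution $q|_{\Sigma_0}=Q+g\xi^1$, you are left with (a constant times) $\|\dot\Lambda_h^{\sigma_0}\Lambda_h^{\sigma-\sigma_0}\xi^1\|_{L^2(\Sigma_0)}^2$ on the right-hand side. Your claim that this ``merges with the storage quantity $\|\dot\Lambda_h^{\sigma_0}\Lambda_h^{\sigma-\sigma_0}\partial_h\xi^1\|_{L^2(\Sigma_0)}^2$'' is not valid: that quantity appears on the left only under $\frac{d}{dt}$, so it is not available as a static sink for a positive term. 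Nor can it be absorbed into the new dissipation $c_7\|\dot\Lambda_h^{\sigma_0+1}\Lambda_h^{\sigma-1-\sigma_0}\xi^1\|_{H^{1/2}(\Sigma_0)}^2$, since that is precisely the term you are trying to produce; making $c_7$ small only shrinks the sink, not the source. The argument is circular as written.

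The fix is that the detour through the bulk $L^2$-norm of $q$ is unnecessary. After the trace theorem you have $\|\dot\Lambda_h^{\sigma_0+1}\Lambda_h^{\sigma-1-\sigma_0}q\|_{H^1(\Omega)}$, and the point is that the extra \emph{horizontal} derivative in $\dot\Lambda_h^{\sigma_0+1}$ already supplies a gradient: on the symbol level
\[
|\varsigma_h|^{\sigma_0+1}\langle\varsigma_h\rangle^{\sigma-1-\sigma_0}
\;\le\;
|\varsigma_h|\cdot|\varsigma_h|^{\sigma_0}\langle\varsigma_h\rangle^{\sigma-\sigma_0},
\]
hence $\|\dot\Lambda_h^{\sigma_0+1}\Lambda_h^{\sigma-1-\sigma_0}q\|_{L^2(\Omega)}\lesssim\|\dot\Lambda_h^{\sigma_0}\Lambda_h^{\sigma-\sigma_0}\nabla_h q\|_{L^2(\Omega)}$, and likewise for the gradient part. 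Thus the entire $q$-contribution is controlled by $\|\dot\Lambda_h^{\sigma_0}\Lambda_h^{\sigma-\sigma_0}\nabla q\|_{L^2(\Omega)}^2$, which is already part of the Stokes dissipation produced in \eqref{laplace-sigma0-0}. This is precisely what the paper does: it writes $\|\dot\Lambda_h^{\sigma_0+1}\Lambda_h^{\sigma-1-\sigma_0}\xi^1\|_{H^{1/2}(\Sigma_0)}\lesssim \|\dot\Lambda_h^{\sigma_0+1}\Lambda_h^{\sigma-1-\sigma_0}q\|_{H^1(\Omega)}+\|\dot\Lambda_h^{\sigma_0+1}\Lambda_h^{\sigma-1-\sigma_0}Q\|_{H^{1/2}(\Sigma_0)}$ and stops there, absorbing the first term into $c_6\|\dot\Lambda_h^{\sigma_0}\Lambda_h^{\sigma-\sigma_0}\nabla q\|_{L^2}^2$ by choosing $c_7<c_6$. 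No Poincar\'e, no boundary substitution for $q$, no residual $\xi^1$-term.
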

\begin{proof}
Due to $Q=q-g\,\xi^1$, we know that, for $\sigma \in \mathbb{R}$,
\begin{equation*}
  \begin{split}
\|\dot{\Lambda}_h^{\sigma}\xi^1\|_{H^\frac{1}{2}(\Sigma_0)} \lesssim & \|\dot{\Lambda}_h^{\sigma}q\|_{H^\frac{1}{2}(\Sigma_0)}+\|\dot{\Lambda}_h^{\sigma}{Q}\|_{H^\frac{1}{2}(\Sigma_0)},
\end{split}
\end{equation*}
which follows
\begin{equation}\label{est-interface-xi-3}
  \begin{split}
\|\dot{\Lambda}_h^{\sigma_0+1}\Lambda_h^{\sigma-1-\sigma_0}\xi^1\|_{H^\frac{1}{2}(\Sigma_0)} \lesssim & \|\dot{\Lambda}_h^{\sigma_0+1}\Lambda_h^{\sigma-1-\sigma_0}q\|_{H^1(\Omega)}
+\|\dot{\Lambda}_h^{\sigma_0+1}\Lambda_h^{\sigma-1-\sigma_0}{Q} \|_{H^\frac{1}{2}(\Sigma_0)}.
\end{split}
\end{equation}
Combining \eqref{laplace-sigma0-0} with \eqref{est-interface-xi-3} yields \eqref{laplace-xi-decay-0}, which completes the proof of Lemma \ref{lem-laplace-xi-decay-1}.
\end{proof}

\renewcommand{\theequation}{\thesection.\arabic{equation}}
\setcounter{equation}{0}
\section{Total energy estimates}\label{sect-total}
Define
\begin{equation*}
\begin{split}
\widehat{\dot{\mathcal{E}}}_{\sigma, \epsilon}\eqdefa\, &\widehat{\dot{\mathcal{E}}}_{\sigma, \text{tan},\epsilon}+\epsilon^3 \|\dot{\Lambda}_h^{\sigma_0}{\Lambda}_h^{\sigma-1-\sigma_0}\partial_1q\|_{L^2}^2
+\epsilon^4\|\dot{\Lambda}_h^{\sigma_0}{\Lambda}_h^{\sigma-\sigma_0-1}\partial_h\xi^1\|_{L^2(\Sigma_0)}^2,\\
\widehat{\dot{\mathcal{D}}}_{\sigma, \epsilon}\eqdefa \,& \widehat{\dot{\mathcal{D}}}_{\sigma, \text{tan},\epsilon}+\frac{c_4}{2}\epsilon^3\|\dot{\Lambda}_h^{\sigma_0}{\Lambda}_h^{\sigma-1-\sigma_0}
(\partial_1\,q,\,\partial_t\partial_1\,q,\,\partial_1^2v^1)\|_{L^2}^2 \\
& +\frac{c_7}{2}\epsilon^4(\|\dot{\Lambda}_h^{\sigma_0}{\Lambda}_h^{\sigma-\sigma_0-1}(\nabla^2\,v,\,\nabla\,q)\|_{L^2}^2
+\|\dot{\Lambda}_h^{\sigma_0+1}\Lambda_h^{\sigma-2-\sigma_0}\xi^1\|_{H^\frac{1}{2}(\Sigma_0)}^2 ),
\end{split}
\end{equation*}
and
\begin{equation*}
\begin{split}
&\widehat{\mathcal{E}}_{\sigma, \epsilon}\eqdefa\,\widehat{\mathcal{E}}_{\sigma, \tan, \epsilon}+\widehat{\dot{\mathcal{E}}}_{\sigma, \epsilon},\quad  \widehat{\mathcal{D}}_{\sigma, \epsilon}\eqdefa\,\widehat{\mathcal{D}}_{\sigma, \tan, \epsilon}+\widehat{\dot{\mathcal{D}}}_{\sigma, \epsilon}.
\end{split}
\end{equation*}
Thanks to  \eqref{ineq-tan-decay-energy-1}, \eqref{comp-eqns-v11-s-1}, and \eqref{laplace-xi-decay-0}, we get
\begin{equation*}
  \begin{split}
&\frac{d}{dt}\widehat{\dot{\mathcal{E}}}_{\sigma,\epsilon}(t)+2\,\widehat{\dot{\mathcal{D}}}_{\sigma, \epsilon}(t)\leq  (C_2\,\epsilon+C_3 \,\epsilon^2+C_4 \,\epsilon^3)\,\|\dot{\Lambda}_h^{\sigma_0} {\Lambda}_h^{\sigma-\sigma_0} \nabla\,v \|_{L^2(\Omega)}^2 \\
&\qquad+(C_3 \,\epsilon^2+C_7\,\epsilon^4)\,\|\dot{\Lambda}_h^{\sigma_0}\Lambda_h^{\sigma-\frac{1}{2}-\sigma_0} {Q} \|_{L^2(\Sigma_0)}^2+(C_4\,\epsilon^3+C_7\,\epsilon^4)\,\|\dot{\Lambda}_h^{\sigma_0}{\Lambda}_h^{\sigma-1-\sigma_0} \partial_tv\|_{L^2}^2\\
   &\qquad
   +C_7\,\epsilon^4\,\|\dot{\Lambda}_h^{\sigma_0}{\Lambda}_h^{\sigma-1-\sigma_0}\partial_1q\|_{L^2}^2+(C_1+C_3 \,\epsilon^2+C_7\,\epsilon^4)\,(E_{\sigma}^{\frac{1}{2}}\dot{\mathcal{D}}_{s_0}^{\frac{1}{2}}
+E_{s_0}^{\frac{1}{2}} \dot{\mathcal{D}}_{\sigma}^{\frac{1}{2}})\dot{\mathcal{D}}_{\sigma}^{\frac{1}{2}}\\
&\qquad+ (C_2\,\epsilon+C_4 \,\epsilon^3+C_7\,\epsilon^4)\,(E_{s_0}\dot{\mathcal{D}}_{\sigma}+E_{s}\dot{\mathcal{D}}_{s_0}).
 \end{split}
\end{equation*}
Combining Lemma \ref{lem-laplace-xi-decay-1} with Lemma \ref{lem-decay-total-1}, and taking the positive $\epsilon$ so small that
\begin{equation*}
\epsilon \leq \min\{1, \frac{\min\{c_1, c_2, c_3, c_4\}}{4(C_2+C_3+C_4+C_7)}\},
 \end{equation*}
 we immediately get the following energy estimate.

\begin{lem}\label{lem-tangrad-decay-total-1}
Let $\sigma>2$, under the assumption of Lemma \ref{lem-tan-pseudo-energy-1}, if $E_{s_0}(t) \leq 1$ for all $t\in [0, T]$, then there holds that $\forall\,t\in [0, T]$
\begin{equation*}
\begin{split}
&\frac{d}{dt}\widehat{\dot{\mathcal{E}}}_{\sigma, \epsilon}+\widehat{\dot{\mathcal{D}}}_{\sigma, \epsilon}\leq C_8\,((E_{\sigma}^{\frac{1}{2}}\dot{\mathcal{D}}_{s_0}^{\frac{1}{2}}
+E_{s_0}^{\frac{1}{2}} \dot{\mathcal{D}}_{\sigma}^{\frac{1}{2}})\dot{\mathcal{D}}_{\sigma}^{\frac{1}{2}}+E_{s}\dot{\mathcal{D}}_{s_0}).
\end{split}
\end{equation*}
\end{lem}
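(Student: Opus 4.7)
The plan is to close the estimate by forming a weighted linear combination of the four building-block inequalities already established, with the small parameter $\epsilon$ calibrated so that the higher-order error terms are absorbed into the lower-order dissipation. Concretely, the ingredients I would combine are: (i) the tangential estimate of Lemma \ref{lem-tan-energy-total-1} (or rather its decay analogue obtained just before Lemma \ref{lem-decay-total-1}) which controls $\frac{d}{dt}\widehat{\dot{\mathcal{E}}}_{\sigma,\text{tan},\epsilon}+\widehat{\dot{\mathcal{D}}}_{\sigma,\text{tan},\epsilon}$; (ii) the boundary estimates for $Q$ from Corollary \ref{cor-bdry-energy-1}, which are already packaged into the tangential part; (iii) the $\partial_1 q$-equation estimate \eqref{comp-eqns-v11-s-1} of Lemma \ref{lem-est-1q-1}, multiplied by $\epsilon^3$; and (iv) the Stokes-type estimate \eqref{laplace-xi-decay-0} of Lemma \ref{lem-laplace-xi-decay-1}, multiplied by $\epsilon^4$. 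The sum of these four produces the inequality for $\frac{d}{dt}\widehat{\dot{\mathcal{E}}}_{\sigma,\epsilon}+2\widehat{\dot{\mathcal{D}}}_{\sigma,\epsilon}$ displayed immediately before the lemma statement.

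First I would write out explicitly, term by term, the right-hand sides coming from the four sources and regroup them by their $\epsilon$-power. The linear terms that look dangerous are $\|\dot{\Lambda}_h^{\sigma_0}\Lambda_h^{\sigma-\sigma_0}\nabla v\|_{L^2}^2$ (appearing with coefficient $C_2\epsilon+C_3\epsilon^2+C_4\epsilon^3$), $\|\dot{\Lambda}_h^{\sigma_0}\Lambda_h^{\sigma-\frac{1}{2}-\sigma_0}Q\|_{L^2(\Sigma_0)}^2$ (coefficient $C_3\epsilon^2+C_7\epsilon^4$), $\|\dot{\Lambda}_h^{\sigma_0}\Lambda_h^{\sigma-1-\sigma_0}\partial_tv\|_{L^2}^2$ (coefficient $C_4\epsilon^3+C_7\epsilon^4$), and $\|\dot{\Lambda}_h^{\sigma_0}\Lambda_h^{\sigma-1-\sigma_0}\partial_1 q\|_{L^2}^2$ (coefficient $C_7\epsilon^4$). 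Each of these, however, appears in the dissipation $\widehat{\dot{\mathcal{D}}}_{\sigma,\epsilon}$ with a strictly smaller $\epsilon$-power and a positive coefficient $c_j/2$ (from the tangential dissipation $c_1$, the $Q$-dissipation $c_2\epsilon$, the $\partial_t v$-dissipation $c_3\epsilon^2$, and the $\partial_1 q$-dissipation $c_4\epsilon^3$ respectively).

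The absorption step is therefore purely algebraic: I would choose
\[
\epsilon\le\min\Bigl\{1,\;\tfrac{c_1}{4(C_2+C_3+C_4)},\;\tfrac{c_2}{4(C_3+C_7)},\;\tfrac{c_3}{4(C_4+C_7)},\;\tfrac{c_4}{4\,C_7}\Bigr\},
\]
so that for each of the four linear terms listed above the coefficient on the right is at most half of the coefficient on the left, and the resulting surplus dissipation is still $\widehat{\dot{\mathcal{D}}}_{\sigma,\epsilon}$. The nonlinear remainders collapse into $E_\sigma^{1/2}\dot{\mathcal{D}}_{s_0}^{1/2}\dot{\mathcal{D}}_\sigma^{1/2}+E_{s_0}^{1/2}\dot{\mathcal{D}}_\sigma+E_s\dot{\mathcal{D}}_{s_0}$, and the term $E_{s_0}^{1/2}\dot{\mathcal{D}}_\sigma$ can be rewritten as $E_{s_0}^{1/2}\dot{\mathcal{D}}_\sigma^{1/2}\cdot\dot{\mathcal{D}}_\sigma^{1/2}$ to match the form stated in the conclusion.

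The only step that requires a bit of care is checking that the energy $\widehat{\dot{\mathcal{E}}}_{\sigma,\epsilon}$ is indeed coercive (positive-definite up to a universal constant), because $\mathring{\mathcal{E}}(\dot{\Lambda}_h^{\sigma_0}\Lambda_h^{\sigma-1-\sigma_0}\nabla v)$ is a sign-indefinite quantity involving the new unknowns $\mathcal{G}^j$ minus squared pressure/velocity terms together with a boundary pairing. Here I would invoke Lemma \ref{lem-est-G-norm-1} to bound the $\widetilde{\mathcal{G}}$-contributions by $E_{s_0}^{1/2}$ times the controlled dissipative norms, use trace and Poincar\'e to handle the cross term $\int_\Omega v^1\partial_1\mathcal{H}(Q)\,dx$, and then exploit the smallness of $\epsilon$ together with the smallness hypothesis $E_{s_0}\le 1$ to guarantee that $\widehat{\dot{\mathcal{E}}}_{\sigma,\epsilon}$ is nonnegative (and in fact equivalent to the sum of its leading pieces). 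This coercivity issue is the main technical point; once it is in place, the weighted combination described above yields the claimed differential inequality with $C_8$ depending on $\epsilon,\underline{b},\bar{\rho}$ and the universal constants $C_1,\dots,C_7$.
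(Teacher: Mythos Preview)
Your approach is correct and essentially identical to the paper's: combine the tangential decay inequality \eqref{ineq-tan-decay-energy-1} with $\epsilon^3$ times \eqref{comp-eqns-v11-s-1} and $\epsilon^4$ times \eqref{laplace-xi-decay-0}, then choose $\epsilon$ small (the paper uses the single sufficient condition $\epsilon\le\min\{1,\min\{c_1,c_2,c_3,c_4\}/4(C_2+C_3+C_4+C_7)\}$) to absorb the four linear error terms into $2\widehat{\dot{\mathcal{D}}}_{\sigma,\epsilon}$. The coercivity of $\widehat{\dot{\mathcal{E}}}_{\sigma,\epsilon}$ that you flag is not actually needed for this differential inequality itself; in the paper it is handled separately in the comparison Lemmas~\ref{lem-comparision-s1-1}--\ref{lem-equiv-decay-total-N+1-1}.
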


The following comparison lemma discovers the equivalence of two types of the energy.

\begin{lem}\label{lem-comparision-s1-1}
Let $N\geq 3$, under the assumption of Lemma \ref{lem-tan-pseudo-energy-1}, if $(\lambda,\,\sigma_0) \in (0, 1)$ satisfies $1-\lambda< \sigma_0\leq 1-\frac{1}{2}\lambda$, and $E_{s}(t) \leq 1$ for all $t\in [0, T]$, then there exists a genius positive constant $C_0$  such that, for $\sigma=s$,
\begin{equation}\label{def-s1-sigma-energy-tv-0}
\begin{split}
 \mathring{\mathcal{E}}(\dot{\Lambda}_h^{\sigma_0} {\Lambda}_h^{\sigma-1-\sigma_0}\nabla\, v)\leq  \frac{3}{2}( \frac{\varepsilon}{2} \|\dot{\Lambda}_h^{\sigma_0} {\Lambda}_h^{\sigma-1-\sigma_0} \nabla\,v^{h}\|_{L^2}^2&+\frac{(\delta+\frac{4}{3}\varepsilon)}{2} \|\dot{\Lambda}_h^{\sigma_0} {\Lambda}_h^{\sigma-1-\sigma_0}\nabla\,v^1\|_{L^2}^2)\\
  &+ C_0\,\mathring{E}_1(\sigma, v, Q, q, \xi^1),\\
\mathring{\mathcal{E}}(\dot{\Lambda}_h^{\sigma_0} {\Lambda}_h^{\sigma-1-\sigma_0}\nabla\, v)\geq  \frac{3}{2}( \frac{\varepsilon}{2} \|\dot{\Lambda}_h^{\sigma_0} {\Lambda}_h^{\sigma-1-\sigma_0} \nabla\,v^{h}\|_{L^2}^2&+\frac{(\delta+\frac{4}{3}\varepsilon)}{2} \|\dot{\Lambda}_h^{\sigma_0} {\Lambda}_h^{\sigma-1-\sigma_0}\nabla\,v^1\|_{L^2}^2)\\
 & - C_0\,\mathring{E}_1(\sigma, v, Q, q, \xi^1),
         \end{split}
\end{equation}
\begin{equation}\label{def-lambda-energy-tv-0}
\begin{split}
   &   \mathring{\mathcal{E}}(\dot{\Lambda}_h^{-\lambda}\nabla\, v)\leq  \frac{3}{2} (\frac{\varepsilon}{2} \|\dot{\Lambda}_h^{-\lambda} \nabla\,v^{h}\|_{L^2}^2+\frac{(\delta+\frac{4}{3}\varepsilon)}{2} \|\dot{\Lambda}_h^{-\lambda}\nabla\,v^1\|_{L^2}^2)+ C_0\mathring{E}_2(-\lambda, v, Q, q, \xi^1),\\
   &   \mathring{\mathcal{E}}(\dot{\Lambda}_h^{-\lambda}\nabla\, v)\leq  \frac{2}{3} (\frac{\varepsilon}{2} \|\dot{\Lambda}_h^{-\lambda} \nabla\,v^{h}\|_{L^2}^2+\frac{(\delta+\frac{4}{3}\varepsilon)}{2} \|\dot{\Lambda}_h^{-\lambda}\nabla\,v^1\|_{L^2}^2)- C_0\mathring{E}_2(-\lambda, v, Q, q, \xi^1),
        \end{split}
\end{equation}
where
\begin{equation*}
\begin{split}
 &\mathring{E}_1(\sigma, v, Q, q, \xi^1)\eqdefa\,\|\dot{\Lambda}_h^{\sigma_0} \Lambda_h^{\sigma-\sigma_0} v\|_{L^2}^2+\|\dot{\Lambda}_h^{\sigma_0} {\Lambda}_h^{\sigma-\frac{3}{2}-\sigma_0}{Q}\|_{L^2(\Sigma_0)}^2+\|\dot{\Lambda}_h^{\sigma_0} {\Lambda}_h^{s_0-\frac{1}{2}-\sigma_0}{Q}\|_{L^2(\Sigma_0)}^2\\
 &\qquad\qquad\qquad\qquad
+ \|\dot{\Lambda}_h^{\sigma_0} {\Lambda}_h^{\sigma-\frac{3}{2}-\sigma_0}\xi^1\|_{L^2(\Sigma_0)}^2+ \|\dot{\Lambda}_h^{\sigma_0} {\Lambda}_h^{\sigma-1-\sigma_0}q\|_{L^2}^2,\\
 &\mathring{E}_2(-\lambda, v, Q, q, \xi^1)\eqdefa\,\|(\dot{\Lambda}_h^{1-\lambda} v, \,\dot{\Lambda}_h^{\sigma_0} \Lambda_h v,\,\dot{\Lambda}_h^{-\lambda}q)\|_{L^2}^2
+ \|(\dot{\Lambda}_h^{-\lambda} \Lambda_h^{\frac{1}{2}+\lambda+\sigma_0} {Q},\,\dot{\Lambda}_h^{-\lambda}\xi^1)\|_{L^2(\Sigma_0)}^2.
        \end{split}
\end{equation*}
If, in addition, $E_s \leq \frac{\epsilon}{18C_0}$, then the inequality \eqref{def-s1-sigma-energy-tv-0} with $\sigma=s+\ell_0$ holds true.
\end{lem}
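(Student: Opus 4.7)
The proof is a direct algebraic identity computation followed by Cauchy--Schwarz/Young absorption. First I would rewrite each of the four quadratic pieces that appear in the definition of $\mathring{\mathcal{E}}(\mathcal{P}(\partial_h)\nabla v)$ as a perfect square of the vertical derivative of $v$ plus controllable cross terms. From \eqref{d-1-v-unknown-1} and \eqref{partialv-q-1}, one has
\[
\mathcal{G}^1+A=\partial_1v^1,\qquad \mathcal{G}^\beta+(-\partial_\beta v^1+\widetilde{\mathcal{G}}^\beta)=\partial_1v^\beta,
\]
where $A:=-\tfrac{\delta-2\varepsilon/3}{\delta+4\varepsilon/3}\nabla_h\cdot v^h+\tfrac{\bar{\rho}(0)}{\delta+4\varepsilon/3}\mathcal{H}(Q)+\widetilde{\mathcal{G}}^1$. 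Using $(a)^2-(b)^2=(a-b)(a+b)$, I obtain
\[
\|\mathcal{P}\mathcal{G}^\beta\|_{L^2}^{2}-\|\mathcal{P}(-\partial_\beta v^1+\widetilde{\mathcal{G}}^\beta)\|_{L^2}^{2}=\|\mathcal{P}\partial_1v^\beta\|_{L^2}^{2}+2\langle \mathcal{P}\partial_1v^\beta,\mathcal{P}(\partial_\beta v^1-\widetilde{\mathcal{G}}^\beta)\rangle,
\]
and similarly $\|\mathcal{P}\mathcal{G}^1\|_{L^2}^{2}-\|\mathcal{P}A\|_{L^2}^{2}=\|\mathcal{P}\partial_1 v^1\|_{L^2}^{2}-2\langle\mathcal{P}\partial_1v^1,\mathcal{P}A\rangle$.

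Next I would handle the last three terms in $\mathring{\mathcal{E}}$ simultaneously. Using that $\mathcal{P}(\partial_h)$ commutes with $\partial_1$ and with multiplication by $\bar\rho(x_1)$, integration by parts in $x_1$ (recalling $v|_{\Sigma_b}=0$ and $\mathcal{H}(Q)|_{\Sigma_0}=Q|_{\Sigma_0}$) gives
\[
\bar{\rho}(0)\!\int_{\Sigma_0}\!\mathcal{P}v^1\,\mathcal{P}q-\!\int_\Omega\!\mathcal{P}\nabla\!\cdot\!(\bar{\rho}(x_1)v)\,\mathcal{P}q =\int_\Omega\bar{\rho}(x_1)\,\mathcal{P}v\cdot\mathcal{P}\nabla q,
\]
while the cross term $-\tfrac{2\bar{\rho}(0)}{\delta+4\varepsilon/3}\!\int_\Omega\mathcal{P}v^1\,\mathcal{P}\partial_1\mathcal{H}(Q)\,dx$ cancels (up to harmless commutators and a $\bar{\rho}'\,v^1\,\mathcal{H}(Q)$ remainder) against the $\mathcal{H}(Q)$ contribution hidden inside $A$ in the cross term $-2\langle\mathcal{P}\partial_1v^1,\mathcal{P}A\rangle$, because $\int\mathcal{P}\partial_1v^1\,\mathcal{P}\mathcal{H}(Q)=\int_{\Sigma_0}\mathcal{P}v^1\,\mathcal{P}Q-\int\mathcal{P}v^1\,\mathcal{P}\partial_1\mathcal{H}(Q)$.

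Collecting, the leading square contributions assemble to exactly
\[
\tfrac{\varepsilon}{2}\|\mathcal{P}\partial_1v^h\|_{L^2}^{2}+\tfrac{\delta+4\varepsilon/3}{2}\|\mathcal{P}\partial_1v^1\|_{L^2}^{2},
\]
and all residual cross terms are of the types (a) $\langle\mathcal{P}\partial_1v,\mathcal{P}\nabla_h v\rangle$, (b) $\langle\mathcal{P}\partial_1v,\mathcal{P}\widetilde{\mathcal{G}}^j\rangle$, (c) $\int\bar{\rho}\mathcal{P}v\cdot\mathcal{P}\nabla q$ and (d) boundary/volume pairings with $Q$ and $\xi^1$. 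For (a) I apply Young with a small $\eta$, which produces $\eta\|\mathcal{P}\nabla v\|_{L^2}^2$ (absorbed, sharpening the coefficient from $1$ toward $\tfrac{3}{2}$ from above and $\tfrac{2}{3}$ from below after combining with the $\|\mathcal{P}\nabla_h v\|^2$ in the main term) plus $C_\eta\|\mathcal{P}\nabla_h v\|^2$, which is already part of the natural energy so no lower-order bookkeeping is required beyond $\|\dot\Lambda_h^{\sigma_0}\Lambda_h^{\sigma-\sigma_0}v\|_{L^2}^2\subset\mathring{E}_1$. For (b), Lemma \ref{lem-est-G-norm-1} together with the hypothesis $E_{s_0}\leq 1$ (resp.\ the smallness $E_s\leq \epsilon/(18C_0)$ when $\sigma=s+\ell_0$) gives $\|\mathcal{P}\widetilde{\mathcal{G}}^j\|_{L^2}\lesssim E_{s_0}^{1/2}\|\mathcal{P}\nabla v\|_{L^2}+\|\mathcal{P}Q\|$-type terms, so Young again absorbs the $\|\nabla v\|$ part and the rest falls into $\mathring{E}_1$. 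For (c), I use $q=Q+g\xi^1$, Lemma \ref{lem-est-harmonic-ext-1} plus the trace inequality $\|Q\|_{L^2(\Sigma_0)}\lesssim\|Q\|_{H^{1/2}(\Sigma_0)}$, and Poincar\'e on $\xi^1$, all of which land inside $\mathring{E}_1$ or $\mathring{E}_2$.

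The main obstacle is bookkeeping: getting the coefficient exactly $\tfrac{3}{2}$ (upper) and $\tfrac{2}{3}$ (lower) requires choosing $\eta$ in Young's inequality so that after absorbing (a) into the leading quadratic $\tfrac{\varepsilon}{2}\|\mathcal{P}\nabla v^h\|^2+\tfrac{\delta+4\varepsilon/3}{2}\|\mathcal{P}\nabla v^1\|^2$ the remaining coefficient is within $[\tfrac{2}{3},\tfrac{3}{2}]$; and verifying that every residual piece fits into the \emph{precise} lists $\mathring{E}_1$, $\mathring{E}_2$ in the statement (in particular, checking that the harmonic-extension $\mathcal{H}(Q)$ contributions require only the $H^{1/2}(\Sigma_0)$ norm of $Q$ appearing via $\|\dot\Lambda_h^{\sigma_0}\Lambda_h^{s_0-1/2-\sigma_0}Q\|_{L^2(\Sigma_0)}$, and that the $\mathcal{H}$-commutator with $\partial_1$ produces only $\|Q\|_{H^{1/2}(\Sigma_0)}$ controlled terms). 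The second clause of the lemma, covering $\sigma=s+\ell_0$, then follows by reading off the same computation while using the stronger smallness $E_s\leq\epsilon/(18C_0)$ to absorb the nonlinear $\widetilde{\mathcal{G}}^j$ terms (which now involve the higher-order norms) into the $\epsilon$-small part of the coefficient rather than into $\mathring{E}_1$.
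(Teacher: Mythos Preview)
Your core idea is exactly the paper's: use the polarization identities
$\|\mathcal P\mathcal G^\beta\|^2-\|\mathcal P(-\partial_\beta v^1+\widetilde{\mathcal G}^\beta)\|^2=\|\mathcal P\partial_1v^\beta\|^2+2\langle\mathcal P\partial_1v^\beta,\mathcal P(\partial_\beta v^1-\widetilde{\mathcal G}^\beta)\rangle$
(and the analogue for $\mathcal G^1$), then absorb the cross terms by Young with parameter $\tfrac13$ to land in $[\tfrac23,\tfrac43]\subset[\tfrac23,\tfrac32]$, and finally invoke Lemma~\ref{lem-est-G-norm-1} (equations \eqref{est-GQ2-norm-1}, \eqref{est-GQ2-s-norm-1}) for the nonlinear $\widetilde{\mathcal G}^j$. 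This is precisely what the paper does, and your explanation of why the $\sigma=s+\ell_0$ case needs the extra smallness $E_s\le \epsilon/(18C_0)$ (to absorb the factor $E_s^{1/2}$ multiplying the highest-order $\nabla v$ norm coming from \eqref{est-GQ2-s-norm-1}) is correct.

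Where you diverge from the paper is in trying to exploit algebraic cancellations between the $\mathcal H(Q)$ cross term inside $A$ and the stand-alone $\int v^1\partial_1\mathcal H(Q)$, and in combining the boundary and bulk $q$-pairings via integration by parts into $\int\bar\rho\,\mathcal P v\cdot\mathcal P\nabla q$. The paper does neither: it simply bounds each of these five pieces separately by Cauchy--Schwarz (shifting half a horizontal derivative where convenient), using $q=Q+g\xi^1$ on $\Sigma_0$, the trace theorem, and Lemma~\ref{lem-est-harmonic-ext-1}. Your integration-by-parts on the $q$-pairings is actually a step backward: the resulting term $\int\bar\rho\,\mathcal Pv^1\,\mathcal P\partial_1 q$ produces a factor $\|\mathcal P\partial_1 q\|_{L^2}$ which is \emph{not} in the list $\mathring E_1$ (only $\|\mathcal P q\|_{L^2}$ is), so your bookkeeping does not close as written. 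The trivial fix is to skip that integration by parts and bound $\bigl|\int_\Omega\mathcal P\nabla\!\cdot(\bar\rho v)\,\mathcal Pq\bigr|\lesssim\|\mathcal P\nabla v\|\,\|\mathcal P q\|$ and the boundary term $\bigl|\int_{\Sigma_0}\mathcal P v^1\,\mathcal P q\bigr|\lesssim\|\mathcal P\nabla v\|(\|\mathcal P Q\|_{L^2(\Sigma_0)}+\|\mathcal P\xi^1\|_{L^2(\Sigma_0)})$ directly, exactly as the paper does; the $\mathcal H(Q)$ cancellation you describe, while algebraically real, is likewise unnecessary.
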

\begin{proof}
In order to prove \eqref{equiv-decay-total-N+1-1}, we need only to estimate the nonlinear energy $\mathring{\mathcal{E}}(\dot{\Lambda}_h^{\sigma}\nabla\, v)$ from upper and lower bounds.
In fact, consider the definition of $\mathring{\mathcal{E}}(\dot{\Lambda}_h^{\sigma_0} {\Lambda}_h^{s-1-\sigma_0}\nabla\, v)$ in \eqref{def-pseudo-gene-energy-tv-0}, in which
\begin{equation*}\label{def-pseudo-gene-energy-tv-0}
\begin{split}
\|\mathcal{P}(\partial_h) \mathcal{G}^{\beta}\|_{L^2}^2&-\|\mathcal{P}(\partial_h)(-\partial_{\beta}v^1
+\widetilde{\mathcal{G}}^{\beta})\|_{L^2}^2\\
&\leq \frac{4}{3}\|\mathcal{P}(\partial_h) \partial_1v^{\beta}\|_{L^2}^2+C(\| \mathcal{P}(\partial_h)\partial_{\beta}v^1\|_{L^2}^2+\|\mathcal{P}(\partial_h)\widetilde{\mathcal{G}}^{\beta}\|_{L^2}^2),
        \end{split}
\end{equation*}
\begin{equation*}
\begin{split}
 \|\mathcal{P}(\partial_h)\mathcal{G}^{\beta}\|_{L^2}^2&-\|\mathcal{P}(\partial_h)(-\partial_{\beta}v^1
+\widetilde{\mathcal{G}}^{\beta})\|_{L^2}^2\\
 &\geq \frac{2}{3}\|\mathcal{P}(\partial_h)\partial_1v^{\beta}\|_{L^2}^2-C(\|\mathcal{P}(\partial_h)\partial_{\beta}v^1\|_{L^2}^2+\|\mathcal{P}(\partial_h)\widetilde{\mathcal{G}}^{\beta}\|_{L^2}^2),
        \end{split}
\end{equation*}
\begin{equation*}
\begin{split}
&\|\mathcal{P}(\partial_h)\mathcal{G}^1\|_{L^2}^2
-\|\mathcal{P}(\partial_h)(-\frac{\delta-\frac{2}{3}\varepsilon}{\delta+\frac{4}{3}\varepsilon} \nabla_h\cdot v^h+\frac{\bar{\rho}(0)}{\delta+\frac{4}{3}\varepsilon}\, \mathcal{H}({Q})+\widetilde{\mathcal{G}}^1)\|_{L^2}^2\\
&\qquad\leq \frac{4}{3}\|\mathcal{P}(\partial_h)\partial_1v^1\|_{L^2}^2
+C(\|\mathcal{P}(\partial_h)\partial_hv^h\|_{L^2}^2+
\|\mathcal{P}(\partial_h)\mathcal{H}({Q})\|_{L^2}^2+\|\mathcal{P}(\partial_h)\widetilde{\mathcal{G}}^1\|_{L^2}^2),
        \end{split}
\end{equation*}

\begin{equation*}
\begin{split}
&\|\mathcal{P}(\partial_h)\mathcal{G}^1\|_{L^2}^2
-\|\mathcal{P}(\partial_h)
(-\frac{\delta-\frac{2}{3}\varepsilon}{\delta+\frac{4}{3}\varepsilon} \nabla_h\cdot v^h+\frac{\bar{\rho}(0)}{\delta+\frac{4}{3}\varepsilon}\, \mathcal{H}({Q})+\widetilde{\mathcal{G}}^1)\|_{L^2}^2\\
&\qquad\geq \frac{2}{3}\|\mathcal{P}(\partial_h)\partial_1v^1\|_{L^2}^2
-C(\|\mathcal{P}(\partial_h)\partial_hv^h\|_{L^2}^2+
\|\mathcal{P}(\partial_h)\mathcal{H}({Q})\|_{L^2}^2+\|\mathcal{P}(\partial_h)\widetilde{\mathcal{G}}^1\|_{L^2}^2),
        \end{split}
\end{equation*}
where we have taken $\mathcal{P}(\partial_h)=\dot{\Lambda}_h^{\sigma_0} {\Lambda}_h^{s-1-\sigma_0}$, and
\begin{equation*}
\begin{split}
|\int_{\Omega}\dot{\Lambda}_h^{\sigma_0} {\Lambda}_h^{s-\sigma_0}v^1\dot{\Lambda}_h^{\sigma_0} {\Lambda}_h^{s-2-\sigma_0}\partial_1  \mathcal{H}({Q}) \,dx|&\lesssim \|\dot{\Lambda}_h^{\sigma_0} {\Lambda}_h^{s-\sigma_0}v^1\|_{L^2}\|\dot{\Lambda}_h^{\sigma_0} {\Lambda}_h^{s-2-\sigma_0}\partial_1  \mathcal{H}({Q})\|_{L^2(\Omega)} \\
 &\lesssim \|\dot{\Lambda}_h^{\sigma_0} {\Lambda}_h^{s-\sigma_0}v^1\|_{L^2}^2+\|\dot{\Lambda}_h^{\sigma_0} {\Lambda}_h^{s-\frac{3}{2}-\sigma_0} {Q}\|_{L^2(\Sigma_0)}^2,
         \end{split}
\end{equation*}
\begin{equation*}
\begin{split}
   &|\int_{\Sigma_0}\dot{\Lambda}_h^{\sigma_0} {\Lambda}_h^{s-\frac{1}{2}-\sigma_0} v^1\, \dot{\Lambda}_h^{\sigma_0} {\Lambda}_h^{s-\frac{3}{2}-\sigma_0}q\,dS_0|\\
   &\leq \|\dot{\Lambda}_h^{\sigma_0} {\Lambda}_h^{s-\frac{1}{2}-\sigma_0} v^1\|_{L^2(\Sigma_0)}\, (\|\dot{\Lambda}_h^{\sigma_0} {\Lambda}_h^{s-\frac{3}{2}-\sigma_0}Q\|_{L^2(\Sigma_0)}+g\|\dot{\Lambda}_h^{\sigma_0} {\Lambda}_h^{s-\frac{3}{2}-\sigma_0}\xi^1\|_{L^2(\Sigma_0)})\\
   &\lesssim \|\dot{\Lambda}_h^{\sigma_0} {\Lambda}_h^{s-1-\sigma_0}\nabla v^1\|_{L^2(\Omega)}\, (\|\dot{\Lambda}_h^{\sigma_0} {\Lambda}_h^{s-\frac{3}{2}-\sigma_0}Q\|_{L^2(\Sigma_0)}+\|\dot{\Lambda}_h^{\sigma_0} {\Lambda}_h^{s-\frac{3}{2}-\sigma_0}\xi^1\|_{L^2(\Sigma_0)}),
           \end{split}
\end{equation*}
\begin{equation*}
\begin{split}
   &|\int_{\Omega} \dot{\Lambda}_h^{\sigma_0} {\Lambda}_h^{s-2-\sigma_0}\nabla\cdot(\bar{\rho}(x_1)v ) \dot{\Lambda}_h^{\sigma_0} {\Lambda}_h^{s-\sigma_0}q\,dx|\lesssim \|\dot{\Lambda}_h^{\sigma_0} {\Lambda}_h^{s-1-\sigma_0}\nabla v\|_{L^2} \|\dot{\Lambda}_h^{\sigma_0} {\Lambda}_h^{s-1-\sigma_0}q\|_{L^2}.
              \end{split}
\end{equation*}
Thanks to \eqref{est-GQ2-norm-1}, we have
\begin{equation*}
\begin{split}
   \|\dot{\Lambda}_h^{\sigma_0} {\Lambda}_h^{s-1-\sigma_0}(\widetilde{\mathcal{G}}^{1}, \widetilde{\mathcal{G}}^{2}, \widetilde{\mathcal{G}}^{3})\|_{L^2}^2\lesssim &E_{s}  (\|\dot{\Lambda}_h^{\sigma_0} \Lambda_h^{s_0-\sigma_0} v\|_{L^2}^2+\|\dot{\Lambda}_h^{\sigma_0} \Lambda_h^{s_0-\frac{1}{2}-\sigma_0} {Q}\|_{L^2(\Sigma_0)}^2)\\
   &+ E_{s_0}(\|\dot{\Lambda}_h^{\sigma_0} \Lambda_h^{s-\frac{3}{2}-\sigma_0}  {Q}\|_{L^2(\Sigma_0)}^2+\|\dot{\Lambda}_h^{\sigma_0} \Lambda_h^{s-\sigma_0} v\|_{L^2}^2).
              \end{split}
\end{equation*}
\begin{equation*}
\begin{split}
      &\|\dot{\Lambda}_h^{\sigma_0} {\Lambda}_h^{s+\ell_0-1-\sigma_0}(\widetilde{\mathcal{G}}^{1}, \widetilde{\mathcal{G}}^{2}, \widetilde{\mathcal{G}}^{3})\|_{L^2}^2\\
      &\lesssim E_{s}\,(\|\dot{\Lambda}_h^{\sigma_0} \Lambda_h^{s_0-1+\ell_0-\sigma_0} \nabla\,v\|_{L^2}^2+\|\dot{\Lambda}_h^{\sigma_0} \Lambda_h^{s_0-\frac{1}{2}+\ell_0-\sigma_0} {Q}\|_{L^2(\Sigma_0)}^2)\\
      &\qquad+E_{s_0}(\|\dot{\Lambda}_h^{\sigma_0} \Lambda_h^{s+\ell_0-\frac{3}{2}-\sigma_0}  {Q}\|_{L^2(\Sigma_0)}^2+\|\dot{\Lambda}_h^{\sigma_0} {\Lambda}_h^{s-1-\sigma_0+\ell_0}v\|_{L^2}^2).
        \end{split}
\end{equation*}
Hence, owing to $E_s \leq 1$, we obtain \eqref{def-s1-sigma-energy-tv-0}.

Similarly, we may readily obtain \eqref{def-lambda-energy-tv-0}.

For $\mathring{\mathcal{E}}(\dot{\Lambda}_h^{\sigma_0} {\Lambda}_h^{s+\ell_0-1-\sigma_0}\nabla\, v)$, in the same manner, thanks to \eqref{est-GQ2-s-norm-1}, we infer
\begin{equation*}
\begin{split}
 &\mathring{\mathcal{E}}(\dot{\Lambda}_h^{\sigma_0} {\Lambda}_h^{s+\ell_0-1-\sigma_0}\nabla\, v)\leq  \frac{3}{2}( \frac{\varepsilon}{2} \|\dot{\Lambda}_h^{\sigma_0} {\Lambda}_h^{s+\ell_0-1-\sigma_0} \nabla\,v^{h}\|_{L^2}^2+\frac{(\delta+\frac{4}{3}\varepsilon)}{2} \|\dot{\Lambda}_h^{\sigma_0} {\Lambda}_h^{s+\ell_0-1-\sigma_0}\nabla\,v^1\|_{L^2}^2)\\
 & + C_0\,\mathring{E}_1(s+\ell_0, v, Q, q, \xi^1)+C_0\bigg(E_{s}\,(\|\dot{\Lambda}_h^{\sigma_0} \Lambda_h^{s+\ell_0-1-\sigma_0} \nabla\,v\|_{L^2}^2+\|\dot{\Lambda}_h^{\sigma_0} \Lambda_h^{s_0-\frac{1}{2}+\ell_0-\sigma_0} {Q}\|_{L^2(\Sigma_0)}^2)\\
      &\qquad\qquad\qquad\qquad\qquad\qquad+E_{s_0}(\|\dot{\Lambda}_h^{\sigma_0} \Lambda_h^{s+\ell_0-\frac{3}{2}-\sigma_0}  {Q}\|_{L^2(\Sigma_0)}^2+\|\dot{\Lambda}_h^{\sigma_0} {\Lambda}_h^{s+\ell_0-1-\sigma_0}v\|_{L^2}^2)\bigg),\\
         \end{split}
\end{equation*}

\begin{equation*}
\begin{split}
 &\mathring{\mathcal{E}}(\dot{\Lambda}_h^{\sigma_0} {\Lambda}_h^{s+\ell_0-1-\sigma_0}\nabla\, v)\geq  \frac{2}{3}( \frac{\varepsilon}{2} \|\dot{\Lambda}_h^{\sigma_0} {\Lambda}_h^{s+\ell_0-1-\sigma_0} \nabla\,v^{h}\|_{L^2}^2+\frac{(\delta+\frac{4}{3}\varepsilon)}{2} \|\dot{\Lambda}_h^{\sigma_0} {\Lambda}_h^{s+\ell_0-1-\sigma_0}\nabla\,v^1\|_{L^2}^2)\\
 & - C_0\,\mathring{E}_1(s+\ell_0, v, Q, q, \xi^1)-C_0\bigg(E_{s}\,(\|\dot{\Lambda}_h^{\sigma_0} \Lambda_h^{s+\ell_0-1-\sigma_0} \nabla\,v\|_{L^2}^2+\|\dot{\Lambda}_h^{\sigma_0} \Lambda_h^{s_0-\frac{1}{2}+\ell_0-\sigma_0} {Q}\|_{L^2(\Sigma_0)}^2)\\
      &\qquad\qquad\qquad\qquad\qquad+E_{s_0}(\|\dot{\Lambda}_h^{\sigma_0} \Lambda_h^{s+\ell_0-\frac{3}{2}-\sigma_0}  {Q}\|_{L^2(\Sigma_0)}^2+\|\dot{\Lambda}_h^{\sigma_0} {\Lambda}_h^{s+\ell_0-1-\sigma_0}v\|_{L^2}^2)\bigg).
         \end{split}
\end{equation*}
From this, and $E_s \leq \frac{\epsilon}{18C_0}$, we obtain that the inequality \eqref{def-s1-sigma-energy-tv-0} with $\sigma=s+\ell_0$ holds true. This ends the proof of Lemma \ref{lem-comparision-s1-1}.
\end{proof}

\begin{lem}[Comparison lemma]\label{lem-equiv-decay-total-N+1-1}
Let $N\geq 3$, under the assumption of Lemma \ref{lem-tan-pseudo-energy-1}, if $(\lambda,\,\sigma_0) \in (0, 1)$ satisfies $1-\lambda< \sigma_0\leq 1-\frac{1}{2}\lambda$, and $E_{s}(t) \leq \frac{\varepsilon}{18C_0}$ for all $t\in [0, T]$, then there exists a genius positive constant $\epsilon_0$  such that
\begin{equation}\label{equiv-decay-total-N+1-1}
\begin{split}
& \dot{\mathcal{E}}_{s}\thicksim \widehat{\dot{\mathcal{E}}}_{s, \epsilon_0},\quad   \dot{\mathcal{D}}_{s} \thicksim \widehat{\dot{\mathcal{D}}}_{s, \epsilon_0},\quad \mathcal{E}_{s+\ell_0}\thicksim \widehat{{\mathcal{E}}}_{s+\ell_0, \epsilon_0},\quad  \mathcal{D}_{s+\ell_0}\thicksim \widehat{{\mathcal{D}}}_{s+\ell_0, \epsilon_0}.
\end{split}
\end{equation}
\end{lem}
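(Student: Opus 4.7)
\medskip

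\noindent\textbf{Proof proposal for Lemma \ref{lem-equiv-decay-total-N+1-1}.}
The plan is to reduce everything to the pointwise‐in‐time comparisons already established in Lemma \ref{lem-comparision-s1-1}, and then to absorb the remainder terms into the dominant pieces by choosing $\epsilon_0$ small. The four equivalences are structurally parallel; I will describe the argument for $\dot{\mathcal{E}}_{s}\thicksim \widehat{\dot{\mathcal{E}}}_{s,\epsilon_0}$ and indicate how the others follow.

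First I would open the definition of $\widehat{\dot{\mathcal{E}}}_{s,\epsilon}=\widehat{\dot{\mathcal{E}}}_{s,\text{tan},\epsilon}+\epsilon^{3}\|\dot{\Lambda}_h^{\sigma_0}\Lambda_h^{s-1-\sigma_0}\partial_1q\|_{L^2}^{2}+\epsilon^{4}\|\dot{\Lambda}_h^{\sigma_0}\Lambda_h^{s-1-\sigma_0}\partial_h\xi^{1}\|_{L^2(\Sigma_0)}^{2}$ and apply Lemma \ref{lem-comparision-s1-1} (with $\sigma=s$) to the sole nonlinear summand $\epsilon^{2}\mathring{\mathcal{E}}(\dot{\Lambda}_h^{\sigma_0}\Lambda_h^{s-1-\sigma_0}\nabla v)$ hidden in $\widehat{\dot{\mathcal{E}}}_{s,\text{tan},\epsilon}$. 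This converts that term into $\epsilon^{2}\tfrac{3}{2}\bigl(\tfrac{\varepsilon}{2}\|\dot{\Lambda}_h^{\sigma_0}\Lambda_h^{s-1-\sigma_0}\nabla v^h\|_{L^{2}}^{2}+\tfrac{\delta+\frac{4}{3}\varepsilon}{2}\|\dot{\Lambda}_h^{\sigma_0}\Lambda_h^{s-1-\sigma_0}\nabla v^1\|_{L^2}^{2}\bigr)$ up to $\pm\epsilon^{2}C_{0}\mathring{E}_1(s,v,Q,q,\xi^1)$. Every single summand in $\mathring{E}_1$ already appears, with an $O(1)$ coefficient, inside the \emph{tangential} part $\widehat{\dot{\mathcal{E}}}_{s,\text{tan},\epsilon}$ or inside $\dot{\mathcal{E}}_{s}$ itself: $\|\dot{\Lambda}_h^{\sigma_0}\Lambda_h^{s-\sigma_0}v\|_{L^{2}}^{2}$ and $\|\dot{\Lambda}_h^{\sigma_0}\Lambda_h^{s-1-\sigma_0}q\|_{L^{2}}^{2}$ are bounded by $\|\sqrt{\bar\rho}\,\dot{\Lambda}_h^{\sigma_0}\Lambda_h^{s-\sigma_0}v\|_{L^{2}}^{2}+\|\sqrt{-\bar\rho'}\,\dot{\Lambda}_h^{\sigma_0}\Lambda_h^{s-\sigma_0}q\|_{L^{2}}^{2}$ (since $\Lambda_h$ is just a low-horizontal derivative loss); the two $Q$-boundary terms are dominated by $\|\dot{\Lambda}_h^{\sigma_0-1}\Lambda_h^{s+\frac12-\sigma_0}Q\|_{L^2(\Sigma_0)}^{2}$; and $\|\dot{\Lambda}_h^{\sigma_0}\Lambda_h^{s-\frac32-\sigma_0}\xi^1\|_{L^2(\Sigma_0)}^{2}$ is absorbed by $\|\dot{\Lambda}_h^{\sigma_0}\Lambda_h^{s-\sigma_0}\xi^1\|_{L^2(\Sigma_0)}^{2}$.

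Next, fixing $\epsilon_0$ so that $\epsilon_0^{2}C_0$ is at most a small fraction (say $1/4$) of the minimum of all the explicit positive constants ($\bar\rho, -\bar\rho', \bar\rho(0),\varepsilon,\delta+\tfrac{4}{3}\varepsilon,\ldots$) multiplying the corresponding terms in $\widehat{\dot{\mathcal{E}}}_{s,\text{tan},\epsilon_0}$, I can absorb the $\mathring{E}_1$ remainders into the tangential block. Then $\widehat{\dot{\mathcal{E}}}_{s,\epsilon_0}$ splits into (i) the genuine ``diagonal'' tangential $L^{2}$ norms of $v,q,\xi^1,Q$, (ii) the $\epsilon_0^{2}$-weighted norm of $\dot{\Lambda}_h^{\sigma_0}\Lambda_h^{s-1-\sigma_0}\nabla v$, and (iii) the $\epsilon_0^{3}$-weighted norm of $\dot{\Lambda}_h^{\sigma_0}\Lambda_h^{s-1-\sigma_0}\partial_1 q$, plus an $\epsilon_0^{4}$ piece on $\partial_h\xi^{1}$ controlled by the tangential $\xi^1$ norm after loosing one horizontal derivative. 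Matching this block-by-block against the six-term definition of $\dot{\mathcal{E}}_{s}$ gives the two-sided estimate, with the $\epsilon_0^{4}\|\partial_h\xi^{1}\|_{L^{2}(\Sigma_0)}^{2}$ term simply being a harmless extra summand on one side.

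The same template handles $\mathcal{E}_{s+\ell_0}\thicksim \widehat{\mathcal{E}}_{s+\ell_0,\epsilon_0}$: apply the second part of Lemma \ref{lem-comparision-s1-1} (the $\dot{\Lambda}_h^{-\lambda}$ branch, using the hypothesis $E_{s}\le\varepsilon/(18C_0)$ which is built in precisely so that Lemma \ref{lem-comparision-s1-1} applies at level $s+\ell_0$) to $\mathring{\mathcal{E}}(\dot{\Lambda}_h^{-\lambda}\nabla v)$ and $\mathring{\mathcal{E}}(\dot{\Lambda}_h^{\sigma_0}\Lambda_h^{s+\ell_0-1-\sigma_0}\nabla v)$; the remainders $\mathring{E}_{1}(s+\ell_0,\cdot)$ and $\mathring{E}_{2}(-\lambda,\cdot)$ are again absorbable into the $\dot{\Lambda}_h^{-\lambda}$ pieces of the tangential energy. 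For the dissipation equivalences $\dot{\mathcal{D}}_{s}\thicksim\widehat{\dot{\mathcal{D}}}_{s,\epsilon_0}$ and $\mathcal{D}_{s+\ell_0}\thicksim\widehat{\mathcal{D}}_{s+\ell_0,\epsilon_0}$, the diagonal $\|\nabla v\|$, $\|Q,\partial_t Q\|$ and $\|\partial_1 q,\partial_t\partial_1 q,\partial_1^{2}v^{1}\|$ terms already appear explicitly in $\widehat{\dot{\mathcal{D}}}_{s,\epsilon_0}$, while the $\epsilon_0^{4}$ Stokes block $\|\dot{\Lambda}_h^{\sigma_0}\Lambda_h^{s-1-\sigma_0}(\nabla^{2}v,\nabla q)\|_{L^{2}}^{2}+\|\dot{\Lambda}_h^{\sigma_0+1}\Lambda_h^{s-2-\sigma_0}\xi^{1}\|_{H^{1/2}(\Sigma_0)}^{2}$ matches exactly the ``$\nabla^{2}v,\nabla q,\xi^{1}\in H^{1/2}$'' summands in $\dot{\mathcal{D}}_{s}$; nothing else needs conversion.

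The main obstacle, as I see it, is bookkeeping: one must verify for each remainder summand in $\mathring{E}_1(s,\cdot)$, $\mathring{E}_1(s+\ell_0,\cdot)$, and $\mathring{E}_2(-\lambda,\cdot)$ that the \emph{Fourier weight} is strictly below that of the corresponding diagonal term in $\widehat{\dot{\mathcal{E}}}_{s,\text{tan},\epsilon}$ (so that $\Lambda_h$ in place of $\dot\Lambda_h$ loses nothing), and that the resulting constants can be packaged into a single smallness condition on $\epsilon_0$ compatible with the earlier choice $\epsilon\leq\min\{c_1/(6C_2),c_2/(4C_3)\}$ already made in Lemmas \ref{lem-decay-total-1}--\ref{lem-tangrad-decay-total-1}. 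Once $\epsilon_0$ is fixed this way, the equivalences \eqref{equiv-decay-total-N+1-1} follow line by line.
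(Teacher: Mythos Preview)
Your proposal is correct and follows essentially the same approach as the paper: apply Lemma~\ref{lem-comparision-s1-1} to convert the nonlinear block $\mathring{\mathcal{E}}(\cdot)$ into diagonal norms plus absorbable remainders $\mathring{E}_1,\mathring{E}_2$, then choose $\epsilon_0$ small enough to swallow the remainders into the tangential part. The paper's own proof is a single sentence invoking Lemma~\ref{lem-comparision-s1-1} and the definitions, so your write-up is simply a detailed unpacking of that sentence.
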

\begin{proof}
Since $E_{s}(t) \leq \frac{\varepsilon}{18C_0}$ for all $t\in [0, T]$, by using Lemma \ref{lem-comparision-s1-1}, we obtain, from the definitions of $\dot{\mathcal{E}}_{s}$, $\widehat{\dot{\mathcal{E}}}_{s, \epsilon_0}$, $\dot{\mathcal{D}}_{s}$, $\widehat{\dot{\mathcal{D}}}_{s, \epsilon_0}$, $\mathcal{E}_{s+\ell_0}\thicksim \widehat{{\mathcal{E}}}_{s+\ell_0, \epsilon_0}$, $\mathcal{D}_{s+\ell_0}$, and $\widehat{{\mathcal{D}}}_{s+\ell_0, \epsilon_0}$, that \eqref{equiv-decay-total-N+1-1} holds if we take the positive constant $\epsilon_0$ small enough, which completes the proof of Lemma \ref{lem-equiv-decay-total-N+1-1}.
\end{proof}

With Lemma \ref{lem-equiv-decay-total-N+1-1} in hand, for simplicity, we denote $\widehat{\dot{\mathcal{E}}}_{s, \epsilon_0}$, $\widehat{\dot{\mathcal{D}}}_{s, \epsilon_0}$,
$\widehat{\mathcal{E}}_{s+\ell_0, \epsilon_0}$, $\widehat{\mathcal{D}}_{s+\ell_0, \epsilon_0}$ by $\widehat{\dot{\mathcal{E}}}_{s}$, $\widehat{\dot{\mathcal{D}}}_{s}$,
$\widehat{\mathcal{E}}_{s+\ell_0}$, $\widehat{\mathcal{D}}_{s+\ell_0}$ respectively. And we also denote the positive constant $\mathfrak{C}_0\geq 1$ such that
\begin{equation}\label{equiv-total-cont-1}
\begin{split}
&\mathfrak{C}_0^{-1}\widehat{\dot{\mathcal{E}}}_{s}\leq  \dot{\mathcal{E}}_s\leq \mathfrak{C}_0 \widehat{\dot{\mathcal{E}}}_{s},\quad   \mathfrak{C}_0^{-1} \widehat{\dot{\mathcal{D}}}_{s}\leq\dot{\mathcal{D}}_{s} \leq \mathfrak{C}_0 \widehat{\dot{\mathcal{D}}}_{s},\\
&\mathfrak{C}_0^{-1}\widehat{\mathcal{E}}_{s+\ell_0}\leq \mathcal{E}_{s+\ell_0}\leq \mathfrak{C}_0\widehat{\mathcal{E}}_{s+\ell_0},\quad  \mathfrak{C}_0^{-1} \widehat{\mathcal{D}}_{s+\ell_0}\leq \mathcal{D}_{s+\ell_0}\leq \mathfrak{C}_0 \widehat{\mathcal{D}}_{s+\ell_0}.
\end{split}
\end{equation}

Therefore, we restate Lemmas \ref{lem-tangrad-decay-total-1} and \ref{lem-tan-energy-total-1} to get the total energy estimates.
\begin{lem}\label{lem-restate-decay-total-N+1-1}
Under the assumption of Lemma \ref{lem-equiv-decay-total-N+1-1}, there hold that $\forall\,t\in [0, T]$
\begin{equation}\label{restate-decay-total-N+1-1}
\begin{split}
&\frac{d}{dt}\widehat{\dot{\mathcal{E}}}_{s}+2\mathfrak{c}_1\dot{\mathcal{D}}_{s}\leq \mathfrak{C}_1 E_s^{\frac{1}{2}} \,\dot{\mathcal{D}}_s,\\
&\frac{d}{dt}\widehat{\dot{\mathcal{E}}}_{s+\ell_0}+2\mathfrak{c}_1\dot{\mathcal{D}}_{s+\ell_0}\leq \mathfrak{C}_1 (E_{s_0}^{\frac{1}{2}} \,\dot{\mathcal{D}}_{s+\ell_0}+E_{s+\ell_0}^{\frac{1}{2}} \,\dot{\mathcal{D}}_{s_0}^{\frac{1}{2}}\dot{\mathcal{D}}_{s+\ell_0}^{\frac{1}{2}}),
\end{split}
\end{equation}
and
\begin{equation}\label{restate-bdddecay-total-N+1-1}
\begin{split}
&\frac{d}{dt}\widehat{\mathcal{E}}_{s+\ell_0} +2\mathfrak{c}_1\mathcal{D}_{s+\ell_0}\leq \mathfrak{C}_1 \bigg( E_{s_0}^{\frac{1}{2}} \dot{\mathcal{D}}_{s+\ell_0}+E_{s+\ell_0}\,\dot{\mathcal{D}}_{s_0}+ E_{s_0} (\dot{\mathcal{D}}_{s_0}^{\frac{1}{2}}+\dot{\mathcal{E}}_{s_0})\bigg).
\end{split}
\end{equation}
\end{lem}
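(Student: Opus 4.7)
\medskip

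\noindent\textbf{Proof proposal for Lemma \ref{lem-restate-decay-total-N+1-1}.}
The plan is to deduce the three stated inequalities directly from Lemma \ref{lem-tangrad-decay-total-1} (applied at the two scales $\sigma=s$ and $\sigma=s+\ell_0$) and Lemma \ref{lem-tan-energy-total-1}, combined with the equivalences \eqref{equiv-total-cont-1} coming from the comparison Lemma \ref{lem-equiv-decay-total-N+1-1}. Under the smallness hypothesis $E_s(t)\leq \varepsilon/(18 C_0)$, the functionals $\widehat{\dot{\mathcal{E}}}_{s,\epsilon_0}$, $\widehat{\dot{\mathcal{D}}}_{s,\epsilon_0}$, $\widehat{\mathcal{E}}_{s+\ell_0,\epsilon_0}$, $\widehat{\mathcal{D}}_{s+\ell_0,\epsilon_0}$ are comparable, up to a universal constant $\mathfrak{C}_0$, to $\dot{\mathcal{E}}_s$, $\dot{\mathcal{D}}_s$, $\mathcal{E}_{s+\ell_0}$, $\mathcal{D}_{s+\ell_0}$ respectively, so any right-hand side control one obtains in the $\widehat{\,\cdot\,}$-norms transfers to the undecorated norms without loss.

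First I would take $\sigma=s$ in Lemma \ref{lem-tangrad-decay-total-1} to obtain
\[
\frac{d}{dt}\widehat{\dot{\mathcal{E}}}_{s,\epsilon_0}+\widehat{\dot{\mathcal{D}}}_{s,\epsilon_0}
\leq C_8\bigl((E_s^{1/2}\dot{\mathcal{D}}_{s_0}^{1/2}+E_{s_0}^{1/2}\dot{\mathcal{D}}_s^{1/2})\dot{\mathcal{D}}_s^{1/2}
+E_s\dot{\mathcal{D}}_{s_0}\bigr).
\]
Since $\dot{\mathcal{D}}_{s_0}\lesssim \dot{\mathcal{D}}_s$ (because $s_0\leq s$ and the dissipation is monotone in the horizontal regularity level), the bracketed expression is bounded by $\mathfrak{C}_1 E_s^{1/2}\dot{\mathcal{D}}_s$. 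Replacing $\widehat{\dot{\mathcal{D}}}_{s,\epsilon_0}$ by $\mathfrak{C}_0^{-1}\dot{\mathcal{D}}_s$ on the left via \eqref{equiv-total-cont-1} and renaming constants yields the first inequality in \eqref{restate-decay-total-N+1-1}.

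Next, I apply Lemma \ref{lem-tangrad-decay-total-1} with $\sigma=s+\ell_0$, which gives the analogous bound with $s$ replaced by $s+\ell_0$ on the right-hand side; the cross term $E_{s+\ell_0}^{1/2}\dot{\mathcal{D}}_{s_0}^{1/2}\dot{\mathcal{D}}_{s+\ell_0}^{1/2}$ and the $E_{s_0}^{1/2}\dot{\mathcal{D}}_{s+\ell_0}$ term appear naturally, and the residual term $E_{s+\ell_0}\dot{\mathcal{D}}_{s_0}$ is absorbed into the cross term by Young's inequality together with the monotonicity $\dot{\mathcal{D}}_{s_0}\leq \dot{\mathcal{D}}_{s+\ell_0}$. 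After again applying \eqref{equiv-total-cont-1} to pass from $\widehat{\dot{\mathcal{D}}}_{s+\ell_0,\epsilon_0}$ to $\dot{\mathcal{D}}_{s+\ell_0}$, the second inequality in \eqref{restate-decay-total-N+1-1} follows. Finally, \eqref{restate-bdddecay-total-N+1-1} is a rewriting of the inequality in Lemma \ref{lem-tan-energy-total-1}: choose $\epsilon$ equal to the fixed $\epsilon_0$ of the comparison lemma, use $\dot{\mathcal{D}}_{s+\ell_0}\lesssim \mathcal{D}_{s+\ell_0}$ and the equivalence for $\widehat{\mathcal{D}}_{s+\ell_0,\epsilon_0}$ to move the $\widehat{\mathcal{D}}$-term to the left, then absorb the term $\epsilon_0 C_2 (E_{s_0}\mathcal{D}_{s+\ell_0}+E_{s+\ell_0}\mathcal{D}_{s_0})$ arising from Lemma \ref{lem-tan-energy-total-1} by choosing $\epsilon_0$ small and using $E_{s_0}\lesssim E_s\ll 1$.

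The only genuinely delicate point is the coefficient bookkeeping in the third inequality: one must verify that after invoking \eqref{equiv-total-cont-1} with the specific constant $\mathfrak{C}_0$, the constant $\epsilon_0 C_2 \mathfrak{C}_0 E_{s_0}^{1/2}$ multiplying $\mathcal{D}_{s+\ell_0}$ can be made strictly smaller than $\mathfrak{c}_1$, so that half of the dissipation term $2\mathfrak{c}_1\mathcal{D}_{s+\ell_0}$ survives on the left-hand side. This is possible because $\epsilon_0$ is already fixed by Lemma \ref{lem-equiv-decay-total-N+1-1} at a value small relative to $c_1,c_2,c_3,c_4$, and the smallness hypothesis on $E_{s_0}\leq E_s\leq \varepsilon/(18C_0)$ can be further tightened (by shrinking the admissible size of $E_s$ in the main bootstrap). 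All remaining residual terms of the type $E_{s_0}^{1/2}\dot{\mathcal{D}}_{s+\ell_0}$, $E_{s+\ell_0}\dot{\mathcal{D}}_{s_0}$, $E_{s_0}(\dot{\mathcal{D}}_{s_0}^{1/2}+\dot{\mathcal{E}}_{s_0})$ are of precisely the form allowed on the right-hand side of \eqref{restate-bdddecay-total-N+1-1}, and no additional estimate is required. Since the derivations are in each case a direct reading of a previously proved inequality composed with the equivalences in Lemma \ref{lem-equiv-decay-total-N+1-1}, I expect no new analytical difficulty beyond the careful choice of constants just described.
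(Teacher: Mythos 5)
Your plan—apply Lemma \ref{lem-tangrad-decay-total-1} at $\sigma=s$ and at $\sigma=s+\ell_0$, apply Lemma \ref{lem-tan-energy-total-1}, and then use the equivalences \eqref{equiv-total-cont-1} to rename the dissipation functionals—is exactly what the paper does; the paper only says these lemmas are ``restated'' and leaves the algebra to the reader. The first inequality in \eqref{restate-decay-total-N+1-1} follows cleanly from your steps.

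There is, however, a genuine gap in the way you derive the second inequality. You write that taking $\sigma=s+\ell_0$ in Lemma \ref{lem-tangrad-decay-total-1} produces the residual $E_{s+\ell_0}\dot{\mathcal{D}}_{s_0}$, which you then claim to ``absorb into the cross term by Young's inequality together with the monotonicity $\dot{\mathcal{D}}_{s_0}\le\dot{\mathcal{D}}_{s+\ell_0}$.'' That absorption does not work: the best you can get is
$E_{s+\ell_0}\dot{\mathcal{D}}_{s_0}=E_{s+\ell_0}^{1/2}\cdot\bigl(E_{s+\ell_0}^{1/2}\dot{\mathcal{D}}_{s_0}^{1/2}\bigr)\cdot\dot{\mathcal{D}}_{s_0}^{1/2}\le E_{s+\ell_0}^{1/2}\cdot E_{s+\ell_0}^{1/2}\,\dot{\mathcal{D}}_{s_0}^{1/2}\dot{\mathcal{D}}_{s+\ell_0}^{1/2}$,
which carries an extra factor $E_{s+\ell_0}^{1/2}$ that cannot be dropped, because $E_{s+\ell_0}$ is \emph{not} small (it is the high-tier energy that is allowed to grow in time). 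The statement of Lemma \ref{lem-tangrad-decay-total-1} in the paper is more careful: the residual is $E_s\dot{\mathcal{D}}_{s_0}$, with the \emph{low-tier} index $s$ independent of $\sigma$, and it is precisely the smallness $E_s\le 1$ that makes the bound close: $E_s\dot{\mathcal{D}}_{s_0}\le E_{s+\ell_0}^{1/2}\cdot E_s^{1/2}\cdot\dot{\mathcal{D}}_{s_0}^{1/2}\dot{\mathcal{D}}_{s+\ell_0}^{1/2}\le E_{s+\ell_0}^{1/2}\dot{\mathcal{D}}_{s_0}^{1/2}\dot{\mathcal{D}}_{s+\ell_0}^{1/2}$. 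This is not a cosmetic distinction — it is the structural point that makes the two-tier strategy work, and your phrasing obscures it.

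A related, smaller point in the third inequality: you speak of ``absorbing'' the entire expression $\epsilon_0 C_2(E_{s_0}\mathcal{D}_{s+\ell_0}+E_{s+\ell_0}\mathcal{D}_{s_0})$ by smallness of $\epsilon_0$ and $E_{s_0}$. Only the first summand can be absorbed into the dissipation in this way (and note $\epsilon_0$ is already fixed by Lemma \ref{lem-equiv-decay-total-N+1-1}, so what does the work is the smallness of $E_{s_0}\le E_s$, not of $\epsilon_0$). The second summand $E_{s+\ell_0}\dot{\mathcal{D}}_{s_0}$ cannot be absorbed because $E_{s+\ell_0}$ is unbounded; it must survive as a right-hand-side term, and indeed it appears verbatim in \eqref{restate-bdddecay-total-N+1-1}. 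You do eventually list $E_{s+\ell_0}\dot{\mathcal{D}}_{s_0}$ among the ``remaining residual terms,'' so the end conclusion is consistent, but the earlier sentence suggesting it is absorbed should be corrected.
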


\renewcommand{\theequation}{\thesection.\arabic{equation}}
\setcounter{equation}{0}

\section{Global well-posedness: Proof of Theorem \ref{thm-main}}\label{sect-proof-mainthm}

With the two {\it a priori} energy estimates \eqref{restate-decay-total-N+1-1} and \eqref{restate-bdddecay-total-N+1-1} in hand, by using the similar argument in \cite{Gui-2020}, we will show that the low order energy $E_{s}$ is uniformly bounded, while the high order energy $E_{s+\ell_0}$ grows algebraicly as the time $t$ goes to infinity by an inductive argument. Hence, we may find the decay estimate of the low order energy $\dot{\mathcal{E}}_s$.

For this, we first recall the following decay estimate.

\begin{lem}[Decay estimate, \cite{Guo-Tice-2, Gui-2020}]\label{lem-diff-ineq-1}
If the non-negative function $f$ satisfies the differential inequality
\begin{equation}\label{diff-ineq-1}
\begin{cases}
&\frac{d}{dt}f+c_1\,f^{1+\alpha}
 \leq 0, \quad \forall\,\, t>0, \\
 &f|_{t=0}=f_0
\end{cases}
\end{equation}
for two positive constants $\alpha>0$ and $c_1>0$, then there holds for any $t>0$
\begin{equation}\label{diff-ineq-2}
\begin{split}
&f(t)\leq (c_1\alpha)^{-\frac{1}{\alpha}}((c_1\alpha)^{-1}f^{-\alpha}_0+t)^{-\frac{1}{\alpha}}.
\end{split}
\end{equation}
\end{lem}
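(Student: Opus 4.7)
The plan is to prove the decay estimate by a direct ODE comparison argument, reducing the differential inequality \eqref{diff-ineq-1} to one for $f^{-\alpha}$ where everything becomes linear. First I would restrict attention to the set $\{t \geq 0 : f(t) > 0\}$: from the inequality $\frac{d}{dt}f \leq -c_1 f^{1+\alpha} \leq 0$, the function $f$ is non-increasing, so if $f(t_0) = 0$ at some time $t_0$, then $f(t) = 0$ for all $t \geq t_0$ and the claimed bound \eqref{diff-ineq-2} is trivially satisfied there (the right-hand side is positive). Hence I may assume $f(t) > 0$ on the interval of interest, and in particular $f_0 > 0$ (if $f_0 = 0$ we interpret $f_0^{-\alpha} = +\infty$ and the bound reads $f(t) \leq 0$, which is consistent).

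With $f > 0$, divide both sides of $\frac{d}{dt}f + c_1 f^{1+\alpha} \leq 0$ by $f^{1+\alpha}$ and observe that
\begin{equation*}
\frac{d}{dt}\bigl(f^{-\alpha}\bigr) = -\alpha\, f^{-\alpha-1}\,\frac{d}{dt}f \geq c_1 \alpha.
\end{equation*}
Integrating this inequality from $0$ to $t$ yields $f(t)^{-\alpha} \geq f_0^{-\alpha} + c_1 \alpha\, t = c_1\alpha\bigl((c_1\alpha)^{-1} f_0^{-\alpha} + t\bigr)$, and taking the $(-1/\alpha)$-th power (which reverses the inequality, since $\alpha > 0$) gives precisely \eqref{diff-ineq-2}.

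There is essentially no obstacle here; the only mildly delicate point is justifying the manipulation when $f$ could vanish or be non-differentiable. Since $f$ is assumed to be differentiable (the inequality is stated pointwise), and since $f$ is monotone non-increasing, the zero set of $f$ is either empty or a half-line $[t_0, \infty)$, and on its complement $f^{-\alpha}$ is a well-defined differentiable function. If one wishes to avoid the smoothness assumption entirely, a standard perturbation $f_\epsilon := f + \epsilon$ with $\epsilon \downarrow 0$ converts the inequality into $\frac{d}{dt} f_\epsilon + c_1 f_\epsilon^{1+\alpha} \leq c_1((f+\epsilon)^{1+\alpha} - f^{1+\alpha})$ and a routine Gronwall-type argument recovers the bound in the limit; I would only invoke this if the ambient regularity of $f$ in the applications of the lemma is below $C^1$. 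Since the lemma is cited from prior work (Guo--Tice, and Gui's earlier paper) and used here only as a black-box ODE tool, the short direct proof above should suffice.
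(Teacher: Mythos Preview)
Your argument is correct and is the standard one-line proof of this Bernoulli-type differential inequality. The paper itself does not give a proof of this lemma at all; it is simply stated with a citation to \cite{Guo-Tice-2, Gui-2020} and used as a black box in Section~\ref{sect-proof-mainthm}, so there is nothing to compare against beyond noting that your derivation recovers exactly the bound \eqref{diff-ineq-2}.
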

We are now in a position to complete the proof of Theorem \ref{thm-main}.

\begin{proof}[Proof of Theorem \ref{thm-main}]
Thanks to the local well-posedness theorem (Theorem \ref{thm-local}), there
exists a positive time $T$ such that the system \eqref{eqns-pert-1} with initial data $(\xi_0,\,v_0)$  has a
unique solution  $(\xi, \, v) \in \mathcal{C}([0, T); \mathfrak{F}_{s+\ell_0})$.

We define $T^{\ast}$ the maximal time of the existence to this solution, so
what we need to do in what follows is to show that $T^{\ast}=+\infty$.

Suppose, by way of contradiction, that  $T^{\ast}<+\infty$. We will show that solutions can actually be
extended past $T^{\ast}$ and satisfy $E_{s+\ell_0}(T^{\ast\ast})<+\infty$ for some $T^{\ast\ast}>T^{\ast}$, which contradicts the fact that $T^{\ast}$ is the maximal time of the existence to this solution, and then obtain that $T^{\ast}=+\infty$. For this, it suffices to prove that there is a positive constant $\kappa_0$ such that
\begin{equation}\label{bdd-soln-extend-1}
\begin{split}
&\lim_{t\nearrow T^{\ast}}E_{s}(t) \leq \kappa_0\,E_{s}(0),\quad \lim_{t\nearrow T^{\ast}}((1+t)^{-1}E_{s+\ell_0}(t)) \leq \kappa_0\,E_{s+\ell_0}(0).
 \end{split}
\end{equation}
Indeed, if \eqref{bdd-soln-extend-1} holds, then thanks to Lemma \ref{lem-est-aij-1}, $\|J-1\|_{L^\infty} \leq C_1 E_{s_0}(t)\leq \kappa_0C_1\,E_{s_0}(0)\leq  \kappa_0C_1\, \nu_0$. Take $\nu_0>0$ so small that
\begin{equation*}
\nu_0 \leq \min\{\frac{1}{2\kappa_0},\,\frac{1}{2C_1\kappa_0}\}
\end{equation*}
 we have $E_{s_0}(t) \leq \frac{1}{2}$ and $\|J-1\|_{L^\infty} \leq \frac{1}{2}$, which ensures that $J(t) \geq \frac{1}{2}$ for any $t\in [0, T^{\ast})$ and $\|J^{-1}(t)\|_{L^\infty}$ doesn't blow up. On the other hand, the second inequality in \eqref{bdd-soln-extend-1} implies $\lim_{t\nearrow T^{\ast}}E_{s+\ell_0}(t)\leq \kappa_0(1+T^{\ast})E_{s+\ell_0}(0)$. Hence, the equality \eqref{loc-blow-crit} in Theorem \ref{thm-local}  doesn't hold true, which contradicts the assumption $T^{\ast}<+\infty$, and then we can
extend the solution past $T^{\ast}$ to some $T^{\ast\ast}>T^{\ast}$.

For $\kappa>0$, set
\begin{equation}\label{equal-time-1}
\begin{split}
T_1&=T_1(\kappa)\eqdefa\sup\bigg\{T\in [0, T^{\ast}]|(\xi,\,v) \in \mathcal{C}([0, T); \mathfrak{F}_{s+\ell_0}), \quad\,\mathcal{E}_{s+\ell_0}(t) \leq 2\mathfrak{C}_0^2\mathcal{E}_{s+\ell_0}(0),\\
&E_{s}(t) \leq \kappa\,(E_{s}(0)+\mathcal{E}_{s+\ell_0}(0)), \,E_{s+\ell_0}(t) \leq \kappa\,(1+t)E_{s+\ell_0}(0)\quad \forall \,t \in [0, T)\bigg\}.
 \end{split}
\end{equation}
In order to prove \eqref{bdd-soln-extend-1}, we need only to show that, there exists a universal positive constant $\kappa_0$ such that
\begin{equation}\label{equal-time-T1-a}
\begin{split}
T^{\ast}=T_1(\kappa_0).
 \end{split}
\end{equation}

Since the inequality $T_1(\kappa)\leq T^{\ast}$ always holds true for any $\kappa>0$ by the definition of $T_1(\kappa)$ in \eqref{equal-time-1}, we will employ a bootstrap argument to prove \eqref{equal-time-T1-a}: find a constant $\kappa_0 >0$ such that, under the conditions in \eqref{equal-time-1} (with $\kappa=\kappa_0$), the following inequalities hold for any $t\in [0, T_1(\kappa_0))$
 \begin{equation}\label{equal-time-boot-1}
\begin{split}
&\mathcal{E}_{s+\ell_0}(t) \leq \frac{3}{2}\mathfrak{C}_0^2\mathcal{E}_{s+\ell_0}(0),\quad E_{s}(t) \leq \frac{1}{2}\kappa_0\,(E_{s}(0)+\mathcal{E}_{s+\ell_0}(0)),\\
&E_{s+\ell_0}(t) \leq \frac{1}{2}\kappa_0\,(1+t)E_{s+\ell_0}(0).
 \end{split}
\end{equation}
This immediately follows from  the definition of $T_1(\kappa_0)$ that the equality \eqref{equal-time-T1-a} holds.

For this, we fix $\kappa_0>0$, which will be determined later, and assume that $t \in [0, T_1(\kappa_0))$ in what follows.

To prove \eqref{equal-time-boot-1}, we first employ the differential inequality \eqref{restate-decay-total-N+1-1} in Lemma \ref{lem-restate-decay-total-N+1-1} to investigate the decay estimate of the lower order energy $\dot{\mathcal{E}}_{s}$. For this, we use the definition of $T_1(\kappa_0)$ in \eqref{equal-time-1} to get
\begin{equation}\label{c-0-1a}
\mathfrak{C}_1 E_s^{\frac{1}{2}}(t) \leq  \mathfrak{C}_1\kappa_0^{\frac{1}{2}}(E_{s}(0)+\mathcal{E}_{s+\ell_0}(0))^{\frac{1}{2}} \leq  \mathfrak{C}_1\kappa_0^{\frac{1}{2}}E_{s+\ell_0}(0)^{\frac{1}{2}}\leq  \mathfrak{C}_1\kappa_0^{\frac{1}{2}}\nu_0^{\frac{1}{2}}.
 \end{equation}
Take $\nu_0>0 $ so small that
\begin{equation}\label{require-esp-2}
\nu_0 \leq \min\{\frac{1}{2C_1\kappa_0},\, \frac{1}{16\kappa_0},\,\frac{\mathfrak{c}_1^2}{4\mathfrak{C}_1^2\kappa_0}\},
\end{equation}
one can obtain from \eqref{c-0-1a} that $\mathfrak{C}_1 E_s^{\frac{1}{2}}(t) \leq \frac{1}{2}\mathfrak{c}_1$ and $E_s(t)\leq \frac{1}{2}$, which along with \eqref{restate-decay-total-N+1-1} implies
\begin{equation}\label{decay-est-N-1}
\begin{split}
&\frac{d}{dt}\widehat{\dot{\mathcal{E}}}_{s}+\mathfrak{c}_1\dot{\mathcal{D}}_{s} \leq 0.\\
\end{split}
\end{equation}

Since the dissipation $\dot{\mathcal{D}}_{s} $ can't control linearly the energy $\widehat{\dot{\mathcal{E}}}_{s}$ in \eqref{decay-est-N-1}, we turn to apply the interpolation between the dissipation $\dot{\mathcal{D}}_{s} $ and the high-order energy $\mathcal{E}_{s+\ell_0}$ to bound the energy $\widehat{\dot{\mathcal{E}}}_{s}$, which can be used to get the decay estimate of $\widehat{\dot{\mathcal{E}}}_{s}$ according to \eqref{decay-est-N-1}.

Indeed, thanks to the definitions of $\dot{\mathcal{E}}_{s}$, ${\mathcal{E}}_{s}$, and $\dot{\mathcal{D}}_{s}$, and the interpolation inequality
\begin{equation*}
\begin{split}
& \|\dot{\Lambda}_h^{\sigma_0}\xi^1\|_{L^2(\Sigma_0)}
\lesssim\|\dot{\Lambda}_h^{1+\sigma_0}\xi^1\|_{L^2(\Sigma_0)}^{\frac{2\ell_0}{1+2\ell_0}}
\|\dot{\Lambda}_h^{-\lambda}\xi^1\|_{L^2(\Sigma_0)}^{\frac{1}{1+2\ell_0}},\\
& \|\dot{\Lambda}_h^{s}\xi^1\|_{L^2(\Sigma_0)}\lesssim\|\dot{\Lambda}_h^{s-\frac{1}{2}}\xi^1
\|_{L^2(\Sigma_0)}^{\frac{2\ell_0}{1+2\ell_0}}
\|\dot{\Lambda}_h^{s+\ell_0}\xi^1\|_{L^2(\Sigma_0)}^{\frac{1}{1+2\ell_0}},
\end{split}
\end{equation*}
we have $\dot{\mathcal{E}}_{s} \leq \mathfrak{C}_2  \dot{\mathcal{D}}_{s}^{\frac{2\ell_0}{1+2\ell_0}} {\mathcal{E}}_{s+ \ell_0}^{\frac{1}{1+2\ell_0}}$, where $\mathfrak{C}_2$ is the interpolation constant independent of $\kappa_0$.

Hence, from \eqref{equiv-total-cont-1}, one obtains
\begin{equation*}
\begin{split}
\widehat{\dot{\mathcal{E}}}_{s} &\leq \mathfrak{C}_0 \dot{\mathcal{E}}_{s} \leq  \mathfrak{C}_0 \mathfrak{C}_2 \dot{\mathcal{D}}_{s}^{\frac{2\ell_0}{1+2\ell_0}} \mathcal{E}_{s+\ell_0}^{\frac{1}{1+2\ell_0}}\leq \widetilde{\mathfrak{C}}_2\, \dot{\mathcal{D}}_{s}^{\frac{2\ell_0}{1+2\ell_0}} \mathcal{E}_{s+\ell_0}(0) ^{\frac{1}{1+2\ell_0}},
\end{split}
\end{equation*}
where $\widetilde{\mathfrak{C}}_2:=\mathfrak{C}_0 \mathfrak{C}_2  (2\mathfrak{C}_0^2)^{\frac{1}{1+2\ell_0}}$,
which implies
\begin{equation}\label{decay-est-N-1a}
\begin{split}
&[\widetilde{\mathfrak{C}}_2^{-1}\widehat{\dot{\mathcal{E}}}_{s}]^{\frac{1+2\ell_0}{2\ell_0}}
\mathcal{E}_{s+\ell_0}(0)^{-\frac{1}{2\ell_0}} \leq  \dot{\mathcal{D}}_{s}.
\end{split}
\end{equation}
Combining \eqref{decay-est-N-1} with \eqref{decay-est-N-1a} yields
\begin{equation}\label{decay-est-N-1b}
\begin{split}
&\frac{d}{dt}\widehat{\dot{\mathcal{E}}}_{s}+\mathfrak{c}_2\,\mathcal{E}_{s+\ell_0}(0)^{-\frac{1}{2\ell_0}}\,
\widehat{\dot{\mathcal{E}}}_{s}^{\frac{1+2\ell_0}{2\ell_0}}
 \leq 0
\end{split}
\end{equation}
with $\mathfrak{c}_2:=\mathfrak{c}_1[\widetilde{\mathfrak{C}}_2^{-1}
]^{\frac{1+2\ell_0}{2\ell_0}}=\mathfrak{c}_1[\mathfrak{C}_0\mathfrak{C}_2
]^{-\frac{1+2\ell_0}{2\ell_0}}(2\mathfrak{C}_0^2)^{-\frac{1}{2\ell_0}}$.

Applying Lemma \ref{lem-diff-ineq-1} to \eqref{decay-est-N-1b}, where we take $c_1:=\mathfrak{c}_2\,\mathcal{E}_{s+\ell_0}(0)^{-\frac{1}{2\ell_0}}$, $\alpha:=\frac{1}{2\ell_0}$,
$f(t):=\widehat{\dot{\mathcal{E}}}_{s}(t)$, we infer that
\begin{equation*}
\begin{split}
&\widehat{\dot{\mathcal{E}}}_{s}(t)\leq (\frac{\mathfrak{c}_2}{2\ell_0})^{-2\ell_0}\mathcal{E}_{s+\ell_0}(0)\bigg(2\ell_0\mathfrak{c}_2^{-1}
(\mathcal{E}_{s+\ell_0}(0)[\widehat{\dot{\mathcal{E}}}_{s}(0)]^{-1})^{\frac{1}{2\ell_0}}
+t\bigg)^{-2\ell_0}.
\end{split}
\end{equation*}
According to the definitions of $\widehat{\dot{\mathcal{E}}}_{s}(0)$ and $\mathcal{E}_{s+\ell_0}(0)$  and \eqref{def-bdd-energy-dissi-v-1},  we know that $\widehat{\dot{\mathcal{E}}}_{s}(0) \leq C_1(s) \mathcal{E}_{s+\ell_0}(0)$ with the positive constant $C_1(s)$ depending only on $s$, so one can see that
$ 2\ell_0\mathfrak{c}_2^{-1}
(\mathcal{E}_{s+\ell_0}(0)[\widehat{\dot{\mathcal{E}}}_{s}(0)]^{-1})^{\frac{1}{2\ell_0}} \geq 2\ell_0\mathfrak{c}_2^{-1}
C_1(s)^{-\frac{1}{2\ell_0}}$,
which leads to
\begin{equation*}
\begin{split}
&\widehat{\dot{\mathcal{E}}}_{s}(t)\leq (\frac{\mathfrak{c}_2}{2\ell_0})^{-2\ell_0}\mathcal{E}_{s+\ell_0}(0)\bigg(2\ell_0\mathfrak{c}_2^{-1}
C_1(s)^{-\frac{1}{2\ell_0}}+t\bigg)^{-2\ell_0}.
\end{split}
\end{equation*}
Therefore, we obtain $\widehat{\dot{\mathcal{E}}}_{s}(t)  \leq \mathfrak{C}_3 \mathcal{E}_{s+\ell_0}(0)(1+t)^{-2\ell_0}$
with $\mathfrak{C}_3: =\mathfrak{C}_3(\ell_0, C_1(s), \mathfrak{c}_2)$,
from this, it follows
\begin{equation}\label{decay-EN-1a}
\begin{split}
&\dot{\mathcal{E}}_{s}(t) \leq \mathfrak{C}_0\mathfrak{C}_3 \mathcal{E}_{s+\ell_0}(0)(1+t)^{-2\ell_0},\quad \int_0^t\dot{\mathcal{E}}_{s}\,d\tau \leq \widetilde{\mathfrak{C}}_3\mathcal{E}_{s+\ell_0}(0)
\end{split}
\end{equation}
with $\widetilde{\mathfrak{C}}_3:=\frac{\mathfrak{C}_0\mathfrak{C}_3 }{2\ell_0-1}$.

Set $\langle\,t\,\rangle:=(1+t)$. Since
\begin{equation*}
\begin{split}
&\frac{d}{dt}(\langle\,t\,\rangle^{\ell_0+\frac{1}{2}}\widehat{\dot{\mathcal{E}}}_{s})
+\mathfrak{c}_1\langle\,t\,\rangle^{\ell_0+\frac{1}{2}}\dot{\mathcal{D}}_{s}  =\langle\,t\,\rangle^{\ell_0+\frac{1}{2}}\frac{d}{dt} \widehat{\dot{\mathcal{E}}}_{s}
+\mathfrak{c}_1\langle\,t\,\rangle^{\ell_0+\frac{1}{2}}\dot{\mathcal{D}}_{s}+ \widehat{\dot{\mathcal{E}}}_{s}\langle\,t\,\rangle^{\ell_0-\frac{1}{2}}( \ell_0+\frac{1}{2}),\\
\end{split}
\end{equation*}
from \eqref{decay-est-N-1}, we get $\frac{d}{dt}(\langle\,t\,\rangle^{\ell_0+\frac{1}{2}}\widehat{\dot{\mathcal{E}}}_{s})
+\mathfrak{c}_1\langle\,t\,\rangle^{\ell_0+\frac{1}{2}}\dot{\mathcal{D}}_{s}  \leq (\ell_0+\frac{1}{2}) \widehat{\dot{\mathcal{E}}}_{s}\langle\,t\,\rangle^{\ell_0-\frac{1}{2}}$, so one can see
\begin{equation*}
\begin{split}
&\frac{d}{dt}(\langle\,t\,\rangle^{\ell_0+\frac{1}{2}}\widehat{\dot{\mathcal{E}}}_{s})
+\mathfrak{c}_1\langle\,t\,\rangle^{\ell_0+\frac{1}{2}}\dot{\mathcal{D}}_{s}  \leq (\ell_0+\frac{1}{2}) \mathfrak{C}_3\mathcal{E}_{s+\ell_0}(0)
\langle\,t\,\rangle^{-(\ell_0+\frac{1}{2})} .
\end{split}
\end{equation*}
Hence, we arrive at
\begin{equation}\label{decay-DN-1}
\begin{split}
&\int_0^t\langle\,\tau\,\rangle^{\ell_0+\frac{1}{2}}\dot{\mathcal{D}}_{s}(\tau)\,d\tau \leq \mathfrak{C}_4 \mathcal{E}_{s+\ell_0}(0),\quad \int_0^t\dot{\mathcal{D}}_{s}(\tau)^{\frac{1}{2}}\,d\tau \leq \widetilde{\mathfrak{C}}_4 \mathcal{E}_{s+\ell_0}(0)^{\frac{1}{2}}.
\end{split}
\end{equation}

With this in hand, we are ready to estimate $E_s$.

Thanks to the equations $\partial_t\xi=v$, we find
\begin{equation*}
\begin{split}
&\|\dot{\Lambda}_h^{\sigma_0}{\Lambda}_h^{s-1-\sigma_0}\nabla\xi(t)\|_{H^1}\leq \|\dot{\Lambda}_h^{\sigma_0}{\Lambda}_h^{s-1-\sigma_0}\nabla\xi(0)\|_{H^1}+\int_0^t \|\dot{\Lambda}_h^{\sigma_0}{\Lambda}_h^{s-1-\sigma_0}\nabla\,v(\tau)\|_{H^1}\,d\tau,
\end{split}
\end{equation*}
which follows
\begin{equation}\label{def-energy-hn-2}
\begin{split}
&\|\dot{\Lambda}_h^{\sigma_0}{\Lambda}_h^{s-1-\sigma_0}\nabla\xi(t)\|_{H^1}^2\leq 2\|\dot{\Lambda}_h^{\sigma_0}{\Lambda}_h^{s-1-\sigma_0}\nabla\xi(0)\|_{H^1}^2
+2\bigg(\int_0^t\dot{\mathcal{D}}_{s}^{\frac{1}{2}}\,d\tau\bigg)^2.
\end{split}
\end{equation}

Thanks to \eqref{def-energy-hn-2} and \eqref{decay-DN-1}, we find
\begin{equation*}
\begin{split}
E_s(t)&=\|\dot{\Lambda}_h^{\sigma_0}{\Lambda}_h^{s-1-\sigma_0}\nabla\xi(t)\|_{H^1}^2+\mathcal{E}_{s}(t)\\
&\leq  2\|\dot{\Lambda}_h^{\sigma_0}{\Lambda}_h^{s-1-\sigma_0}\nabla\xi(0)\|_{H^1}^2
+2\bigg(\int_0^t\dot{\mathcal{D}}_{s}^{\frac{1}{2}}\,d\tau\bigg)^2+\mathcal{E}_{s}(t)\\
 &\leq  2\|\dot{\Lambda}_h^{\sigma_0}{\Lambda}_h^{s-1-\sigma_0}\nabla\xi(0)\|_{H^1}^2+2( \widetilde{\mathfrak{C}}_4^2+\mathfrak{C}_0^2) \mathcal{E}_{s+\ell_0}(0).
\end{split}
\end{equation*}
Hence, we obtain
\begin{equation}\label{bdd-EN-decay-1}
\begin{split}
&E_s(t)\leq  \mathfrak{C}_5(E_s(0)+\mathcal{E}_{s+\ell_0}(0)) \quad \text{with}\quad \mathfrak{C}_5:=2 (1+\widetilde{\mathfrak{C}}_4^2+\mathfrak{C}_0^2).
\end{split}
\end{equation}

Let's now turn to estimate ${E}_{s+\ell_0}$. Notice that
\begin{equation*}
\begin{split}
&\|\dot{\Lambda}_h^{s+\ell_0-1}\nabla\xi(t)\|_{H^1}\leq \|\dot{\Lambda}_h^{s+\ell_0-1}\nabla\xi(0)\|_{H^1}
+\int_0^t\dot{\mathcal{D}}_{s+\ell_0}^{\frac{1}{2}}\,d\tau,
\end{split}
\end{equation*}
which along with \eqref{bdd-EN-decay-1} and the definition of $T_1(\kappa_0)$ leads to
\begin{equation}\label{est-E-N+1-bdd-1}
\begin{split}
&{E}_{s+\ell_0}(t) \leq\|\dot{\Lambda}_h^{s+\ell_0-1}\nabla^2\xi\|_{L^2}^2+2(\mathcal{E}_{s+\ell_0}+E_{s})\\
&\leq 2\|\dot{\Lambda}_h^{s+\ell_0-1}\nabla^2\xi(0)\|_{L^2}^2+2\,t\,\int_0^t \mathcal{D}_{s+\ell_0}\,d\tau+4\mathfrak{C}_0^2 \mathcal{E}_{s+\ell_0}(0)+2\mathfrak{C}_5(E_s(0)+\mathcal{E}_{s+\ell_0}(0))\\
&\leq 2\,t\,\int_0^t \mathcal{D}_{s+\ell_0}\,d\tau+\widetilde{\mathfrak{C}}_5 E_{s+\ell_0}(0)
\end{split}
\end{equation}
with $\widetilde{\mathfrak{C}}_5:=2+4\mathfrak{C}_0^2 +2\mathfrak{C}_5$.

To deal with $\int_0^t \mathcal{D}_{s+\ell_0}\,d\tau$ in \eqref{est-E-N+1-bdd-1}, we first apply the energy inequality \eqref{restate-bdddecay-total-N+1-1} to find
\begin{equation*}
\begin{split}
\frac{d}{dt}{\widehat{\mathcal{E}}}_{s+\ell_0}+\mathfrak{c}_1{\mathcal{D}}_{s+\ell_0} \leq \mathfrak{C}_1 \bigg(E_{s+\ell_0}\,\dot{\mathcal{D}}_{s_0}+ E_{s_0} (\dot{\mathcal{D}}_{s_0}^{\frac{1}{2}} +\dot{\mathcal{E}}_{s_0})\bigg),
\end{split}
\end{equation*}
which along with \eqref{est-E-N+1-bdd-1} gives rise to
\begin{equation}\label{diff-ineq-gron-EN+1-1}
\begin{split}
\frac{d}{dt}({\widehat{\mathcal{E}}}_{s+\ell_0}+\mathfrak{c}_1\int_0^t \mathcal{D}_{s+\ell_0}\,d\tau) \leq &  2\mathfrak{C}_1\,t\,\dot{\mathcal{D}}_{s_0}\,\int_0^t \mathcal{D}_{s+\ell_0}\,d\tau\\
&+ 2\mathfrak{C}_1 \mathfrak{C}_0^2\mathcal{E}_{s+\ell_0}(0)(\dot{\mathcal{D}}_{s_0}^{\frac{1}{2}} +\dot{\mathcal{E}}_{s_0})+ \mathfrak{C}_1  \widetilde{\mathfrak{C}}_5 E_{s+\ell_0}(0)\,\dot{\mathcal{D}}_{s_0}.
\end{split}
\end{equation}
Applying the Gronwall inequality to \eqref{diff-ineq-gron-EN+1-1} yields
\begin{equation*}
\begin{split}
& {\widehat{\mathcal{E}}}_{s+\ell_0}(t)+\mathfrak{c}_1\int_0^t \mathcal{D}_{s+\ell_0}\,d\tau  \leq \exp\{2\mathfrak{C}_1\mathfrak{c}_1^{-1}\,\int_0^t\tau\,\dot{\mathcal{D}}_{s_0}\,d\tau\} \\
&\times \bigg[{\widehat{\mathcal{E}}}_{s+\ell_0}(0)+  2\mathfrak{C}_1 \mathfrak{C}_0^2\mathcal{E}_{s+\ell_0}(0)\int_0^t (\dot{\mathcal{D}}_{s_0}^{\frac{1}{2}} +\dot{\mathcal{E}}_{s_0})\,d\tau+ \mathfrak{C}_1\widetilde{\mathfrak{C}}_5 E_{s+\ell_0}(0)\,\int_0^t \dot{\mathcal{D}}_{s_0}\,d\tau\bigg],
\end{split}
\end{equation*}
which along with \eqref{decay-DN-1} implies
\begin{equation*}
\begin{split}
& {\widehat{\mathcal{E}}}_{s+\ell_0}(t)+\mathfrak{c}_1\int_0^t \mathcal{D}_{s+\ell_0}\,d\tau  \leq \exp\{2\mathfrak{C}_1\mathfrak{c}_1^{-1}\,\mathfrak{C}_4 \mathcal{E}_{s+\ell_0}(0)\} \\
&\times \bigg[{\widehat{\mathcal{E}}}_{s+\ell_0}(0)+  2\mathfrak{C}_1 \mathfrak{C}_0^2\mathcal{E}_{s+\ell_0}(0)\, (\widetilde{\mathfrak{C}}_4 \mathcal{E}_{s+\ell_0}(0)^{\frac{1}{2}}+\widetilde{\mathfrak{C}}_3 \mathcal{E}_{s+\ell_0}(0))+ \mathfrak{C}_1  \widetilde{\mathfrak{C}}_5 E_{s+\ell_0}(0)\,\mathfrak{C}_4\mathcal{E}_{s+\ell_0}(0)\bigg].
\end{split}
\end{equation*}
Therefore, we arrive at
\begin{equation}\label{E-N+1-est-bdd-1a}
\begin{split}
& {\widehat{\mathcal{E}}}_{s+\ell_0}(t)+\mathfrak{c}_1\int_0^t \mathcal{D}_{s+\ell_0}\,d\tau\leq \exp\{2\mathfrak{C}_1\mathfrak{c}_1^{-1}\,\mathfrak{C}_4 \nu_0\}  \widehat{\mathcal{E}}_{s+\ell_0}(0)(1+ \mathfrak{C}_6\nu_0^{\frac{1}{2}})
\end{split}
\end{equation}
with $\mathfrak{C}_6:=2\mathfrak{C}_1 \mathfrak{C}_0^3 (\widetilde{\mathfrak{C}}_4 +\widetilde{\mathfrak{C}}_3)+ \mathfrak{C}_1  \widetilde{\mathfrak{C}}_5 \mathfrak{C}_4\mathfrak{C}_0 $.

Take $\nu_0>0 $ so small that
\begin{equation*}
\nu_0 \leq \min\{\frac{1}{2C_1\kappa_0},\, \frac{1}{16\kappa_0},\,\frac{\mathfrak{c}_1^2}{4\mathfrak{C}_1^2\kappa_0},\,\frac{\mathfrak{c}_1}{2\mathfrak{C}_1\,\mathfrak{C}_4 }\log(\frac{5}{4}),\,\frac{1}{25\mathfrak{C}_6^2}\},
\end{equation*}
from \eqref{E-N+1-est-bdd-1a}, we obtain $\sup_{\tau\in [0, t]}\widehat{\mathcal{E}}_{s+\ell_0}(\tau) +\mathfrak{c}_1\int_0^t\mathcal{D}_{s+\ell_0}(\tau)\,d\tau \leq \frac{3}{2}\widehat{\mathcal{E}}_{s+\ell_0}(0)$, which follows
\begin{equation}\label{E-N+1-est-bdd-3}
\begin{split}
&\mathcal{E}_{s+\ell_0}(t) \leq \frac{3}{2}\mathfrak{C}_0^2 \mathcal{E}_{s+\ell_0}(0) \quad\text{and}\quad \int_0^t\mathcal{D}_{s+\ell_0}(\tau)\,d\tau \leq \frac{3}{2\mathfrak{c}_1}\mathfrak{C}_0 \mathcal{E}_{s+\ell_0}(0).
\end{split}
\end{equation}
Consequently, owing to \eqref{est-E-N+1-bdd-1} and \eqref{E-N+1-est-bdd-3}, one can see
\begin{equation*}\label{E-N+1-est-bdd-4a}
\begin{split}
{E}_{s+\ell_0}(t) &\leq 2\,t\,\int_0^t \mathcal{D}_{s+\ell_0}\,d\tau+\mathfrak{C}_5 E_{s+\ell_0}(0)\leq  t\,\frac{3}{\mathfrak{c}_1}\mathfrak{C}_0^2 \mathcal{E}_{s+\ell_0}(0)+\mathfrak{C}_5 E_{s+\ell_0}(0),
\end{split}
\end{equation*}
so we arrive at
\begin{equation}\label{E-N+1-est-bdd-4}
\begin{split}
{E}_{s+\ell_0}(t) \leq  \widetilde{\mathfrak{C}}_6\,(t+1)\,E_{s+\ell_0}(0)
\end{split}
\end{equation}
with $\widetilde{\mathfrak{C}}_6:=\frac{3}{\mathfrak{c}_1}\mathfrak{C}_0^2 +\mathfrak{C}_5$.

Observe that all the constants $\mathfrak{C}_i,\, \widetilde{\mathfrak{C}}_j$ (with $i=0, 1, ..., 6$, $j=1,...,6$) above are independent of $\kappa_0$. We take the constant $\kappa_0:=2(\mathfrak{C}_5+\widetilde{\mathfrak{C}}_6)$ in \eqref{equal-time-1} and
\begin{equation*}
\nu_0:=\frac{1}{2}\min\{\frac{1}{2C_1\kappa_0},\,\frac{1}{16\kappa_0},\,
\frac{\mathfrak{c}_1^2}{4\mathfrak{C}_1^2\kappa_0},\,\frac{\mathfrak{c}_1}{2\mathfrak{C}_1\,\mathfrak{C}_4 }\log(\frac{5}{4}),\,\frac{1}{25\mathfrak{C}_6^2}\},
 \end{equation*}
 then for any $t \in [0, T_1(\kappa_0))$, combining \eqref{bdd-EN-decay-1}, \eqref{E-N+1-est-bdd-3}, and \eqref{E-N+1-est-bdd-4} ensure that \eqref{equal-time-boot-1} holds true.

Thanks to \eqref{decay-EN-1a} and \eqref{equal-time-boot-1}, we get the inequality \eqref{totalenerg-high-thm-1}. This ends the proof of Theorem \ref{thm-main}.
\end{proof}

\bigbreak \noindent {\bf Acknowledgments.} G. Gui's research is supported by NSFC under Grant 11571279. Z. Zhang is partially supported by NSFC under Grant 12171010.

\end{document}